\title{Absolute profinite rigidity, direct products, and finite presentability}
\author{M. R. Bridson}
\author{A. W. Reid}
\author{R. Spitler}
\address{\newline Mathematical Institute, 
\newline Andrew Wiles Building,
\newline University of Oxford,
\newline Oxford OX2 6GG, UK}
\email{bridson@maths.ox.ac.uk}
\address{\newline Department of Mathematics,
\newline Rice University, 
\newline Houston, TX 77005, USA}
\email{alan.reid@rice.edu,ryan.spitler@rice.edu}
\thanks{The first author was supported by the Clay Mathematics Institute, the second author by N.S.F. grant DMS-1812397 and the third author by the N.S.F. Postdoctoral Fellowship, DMS-2103335. For the purpose of open access, the authors have applied a CC BY public copyright licence to any author accepted manuscript arising from this submission.}
\def\-{\overline}
\def\wh{\widehat}
\def\G{\Gamma}
\def\im{{\rm{im}}}
\def\f{{\rm{f}}}
 \def\H{\mathbb{H}}
 \def\Z{\mathbb{Z}}
 \def\R{\mathbb{R}}
 \def\Q{\mathbb{Q}} 
 \def\C{\mathbb{C}}
\def\La{\Lambda}
 \def\SFS{{Seifert fibred space }}
 \def\SFSs{{Seifert fibred spaces }}
\DeclareMathOperator{\SL}{SL} \DeclareMathOperator{\PSL}{PSL}
\def\qed{ $\sqcup\!\!\!\!\sqcap$}
\def\tr{\mbox{\rm{tr}}\, }
\def\P{\mbox{\rm{P}}}
\def\G{\Gamma}
\def\<{\langle}
\def\>{\rangle}
\def\onto{\twoheadrightarrow} 
\newtheorem{theorem}{Theorem}[section]
\newtheorem{lemma}[theorem]{Lemma}
\newtheorem{corollary}[theorem]{Corollary}
\newtheorem{proposition}[theorem]{Proposition}
\theoremstyle{definition} 
\newtheorem{definition}[theorem]{Definition}
\newtheorem{remark}[theorem]{Remark}
\newtheorem{example}[theorem]{Example}
\numberwithin{equation}{section}
\def\L{\Lambda}
\def\D{\Delta}
\begin{document}
 

\begin{abstract}  
We prove that there exist finitely presented,
residually finite groups that are profinitely rigid in the class of all finitely presented
groups but not in the class of all finitely generated groups. These groups are of the form $\G\times\G$
where $\G$ is a profinitely rigid 3-manifold group; we describe a family of such groups with the property
that if $P$ is a finitely generated, residually finite group with $\wh{P}\cong\wh{\G\times\G}$ then there is
an embedding $P\hookrightarrow\G\times\G$ that induces the profinite isomorphism; 
in each case there are infinitely many non-isomorphic possibilities for $P$.  
\end{abstract}


\subjclass{20E26, 20E18 (20F65, 20F10, 57M25) }

\keywords{Profinite completion, profinite genus, absolute profinite rigidity, 3-manifold groups, finite presentation, central extensions}

\maketitle

%
%
%
%

\section{Introduction} 
\label{intro}
The quest to understand the extent to which finitely generated groups are determined by their finite images has
been greatly invigorated in recent years with input from low-dimensional geometry and topology.
In our papers \cite{BMRS1} and \cite{BMRS2} with D.~B. McReynolds, we provided the first examples of finitely generated, residually finite, full-sized groups $\Gamma$ that are  
{\em profinitely rigid}:  
 for finitely generated, residually finite groups  $\Lambda$, if $\wh{\Lambda}\cong \wh{\Gamma}$ then $\Lambda \cong \Gamma$, where $\wh{\L}$ denotes the profinite completion
of $\L$.  
The following theorem provides the first examples of finitely presented groups that are profinitely rigid among
finitely presented groups but not among finitely generated groups.   Like the examples in 
 \cite{BMRS1, BMRS2, BRprasad, TCW}, the groups $\G$ in this theorem are 3-manifold groups with particular arithmetic properties. 
Here,
 $S^2(p,q,r)$ denotes the quotient $\mathbb{H}^2/\Delta(p,q,r)$ of the hyperbolic plane by the 
 triangle group $\D(p,q,r)$  
 (the index-2 orientation-preserving subgroup of the reflection group associated to a hyperbolic triangle with interior angles $\pi/p$, $\pi/q$ and $\pi/r$). 
 
\begin{theorem}
\label{t:main} There exist finitely presented, residually finite groups $\G$ with the following properties:
\begin{enumerate}
\item $\G\times\G$ is profinitely rigid among all finitely presented, residually finite groups. 
\item There exist infinitely many non-isomorphic finitely generated groups $\Lambda$ such that  $\wh{\Lambda}\cong \wh{\G}\times\wh{\G}$.
\item If $\Lambda$ is as in (2), then there is an embedding
$\Lambda\hookrightarrow\G\times\G$ that induces the isomorphism $\wh{\Lambda}\cong \wh{\G\times\G}$
(in other words,  $\Lambda\hookrightarrow\G\times\G$ is a Grothendieck pair). 
\end{enumerate}  
If $M$ is any Seifert fibred space 
with base  orbifold $S^2(3,3,4)$ or $S^2(3,3,6)$ or $S^2(2,5,5)$,  then $\G=\pi_1M$ has these properties. 
\end{theorem}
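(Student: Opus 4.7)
Write $\G = \pi_1 M$, so $\G$ sits in a central extension $1\to Z\to\G\to\D\to 1$ with $Z\cong\Z$ the fibre subgroup and $\D=\D(p,q,r)$ one of the three triangle groups in the statement. I would prove the three assertions in the order (3), (2), (1), deducing (1) from (3) together with a finite-presentation argument. For (3): given a finitely generated, residually finite group $\L$ and an isomorphism $\phi\colon\wh\L\to\wh\G\times\wh\G$, composing with the two projections $p_i$ gives maps $\alpha_i\colon\L\to\wh\G$ with dense image. The key step is to verify that each $\alpha_i(\L)$ is finitely generated, residually finite, with profinite completion $\wh\G$; for this I would use the product splitting of $\wh\L$ together with goodness of 3-manifold groups to identify the closures of $\L\cap\ker p_i$ with the factors $\wh\G$. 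The absolute profinite rigidity of $\G$ (established earlier in the paper for these Seifert fibred groups) then gives $\alpha_i(\L)\cong\G$, and pairing the projections produces $\L\to\G\times\G$ whose kernel lies in $\ker p_1\cap\ker p_2=1$ inside $\wh\L$, hence is trivial by residual finiteness; the induced map on profinite completions is $\phi$ by construction.

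For (2), I would construct an infinite family of finitely generated Grothendieck subgroups $\L_n\leq\G\times\G$ modelled on the Bridson--Grunewald fibre product construction. For each $n$ in an appropriate infinite index set, choose a surjection $q_n\colon\G\twoheadrightarrow Q_n$ onto a finitely presented quotient whose profinite data forces the fibre product $\L_n:=\G\times_{Q_n}\G=\{(g,h)\in\G\times\G:q_n(g)=q_n(h)\}$ to be a Grothendieck subgroup of $\G\times\G$. These $\L_n$ are finitely generated (as fibre products of finitely presented groups with finitely generated kernel) but in general not finitely presented---this is exactly what makes the construction possible and is compatible with (1). To obtain infinitely many pairwise non-isomorphic $\L_n$ I would exploit the central $\Z$ in $\G$ to twist the fibre products, distinguishing the resulting groups via invariants such as torsion in the abelianizations of specific finite-index subgroups. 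The main obstacle is producing an adequate family of quotients $Q_n$ of these Seifert fibred groups and verifying the cohomological input needed for the profinite equality $\wh{\L_n}\cong\wh{\G\times\G}$.

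For (1), suppose $\L$ is finitely presented with $\wh\L\cong\wh{\G\times\G}$. By (3), $\L$ embeds as a Grothendieck subgroup of $\G\times\G$, so it suffices to show finite presentation forces $\L=\G\times\G$. The profinite centre of $\wh\L$ is $\wh Z\times\wh Z\cong\wh\Z^2$, and its preimage $C\leq\L$ is central with $\wh C\cong\wh\Z^2$; the quotient $\L/C$ is finitely generated, residually finite, with profinite completion $\wh\D\times\wh\D$, hence isomorphic to $\D\times\D$ by the profinite rigidity of $\D$ (upgraded to the square). Since $\L$ and $\D\times\D$ are both finitely presented, $C$ is finitely generated, and therefore $C\cong\Z^2$ (torsion-free abelian of rank two with the prescribed profinite completion). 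Finally, the central extension $1\to\Z^2\to\L\to\D\times\D\to 1$ is classified by its class in $H^2(\D\times\D;\Z^2)$, which must coincide with that of $\G\times\G$ because it is detected on profinite completions; hence $\L\cong\G\times\G$.
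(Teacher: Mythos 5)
Your global architecture agrees with the paper's (prove the Grothendieck-pair statement (3) first, construct infinitely many fibre products for (2), and deduce (1) from (3) plus the non-existence of proper finitely presented Grothendieck pairs), but in each of the three parts the decisive technical input is missing or replaced by an assertion that does not hold. In (3), you claim that the projections $\L_i=\alpha_i(\L)$ have profinite completion $\wh{\G}$, so that the profinite rigidity of $\G$ applies. But a finitely generated subgroup that is dense in $\wh{\G}$ only yields a surjection $\wh{\L_i}\twoheadrightarrow\wh{\G}$, not an isomorphism; dense proper subgroups inducing such surjections are exactly the phenomenon under discussion, and ``goodness plus the product splitting'' does not rule them out. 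The paper's route is substantially longer: Galois rigidity of $\D$ together with the arithmetic hypotheses (Theorem \ref{products_prof_thm}) is used to show that each $\L_i$ surjects $\D$ with infinite cyclic central kernel and is torsion-free, hence is itself the fundamental group of a Seifert fibred space over the same base (Proposition \ref{p:towardsGP}); only then do the Hopfian/Euler-number arguments (Lemma \ref{l:dense-will-do}, Corollary \ref{c:dense-enough}, resting on Wilkes' theorem) give $\L_i\cong\G$ and the identification of the pair.

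In (2) you defer precisely the hard point: producing infinitely many finitely presented quotients $Q_n$ with $\wh{Q_n}=1$ and $H_2(Q_n,\Z)=0$. Since $H_1(\G,\Z)\neq 0$, one cannot simply lift to universal central extensions; the paper's device is to realise $\G$ as the commutator subgroup of $\G\rtimes C_2$ (the orbifold quotient by the fibre-reversing involution), which has $H_1=C_2$ and $H_2=0$ -- this is where the oddness of $|H_1(M,\Z)|$ enters and is the reason only $S^2(3,3,4)$, $S^2(3,3,6)$, $S^2(2,5,5)$ appear in the statement -- and then to combine Osin/AMO common quotients of relatively hyperbolic groups with universal central extensions (Theorem \ref{t2:lots-of-P}); pairwise non-isomorphism of the fibre products comes from a centraliser characterisation recovering $Q_n$ modulo centre (Lemmas \ref{l:easy} and \ref{l:P-gives-Q}), not from torsion in abelianizations of finite-index subgroups. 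In (1), the step ``$\L/C\cong\D\times\D$ by profinite rigidity of $\D$ upgraded to the square'' is not available: rigidity of a direct product among finitely generated groups is exactly what fails in this paper, and at that stage you do not yet know $\L/C$ is finitely presented (you deduce that $C$ is finitely generated only afterwards, so the argument is circular). Moreover your assertion that the preimage $C$ of the profinite centre satisfies $\wh{C}\cong\wh{\Z}^2$ is an unproved claim; the paper has to exclude the possibility that $\L$ meets a central factor trivially by a goodness/cohomological-dimension argument. The correct ingredients, as in Theorem \ref{p:fills2}, are that the image of $\L$ in $\D\times\D$ is finitely presented (its kernel is central, hence finitely generated) and that finitely presented subgroups of direct products of Fuchsian (limit) groups are closed in the profinite topology \cite{BW, BHMS2}, so density forces this image to be all of $\D\times\D$; together with $\mathcal{Z}=\Z^2\subset\L$ this gives $\L=\G\times\G$ directly, with no need for (and no profinite justification of) a comparison of extension classes in $H^2(\D\times\D;\Z^2)$.
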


In general, for a fixed group $\G$, there can be uncountably many finitely generated, residually finite groups
$H$ with $\wh{H}\cong\wh{\G}$; see \cite{Nek} and \cite{pyber} for example. In other settings, for example nilpotent
groups \cite{Pick}, there are only finitely many such $H$, up to isomorphism.
The  groups $\G\times\G$ in Theorem \ref{t:main} provide the first examples of groups where the
number of such $H$ is countably infinite; this property is assured by parts (2) and (3) of Theorem \ref{t:main}.
We express this in the language of \cite{GZ}, where the
{\em profinite genus} of a finitely generated,  residually finite group $\G$ 
is defined to be the set of isomorphism classes
of finitely generated, residually finite groups $H$ such that $\wh{H}\cong\wh{\G}$.

\begin{corollary}
There exist finitely presented, residually finite groups whose profinite genus  
is countably infinite.
\end{corollary}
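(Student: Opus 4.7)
The plan is to read the corollary off Theorem \ref{t:main} by bounding the profinite genus of $\G\times\G$ both below and above by $\aleph_0$. The lower bound is essentially immediate from clause (2): it yields infinitely many pairwise non-isomorphic finitely generated residually finite groups $\Lambda$ with $\wh{\Lambda}\cong\wh{\G}\times\wh{\G}$. I would first record that $\G\times\G$ is itself finitely presented and residually finite, since $\G$ is, and that profinite completion commutes with finite direct products of finitely generated groups, so $\wh{\G\times\G}\cong\wh{\G}\times\wh{\G}$. Thus $\G\times\G$ is a finitely presented, residually finite group whose profinite genus contains infinitely many isomorphism classes.

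The upper bound is the substantive content, and this is where clause (3) of Theorem \ref{t:main} is essential: every $\Lambda$ in the profinite genus of $\G\times\G$ embeds into $\G\times\G$. Since $\G\times\G$ is finitely presented, it is countable; and any countable group has only countably many finite subsets, hence only countably many finitely generated subgroups, and in particular at most countably many isomorphism classes of finitely generated subgroups. Combining this ceiling with the floor supplied by clause (2) yields the corollary.

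The only step that requires any thought is the invocation of clause (3); without the embedding $\Lambda\hookrightarrow\G\times\G$ one could not rule out a profinite genus of cardinality $2^{\aleph_0}$, as the examples of Nekrashevych and Pyber recalled in the introduction show. No further obstacle arises, so I would expect the written proof to fit in a few lines once Theorem \ref{t:main} is in hand.
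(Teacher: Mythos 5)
Your proposal is correct and matches the paper's (implicit) argument: the lower bound comes from part (2) of Theorem \ref{t:main}, and the upper bound from part (3) together with the observation that the countable group $\G\times\G$ has only countably many finitely generated subgroups, hence at most countably many isomorphism classes in the genus. This is exactly how the paper justifies the corollary in the discussion preceding its statement.
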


A motivating question in the study of profinite rigidity is to determine whether  the
free group $F_r$ is profinitely rigid when $r\ge 2$. It is known that $F_r\times F_r$ is not
profinitely rigid, indeed there are Grothendieck pairs $P\hookrightarrow F_r\times F_r$
with $P\not\cong F_r\times F_r$ finitely generated. 
There are no such pairs with $P$ finitely presented,
and it seems reasonable to conjecture that $F_r$ satisfies Theorem \ref{t:main}. In order to prove this,
one would first have to show that $F_r$ was profinitely rigid, but even this would not be enough.

The groups $\G$ for which we shall prove Theorem \ref{t:main} are  fundamental groups of \SFSs
whose base orbifold is $S^2(p,q,r) = \H^2/\D(p,q,r)$, where $\D(p,q,r)$ is one of
\begin{equation}\label{list-top}
\Delta(3,3,4), \ \Delta(3,3,5),\ \Delta(3,3,6),\ \Delta(2,5,5), \ \Delta(4,4,4).     
\end{equation}

Theorem \ref{t:main} summarizes the main contribution of this paper, but we shall prove a
number of other  results  that are of independent interest. First, we prove the following theorem, which 
provides the first infinite family of closed 3-manifolds whose fundamental groups are full sized (i.e. they contain a free subgroup of rank $2$) 
and profinitely rigid.  
For this, we augment the above list with
\begin{equation}\label{list-bottom}
\Delta(2,3,8), \Delta(2,3,10),\ \Delta(2,3,12),\ \Delta(2,4,5), \ \Delta(2,4,8).
\end{equation}

\begin{theorem}\label{SFS_rigid} 
Let $\Delta(p,q,r)$  be a triangle group from  list (\ref{list-top}) or (\ref{list-bottom}), and let $M$ be a Seifert fibred space with base $S^2(p,q,r)$.  
Then $\pi_1 M$ is profinitely rigid.
\end{theorem}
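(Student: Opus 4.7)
The plan is to exploit the central extension structure of Seifert fibred manifold groups and reduce profinite rigidity of $\pi_1 M$ to that of the base orbifold group $\Delta(p,q,r)$. Since $M$ has hyperbolic base orbifold $S^2(p,q,r)$, the fundamental group fits into a central extension
\[
1 \to Z \to \pi_1 M \to \Delta(p,q,r) \to 1,
\]
with $Z \cong \Z$ generated by the regular fibre. Fuchsian groups are cohomologically good in the sense of Serre, so this sequence remains exact on passing to profinite completions, and $\wh{Z} \cong \hat{\Z}$ is central in $\wh{\pi_1 M}$.

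The first and most substantial step is to establish absolute profinite rigidity of each triangle group $\Delta(p,q,r)$ appearing in lists (\ref{list-top}) and (\ref{list-bottom}). I would follow the arithmetic template of \cite{BMRS1, BMRS2, BRprasad}: each of these triangle groups is arithmetic, so one aims to show that $\wh{\Delta(p,q,r)}$ determines the invariant trace field and quaternion algebra of $\Delta(p,q,r)$, and then to use strong approximation together with the resulting restrictions on faithful representations to conclude that any finitely generated, residually finite $H$ with $\wh{H} \cong \wh{\Delta(p,q,r)}$ must be isomorphic to $\Delta(p,q,r)$. This is where I expect the principal obstacle to lie; each new triangle group in (\ref{list-bottom}) demands individualised arithmetic analysis of the kind carried out in those references.

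Assuming triangle-group rigidity, take any finitely generated, residually finite $\Lambda$ with $\wh{\Lambda} \cong \wh{\pi_1 M}$. I would first characterise $\wh{Z} \le \wh{\pi_1 M}$ internally --- either as the centre (noting that $\wh{\Delta(p,q,r)}$ has trivial centre because the abundant $\PSL_2(\F_q)$-quotients of $\Delta(p,q,r)$ have trivial centre and strong approximation forces their kernels to intersect trivially) or, failing that, as the unique maximal procyclic closed normal subgroup. Then $A := \Lambda \cap \wh{Z}$ is central and infinite cyclic in $\Lambda$ with $\wh{A} = \wh{Z}$, and $\wh{\Lambda/A} \cong \wh{\Delta(p,q,r)}$; by triangle-group rigidity, $\Lambda/A \cong \Delta(p,q,r)$, exhibiting $\Lambda$ as a central $\Z$-extension of $\Delta(p,q,r)$. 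Such extensions are classified by $H^2(\Delta(p,q,r); \Z)$, and by goodness the natural inflation map $H^2(\Delta(p,q,r); \Z) \to H^2(\wh{\Delta(p,q,r)}; \hat{\Z})$ is injective, so the extension classes of $\Lambda$ and $\pi_1 M$ coincide and $\Lambda \cong \pi_1 M$.
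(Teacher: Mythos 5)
Your reduction has the right general shape (pass to the central extension $1\to\Z\to\pi_1M\to\Delta\to 1$ and use rigidity of the base), but there are two genuine gaps. First, the step ``$A:=\Lambda\cap\wh{Z}$ is central and infinite cyclic with $\wh{A}=\wh{Z}$, and $\wh{\Lambda/A}\cong\wh{\Delta}$'' is asserted, not proved. Knowing that $Z(\wh{\Lambda})\cong\wh{\Z}$ says nothing a priori about $Z(\Lambda)=\Lambda\cap Z(\wh{\Lambda})$: a dense finitely generated subgroup need not meet a closed central $\wh{\Z}$ in an infinite cyclic group, let alone densely, and it need not induce the full profinite topology on that intersection. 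Producing the structure of $\Lambda$ as a central $\Z$-extension of $\Delta$ is the hard part of the argument, and it is not a formal consequence of rigidity of $\Delta$; in the paper it requires arithmetic input about $\pi_1M$ itself, namely that Galois rigidity of $\Delta$ lifts to $\Gamma=\pi_1M$ (Lemma \ref{galois_rigid_seif}), which via Theorem \ref{galois_rigid_reps} yields an honest epimorphism $\rho:\Lambda\to\Delta$; one then shows $\ker\rho$ is central using centre-freeness of $\wh{\Delta}$ and identifies it with $\Z$ by Proposition \ref{p:sameZ}, before invoking goodness and Lemma \ref{GMW_Lemma} to recognise $\Lambda$ as a Seifert fibred group over $S^2(p,q,r)$ (Propositions \ref{reduce_galois_rigid} and \ref{up_reduce}).

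Second, your final step fails. An abstract isomorphism $\wh{\Lambda}\cong\wh{\pi_1M}$ does not identify the two extension classes inside $H^2(\wh{\Delta};\wh{\Z})$: it only matches them up to the induced automorphisms of the centre $\wh{\Z}$ (i.e.\ multiplication by units of $\wh{\Z}$) and of $\wh{\Delta}$, so injectivity of $H^2(\Delta;\Z)\to H^2(\wh{\Delta};\wh{\Z})$ does not let you conclude $\Lambda\cong\pi_1M$. Indeed the conclusion you are trying to prove by this purely formal route is false in general: by Hempel \cite{Hem} there exist non-homeomorphic Seifert fibred spaces over a common hyperbolic base (those with $e=0$, arising as surface bundles with periodic monodromy) whose fundamental groups have isomorphic profinite completions, even though both are central $\Z$-extensions of the same Fuchsian group. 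Any correct proof must therefore use something special about the bases in lists (\ref{list-top}) and (\ref{list-bottom}): the paper computes from the Seifert invariants that $H_1(M,\Z)$ is finite and $e(M)\neq 0$ for these bases (Lemma \ref{small_ab} and Lemma \ref{l:euler_no_bottomlist}) and then applies Wilkes' classification (Theorem \ref{main_wilkes}, from \cite{Wil}) to distinguish $M$ from all other Seifert fibred spaces over the same base. Your proposal never engages with the euler number or the Seifert data, so it cannot rule out the Hempel-type ambiguity; this is the essential missing ingredient.
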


The following pair of contrasting theorems explain how finite presentability emerges as a 
key determinant of profinite rigidity. The first of
these theorems is the key to promoting abstract isomorphism of profinite completions to isomorphisms induced
by inclusions (i.e. Grothendieck pairs). This control is vital in the proofs of parts (1) and (2) of Theorem \ref{t:main}.

\begin{theorem} \label{t3}
Let $\Delta(p,q,r)$ be a triangle group from list  (\ref{list-top}) or  (\ref{list-bottom}), let $M$ be a 
Seifert fibred space with  base orbifold $S^2(p,q,r)$ and let $\G=\pi_1M$.
Then, for  every finitely generated, residually
 finite group $\L$ with $\wh{\L}\cong\wh{\G\times\G}$, there is
 an embedding $\L\hookrightarrow\G\times\G$ that induces the isomorphism.
 \end{theorem}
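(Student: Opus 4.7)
The plan is to exploit the central extension structure $1\to Z\to\G\to\D\to 1$, where $Z\cong\Z$ is generated by the regular fibre and $\D=\D(p,q,r)$ is the base triangle group. Since these 3-manifold groups are good in the sense of Serre, the extension remains exact on passing to profinite completions, yielding $1\to\wh Z\to\wh\G\to\wh\D\to 1$. Because $\D$ is centreless, $Z(\wh\G)=\wh Z$, and hence $Z(\wh\G\times\wh\G)=\wh Z\times\wh Z$.

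Given the isomorphism $\psi:\wh\L\to\wh{\G\times\G}=\wh\G\times\wh\G$, composition with $\L\hookrightarrow\wh\L$ (injective since $\L$ is residually finite) produces an embedding $\iota:\L\hookrightarrow\wh\G\times\wh\G$; the task is to show $\iota(\L)\subseteq\G\times\G$. Since centralisers are closed in profinite groups and $\L$ is dense in $\wh\L$, one has $Z(\L)=\iota^{-1}(\wh Z\times\wh Z)$. Setting $\bar\L:=\L/Z(\L)$ gives an injection $\bar\L\hookrightarrow\wh\D\times\wh\D$ with dense image; combined with the abelianisation data forced by $\wh\L\cong\wh\G\times\wh\G$ (which pins down the rank of $Z(\L)$), one identifies $Z(\L)\cong\Z^{2}$ and $\wh{\bar\L}\cong\wh\D\times\wh\D$.

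The main step, and the principal obstacle, is to promote $\bar\L\hookrightarrow\wh\D\times\wh\D$ to an inclusion $\bar\L\hookrightarrow\D\times\D$. Projecting to each factor yields a dense homomorphism $\bar\L\to\wh\D$ from a finitely generated residually finite group, and the aim is to show any such homomorphism factors through $\D\hookrightarrow\wh\D$. This is a Grothendieck-pair strengthening of the profinite rigidity of $\D(p,q,r)$, and one hopes to derive it from the same arithmetic input that powers Theorem \ref{SFS_rigid}: the superrigidity of representations of $\D$ into $\PSL(2,\cdot)$ and the subgroup separability properties specific to the triangle groups in lists (\ref{list-top}) and (\ref{list-bottom}). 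Overcoming this is the crux of the argument.

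Finally, with $\bar\L\hookrightarrow\D\times\D$ in hand, pulling back the central extension $1\to\Z^{2}\to\G\times\G\to\D\times\D\to 1$ along this inclusion produces $1\to\Z^{2}\to E\to\bar\L\to 1$ together with a canonical map $E\to\G\times\G$. One then compares this with $1\to Z(\L)\to\L\to\bar\L\to 1$: the profinite completions of both extensions are canonically identified with $\wh Z\times\wh Z\hookrightarrow\wh\G\times\wh\G\twoheadrightarrow\wh\D\times\wh\D$, forcing equality of the corresponding classes in $H^{2}(\bar\L,\Z^{2})$. Hence $E\cong\L$, and the composite $\L\cong E\to\G\times\G$ is the desired embedding, which visibly induces $\psi$.
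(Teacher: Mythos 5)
Your proposal does not close the theorem: the step you yourself label ``the main step, and the principal obstacle'' --- promoting the dense embedding into $\wh{\D}\times\wh{\D}$ to an inclusion into $\D\times\D$ --- is exactly the mathematical content of the theorem, and ``one hopes to derive it from \dots superrigidity \dots and subgroup separability'' is not an argument. Separability (LERF) of triangle groups gives nothing here, and the actual mechanism in the paper is quite different: one applies the Galois-rigidity machinery for direct products (Theorem \ref{products_prof_thm}, resting on the arithmetic of the invariant quaternion algebra --- type number one, prescribed ramification) to produce two genuinely distinct $\PSL(2,\C)$-representations of $\L$ with image in $\P\rho(\mathcal{O}^1)=\D$ (with an extra argument for $\D(4,4,4)$ and an index-$2$ reduction for list (\ref{list-bottom})); Proposition \ref{p:neat} and Lemma \ref{l:justZ} force these to be surjections factoring through the two coordinate projections $\L\to\L_i$, giving central extensions $1\to Z\to\L_i\to\D\to 1$ with $Z\cong\Z$ (Lemma \ref{inf_cyclic}); Lemma \ref{GMW_Lemma} then identifies each $\L_i$ as a Seifert fibred group over $S^2(p,q,r)$, and Wilkes' theorem together with $e(M)\neq 0$ (Lemma \ref{l:dense-will-do}, Corollary \ref{c:dense-enough}) yields $\L_i\cong\G$, after which Hopficity of $\wh{\G\times\G}$ finishes the proof. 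None of this is supplied or replaced by your sketch.

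There are also concrete errors earlier in your outline. The identification $Z(\L)\cong\Z^2$ is not ``pinned down by abelianisation data'': $Z(\L)=\L\cap(\wh{Z}\times\wh{Z})$, and for the Grothendieck pairs $P\hookrightarrow\G\times\G$ that actually realise these $\L$ (fibre products of maps $\G\to\widetilde{Q}$), this intersection is $\{(z^a,z^b):\widetilde{f}(z)^{a-b}=1\}$, which has rank $1$ whenever the image of the regular fibre in the universal central extension $\widetilde{Q}$ has infinite order; finiteness of $H_1(\L,\Z)$ says nothing about this. Even granting $Z(\L)\cong\Z^2$, your claim $\wh{\bar\L}\cong\wh{\D}\times\wh{\D}$ needs $\overline{Z(\L)}=\wh{Z}\times\wh{Z}$, i.e.\ density of $Z(\L)$ in $Z(\wh{\L})$, which is exactly the kind of statement the paper warns can fail (the inclusion $\overline{Z(\G)}\subset Z(\wh{\G})$ may be strict). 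Finally, the concluding comparison of extension classes in $H^2(\bar\L,\Z^2)$ is not forced by an isomorphism of profinite completions: goodness gives comparison isomorphisms only for \emph{finite} coefficient modules, so equality of the two classes would require a further limit argument that you have not given. The paper sidesteps all of these issues by never forming $\L/Z(\L)$ at all, working instead with the coordinate projections $\L_1,\L_2$ of $\L$ inside $\wh{\G}\times\wh{\G}$.
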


Recall that a subgroup of $\PSL(2,\C)$ is termed {\em elementary} if its limit set in $\mathbb{CP}^1$ consists of at most two points. Otherwise, the group is called {\em non-elementary}. A discrete subgroup is elementary if and only if it is 
virtually abelian, which accords with the terminology used for Gromov-hyperbolic groups. 
Throughout, by a Fuchsian group 
we will always mean a finitely generated, discrete, non-elementary subgroup of $\PSL(2,\R)$.

\begin{theorem}\label{t:not-fp}
For every Fuchsian group $F$
and every Seifert fibred space  $M$ with base orbifold $\H^2/F$, there are no Grothendieck pairs $\L\hookrightarrow
\pi_1M\times\pi_1M$ with $\L\neq \pi_1M\times\pi_1M$ finitely presented.
\end{theorem}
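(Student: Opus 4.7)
The plan is to exploit the centre of $\G = \pi_1 M$ to reduce the problem to the analogous statement for $F \times F$, where $F$ is the base Fuchsian group; the Fuchsian case is then handled by known structural results on finitely presented subgroups of direct products of limit groups.

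Write $1 \to Z \to \G \to F \to 1$ with $Z$ the infinite cyclic fibre subgroup, which is central in $\G$ (if $M$ is non-orientable one first passes to its orientation double cover). Suppose $\L \hookrightarrow \G\times\G$ is a Grothendieck pair with $\L$ finitely presented, and set $\L_0 := \L \cap (Z\times Z)$. Since $\L_0 \leq Z\times Z \cong \Z^2$ it is automatically finitely generated, and being central it is normal in $\L$; hence the quotient $\bar\L := \L/\L_0$ is finitely presented and embeds in $F\times F$. Using that Fuchsian groups are good in the sense of Serre, the short exact sequence $1 \to \wh{Z\times Z} \to \wh{\G\times\G} \to \wh{F\times F} \to 1$ survives profinite completion. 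Combining this exactness with the isomorphism $\wh{\L} \cong \wh{\G\times\G}$, a short diagram chase shows that $\L_0$ has finite index in $Z\times Z$ and that $\bar\L \hookrightarrow F\times F$ is itself a Grothendieck pair.

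With this reduction in hand, it suffices to show that every finitely presented Grothendieck pair in $F\times F$ equals $F\times F$. First, $F$ itself admits no proper finitely generated Grothendieck pair subgroup (a folklore consequence of the fact that infinite-index finitely generated subgroups of a hyperbolic surface group are free, whose profinite completions do not agree with that of a surface group; the case with torsion is reduced to the torsion-free case by Selberg's lemma). Consequently both projections of $\bar\L$ to $F$ are surjective, so $\bar\L$ is a subdirect product in $F\times F$. Passing to a torsion-free finite-index surface subgroup of $F$ and to the corresponding finite-index subgroup of $\bar\L$, one then invokes the Bridson--Howie--Miller--Short theorem on finitely presented subdirect products of limit groups, which guarantees that such a subdirect product virtually contains a direct product of finite-index subgroups of the two factors. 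Thus $\bar\L$ has finite index in $F\times F$, and the Grothendieck pair property promotes this to $\bar\L = F\times F$.

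Finally, from $\bar\L = F\times F$ and $[Z\times Z:\L_0]<\infty$ it follows that $\L$ has finite index in $\G\times\G$; combined with residual finiteness and $\wh{\L}\cong\wh{\G\times\G}$, this index must be $1$, whence $\L = \G\times\G$, as required. The most delicate step I expect is the descent to the Fuchsian case: controlling $\L_0$ and ensuring that the Grothendieck pair property passes cleanly through the central quotient requires the goodness of Fuchsian groups together with a careful comparison of those finite quotients of $\G\times\G$ that are detected by the fibre subgroup with those of $F\times F$.
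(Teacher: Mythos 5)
Your overall architecture (reduce to the base product $F\times F$, then invoke separability/structure of finitely presented subgroups of products of limit groups) is the right one, and your Fuchsian-level endgame is essentially the paper's Proposition~\ref{p:fills}. But there is a genuine gap at the pivotal reduction step: the assertion that ``a short diagram chase shows that $\L_0=\L\cap(Z\times Z)$ has finite index in $Z\times Z$ and that $\bar\L\hookrightarrow F\times F$ is itself a Grothendieck pair.'' The only inputs you feed into that chase are the exactness of $1\to\wh{Z\times Z}\to\wh{\G\times\G}\to\wh{F\times F}\to 1$ and the isomorphism $\wh{\L}\cong\wh{\G\times\G}$, and these hypotheses are also satisfied by merely finitely generated Grothendieck pairs -- for which the conclusion can fail. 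Indeed, for a Platonov--Tavgen fibre product $P<\G\times\G$ of an epimorphism $p:\G\to Q$ with $\wh Q=1$ and $H_2(Q,\Z)=0$, one has $P\cap(Z\times Z)=\{(z,z'):p(z)=p(z')\}$, which has infinite index in $Z\times Z$ whenever $p$ does not kill the fibre class (this is exactly the situation that arises when $Q$ is a universal central extension with infinite centre, as in Section~\ref{s:homol-argument}). More structurally, density of $\L$ in $\wh{\G\times\G}$ gives no lower bound on $\L\cap Z(\G\times\G)$: note that $\L_0=Z(\L)$, and the closure of $Z(\L)$ can be a tiny subgroup of $Z(\wh\L)$ (the paper's $\SL(3,\Z)$ example in Section~\ref{centre} is a warning about exactly this). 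So finite presentability must enter at this point, and your proposal never says how. Worse, for $\bar\L\hookrightarrow F\times F$ to be a Grothendieck pair you would need $\L_0=Z\times Z$ on the nose, not just of finite index, since $\wh{\bar\L}\cong\wh{\L}/\overline{\L_0}$ maps onto $\wh{F\times F}$ with kernel $\overline{Z\times Z}/\overline{\L_0}$.

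The paper avoids this trap by running the two halves in the opposite order. First it projects $\L$ to $F\times F$: the image is finitely presented (the kernel $\L_0$ is central and finitely generated, which is all that is needed) and is dense in the profinite topology on $F\times F$; since finitely presented subgroups of products of limit groups are closed in the profinite topology (\cite{BW}, \cite{BHMS2}), the image is all of $F\times F$ -- note that only density is used here, so one never needs the image to form a Grothendieck pair. Only then is the centre handled, and not by a diagram chase but by a cohomological argument: if $\L$ missed a factor of $Z\times Z$, then $\L$ would be good (Theorem~\ref{good-groups} and \cite[Proposition 3.4]{GJZ}) of cohomological dimension at most $5$, contradicting $\wh{\L}\cong\wh{\G\times\G}$, whose continuous cohomology with finite coefficients witnesses dimension $6$ by goodness of $\G\times\G$; and if $\L\cap(Z\times Z)$ were proper of finite index then $\L$ would be a proper finite-index subgroup, contradicting density. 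If you reorganize your argument along these lines -- surjectivity onto the base first, then the goodness/cohomological-dimension step to capture $Z\times Z$ -- the rest of your outline goes through.
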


To complete the proof of Theorem \ref{t:main} we prove the following result.

\begin{theorem}\label{t:not-GR}
If $\Delta(p,q,r)$ is a triangle group from  list (\ref{list-top}), 
then there are infinitely many
Seifert fibred spaces with  base orbifold $S^2(p,q,r)$ whose fundamental group $\G$ has the 
property that there are infinitely many non-isomorphic, finitely generated groups $\La$
and inclusions $\La\hookrightarrow \G\times \G$ inducing isomorphisms $\wh{\La}\cong\wh{\G\times \G}$. 
 \end{theorem}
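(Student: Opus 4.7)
The plan is to apply the fiber-product construction of Bridson--Grunewald---a refinement of the Platonov--Tavgen trick, powered by the 1-2-3 theorem of Bridson--Howie--Miller--Short---to produce the required inclusions. Specifically, if $\phi: \G \twoheadrightarrow Q$ is a surjection from a finitely presented group $\G$ onto a finitely presented super-perfect group $Q$ (i.e.\ $H_1(Q,\Z)=H_2(Q,\Z)=0$) having no nontrivial finite quotients, then
\begin{equation*}
\La_\phi := \{(g_1, g_2) \in \G \times \G : \phi(g_1) = \phi(g_2)\}
\end{equation*}
is a finitely generated subgroup of $\G \times \G$ and the inclusion $\La_\phi \hookrightarrow \G \times \G$ is a Grothendieck pair.

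For each triangle group $\Delta(p,q,r)$ from list (\ref{list-top}), I would first note that there are infinitely many pairwise non-isomorphic Seifert fibered spaces $M$ with base orbifold $S^2(p,q,r)$: these are parametrized by Seifert invariants, and the fundamental groups $\G = \pi_1 M$ are distinguished by the Euler class of the central extension $1 \to Z \to \G \to \Delta(p,q,r) \to 1$ together with the torsion data of the fibration.

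The substantive step is then to produce, for each such $\G$, an infinite family of surjections $\phi_i: \G \twoheadrightarrow Q_i$ satisfying the Bridson--Grunewald hypotheses, and to verify that the resulting fiber products $\La_{\phi_i}$ are pairwise non-isomorphic. All the triangle groups appearing in list (\ref{list-top}) are isosceles (two entries of $(p,q,r)$ coincide), so $\Delta(p,q,r)$ carries a nontrivial outer automorphism of order $2$; I would exploit this extra symmetry, combined with an adaptation of a Rips-type construction applied to a torsion-free finite-index (surface) subgroup of $\Delta(p,q,r)$, to manufacture an infinite family of finitely presented super-perfect quotients with no nontrivial finite quotients. Non-isomorphism of the resulting $\La_{\phi_i}$ can be detected using a homological or commensurability invariant read off from the distinct kernels.

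The main obstacle is precisely this construction: finitely presented super-perfect groups with no nontrivial finite quotients are scarce, and exhibiting an infinite supply of them as explicit quotients of the specific triangle groups in list (\ref{list-top}) (together with some control over how they differ, so that the fiber products can be distinguished) is the technical heart of the argument. The reason list (\ref{list-bottom}) is not available here would presumably be that its scalene triangle groups lack the extra symmetry needed to drive the construction.
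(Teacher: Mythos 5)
You have correctly identified the outer framework: fibre products over epimorphisms $\phi\colon\G\to Q$ with $Q$ finitely presented, $\wh{Q}=1$ and $H_2(Q,\Z)=0$ give finitely generated Grothendieck pairs $\La_\phi\hookrightarrow\G\times\G$, and this is exactly the skeleton the paper uses (Proposition \ref{p:PT}). But the step you yourself flag as the technical heart is where the proposal breaks down, and the mechanism you suggest would not deliver it. A Rips-type construction realises a prescribed $Q$ as a quotient of some auxiliary small-cancellation group built for the purpose; it does not produce quotients of the \emph{given} group $\G=\pi_1M$, nor of $\Delta(p,q,r)$, nor of a chosen surface subgroup. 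Moreover, even if you manufactured suitable quotients of a torsion-free finite-index subgroup, the resulting Grothendieck pairs would sit inside the square of that subgroup, and promoting them to pairs in $\G\times\G$ is not automatic (compare Lemma \ref{l:jack-up}, which requires hypotheses). The order-2 symmetry of the isosceles triangles is a red herring: the involution actually used exists for every hyperbolic triangle, and the true reason list (\ref{list-bottom}) is excluded is homological, not a lack of symmetry.

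What the paper does instead is the following. Quotients $Q_i$ of $\Delta$ with $\wh{Q_i}=1$, pairwise non-isomorphic and centreless, are obtained from the Osin/Arzhantseva--Minasyan--Osin common-quotient machinery (Theorem \ref{t:osin}, Theorem \ref{t:lots-of-Q2}), distinguished by the maximal order of their torsion elements; there is no control of $H_2(Q_i,\Z)$ at this stage. One then passes to the universal central extensions $\tilde{Q}_i$, which are super-perfect with no finite quotients, but the Bass--Lubotzky homological lemma (Lemma \ref{l:schur}) lifts the map only to an epimorphism $[\La,\La]\to\tilde{Q}_i$ from the \emph{commutator subgroup} of a group $\La$ with $H_2(\La,\Z)=0$ (Theorem \ref{c:super-perfect}, Theorem \ref{t2:lots-of-P}). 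So the crux is to realise $\pi_1M$ as $[\La,\La]$ for such a $\La$: Proposition \ref{p:coxeter} takes $\La=\pi_1M\rtimes C_2$, the orbifold quotient by the fibre-reversing involution covering a reflection of the base triangle, and this works precisely when $|H_1(M,\Z)|$ is odd. That parity condition explains the shape of the theorem: it holds for every Seifert fibred space over $S^2(3,3,4)$, $S^2(3,3,6)$, $S^2(2,5,5)$ (Theorem \ref{t:333}), for infinitely many over $S^2(3,3,5)$ (Proposition \ref{p:all-odd}), while $S^2(4,4,4)$, where $H_1$ always has even order, is handled separately by realising those groups as index-3 subgroups of $(3,3,4)$-Seifert groups and restricting Grothendieck pairs via Lemma \ref{l:usual}. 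Finally, pairwise non-isomorphism of the fibre products is not read off from a homological invariant: it is proved with a centraliser argument (Lemmas \ref{l:easy} and \ref{l:P-gives-Q}), which recovers the central quotients $\tilde{Q}_i/Z_i\cong Q_i$ from the abstract isomorphism type of the fibre product and uses that the $Q_i$ are centreless with distinct maximal torsion. None of these ingredients appears in your proposal, so as it stands there is a genuine gap at its central step.
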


For the moment, $S^2(3,3,4),\ S^2(3,3,6)$ and $S^2(2,5,5)$ 
are the only orbifolds for which we can prove that the conclusion of Theorem \ref{t:main}
holds for the fundamental group of {\em every} \SFS with the given base. 
The obstruction to extending this statement to the other bases  from lists 
(\ref{list-top}) and (\ref{list-bottom}) is discussed in Section \ref{s:last}.

Let us turn to some of the key ideas behind these results. In \cite{BMRS2} we proved that the 
triangle groups  $\D$ in lists ({\ref{list-top}) and ({\ref{list-bottom}) are profinitely rigid. We did so  by employing the template from \cite{BMRS1}, starting 
 with a proof that the groups  in ({\ref{list-top}) are Galois rigid (this notion is recalled in Section \ref{galois}). 
Theorem \ref{SFS_rigid} follows a similar path, and we begin by establishing Galois rigidity for the  groups $\pi_1M$ in Theorem \ref{SFS_rigid}.
It is not the case that  an arbitrary
central extension of $\Delta$ will be profinitely rigid \cite{piwek}, but we shall prove that it
does follow when the centre is cyclic.  To do this, we appeal to the work of
Wilkes  \cite{Wil} (recalled in Theorem \ref{main_wilkes} below), who proved that the 
 fundamental groups of most Seifert fibred spaces can be distinguished from 
 the fundamental groups of other compact $3$-manifolds by their profinite completions.  

The key step of reducing abstract profinite isomorphism to the study of Grothendieck pairs (Theorem \ref{t3}) is explained in Section \ref{s:t3}. Roughly speaking, we extend the ideas used in \cite{BMRS1} to cover direct products 
of Galois rigid groups, promoting profinite isomorphisms $\wh{P}\cong \wh{\D\times\D}$ to Grothendieck
pairs ${P}\hookrightarrow{\D\times\D}$ under suitable arithmetic conditions. 
We then consider the effect of  taking central extensions.

For Theorem \ref{t:not-GR}}, not all of the groups from Theorem \ref{SFS_rigid} can be used:
we need constraints on the Seifert invariants that ensure  
$\G=\pi_1M$ has finite index in $[G,G]$ where $G$ is a group that maps onto a non-elementary hyperbolic group and has $H_1(G,\Z)$ finite and $H_2(G,\Z)=0$.

These conditions allow us to apply a homological argument, adapted from an idea of
Bass and Lubotzky \cite{BL}, that constructs an epimorphism $\pi_1M\to Q$ where $Q$ is 
a finitely presented infinite group with $\wh{Q}=1$ and $H_2(Q,\Z)=0$;
 this is explained in Section \ref{s:homol-argument}. 
With this epimorphism in hand, an argument originating in the work of Platonov and Tavgen \cite{PT} and developed
in \cite{BG} and \cite{BL},  allows us to construct a finitely generated Grothendieck
pair $\Lambda\hookrightarrow 
\G\times\G$ by taking a fibre product. 
In fact, we are able to construct infinitely many non-isomorphic $Q$ and from these we obtain infinitely many possibilities 
for $P$, with $\G$ fixed. In the language of \cite{B-jems}, this shows that $\G\times \G$ has infinite strong profinite genus among finitely generated groups. The fact that there are no finitely presented Grothendieck pairs $\Lambda\hookrightarrow\G\times \G$ 
ultimately boils down to the fact every finitely  presented subgroup of a direct product of Fuchsian
groups is closed in the profinite topology \cite{BW}; see Section \ref{products}. We shall prove
in Theorem \ref{p:fills2} that the statement  of Theorem \ref{t:not-fp} remains valid for products of any finite number of
copies of $\pi_1M$. \\[\baselineskip]
\noindent{\bf Acknowledgements:}~{\em Thanks are due to the referees for their many helpful comments and suggestions. We are also
grateful to multiple institutions and hosts for their hospitality and support as this work developed: the Mathematisches Forschungsinstitut Oberwolfach;
the Max-Planck-Institut f\"ur Mathematik, Bonn; the University of Auckland; the University of Oxford; the organizers of the conference
``Groups of Dynamical Origins, Automata and Spectra" in Les Diablerets; 
the Instituto de Ciencias Matem\'aticas, Madrid, particularly the AgolLab; and finally, the organizers of the July 2023 conference ``Group actions and low-dimensional topology" in El Barco de \'Avila, and the townspeople of El Barco de \'Avila.
The third author would also like to thank Neil Hoffman for asking  whether Theorem \ref{SFS_rigid} might hold.}

\section{Preliminaries}
\label{prelims}

Throughout this section, $\Gamma$ will be a finitely generated, residually finite group and $\widehat{\Gamma}$ its profinite completion. We will denote by $Z(\Gamma)$ and $Z(\widehat{\Gamma})$ the centres of $\G$ and $\wh{\G}$ respectively.
Throughout the paper, if $G$ is a profinite group   and $K\subset G$ a subset, then $\overline{K}$ will denote the closure of $K$ in $G$.

\subsection{Centres}
\label{centre}
The relationship between the centre of $\Gamma$ and the centre of $\wh{\Gamma}$ can be somewhat complicated, but the following observation is straightforward.

\begin{lemma}
\label{centres}
$Z(\Gamma) = Z(\widehat{\Gamma}) \cap \Gamma$.
\end{lemma}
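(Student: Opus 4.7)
The statement is essentially a continuity argument, so the plan is to verify the two inclusions directly, using only that $\Gamma$ embeds as a dense subgroup of $\widehat{\Gamma}$ (via residual finiteness) and that multiplication in the profinite group $\widehat{\Gamma}$ is continuous.

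For the inclusion $Z(\widehat{\Gamma})\cap\Gamma\subseteq Z(\Gamma)$, I would simply note that if $z\in\Gamma$ commutes with every element of $\widehat{\Gamma}$, then in particular it commutes with every element of the subgroup $\Gamma\subseteq\widehat{\Gamma}$, so $z\in Z(\Gamma)$. This direction is formal and uses nothing beyond the definitions.

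For the reverse inclusion $Z(\Gamma)\subseteq Z(\widehat{\Gamma})\cap\Gamma$, let $z\in Z(\Gamma)$. The plan is to fix $z$ and consider the continuous map $f_z\colon\widehat{\Gamma}\to\widehat{\Gamma}$ defined by $f_z(g)=zgz^{-1}g^{-1}$. Since $z$ is central in $\Gamma$, the map $f_z$ vanishes on $\Gamma$. Because $\Gamma$ is dense in $\widehat{\Gamma}$ and $f_z$ is continuous with Hausdorff target, $f_z$ vanishes on all of $\widehat{\Gamma}$, so $z\in Z(\widehat{\Gamma})$; and $z\in\Gamma$ by hypothesis.

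There is no real obstacle here: the argument is two lines and only invokes residual finiteness (to make sense of the intersection $Z(\widehat{\Gamma})\cap\Gamma$ inside $\widehat{\Gamma}$) together with the continuity of the commutator map. The content of the lemma is precisely the observation that centrality passes between $\Gamma$ and $\widehat{\Gamma}$ in the most naive way, in contrast to the more subtle behaviour of $\overline{Z(\Gamma)}$ versus $Z(\widehat{\Gamma})$ alluded to in the opening sentence of Section \ref{centre}.
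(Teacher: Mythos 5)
Your proof is correct and is essentially the paper's argument: the easy inclusion is formal, and the paper proves $Z(\Gamma)\subseteq Z(\widehat{\Gamma})$ by the same density-plus-continuity observation, merely phrased as "an element of $\widehat{\Gamma}$ fixed by conjugation by the dense subgroup $\Gamma$ is fixed by conjugation by all of $\widehat{\Gamma}$" rather than via your explicit commutator map $f_z$.
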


\begin{proof}
As $\Gamma < \widehat{\Gamma}$ is a subgroup, that $Z(\widehat{\Gamma}) \cap \Gamma \subset Z(\Gamma)$ is clear.
On the other hand, since $\Gamma < \widehat{\Gamma}$ is dense, any element of $\widehat{\Gamma}$ fixed by the conjugation action of $\Gamma$ will be fixed by the conjugation action of all of $\widehat{\Gamma}$. Thus $Z(\Gamma) \subset Z(\widehat{\Gamma})$.
\end{proof}

A similar argument shows that $\overline{Z(\Gamma)} \subset Z(\widehat{\Gamma})$, but in general this inclusion may be strict. The centre of the profinite completion can become unexpectedly larger as the following example shows.

\begin{example}
If $\G = \SL(3,\mathbb{Z})$, then $Z(\Gamma) = \{1 \}$ is trivial. On the other hand, there are infinitely many $p$ for which $\mathbb{Z}_p$ (the $p$-adic integers) contains a primitive cube root of unity, in which case the centre of $\SL(3,\mathbb{Z}_p)$ is a cyclic group $C_3$ of order $3$. Therefore $Z(\widehat{\Gamma}) = Z(\prod_p \SL(3,\mathbb{Z}_p))$ contains a countable direct product of copies of $C_3$.
\end{example}

We will  need   the following result on the centres of profinite completions of Fuchsian groups. 
This seems to be well-known, but we could not find a reference and therefore include a proof.

\begin{theorem} \label{prof-centre-fuchs}
Let $\Gamma$ be a finitely generated Fuchsian group. Then $Z(\widehat{\Gamma}) = \{1\}$.
\end{theorem}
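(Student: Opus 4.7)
The plan is to exploit the structure of centralisers in the profinite completion. First, I would note that $Z(\Gamma)=\{1\}$: any non-elementary subgroup of $\PSL(2,\mathbb{R})$ contains two hyperbolic elements with distinct axes, and a central element would have to fix both pairs of endpoints on $\partial\mathbb{H}^2$, forcing it to be trivial. This alone is far from the conclusion, as the $\SL(3,\mathbb{Z})$ example in the paper makes clear, but it is the starting point.

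The heart of the argument is to establish two facts for $\Gamma$: (i) for every hyperbolic $g\in\Gamma$, the centraliser $C_{\widehat{\Gamma}}(g)$ equals the closure $\overline{\langle g_0\rangle}\cong\widehat{\mathbb{Z}}$, where $\langle g_0\rangle$ is the unique maximal cyclic subgroup of $\Gamma$ containing $g$; and (ii) if $a,b\in\Gamma$ are hyperbolic with distinct axes, then $\overline{\langle a_0\rangle}\cap\overline{\langle b_0\rangle}=\{1\}$ in $\widehat{\Gamma}$. Granted these, any $z\in Z(\widehat{\Gamma})$ lies in $C_{\widehat{\Gamma}}(a)\cap C_{\widehat{\Gamma}}(b)=\overline{\langle a_0\rangle}\cap\overline{\langle b_0\rangle}=\{1\}$ for such a pair, so $Z(\widehat{\Gamma})=\{1\}$. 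Since $\Gamma$ is non-elementary and finitely generated, such pairs $a,b$ exist in abundance (for instance, by a ping-pong argument on $\partial\mathbb{H}^2$).

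For (i) I would invoke the cyclic subgroup separability of Fuchsian groups (Scott, Long--Niblo) together with the description of $C_\Gamma(g)$ as the maximal cyclic super-group of $\langle g\rangle$. For (ii), Selberg's Lemma gives a torsion-free finite-index normal subgroup $\Gamma_0\triangleleft\Gamma$ which is either a closed orientable surface group of genus at least $2$ or a non-abelian free group; inside $\widehat{\Gamma_0}$ the assertion that $\overline{\langle a_0\rangle}\cap\overline{\langle b_0\rangle}=\{1\}$ whenever $\langle a_0\rangle,\langle b_0\rangle$ are distinct maximal cyclic subgroups follows from the Ribes--Zalesskii theorem on the closedness of products of finitely generated subgroups in free groups, together with its surface-group analogue due to Wilton--Zalesskii. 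One then transfers the conclusion from $\widehat{\Gamma_0}$ to $\widehat{\Gamma}$ using the open embedding $\widehat{\Gamma_0}\hookrightarrow\widehat{\Gamma}$ and replacing $a,b$ by suitable powers lying in $\Gamma_0$. The main obstacle is (i): identifying $C_{\widehat{\Gamma}}(g)$ with $\overline{\langle g_0\rangle}$ is a centraliser-separability statement strictly stronger than mere cyclic subgroup separability, and its verification for the full range of finitely generated Fuchsian groups (cocompact or not, with or without torsion, orientable or not) is the step that requires the most care.
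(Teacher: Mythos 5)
Your overall skeleton (a central element lies in $C_{\widehat{\Gamma}}(a)\cap C_{\widehat{\Gamma}}(b)$ for hyperbolic $a,b$ with distinct axes, so it suffices to control these centralisers) is sound, but step (i) is a genuine gap, and you have in effect conceded it. Cyclic subgroup separability is a statement about the discrete group: it tells you that $\overline{\langle g_0\rangle}\cap\Gamma=\langle g_0\rangle$ and (with a little more, namely that $\Gamma$ induces the full profinite topology on $\langle g_0\rangle$) that $\overline{\langle g_0\rangle}\cong\widehat{\Z}$. It says nothing about which elements of $\widehat{\Gamma}\setminus\Gamma$ commute with $g$; the equality $C_{\widehat{\Gamma}}(g)=\overline{\langle g_0\rangle}$ is a statement about the profinite group itself, and the known proofs of results of this type (for free, surface and Fuchsian groups) go through the theory of profinite groups acting on profinite trees --- i.e.\ exactly the Melnikov--Zalesskii/Ribes--Zalesskii machinery --- or through goodness plus cohomological arguments; it cannot be extracted from LERF. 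There are secondary loose ends of the same nature in (ii): the intersection statement $\overline{\langle a_0\rangle}\cap\overline{\langle b_0\rangle}=\{1\}$ for surface groups needs a precise reference (the Ribes--Zalesskii product theorem is stated for free groups, and what you need is an assertion of the form $\overline{H}\cap\overline{K}=\overline{H\cap K}$), and the transfer from a torsion-free finite-index subgroup $\Gamma_0$ back to $\Gamma$ only shows directly that $Z(\widehat{\Gamma})\cap\widehat{\Gamma_0}$ is trivial, hence that $Z(\widehat{\Gamma})$ is finite, which requires a further argument (e.g.\ the correspondence between finite subgroups of $\widehat{\Gamma}$ and of $\Gamma$) to finish. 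So as written the proposal does not constitute a proof.

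It is worth contrasting this with the paper's argument, which avoids centralisers entirely. With torsion present, one uses a surjection $\Gamma\twoheadrightarrow\PSL(2,q)$ with torsion-free kernel: an extension of a centre-free group by a centre-free group is centre-free, so this reduces everything to free and surface groups. For free groups, centre-freeness of $\widehat{\Gamma}$ is quoted directly from Melnikov--Zalesskii; for a surface group one writes $\Gamma=A\ast_C B$ with $A,B$ free and $C\cong\Z$, uses LERF (Scott) to see that $\widehat{\Gamma}\cong\widehat{A}\sqcup_{\widehat{C}}\widehat{B}$ as a profinite amalgam, and applies the same theorem again. If you want to salvage your approach, the honest route to (i) is the same profinite Bass--Serre theory; but at that point the paper's argument is both shorter and needs far less.
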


\begin{proof} We first assume that $\Gamma$ is torsion-free, so $\G$ is either a free group or surface group. In the case when $\Gamma$ is free, that $\widehat{\Gamma}$ is centre-free follows directly from \cite[Theorem 3.16]{ZM}. When
$\Gamma$ is a surface group, we can decompose $\Gamma = A *_C B$ where $A$ and $B$ are free groups and $C$ is infinite cyclic. By \cite{Sc2} $\Gamma$ is LERF, and so the profinite topology on $\G$ induces the full profinite topology on $A$, $B$ and $C$ and, furthermore,
$\wh{\G}$ decomposes as the profinite free product with amalgamation $\wh{A} \sqcup_{\wh{C}} \wh{B}$ (see \cite[Chapter 9.2]{RZ}).
Using the previously established centre-freeness of the profinite completion of free groups, we can apply \cite[Theorem 3.16]{ZM} once again to deduce that $\wh{\G}$ is also centre-free.  

We now assume that $\G$ contains elements of finite order. 
It is well-known that $\Gamma$ admits surjections onto finite simple groups of the form $\PSL(2,q)$ (for example by taking a particular realization of $\Gamma$ with algebraic traces and reducing modulo primes of the trace-field) whose kernel is torsion-free (and hence a free group or a surface group). Since $\PSL(2,q)$ is centre-free, $\wh{\G}$ is an extension of a centre-free group by a centre-free group so is itself centre-free.
\end{proof}

\subsection{Goodness}\label{good}

A group $\Gamma$ is {\em good} in the sense of Serre \cite{Ser} if for any finite $\Gamma$-module $M$, the natural map $\Gamma\rightarrow \wh{\G}$ induces isomorphisms:

\[
\mathrm{H}^q(\widehat{\Gamma},M) \xrightarrow{\sim} \mathrm{H}^q(\Gamma,M)
\]
from the continuous cohomology of $\widehat{\Gamma}$ to the group cohomology of $\Gamma$, both with coefficients in $M$.

We will need the following examples of good groups (see \cite{GJZ} for the case of Fuchsian groups and \cite[Theorem 7.3]{Re} for the case of fundamental groups of compact $3$-manifolds).

\begin{theorem} \label{good-groups}\label{good1}
If $\Gamma$ is a finitely generated Fuchsian group or the fundamental group of a compact $3$-manifold, then $\Gamma$ is good.
\end{theorem}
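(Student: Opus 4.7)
The plan is to address the two cases (Fuchsian and compact $3$-manifold) separately, in both cases reducing to pieces for which goodness is either classical or follows from Serre's permanence properties: goodness passes to finite-index overgroups (when the subgroup is finitely generated), and it is preserved by extensions $1 \to N \to \Gamma \to Q \to 1$ with $N$ finitely generated good, $Q$ good, provided the cohomology of $N$ with any finite coefficients is finite in each degree. It is also preserved by amalgamated free products $A*_C B$ and HNN extensions over a good subgroup $C$, provided the profinite topology on the big group induces the full profinite topology on each factor (equivalently, $C$ is separable in the appropriate sense).

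For the Fuchsian case, I would first recall that free groups are good, which is essentially immediate since they have cohomological dimension one and their profinite completions have cohomological $p$-dimension one for every prime $p$. Next, for a closed orientable surface group I would use the decomposition $\Gamma = A *_C B$ with $A, B$ free and $C$ infinite cyclic, together with subgroup separability (LERF, as cited in the proof of Theorem \ref{prof-centre-fuchs}) to ensure that $\widehat{\Gamma} = \widehat{A} \sqcup_{\widehat{C}} \widehat{B}$; comparing the two five-term Mayer--Vietoris sequences for discrete and continuous cohomology with finite coefficients and applying the five lemma then yields goodness. Finally, for an arbitrary finitely generated Fuchsian group $\Gamma$, Selberg's lemma gives a torsion-free finite-index subgroup $\Gamma_0$, which is either free or a surface group and hence good; Serre's permanence result then propagates goodness to $\Gamma$.

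For the compact $3$-manifold case, I would proceed by geometric decomposition. After passing, if necessary, to an orientation double cover, cut $M$ along the JSJ tori into Seifert fibered and hyperbolic pieces. Each Seifert fibered piece has fundamental group that is a (virtually) central extension of a Fuchsian group by $\Z$ (or a finite cyclic group), so goodness follows from the Fuchsian case together with the extension principle, noting that $\Z$ and finite groups are good and have suitable cohomological finiteness. For the hyperbolic pieces, I would invoke Agol's virtual fibering theorem to find a finite-index subgroup that fibers over the circle, giving a short exact sequence $1 \to \pi_1 S \to \Gamma_0 \to \Z \to 1$ with $\pi_1 S$ a surface group and hence good by the first part; the extension principle delivers goodness of $\Gamma_0$, and then Serre's argument extends this to the full hyperbolic piece. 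To assemble the pieces, I would use that each JSJ torus has $\Z^2$ fundamental group (good and of type $F_\infty$ with finite cohomology in each degree), that peripheral subgroups are separable in each piece, and that the profinite topology of $\pi_1 M$ induces the full profinite topology on each vertex and edge group (a consequence of LERF/separability results of Wise--Agol); one then applies the amalgamation/HNN permanence principle along the JSJ graph of groups. Closed flat and Nil/Sol manifolds are handled directly, as they are virtually polycyclic and thus good by a classical result.

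The main obstacle is the hyperbolic $3$-manifold case: without Agol's virtual fibering theorem one has no direct handle on the profinite cohomology of a hyperbolic $3$-manifold group, and the reduction to a surface-by-cyclic extension is what makes the extension principle applicable. A secondary technical difficulty is verifying that the induced profinite topology on the JSJ pieces and tori is the full profinite one, which is needed to identify $\widehat{\pi_1 M}$ with the profinite graph of profinite groups; this hinges on the subgroup separability results of Wise and Agol that postdate the original goodness literature and are the reason a complete proof of the $3$-manifold case only became available after geometrization and the virtual Haken theorem.
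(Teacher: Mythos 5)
The paper does not actually prove this statement: it is quoted with references, the Fuchsian case to Grunewald--Jaikin-Zapirain--Zalesskii \cite{GJZ} and the compact $3$-manifold case to \cite[Theorem 7.3]{Re}. Your sketch follows essentially the same route as the proofs in those sources: goodness of free groups, surface groups via an efficient amalgam and Mayer--Vietoris, finite-index and extension permanence for general Fuchsian groups, and, for $3$-manifolds, geometrization, the Agol--Wise virtual fibering theorem for hyperbolic pieces, and assembly along an efficient JSJ graph of groups. If you were to write it out in full, two points deserve a word: the reduction to JSJ pieces should be preceded by the prime/sphere decomposition (goodness is preserved by free products) and by a remark on compact manifolds with nonempty boundary, and the efficiency of the JSJ decomposition --- that the profinite topology of $\pi_1M$ induces the full profinite topology on the vertex and edge groups and that these are closed --- is a theorem of Wilton--Zalesskii (using separability results of Agol, Wise and Hamilton) rather than an immediate consequence of LERF of the individual pieces; with those caveats your outline is the standard and correct argument.
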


A useful consequence of goodness that we will make use of is the following (see \cite[Corollary 7.6]{Re} for instance). 

\begin{theorem} \label{good-tors-free}\label{good2}
If $\Gamma$ is a finitely generated residually finite group of finite cohomological dimension  that is good, then $\widehat{\Gamma}$ is torsion-free.
\end{theorem}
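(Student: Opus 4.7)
The plan is a proof by contradiction that combines goodness with a classical theorem of Serre on $p$-torsion in profinite groups. First I would suppose that $\widehat{\Gamma}$ contains a non-trivial torsion element; after passing to an appropriate power I may assume there exists $g\in\widehat{\Gamma}$ of prime order $p$, and I would set $C:=\overline{\langle g\rangle}\cong \mathbb{Z}/p\mathbb{Z}$, a closed subgroup of $\widehat{\Gamma}$. The goal is to extract a contradiction from the existence of $C$.

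The first input is the standard computation $H^n(C,\mathbb{F}_p)\cong \mathbb{F}_p$ for every $n\geq 0$, giving $\operatorname{cd}_p(C)=\infty$. The second input is Serre's monotonicity theorem for cohomological dimension of profinite groups: for any profinite group $G$ and any closed subgroup $K\leq G$, one has $\operatorname{cd}_p(K)\leq \operatorname{cd}_p(G)$. Applied to $C\leq\widehat{\Gamma}$, this reduces the matter to showing that $\operatorname{cd}_p(\widehat{\Gamma})<\infty$.

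The third and central step is to establish this bound via goodness. By the definition recalled in Section~\ref{good}, the natural comparison map $H^n_{\mathrm{cts}}(\widehat{\Gamma},M)\to H^n(\Gamma,M)$ is an isomorphism for every finite $\widehat{\Gamma}$-module $M$, and in particular for every finite $\mathbb{F}_p[\widehat{\Gamma}]$-module. Consequently $\operatorname{cd}_p(\widehat{\Gamma})\leq \operatorname{cd}_p(\Gamma)$, so it is enough to check that $\operatorname{cd}_p(\Gamma)<\infty$. For the torsion-free Fuchsian groups and torsion-free fundamental groups of aspherical compact $3$-manifolds to which the theorem will be applied in this paper, this is automatic, because each such $\Gamma$ admits a finite-dimensional (indeed compact) $K(\Gamma,1)$, forcing $\operatorname{cd}(\Gamma)\leq 3$.

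The main subtlety lies precisely in that last step: the implication ``finitely generated, torsion-free, good $\Rightarrow$ finite cohomological dimension'' is not valid in full generality, so the scheme depends on having $\operatorname{cd}(\Gamma)<\infty$ available as an auxiliary geometric input. For the concrete $\Gamma$ appearing throughout this paper this input is immediate, the argument closes, and one concludes that no closed $\mathbb{Z}/p$ can sit inside $\widehat{\Gamma}$; hence $\widehat{\Gamma}$ is torsion-free.
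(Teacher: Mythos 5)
Your argument is exactly the one behind the result the paper cites for this statement ([Re, Theorem 7.6]); the paper gives no proof of its own, and the cited proof is the same combination you describe, namely goodness transferring the vanishing of high-degree cohomology with finite coefficients from $\Gamma$ to $\widehat{\Gamma}$, together with Serre's theorem that a closed subgroup of prime order $p$ would force infinite $p$-cohomological dimension. Your caveat about needing finite cohomological dimension of $\Gamma$ is well taken: that hypothesis is part of the cited statement even though it is suppressed in the wording here, and it holds automatically for the torsion-free Fuchsian groups and compact $3$-manifold groups to which the theorem is applied in this paper.
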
 

\subsection{Central Extensions} 
\label{central}

A further consequence of goodness is the following (see \cite[Corollary 6.2]{GJZ}): if $G$ is a residually finite, good group and $H\to G$
is an epimorphism with finitely generated, residually finite kernel, then $H$ is
residually finite.  We make use of this in the following way.

\begin{lemma} 
\label{full_profinite}
Suppose that $H$ is residually finite with finitely generated centre $Z$, and that
$H/Z$ is good. Then $H$ induces the full profinite
topology on $Z$. 
\end{lemma}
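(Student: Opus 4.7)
The plan is to reduce the statement to showing that for each $n \geq 1$, the subgroup $nZ$ is closed in the profinite topology that $H$ induces on $Z$. Since $Z$ is finitely generated abelian, every finite-index subgroup of $Z$ contains some $nZ$; and since $Z/nZ$ is finite, it suffices to separate each non-identity element of $Z/nZ$ from $1$ by a finite quotient of $H$ whose kernel contains $nZ$, and then intersect the finitely many resulting finite-index subgroups.

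Fix $n$ and consider the central extension $1 \to Z/nZ \to H/nZ \to G \to 1$, where $G = H/Z$ acts trivially on $Z/nZ$. It is classified by a class $\alpha \in H^2(G, Z/nZ)$. Since $G$ is good and $Z/nZ$ is finite, the comparison map $H^2_{\mathrm{cts}}(\wh{G}, Z/nZ) \xrightarrow{\sim} H^2(G, Z/nZ)$ is an isomorphism, so $\alpha$ is the image of a continuous class $\tilde\alpha$ that classifies a central extension of profinite groups
\[
1 \to Z/nZ \to E \to \wh{G} \to 1.
\]
By naturality of the $H^2$-classification (equivalently, by pulling back this extension along the canonical map $G \to \wh{G}$), one obtains a group homomorphism $\phi : H/nZ \to E$ fitting into a commutative ladder of extensions; in particular $\phi$ restricts to the identity on the common kernel $Z/nZ$.

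To conclude, I would exploit that $E$ is profinite and hence residually finite as an abstract group. For each non-identity $\bar z \in Z/nZ \subseteq E$, there is a continuous surjection $E \twoheadrightarrow F_{\bar z}$ onto a finite group that does not kill $\bar z$. Composing with $\phi$ and pulling back to $H$ produces a finite quotient $H \twoheadrightarrow F_{\bar z}$ whose kernel contains $nZ$ but excludes a chosen lift in $Z$ of $\bar z$. Intersecting these finitely many kernels as $\bar z$ ranges over $Z/nZ \smallsetminus \{1\}$ yields a finite-index subgroup $H_0 \leq H$ with $nZ \subseteq H_0$ and $H_0 \cap Z = nZ$, which is exactly what is required.

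The step requiring most care is producing $\phi$ with $\phi|_{Z/nZ} = \mathrm{id}$: this is naturality of the bijection between central extensions and $H^2$ applied to the map $G \to \wh{G}$, not a deep point, but one must keep track of the extension diagrams carefully so that the restriction to the kernel is literally the identity rather than merely an injection. Once $\phi$ is in hand, everything else is a routine residual finiteness argument in the profinite group $E$, and no use of residual finiteness of $G = H/Z$ is needed.
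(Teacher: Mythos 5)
Your proof is correct, but it takes a different route from the paper. The paper's proof is a two-line reduction: for any finite-index $Z'\le Z$ it forms the central extension $1\to Z/Z'\to H/Z'\to H/Z\to 1$ and invokes \cite[Corollary 6.2]{GJZ} (goodness of the quotient implies residual finiteness of the extension) to conclude that $H/Z'$ is residually finite, after which one separates the finite subgroup $Z/Z'$ in a finite quotient and pulls back. You instead restrict to the cofinal family $nZ$ and inline the cohomological mechanism behind that citation: surjectivity of $H^2_{\mathrm{cts}}(\wh{H/Z},\Z/n\Z)\to H^2(H/Z,\Z/n\Z)$ given by goodness lets you realise the class of $1\to Z/nZ\to H/nZ\to H/Z\to 1$ by a profinite extension $E$ of $\wh{H/Z}$, and naturality of the extension/$H^2$ correspondence under $H/Z\to\wh{H/Z}$ gives a map $H/nZ\to E$ that is the identity on $Z/nZ$, where the separation is then done inside the profinite group $E$. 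What your version buys: it is self-contained (no appeal to the GJZ corollary), it only uses the degree-2 comparison map for trivial finite modules, and, as you note, it needs no residual finiteness hypothesis on $H$ or on $H/Z$ (the cited corollary formally assumes the quotient is residually finite, which the lemma's hypotheses do not state explicitly, though it holds in all the paper's applications). What the paper's version buys: brevity, and it treats an arbitrary finite-index $Z'\le Z$ in one step rather than via the reduction to $nZ$ --- a reduction which is harmless here precisely because $Z$ is finitely generated abelian.
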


\begin{proof}
If $Z'<Z$ is a subgroup of finite index in $Z$,  
then $Z'$ is normal in $H$ and we have a central extension
$$
1\to Z/Z' \to H/Z' \to H/Z\to 1.
$$
By the remark above, since $H/Z$ is good, we deduce that $H/Z'$ is also residually finite. Hence there is a homomorphism from $H/Z'$ to a finite
group that is injective on $Z/Z'$. If $K$ is the kernel of the composition
of this map with $H\to H/Z'$, then $K\cap Z=Z'$.
\end{proof}

\begin{corollary}\label{c:zhat}
\label{exact}
$1\to \wh{Z}\to \wh{H}\to \wh{H/Z}\to 1$ is exact.
\end{corollary}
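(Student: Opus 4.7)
The plan is to deduce this short exact sequence from Lemma \ref{full_profinite} by invoking the standard right-exactness of profinite completion. For any finitely generated, residually finite group $H$ with a normal subgroup $N$, the canonical map $\wh{H}\twoheadrightarrow\wh{H/N}$ is surjective, and its kernel is the closure $\overline{N}$ of $N$ in $\wh{H}$; moreover, $\overline{N}$ is canonically isomorphic to the completion of $N$ with respect to the topology induced from $H$. Applied with $N=Z$, this immediately yields the right-exact portion $\overline{Z}\to\wh{H}\to\wh{H/Z}\to 1$.

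The remaining step is to identify $\overline{Z}$ with the full profinite completion $\wh{Z}$ and to confirm injectivity of $\wh{Z}\hookrightarrow\wh{H}$. This is precisely what Lemma \ref{full_profinite} delivers: since $H$ induces the full profinite topology on $Z$, the completion of $Z$ in the topology induced from $H$, namely $\overline{Z}$, agrees canonically with $\wh{Z}$, and the natural map $\wh{Z}\to\wh{H}$ is injective. Combining this identification with the right-exact sequence above gives the claimed exactness on the left as well.

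Since the constituent facts are standard, no real obstacle arises here; the substance of the corollary lies in recognising that the ``full profinite topology on $Z$'' conclusion of Lemma \ref{full_profinite} is exactly what is required to promote the general right-exact sequence of profinite completions to a short exact sequence. The one point to keep in mind when writing the proof is to be explicit about the dual role played by Lemma \ref{full_profinite}: it ensures both that $\wh{Z}$ sits inside $\wh{H}$ (injectivity) and that it exhausts the closure of $Z$ there (surjectivity onto $\overline{Z}$).
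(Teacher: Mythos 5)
Your argument is correct and is essentially the deduction the paper intends: the corollary is stated without further proof precisely because the standard right-exact sequence $\overline{Z}\to\wh{H}\to\wh{H/Z}\to 1$, together with the identification $\overline{Z}\cong\wh{Z}$ supplied by Lemma \ref{full_profinite} (the induced topology on $Z$ being the full profinite topology), is all that is needed, exactly as you say. Your explicit remark that Lemma \ref{full_profinite} provides both the injectivity of $\wh{Z}\to\wh{H}$ and the identification of its image with the kernel of $\wh{H}\to\wh{H/Z}$ matches the role the paper assigns to that lemma (compare Remark \ref{remark_closure}).
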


\begin{remark} \label{remark_closure}
The exactness in Corollary \ref{exact} holds more generally as can be seen from \cite[Exercise 2b]{Ser}:
if $1 \to K \to \Gamma \to Q \to 1$ is exact with  $Q$ and $K$ finitely generated and residually finite, and if
$Q$ is good, then $\wh{K}\cong\overline{K}$ in $\widehat{\Gamma}$.
\end{remark}

\begin{proposition}\label{p:sameZ}
Let $B$ be a good group such that $\wh{B}$ has trivial centre and let $E_1, E_2$
be central extensions of $B$ with finitely generated kernels $Z_1,Z_2$.
 Then $\wh{E_1}\cong
\wh{E_2}$ implies $Z_1\cong Z_2$.
\end{proposition}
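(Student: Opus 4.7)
The plan is to identify the centre of each $\wh{E_i}$ with $\wh{Z_i}$, and then invoke the fact that finitely generated abelian groups are determined up to isomorphism by their profinite completions.

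First I would check that Lemma \ref{full_profinite} and Corollary \ref{c:zhat} apply to the extensions $1 \to Z_i \to E_i \to B \to 1$. Since $Z_i$ is finitely generated abelian it is residually finite, and $B$ is good by hypothesis; the discussion opening Section \ref{central} (cf.\ \cite[Corollary 6.2]{GJZ}) then guarantees that $E_i$ is residually finite, whence Lemma \ref{full_profinite} says $E_i$ induces the full profinite topology on $Z_i$. Corollary \ref{c:zhat} now furnishes a short exact sequence of profinite groups
$$
1 \to \wh{Z_i} \to \wh{E_i} \to \wh{B} \to 1,
$$
and $\wh{Z_i}$ is central in $\wh{E_i}$ since it is the closure of the central subgroup $Z_i < E_i$ in the topological group $\wh{E_i}$ (the conjugation action of $\wh{E_i}$ on itself is continuous, so the closure of a central subgroup is central).

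Next I would show that in fact $Z(\wh{E_i}) = \wh{Z_i}$. The inclusion $\wh{Z_i} \subseteq Z(\wh{E_i})$ is the centrality established above. For the reverse inclusion, any $x \in Z(\wh{E_i})$ projects to a central element of $\wh{E_i}/\wh{Z_i} \cong \wh{B}$; since $Z(\wh{B})$ is trivial by hypothesis, the image of $x$ must be trivial, so $x \in \wh{Z_i}$. Any isomorphism $\wh{E_1} \cong \wh{E_2}$ carries centre to centre, yielding an isomorphism of profinite groups $\wh{Z_1} \cong \wh{Z_2}$.

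To finish, I would invoke the fact that a finitely generated abelian group $A$ is recovered from $\wh{A}$: writing $A = \Z^r \oplus T$ with $T$ finite, one has $\wh{A} \cong \wh{\Z}^r \oplus T$, and the torsion subgroup of $\wh{A}$ is $T$ (since $\wh{\Z}$ is torsion-free) while the rank $r$ can be read off from the $\Z_p$-rank of any $p$-adic factor. Applied to $Z_1$ and $Z_2$, this gives $Z_1 \cong Z_2$.

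I do not expect a serious obstacle here; the only delicate point is ensuring the hypotheses of Lemma \ref{full_profinite} are in place (in particular, the implicit residual finiteness of $B$ needed to conclude that $E_i$ is residually finite and to identify the kernel of $\wh{E_i} \to \wh{B}$ with $\wh{Z_i}$ rather than merely with some quotient of it). Once the exact sequence with central kernel $\wh{Z_i}$ is in hand, the identification $Z(\wh{E_i}) = \wh{Z_i}$ is automatic from the centre-freeness of $\wh{B}$, and the rest is the classical profinite rigidity of finitely generated abelian groups.
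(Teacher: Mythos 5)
Your proposal is correct and follows essentially the same route as the paper: apply Corollary \ref{c:zhat} to get $1\to\wh{Z_i}\to\wh{E_i}\to\wh{B}\to 1$, use the centre-freeness of $\wh{B}$ to identify $\wh{Z_i}$ with $Z(\wh{E_i})$, and conclude $\wh{Z_1}\cong\wh{Z_2}$, hence $Z_1\cong Z_2$. You merely spell out two points the paper leaves implicit (the residual finiteness of $E_i$ via goodness, and the recovery of a finitely generated abelian group from its profinite completion), which is fine.
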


\begin{proof} For each extension we obtain a short exact sequence as in Corollary \ref{c:zhat}. Because
$\wh{B}$ has trivial centre, $\wh{Z_i}$ is the centre of $\wh{E_i}$, so $\wh{E_1}\cong
\wh{E_2}$ implies $\wh{Z_1}\cong\wh{Z_2}$, hence $Z_1\cong Z_2$. 
\end{proof}

The hypothesis that $Z_1$ and $Z_2$ are finitely generated is unnecessary if we assume $B$ is finitely
presented and $E_1, E_2$ are finitely generated because of the following well known fact.

\begin{lemma}\label{l:Zfg}
Let $1\to Z\to B\to C\to 1$ be exact with $Z$ central. If $C$ is finitely presented and $B$ is finitely generated then $Z$ is
finitely generated.
\end{lemma}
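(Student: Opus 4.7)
The plan is to run the standard argument that a finitely presented quotient of a finitely generated group has a finitely \emph{normally} generated kernel, and then to use centrality to replace ``normally generated'' with ``generated.''

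First I fix a finite generating set $S$ of $B$ and let $\pi\colon F(S)\twoheadrightarrow B$ be the associated surjection from the free group on $S$; composing with the quotient map $B\twoheadrightarrow C$ gives a surjection $\rho\colon F(S)\twoheadrightarrow C$. Because $C$ is finitely presented, the kernel of any surjection from a finitely generated free group onto $C$ is the normal closure of a finite set (this is the standard fact that finite presentability is independent of the chosen finite generating set). Thus there exist $r_1,\dots,r_k\in F(S)$ whose normal closure in $F(S)$ equals $\ker\rho$. Since $\pi(r_i)$ lies in $\ker(B\to C)=Z$, I set $z_i:=\pi(r_i)\in Z$.

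Next I show that $z_1,\dots,z_k$ generate $Z$. Given any $z\in Z$, pick $w\in F(S)$ with $\pi(w)=z$; then $\rho(w)=1$, so $w$ can be written as a product of conjugates
\[
w=\prod_{j}g_j\, r_{i_j}^{\varepsilon_j}\, g_j^{-1}
\]
with $g_j\in F(S)$ and $\varepsilon_j=\pm 1$. Applying $\pi$ yields $z=\prod_j \pi(g_j)\, z_{i_j}^{\varepsilon_j}\, \pi(g_j)^{-1}$, and the centrality of $Z$ in $B$ collapses each conjugate $\pi(g_j)z_{i_j}^{\varepsilon_j}\pi(g_j)^{-1}$ to $z_{i_j}^{\varepsilon_j}$. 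Hence $z=\prod_j z_{i_j}^{\varepsilon_j}$ belongs to the subgroup generated by $\{z_1,\dots,z_k\}$, and $Z$ is finitely generated.

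There is no real obstacle here; the statement is classical and the two ingredients (invariance of finite presentability under change of finite generating set, and the collapse of normal closure to subgroup closure for central subsets) are both elementary. The only point requiring minor care is the first one, which is why I begin by lifting the chosen generators of $B$ to the free group $F(S)$ before invoking finite presentability of $C$.
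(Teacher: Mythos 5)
Your argument is correct and is essentially the paper's proof, just written out in full: the paper likewise first observes that finite presentability of $C$ together with finite generation of $B$ makes $Z$ finitely generated as a normal subgroup, and then uses triviality of the conjugation action of $B$ on the central subgroup $Z$ to upgrade normal generation to generation. Your explicit lift to the free group $F(S)$ and the collapse of conjugates are exactly the details behind those two sentences.
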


\begin{proof}
Since $C$ is finitely presented and $B$ is finitely generated, $Z$ is finitely generated as a normal subgroup of $B$. But the action of $B$ by 
conjugation on $Z$ is trivial, so any finite
subset of $Z$ that generates it as a normal subgroup already generates it as a subgroup.\end{proof} 

\section{Galois Rigidity}\label{galois}

We begin by recalling what we need about Galois rigidity from \cite{BMRS1} and \cite{BMRS2}. As in these papers, 
let $\phi\colon \mathrm{SL}(2,\mathbb{C}) \to \mathrm{PSL}(2,\mathbb{C})$ be the quotient homomorphism, and if $H$ is a finitely generated subgroup of $\PSL(2,\mathbb{C})$,  set $H_1 = \phi^{-1}(H)$. It will be convenient to say that $H$ is 
{\em Zariski-dense} in $\PSL(2,\C)$ when what we actually mean is that $H_1$ is a Zariski-dense subgroup of $\SL(2,\C)$. The \textit{trace-field} of $H$ is defined to be the field 
\[ K_H=\mathbb{Q}(\mathrm{tr}(\gamma)~\colon~ \gamma \in H_1). \] 
If $K_H$ is a number field with ring of integers $R_{K_H}$, we say that $H$ has {\em integral traces} if $\tr(\gamma)\in R_{K_H}$ for all $\gamma \in H_1$. 

Suppose that $H$ is a finitely generated group and  $\rho\colon H\rightarrow \PSL(2,\C)$ a Zariski-dense representation with $K=K_{\rho(H)}$ a number field of degree $n_K$. If $K=\Q(\theta)$ for some algebraic number $\theta$, then the Galois conjugates of $\theta$, say $\theta=\theta_1,\dots,\theta_{n_K}$, provide embeddings $\sigma_i\colon K\to\C$ defined by $\theta\mapsto\theta_i$.  These in turn can be used to build $n_K$ Zariski-dense non-conjugate representations $\rho_{\sigma_i}\colon H \to \PSL(2,\C)$ with the property that $\tr(\rho_{\sigma_i}(\gamma))=\sigma_i(\tr\rho(\gamma))$ for all $\gamma\in H$. We refer to these as {\em Galois conjugate representations}. 
The existence of these Galois conjugates shows that $|\mathrm{X}_{\mathrm{zar}}(H,\mathbb{C})|\geq  n_{K_{\rho(H)}}$, where 
$\mathrm{X}_{\mathrm{zar}}(H,\C)$ denotes the set of Zariski-dense representations $H\to\PSL(2,\C)$ up to conjugacy.

\begin{definition}\label{def:galois-rigid}
Let $\G$ be a finitely generated group and $\rho\colon \G\to \PSL(2,\C)$ a Zariski-dense representation whose trace field $K_{\rho(\G)}$ is a number field. If
 $|\mathrm{X}_{\mathrm{zar}}(\G,\mathbb{C})|= n_{K_{\rho(\G)}}$, we say that $\G$ is {\em Galois rigid}  (with associated field $K_\G$).
\end{definition} 

Zariski-dense representations $\beta : \G \to \PSL(2,\C)$ are irreducible, so it follows from \cite{CS} (see also \cite[Section 3]{BZ}) that they are determined  up to conjugacy by their character. With this in mind, we shall sometimes abuse notation by
writing $\beta\in \mathrm{X}_{\mathrm{zar}}(\G,\mathbb{C})$ when what we mean is that $\beta$ is a Zariski-dense representation. Likewise, it is sometimes convenient to refer to elements of $\mathrm{X}_{\mathrm{zar}}(\G,\mathbb{C})$ as if they were representations.

In our papers \cite{BMRS1, BMRS2} with McReynolds, the Galois rigidity of certain Fuchsian and Kleinian groups played a crucial role in the proof of profinite rigidity.  We will need to extend some of \cite{BMRS1, BMRS2} to the settings of central extensions of certain Fuchsian groups and direct products of these Fuchsian groups.  The discussion below,  in particular the number theoretic set-up, is guided by \cite{BMRS2}.

 We fix a real quadratic number field $K\subset \R$ with $\sigma\colon K\to \R$ the non-trivial Galois embedding, a quaternion algebra $B/K$, and a maximal order $\mathcal{O} < B$. Since $K$ has two real places $v_1$ (the identity place) and $v_2$ (associated to $\sigma$), we can prescribe that $B$ be ramified at either of $v_1$ or $v_2$, and unramified at the other; denote these two possibilities by $B_1$ and $B_2$ respectively. If these $B_i$ are only additionally ramified at a finite place $\omega$ with residue field of characteristic $p$ and $\omega$ is the unique such place, then although $B_1$ and $B_2$ are not isomorphic (over $K$), there is an extension of $\sigma$ that maps $B_1$ to $B_2$. 
In this situation, and up to this ambiguity, we can identify $B$ with either of the $B_i$ (for $i=1,2$). 

Now suppose that $\rho : B\to M(2,\R)$ is a representation and $\mathcal{O}^1$ the elements of norm one in $\mathcal{O}^1$, so that $\rho(\mathcal{O}^1) < \SL(2,\R)$. Let $\Gamma$ be a finitely generated, residually finite group with a representation 
$f: \Gamma \to \P\rho(\mathcal{O}^1)<\PSL(2,\C)$ whose image is Zariski-dense with trace-field  $K$ (in fact this is automatic given the hypothesis that $B$ is ramified at a real place of $K$).  
With this preamble established, the following result can be readily extracted from \cite[Theorem 4.8 and Corollary 4.11]{BMRS1}.

\begin{theorem}
\label{galois_rigid_reps}
Let $f: \Gamma \to \P\rho(\mathcal{O}^1)$ be as above and assume that $\Gamma$ is Galois rigid. If $\Sigma$ is a finitely generated, residually finite group such that $\widehat{\Sigma} \cong \widehat{\Gamma}$, then:
\begin{itemize}
\item[(i)]
$\Sigma$ is Galois rigid with associated field $K$ and quaternion algebra $B$.
\item[(ii)] 
If $B$ has type number $1$, and if $\mathrm{Ram}(B) = \{v_2,\omega\}$ where $v_2$ is the real place described above, and $\omega$ is a finite place as above, then there is a homomorphism $f': \Sigma \to \P\rho(\mathcal{O}^1)\subset \PSL(2,\C)$ with Zariski-dense image.
\end{itemize}
\end{theorem}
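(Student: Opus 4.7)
The plan is to follow the template established in \cite[Theorem~4.8 and Corollary~4.11]{BMRS1}, to which the present statement is closely modelled; the argument naturally breaks into three stages.

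First, I would promote the Zariski-dense representation $f\colon \Gamma \to \P\rho(\mathcal{O}^1) \subset \PSL(2,\C)$ together with its $n_K$ Galois conjugates $f_{\sigma_i}$ to continuous representations of $\widehat{\Gamma}$. The key observation is that $f$ factors through the dense embedding $\mathcal{O}^1 \hookrightarrow \prod_v \mathcal{O}_v^1$, where the product runs over the finite places $v$ of $K$ and $\mathcal{O}_v$ denotes the $v$-adic completion of $\mathcal{O}$; by strong approximation this product is identified with the profinite completion of $\mathcal{O}^1$, so the universal property of $\widehat{\Gamma}$ produces a continuous homomorphism $\widehat{\Gamma} \to \P\prod_v \mathcal{O}_v^1$. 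Doing this for each $f_{\sigma_i}$ gives $n_K$ such continuous representations, and each one also encodes, through its traces at finite-place completions, the associated quaternion algebra $B^{\sigma_i}$.

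Second, the isomorphism $\widehat{\Sigma} \cong \widehat{\Gamma}$ pulls these continuous representations back to continuous representations of $\widehat{\Sigma}$, which restrict to Zariski-dense representations of $\Sigma$ into $\PSL(2,\C)$; each one has trace field a Galois conjugate of $K$ and associated quaternion algebra the corresponding Galois conjugate of $B$. Running the same construction with the roles of $\Gamma$ and $\Sigma$ interchanged shows that the profinite isomorphism actually induces a bijection between $\mathrm{X}_{\mathrm{zar}}(\Gamma,\C)$ and $\mathrm{X}_{\mathrm{zar}}(\Sigma,\C)$. Combined with the Galois rigidity hypothesis $|\mathrm{X}_{\mathrm{zar}}(\Gamma,\C)| = n_K$, this yields $|\mathrm{X}_{\mathrm{zar}}(\Sigma,\C)| = n_K$, and identifies both the trace field and the quaternion algebra, giving (i).

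For (ii), the representation of $\Sigma$ produced above a priori takes values only in $\P\rho((\mathcal{O}')^1)$ for some maximal order $\mathcal{O}' \subset B$. The type-number-one hypothesis ensures $\mathcal{O}'$ is conjugate to $\mathcal{O}$ inside $B^\times$, so after conjugation in $\PSL(2,\C)$ we may arrange the image to lie inside $\P\rho(\mathcal{O}^1)$, giving the desired $f'$. The prescription $\mathrm{Ram}(B) = \{v_2,\omega\}$ is used to select, among the $n_K$ Galois conjugates available, the one whose archimedean completion at the identity place $v_1$ actually embeds into $\SL(2,\R)$; the extension of $\sigma$ sending $B_1$ to $B_2$ is what makes this selection coherent. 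The main obstacle is the careful bookkeeping of how $\sigma$ interacts with the ambiguity between $B_1$ and $B_2$ at the archimedean places, together with verifying that the continuous extension of $f$ to $\widehat{\Gamma}$ descends and returns a \emph{discrete} representation on $\Sigma$; both of these points are already handled in \cite{BMRS1}, so the present argument is essentially a translation of that proof from a specific Fuchsian setting to the present abstract setting of a Galois rigid group equipped with a given Zariski-dense representation.
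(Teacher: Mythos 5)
The paper itself offers no independent proof of Theorem \ref{galois_rigid_reps}: it is stated as something that ``can be readily extracted'' from \cite[Theorem 4.8 and Corollary 4.11]{BMRS1}, and your proposal follows that same route, so in outline you and the paper agree. However, your sketch mislocates where the real work lies, and as written it has a gap. In stage two you assert that the representations of $\Sigma$ obtained by restricting the continuous extensions along $\widehat{\Sigma}\cong\widehat{\Gamma}$ automatically ``have trace field a Galois conjugate of $K$ and associated quaternion algebra the corresponding Galois conjugate of $B$.'' That is not automatic: these restrictions take values in compact $v$-adic groups, their traces a priori live in the fields $K_v$, and what \cite[Theorem 4.8]{BMRS1} (and Spitler's thesis) actually delivers is a Zariski-dense representation of $\Sigma$ with \emph{some} number field $L$ and \emph{some} quaternion algebra $A$ whose completions match those of $(K,B)$ under a bijection of finite places. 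Identifying $L\cong K$ and $A\cong B$ is exactly where the hypotheses that $K$ is real quadratic (Galois over $\Q$), that $\mathrm{Ram}(B)=\{v_2,\omega\}$ with $\omega$ the unique $q$-adic place, and that $B$ has type number $1$ enter; compare the paper's own proof of Theorem \ref{products_prof_thm}, where this is carried out via the splitting behaviour of split and inert primes and the parity of the ramification set. Your attribution of the role of $\mathrm{Ram}(B)$ in (ii) to ``selecting the archimedean completion'' therefore misses its actual function, and your symmetric ``interchange the roles of $\Gamma$ and $\Sigma$'' argument for a bijection of $\mathrm{X}_{\mathrm{zar}}$-sets does not work as stated, since $\Sigma$ comes with no integrality or rigidity data to run the construction in reverse; the count $|\mathrm{X}_{\mathrm{zar}}(\Sigma,\C)|=n_K$ comes from the character-variety counting in \cite{BMRS1}, which leans on the Galois rigidity of $\Gamma$.

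A smaller point: the theorem asks only for $f'\colon\Sigma\to \P\rho(\mathcal{O}^1)$ with Zariski-dense image; no discreteness of $f'(\Sigma)$ is claimed, needed, or in general true, so the ``obstacle'' of verifying that one recovers a discrete representation is a red herring. The part of your argument that is on target is the use of type number $1$ to conjugate an arbitrary maximal order containing the image into $\mathcal{O}$, which is indeed how the containment $f'(\Sigma)<\P\rho(\mathcal{O}^1)$ is obtained both in \cite{BMRS1} and in the paper's proof of Theorem \ref{products_prof_thm}(iii).
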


We now investigate direct products and Galois rigidity. We continue to assume that $\G$ is a finitely generated, residually finite group that is Galois rigid, with a representation $f: \Gamma \to \P\rho(\mathcal{O}^1)<\PSL(2,\C)$ (with $\mathcal{O} \subset B$ as described above)  whose image is Zariski-dense. We will consider $\G \times \G$. Let $\G_1 = \G \times 1$ and $\G_2 = 1 \times \G$, and for $i =1,2$ let $\pi_i: \G \times \G \to \G_i$ denote the projection homomorphism. For each $i$, let $\phi_i = f \circ \pi_i$ and $\overline{\phi}_i = \overline{f} \circ \pi_i$ where $\overline{f}$ is the Galois conjugate of the representation  $f$. Note that although $\G$ is Galois rigid, $\G \times \G$ is not.

\begin{lemma}\label{l:thru-proj} 
 \begin{enumerate}
 \item For every representation of a direct product $\alpha : H_1\times H_2\to {\rm{PSL}}(2,\C)$, if $\im (\alpha)$
 is Zariski-dense, then $\alpha(H_i)=1$ for exactly one of $i=1$ or $i=2$.
 \item If a group $H$ has non-trivial centre $Z$, then $\alpha(Z)=1$ for
 every representation $\alpha : H\to {\rm{PSL}}(2,\C)$ with  $\im (\alpha)$
 being Zariski-dense.
 \end{enumerate} 
 \end{lemma}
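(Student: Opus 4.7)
The plan for both parts hinges on the standard fact that in $\PSL(2,\C)$ the centralizer of any non-elementary subgroup is trivial. I would establish this by noting that an element $g$ commuting with $N$ must preserve $\mathrm{Fix}(n)$ setwise for each $n\in N$, and for loxodromic $n$ is forced to fix the attracting and repelling points rather than swap them (the dynamics of $n$ are preserved under centralization). Non-elementarity of $N$ supplies two loxodromic elements whose combined fixed-point sets contain at least three distinct points of $\mathbb{CP}^1$, so $g$ fixes three points on $\mathbb{CP}^1$ and hence $g=1$.

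Given this, part (2) is immediate: $\alpha(Z)$ centralizes the non-elementary group $\im(\alpha)$ in $\PSL(2,\C)$, so $\alpha(Z)=1$. For part (1), I would set $A_i=\alpha(H_i)$ and note that $A_1$ and $A_2$ commute elementwise because the factors of a direct product do. Suppose for contradiction that both $A_i$ are non-trivial. If $A_1$ were non-elementary, then $A_2$ would lie inside its (trivial) centralizer, contradicting non-triviality of $A_2$; thus both $A_i$ are elementary. Now because $A_2$ centralizes (hence normalizes) $A_1$, it preserves every conjugation-invariant of $A_1$; in particular it preserves either the limit set $\Lambda(A_1)\subset\mathbb{CP}^1$ (non-empty and of cardinality at most $2$ when $A_1$ is infinite) or the $\H^3$-fixed-point set of $A_1$ (a point or a geodesic, non-empty when $A_1$ is finite). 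In every case $\im(\alpha)=A_1A_2$ leaves invariant a finite subset of $\overline{\H^3}$ of controlled size, which forces $\im(\alpha)$ to be elementary and contradicts the hypothesis. Hence at least one $A_i$ is trivial, and since $\im(\alpha)$ is non-elementary it cannot be that both are trivial, so exactly one is.

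The most delicate step will be the case in (1) where $A_1$ is finite of order at least $2$. Here I would use that every finite subgroup of $\PSL(2,\C)$ has a non-empty fixed-point set in $\H^3$ (finite subgroups act with bounded orbits and so fix their circumcentre); that $A_2$ preserves this fixed-point set then follows from the fact that $A_2$ normalizes $A_1$. The remaining subcases — when $A_1$ contains a parabolic (one invariant point on $\mathbb{CP}^1$) or only loxodromic/elliptic non-identity elements sharing an axis (two invariant points) — are handled by the limit-set argument essentially verbatim, completing the contradiction.
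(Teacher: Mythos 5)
Your strategy is sound and draws on essentially the same circle of ideas as the paper's (very terse) proof, just organised differently. The paper observes that $\alpha(H_2)$ (resp.\ $\alpha(Z)$) centralises a non-trivial element of the image, hence is an elementary normal subgroup of the non-elementary group $\im(\alpha)$, and then appeals to the invariance of limit sets under normalisers; you instead first prove that the centraliser of a non-elementary subgroup is trivial (via the standard fact that such a subgroup contains two loxodromics with no common fixed point), which gives (2) at once, and for (1) you reduce to showing that two commuting non-trivial elementary subgroups $A_1,A_2$ generate an elementary group, using conjugation-invariance of the limit set or of the fixed-point set in $\H^3$. This is the same toolkit, and your handling of the finite, parabolic and common-axis cases is if anything more careful than the paper's sketch (you also avoid the paper's parenthetical claim that the centraliser of a single non-trivial element is abelian, which fails for involutions).

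There is, however, one subcase of (1) that your case analysis does not cover, and one inaccurate parenthetical. An infinite elementary subgroup can have empty limit set without its non-trivial elements sharing an axis: take every non-trivial element of $A_1$ elliptic but with no common axis, e.g.\ $A_1$ inside a conjugate of $\PSU(2)$ generated by an irrational rotation about one axis and a half-turn about a perpendicular axis; such $A_1$ genuinely occur with $A_2\neq 1$ (here the half-turn about the first axis centralises $A_1$). For these groups $\Lambda(A_1)=\emptyset$, so your claim that the limit set is non-empty whenever $A_1$ is infinite is false, and your circumcentre argument is stated only for finite groups, so neither branch of your dichotomy applies as written. The gap is easy to close, in either of two ways. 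First, one can invoke the classical fact that a subgroup of $\PSL(2,\C)$ all of whose elements are elliptic fixes a point of $\H^3$ (it is conjugate into $\PSU(2)$); its fixed-point set in $\H^3$ is a point or a geodesic and is a conjugation invariant, so $\im(\alpha)$ stabilises a point of $\H^3$ or a pair of points of $\mathbb{CP}^1$ and is elementary, the desired contradiction. Alternatively, and more elementarily: choose $b_1,b_2\in A_1$ elliptic with distinct axes; every element of $A_2$ centralises both, hence preserves each axis's endpoint pair, and a M\"obius transformation permuting these three or four points while preserving the pairs is determined by that permutation, so $A_2$ is finite; now run your finite-group (circumcentre) argument with the roles of $A_1$ and $A_2$ exchanged. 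With this case added, your proof is complete.
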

 
 \begin{proof} If $\gamma\in \alpha(H_1)$ is non-trivial, then $\alpha(H_2)$ lies in the centraliser of $\gamma$
 in $ {\rm{PSL}}(2,\C)$,  which is abelian. Thus $A=\alpha(H_2)$ is a normal abelian subgroup of the Zariski-dense
 group $\alpha(H_1\times H_2)$. But Zariski-dense subgroups of ${\rm{PSL}}(2,\C)$ do not contain
 any non-trivial subgroups of this form. Briefly, since $A$ is a normal subgroup of $\alpha(H_1\times H_2)$, the Zariski closure of $A$ is a normal subgroup of the Zariski closure of $\alpha(H_1\times H_2)$, which is $\PSL(2,\C)$, hence it agrees with
 $\PSL(2,\C)$. But $A$ being abelian implies that the Zariski closure is abelian, and this is a contradiction, so
 (1) is proved. The proof of (2) is similar.
 \end{proof}

\begin{corollary}
\label{products_reps}
For each $\psi \in \mathrm{X}_{\mathrm{zar}}(\G \times \G,\mathbb{C})$ there is a unique $i$ so that $\psi$ is conjugate to exactly one of $\phi_i$ or $\overline{\phi}_i$. Thus $\G \times \G$ has exactly $2$ distinct Zariski-dense representations in $\PSL(2,\C)$ up to conjugation and taking Galois conjugates. In particular, $|\mathrm{X}_{\mathrm{zar}}(\G \times \G,\mathbb{C})| = 4$ and all of these representations have integral traces.
\end{corollary}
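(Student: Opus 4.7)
The plan is to combine Lemma \ref{l:thru-proj}(1) with the Galois rigidity of $\G$. Specifically, let $\psi \in \mathrm{X}_{\mathrm{zar}}(\G\times\G,\C)$. Since Zariski-dense representations to $\PSL(2,\C)$ have non-elementary image, Lemma \ref{l:thru-proj}(1) guarantees that there is a unique $i \in \{1,2\}$ such that $\psi(\G_j) = 1$ for $j \neq i$. Consequently $\psi$ factors as $\psi = \psi' \circ \pi_i$ for some representation $\psi' : \G \to \PSL(2,\C)$, and the image of $\psi'$ coincides with the image of $\psi$, so $\psi'$ is itself Zariski-dense.

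Next, I would invoke the Galois rigidity of $\G$ (with associated field $K$). Since $K$ is a real quadratic extension of $\Q$, we have $n_K = 2$, so $|\mathrm{X}_{\mathrm{zar}}(\G,\C)| = 2$. The two classes are represented by $f$ and its Galois conjugate $\overline{f}$, so $\psi'$ is conjugate in $\PSL(2,\C)$ to exactly one of $f$ or $\overline{f}$. Pulling back through $\pi_i$, this shows $\psi$ is conjugate to exactly one of $\phi_i, \overline{\phi}_i$, establishing the first assertion.

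To obtain $|\mathrm{X}_{\mathrm{zar}}(\G\times\G,\C)| = 4$, I would check that the four representations $\phi_1, \overline{\phi}_1, \phi_2, \overline{\phi}_2$ are pairwise non-conjugate. The classes $\phi_1, \overline{\phi}_1$ are distinguished from $\phi_2, \overline{\phi}_2$ by their kernels (conjugation in $\PSL(2,\C)$ preserves the kernel, and $\G_1$ lies in one pair of kernels but not the other). Within each pair, non-conjugacy follows immediately from the non-conjugacy of $f$ and $\overline{f}$ given by Galois rigidity. Since $\overline{\phi}_i$ is by construction the Galois conjugate of $\phi_i$, there are exactly two equivalence classes under conjugation and Galois conjugation, as claimed.

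Finally, for integrality of traces: by construction $f$ takes values in $\P\rho(\O^1)$, whose elements have traces in the ring of integers $R_K$; applying the Galois automorphism $\sigma$ shows $\overline{f}$ also has integral traces, since $\sigma$ permutes $R_K$. Because traces of $\phi_i$ and $\overline{\phi}_i$ are pulled back from traces of $f$ and $\overline{f}$ via $\pi_i$, all four representations have integral traces. No step here looks substantive — the whole argument is essentially bookkeeping once Lemma \ref{l:thru-proj}(1) and the definition of Galois rigidity are in hand; the only mild care needed is in verifying the four representations are genuinely pairwise non-conjugate, which is handled by the kernel distinction between $i=1$ and $i=2$.
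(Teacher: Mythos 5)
Your proposal is correct and follows essentially the same route as the paper: factor $\psi$ through a unique $\pi_i$ via Lemma \ref{l:thru-proj}(1), invoke Galois rigidity of $\G$ to identify the factored representation with $f$ or $\overline{f}$, distinguish the two values of $i$ by kernels and the two Galois conjugates by characters, and note integrality of traces is inherited from $f$ and $\overline{f}$. The only cosmetic difference is that you spell out the integrality and non-conjugacy checks a little more explicitly than the paper does.
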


\begin{proof}  
Let $\psi \in \mathrm{X}_{\mathrm{zar}}(\G \times \G,\mathbb{C})$. Since $\im (\psi)$ is Zariski-dense, by Lemma \ref{l:thru-proj}, there is a unique $i$ so that $\psi$ factors through $\pi_i$. By hypothesis, every Zariski-dense representation of $\G_i$ is conjugate to exactly one of $f$ or $\overline{f}$. Hence $\psi$ is conjugate to exactly one of $\phi_i$ or $\overline{\phi}_i$. Because representations factoring through distinct $\G_i$ have distinct kernels, they cannot be conjugate. Also $\phi_i$ and $\overline{\phi}_i$ cannot be conjugate as they induce distinct characters, so we have $|\mathrm{X}_{\mathrm{zar}}(\G \times \G,\mathbb{C})| = 4$. Each  
$\phi_i$ and  $\overline{\phi}_i$ has integral traces because $f$ and $\overline{f}$ do.
\end{proof}

This corollary tells us that although $\G \times \G$ is not Galois rigid, it still has only finitely many Zariski-dense representations in $\PSL(2,\C)$ up to conjugation, and
we can account for them. Using ideas from \cite{BMRS1} and \cite{Spit} that extend Theorem \ref{galois_rigid_reps}, we can exploit this representation rigidity 
for $\G \times \G$ to prove the following theorem.

\begin{theorem}
\label{products_prof_thm}
With $f: \Gamma \to \P\rho(\mathcal{O}^1)$  
and $\G \times \G$ as above, and the continuing assumption that $\G$ is Galois rigid, let $\Sigma$ be a finitely generated, residually finite group such that $\widehat{\Sigma} \cong \widehat{\G \times \G}$. Then
\begin{itemize}
\item[(i)]
$\Sigma$ has exactly $2$ distinct Zariski-dense representations, $\psi_j: \Sigma\to \PSL(2,\C),\ j = 1,2$, up to conjugation and the taking of Galois conjugates.
\item[(ii)]
$|\mathrm{X}_{\mathrm{zar}}(\Sigma,\mathbb{C})| = 4$ and the image of each Zariski-dense
 representation has trace field $K$.
\item[(iii)] 
If $B$ has type number $1$ and if $\mathrm{Ram}(B) = \{v_2,\omega\}$ where $v_2$ is the real place described above, and if $\omega$ is a finite place as above, then
for $j=1,2$, the image of $\psi_j$ is contained in $\P\rho(\mathcal{O}^1)$.
\end{itemize}
\end{theorem}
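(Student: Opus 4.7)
The plan is to upgrade the abstract profinite isomorphism $\wh{\Sigma} \cong \wh{\G\times\G}$ to a bijection between $\mathrm{X}_{\mathrm{zar}}(\Sigma,\C)$ and $\mathrm{X}_{\mathrm{zar}}(\G\times\G,\C)$, and then to read off the statements (i)--(iii) from what Corollary \ref{products_reps} already tells us about the latter. The skeleton is the Galois-rigid transfer machinery of \cite{BMRS1} together with the refinement in \cite{Spit}; the novelty is that it must be adapted to the product, which is itself not Galois rigid but has only finitely many Zariski-dense representations, all of them arising through one of the two projections.

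\emph{Step 1 (Transfer from $\G\times\G$ to $\Sigma$).} Each of the four representations $\phi_1,\overline{\phi}_1,\phi_2,\overline{\phi}_2$ of Corollary \ref{products_reps} takes values in $\PSL(2,R_K)$ (using the integrality of traces together with the passage to a conjugate in $\SL(2,\C)$). Reduction modulo any ideal $\mathfrak{a}\triangleleft R_K$ yields a representation of $\G\times\G$ into a finite group, which factors through $\wh{\G\times\G}\cong\wh{\Sigma}$ and hence restricts to a representation of $\Sigma$ modulo $\mathfrak{a}$. I would then run the congruence/inverse-limit argument of \cite[Section 4]{BMRS1} (in the form sharpened by \cite{Spit}) to compile these reductions into a Zariski-dense representation $\psi : \Sigma \to \PSL(2,R_K)$, whose character matches that of the chosen $\phi_i$ in the sense that it reduces to the same representation modulo each $\mathfrak{a}$. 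Applied to $\phi_1$ and $\phi_2$ separately, this produces at least two non-conjugate Zariski-dense representations $\psi_1,\psi_2$ of $\Sigma$, each with trace field $K$ and integral traces, together with their Galois conjugates $\overline{\psi}_1,\overline{\psi}_2$.

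\emph{Step 2 (No other Zariski-dense representations).} Conversely, suppose $\psi\in\mathrm{X}_{\mathrm{zar}}(\Sigma,\C)$ is arbitrary. Using the same integrality/congruence arguments in reverse (the image of $\psi$ is finitely generated, so its trace field is a finitely generated extension of $\Q$; the profinite isomorphism combined with the existence of faithful matrix reductions of $\G\times\G$ forces the trace field to be a number field and the traces to be integral), one shows that each reduction of $\psi$ modulo a maximal ideal corresponds, via $\wh{\Sigma}\cong\wh{\G\times\G}$, to a reduction of some representation of $\G\times\G$. A Zariski-density argument on the set of ideals pins the character of $\psi$ down to the character of a representation of $\G\times\G$ that is Zariski-dense (non-elementary images cannot degenerate under this correspondence, by Lemma \ref{l:thru-proj}). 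Since $|\mathrm{X}_{\mathrm{zar}}(\G\times\G,\C)|=4$, this forces $\psi$ to be conjugate to one of $\psi_1,\overline{\psi}_1,\psi_2,\overline{\psi}_2$. Combined with Step 1 this gives (i) and (ii).

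\emph{Step 3 (Part (iii)).} Each $\psi_j$ is a Zariski-dense representation with trace field $K$ and integral traces, and its associated quaternion algebra is a quaternion $K$-algebra containing the traces of $\psi_j$. The profinite isomorphism identifies the ramification data of these quaternion algebras with those of $B$ at every finite place (since these are detected by finite congruence quotients), and the non-elementary real place is detected by the unboundedness of the image in $\PSL(2,\R)$ versus $\PSL(2,\C)$. Thus the associated algebra is $B$ itself (up to the ambiguity discussed before Theorem \ref{galois_rigid_reps}). The type-number-1 hypothesis then implies, exactly as in Theorem \ref{galois_rigid_reps}(ii), that the image of $\psi_j$ can be conjugated into $\P\rho(\O^1)$.

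The main obstacle I anticipate is Step 2. The argument of \cite{BMRS1} for Galois-rigid $\G$ uses the Galois rigidity of $\G$ directly to bound $|\mathrm{X}_{\mathrm{zar}}(\Sigma,\C)|$, but $\G\times\G$ is not Galois rigid and so the same bound is not immediate; one must instead extract from the profinite isomorphism the dichotomy of Lemma \ref{l:thru-proj}, namely that any Zariski-dense representation arising in this way kills one of the two factors of $\wh{\G\times\G}$. Making this precise without $\Sigma$ itself being a product, and ruling out ``mixed'' representations that could conceivably arise from twists across the two factors, is where the serious technical work sits; the goodness of $\G$ (Theorem \ref{good1}) and the triviality of $Z(\wh{\G})$ in the Fuchsian base (Theorem \ref{prof-centre-fuchs}) will be needed to push the argument through cleanly.
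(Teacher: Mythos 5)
Your overall strategy (run the BMRS1/Spitler transfer machinery on the fixed isomorphism $\wh{\Sigma}\cong\wh{\G\times\G}$, using Corollary \ref{products_reps} to control $\mathrm{X}_{\mathrm{zar}}(\G\times\G,\C)$, and then invoke type number $1$) is the same as the paper's, but there is a genuine gap at the point you pass over most quickly. In Step 1 you assert that the compiled representations of $\Sigma$ land in $\PSL(2,R_K)$, i.e.\ already have trace field $K$, and in Step 3 that their quaternion algebras are identified with $B$ because ramification is ``detected by finite congruence quotients''. What the arguments of \cite[Theorem 4.8]{BMRS1} and \cite[Theorem 6.1]{Spit} actually yield is weaker: a priori unknown number fields $L_1,L_2$, quaternion algebras $A_j/L_j$, Zariski-dense representations $\psi_j:\Sigma\to \P A_j^1$ with integral traces, and only a bijection $\tau$ of the \emph{finite} places of $L_1\sqcup L_2$ with two copies of $V_K^{\f}$ matching local fields and local algebras (and $\tau$ need not even respect the two copies). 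The heart of the paper's proof of (ii) and (iii) is the arithmetic step you have skipped: every rational prime that splits in $K$ must split completely in $L_j$, so, since $K$ is quadratic and Galois over $\Q$, $L_j$ embeds in $K$ (\cite{Mar}); an inert prime then forces $[L_j:\Q]\ge 2$, whence $L_j\cong K$. Only after that can one argue that $A_j$ is unramified at all finite places away from $q$, ramified at $\omega$, and then — because the ramification set has even cardinality and $K$ has exactly two real places — ramified at exactly one real place, so that $A_j\cong B$ up to the $B_1/B_2$ ambiguity fixed in the preamble; type number $1$ then conjugates $\psi_j(\Sigma)$ into $\P\rho(\mathcal{O}^1)$. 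Without this field and algebra identification, your claim ``trace field $K$'' in (ii) is unproved.

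Two further points. Your mechanism for the archimedean place (``detected by the unboundedness of the image'') is both unjustified and unnecessary: profinite/congruence data cannot see real places (this is precisely why the $\sigma$-ambiguity between $B_1$ and $B_2$ exists), and ruling out a bounded, $\PU(2)$-conjugate image would itself require an argument; the paper avoids the issue entirely via the parity of $\mathrm{Ram}(A_j)$. Finally, the difficulty you flag in your last paragraph is misplaced: goodness of $\G$ and the triviality of $Z(\wh{\D})$ play no role in this theorem (they are used later, in Section \ref{s:t3}); the control of ``mixed'' representations you worry about is already supplied by Corollary \ref{products_reps} feeding into the transfer machinery, and the genuinely new work in this proof is the number theory described above.
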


\begin{proof}
For $i=1,2$ we have the representations $\phi_i$ and $\overline{\phi}_i$ of $\G \times \G$ in $\P B^1<\PSL(2,\C)$ and  the trace field of the image of each is $K$. From Corollary \ref{products_reps}, we know that these are the only Zariski-dense representations of $\G \times \G$ in $\PSL(2,\C)$ up to conjugation. Each of these representations has integral traces, so the image of $\G \times \G$ in each case will be $p$-adically bounded for any $p$-adic place of $K$.

With this information in hand, one can apply the  arguments from \cite[Theorem 4.8]{BMRS1} and \cite[Theorem 6.1]{Spit} to the fixed profinite isomorphism $\widehat{\Sigma} \cong \widehat{\G \times \G}$ to obtain similar collections for $\Sigma$. The precise statement is as follows:

\begin{proposition}
For $j=1, 2$ there is a number field $L_j$, a quaternion algebra $A_j$, and a Zariski-dense representation $\psi_j: \Sigma \to \P A_j^1 < \PSL(2,\C)$ such that $\psi_j(\Sigma)$ has trace field $L_j$. Furthermore,
\begin{itemize}
\item[(a)]
every Zariski-dense representation of $\Sigma$ is conjugate in $\PSL(2,\C)$ to a Galois conjugate of  $\psi_1$ or $\psi_2$;
\item[(b)]
there is a bijection between sets of finite places, $\tau: V_{L_1}^\f \sqcup V_{L_2}^\f \to V_K^\f \sqcup V_K^\f$ so that if $v$ is a $p$-adic place of $L_j$, then $\tau(v)$ is a $p$-adic place of $K$ with $L_{j,v}\cong K_{\tau(v)}$ and $A_j \otimes_{L_j} L_{j,v} \cong B \otimes_K K_{\tau(v)}$;
\item[(c)] 
each $\psi_j(\Sigma)$ has integral traces.
\end{itemize}
\end{proposition}

Continuing with the proof of Theorem \ref{products_prof_thm}, 
note that {\em a priori} the bijection $\tau$ in (b) does not  need to restrict to separate bijections of the $V_{L_j}^\f$ with $V_K^\f$, as $L_j$ may be paired with different copies of $V_K^\f$ as the place $v$ varies. Also note that it is not {\em a priori} necessary that the number fields $L_1$ and $L_2$ or the quaternions algebras $A_1$ and $A_2$ be isomorphic. However, with the strict arithmetic assumptions we have made on $K$ and $B$ we will shortly show that in fact $L_j \cong K$ and $A_j \cong B$ for each $j$.

Now we can immediately see that (i) of the theorem follows from (a) of the proposition. To prove (ii) it is sufficient to show that each $L_j \cong K$. Without loss of generality, we will show it for $L_1$. Note that if $p$ is a rational prime which splits in $K$, then for each $p$-adic place $w$ of $K$ we have $K_w \cong \Q_p$. Thus condition (b) above implies $L_1$ is completely split over $p$. Since this holds for every split prime $p$, and $K$ is Galois over $\Q$, by \cite[p.~108, Cor to Thm 31]{Mar} we know that $L_1$ is isomorphic to a subfield of the quadratic field $K$. Next, if $p$ is a prime that is inert in $K$, then for each $p$-adic place $w$ of $K$ we have $[K_w,\Q_p] = 2$. So from condition (b) again, there is a $p$-adic place $v$ of $L_1$ so that $[L_{1,v},\Q_p] = 2$, hence $[L_1,\Q] \geq 2$ and $L_1 \cong K$.

To show (iii), we first show that each $A_j \cong B$, and again without loss of generality will show it for $A_1$. After showing (ii), we now know that $A_1$ can also be considered as a quaternion algebra over $K$, so we need only to determine its ramification set. By assumption, $\mathrm{Ram}(B) = \{v_2,\omega\}$ where $v_2$ is a real place of $K$ and $\omega$ is a finite place such that it is the unique $q$-adic place of $K$ for some specific prime $q$. This means that for every prime $p$ other than $q$, each $p$-adic place $w$ of $K$ has $B \otimes_{K} K_{w}$ which is not a division algebra. From (b) above, this implies $A_1$ is not ramified at any $p$-adic place for any prime other than $q$. On the other hand, for each $q$-adic place $w$ of $K$, we know $B \otimes_{K} K_{w}$ is a division algebra. Hence $A_1$ is ramified at $\omega$, the unique $q$-adic place of $K$. Finally, because $A_1$ is ramified at only a single finite place of $K$, and $K$ has only two real places, and the set of ramified places must be even, we see that $\mathrm{Ram}(A_1) = \{r,\omega\}$ for one of the real places $r$ of $K$. So up to the previously mentioned ambiguity for such quaternion algebras over $K$, we have $A_1 \cong B$.

To finish (iii), we observe that from (c) we know that $\psi_j(\Sigma)$ is contained in the image of some order of $A_j \cong B$. Since $B$ has type number $1$, it has a unique maximal order up to conjugation, so after conjugating within $\PSL(2,\C)$ we may assume that $\psi_j(\Sigma) < \P\rho(\mathcal{O}^1)$ as desired.
\end{proof}

\section{Seifert fibred spaces}
\label{SFS_recap}
For the reader's convenience, we briefly recall some of the theory of Seifert fibred spaces, see \cite[Chapter VI]{J}, \cite{neumann} or \cite[\S 3]{Sc2} for more details.

\subsection{Some basic facts about Seifert fibred spaces}\label{ss:SFS}

A closed orientable $3$-manifold $M$ is a {\em Seifert fibred space} if and only if $M$ is foliated by circles.  The leaves of this foliation are either {\em regular fibres} or {\em exceptional fibres}. To describe this more carefully, 
we recall the following. 

Let $\mathbb{D}^2$ denote the unit disc. By a {\em trivially fibred solid torus} we mean $\mathbb{D}^2\times S^1$ with the product foliation by circles; i.e. the fibres are the circles $\{x\}\times S^1$.  A {\em fibred solid torus}
is a solid torus with a foliation by circles that is finitely covered by a trivially fibred solid torus.  Such a solid torus can be obtained from a trivially fibred solid torus by cutting it open along $\mathbb{D}^2 \times \{y\}$ for some $y\in S^1$ and gluing the ends of the solid cylinder by a $2q\pi/p$ twist where $p,q\in \Z$ are coprime. These integers can be normalized so that $0 < q < p$.

Returning to the Seifert fibred space $M$, a regular fibre is one which has a trivially fibred solid torus neighborhood, and otherwise it is an exceptional fibre, of which there are only finitely many.
Forming the quotient space of the Seifert fibred space by collapsing each fibre to a point, the result is a $2$-manifold with an orbifold structure (which we refer to as the base orbifold).  
When $\pi_1(M)$ is infinite with $t$ exceptional fibres and the base orbifold is orientable with underlying surface of genus $g$, then $\pi_1(M)$ has a presentation
$$\pi_1(M) = \<x_1,y_1,\ldots , x_g,y_g, c_1,\ldots , c_t, z \mid z~\hbox{is central},~c_j^{p_j}z^{q_j}=1,\ \prod [x_i,y_i]c_1c_2\ldots c_t = z^d\>,$$
where $0 < q_j < p_j$ 
for  $j=1, \ldots t$, with $q_j$ coprime to $p_j$,  and $d$ is an integer. The central generator $z$ is 
the homotopy class of any regular fibre. The set of pairs $\{(p_j,q_j) : j=1,\ldots ,t\}$ is called the {\em Seifert invariants} of $M$, and following \cite{neumann, Sc2}, we define 
the {\em Euler number}  $e(M)=-(d+\Sigma_{j=1}^{t} q_j/p_j)$. Note that when $g=0$, the case
of primary interest for us, 
the manifold is determined by the data $(e(M); (p_1,q_1),\ldots (p_t,q_t))$. 

The Seifert fibred space $M$ as above is geometric in the sense of Thurston (see \cite{Sc2}), and if the base is a hyperbolic $2$-orbifold (which will be the case of interest to us), then $M$ admits a $\mathbb{H}^2\times \R$ geometry or
$\widetilde{\PSL}_2$ geometry according to whether the Euler number of $M$ is zero or not. 
We also point out for future reference that if a Seifert fibred space $M$ with base a hyperbolic $2$-orbifold fibres over the circle, then $e(M)=0$ and $M$ has $\mathbb{H}^2\times \R$ geometry (see \cite[Theorem 5.4]{Sc2}).

\subsection{Finite extensions of Seifert fibred spaces}\label{ss:fin_ext}

It will be important for us to identify when central extensions of Fuchsian groups are the fundamental groups of Seifert fibred spaces, and to that end
we will make crucial use of the next result, which is \cite[Lemma 5.15]{GMW}. We include a sketch of the proof for completeness.

\begin{lemma}
\label{GMW_Lemma}
Suppose that $G$ fits into a short exact sequence 
$$1 \rightarrow \Z  \rightarrow G \rightarrow F \rightarrow 1,$$
where $F$ is a cocompact Fuchsian group. Then $G$ is the fundamental group of a Seifert fibred space if and only if $G$ is torsion-free.\end{lemma}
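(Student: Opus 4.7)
My plan is to split the equivalence into its two directions. The \emph{only if} direction is essentially geometric: if $G=\pi_1 M$ for a Seifert fibred space with base orbifold $\mathbb{H}^2/F$, then by the discussion in \S\ref{ss:SFS} the manifold $M$ carries either a $\mathbb{H}^2\times\mathbb{R}$ or a $\widetilde{\PSL}_2(\mathbb{R})$ geometry, so its universal cover is homeomorphic to $\mathbb{R}^3$ and $G$ acts freely on it. Hence $M$ is aspherical and $G$ is torsion-free.

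For the \emph{if} direction, I would reconstruct Seifert invariants directly from a central-extension presentation of $G$. Start with a standard presentation of the cocompact Fuchsian group $F$ with hyperbolic generators $x_1,y_1,\ldots,x_g,y_g$, elliptic generators $c_1,\ldots,c_t$ of orders $p_j$, and defining relators $c_j^{p_j}$ and $\prod_i[x_i,y_i]\,c_1\cdots c_t$. Lift each generator to $G$ and adjoin a generator $z$ for the central $\mathbb{Z}$. Since the projection kills each relator of $F$, in $G$ it lifts to a power of $z$, giving integers $q_j,d$ with
\[
\tilde c_j^{\,p_j}z^{q_j}=1,\qquad \prod_i[\tilde x_i,\tilde y_i]\,\tilde c_1\cdots\tilde c_t=z^d.
\]
A standard argument (identifying the extension class in $H^2(F;\mathbb{Z})$, using the fact that a central extension is determined up to presentation by lifts of the defining relators together with centrality of the kernel) shows that these relations together with ``$z$ is central'' form a presentation of $G$.

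The crux of the argument is that torsion-freeness of $G$ forces $\gcd(p_j,q_j)=1$ for each $j$. For this, the abelian subgroup $\langle \tilde c_j, z\rangle\leq G$ is a quotient of $\mathbb{Z}^2$ by the single relator $(p_j,q_j)$, and such a quotient is torsion-free if and only if $(p_j,q_j)$ is primitive in $\mathbb{Z}^2$, i.e.\ $\gcd(p_j,q_j)=1$. After replacing $\tilde c_j$ by $\tilde c_j z^{k_j}$ for suitable $k_j\in\mathbb{Z}$, I may normalise $0<q_j<p_j$ (which changes $d$ accordingly). The resulting data $(d;(p_1,q_1),\ldots,(p_t,q_t))$ then satisfy exactly the conditions recalled in \S\ref{ss:SFS} for the Seifert invariants of an orientable Seifert fibred space $M$ with base orbifold $\mathbb{H}^2/F$; the classical construction (see e.g.\ \cite{Sc2,neumann}) realises $M$ and shows that $\pi_1 M$ admits precisely the presentation we extracted for $G$, so $\pi_1 M\cong G$.

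The main obstacle I anticipate is not the gcd computation itself but the bookkeeping that surrounds it: verifying that the presentation obtained by lifting generators and relators from $F$ really is a presentation of $G$ (no hidden extra relators), checking that the normalisation of $(p_j,q_j)$ and the adjustment of $d$ are compatible, and confirming that the classically constructed Seifert fibred space produces exactly this presentation up to isomorphism. These are all standard but need a careful write-up; the substantive input of the hypothesis ``$G$ torsion-free'' enters only through the primitivity of $(p_j,q_j)$.
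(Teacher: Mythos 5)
Your proof is essentially correct for central extensions, but it follows a genuinely different route from the paper. The paper's argument (following \cite[Lemma 5.15]{GMW}) passes to a torsion-free finite-index normal subgroup $F_0=\pi_1\Sigma$ of $F$, notes that $K=p^{-1}(F_0)$ is the fundamental group of a circle bundle over $\Sigma$ and hence Haken, and then applies Zimmermann's realization theorem \cite[Satz 0.2]{Zim} to the effective extension $1\to K\to G\to G/K\to 1$ to conclude that the torsion-free group $G$ is the fundamental group of a closed aspherical $3$-manifold, which is Seifert fibred because it contains an infinite cyclic normal subgroup. You instead reconstruct the Seifert presentation algebraically and realize it by the classical construction of Seifert fibred spaces with prescribed invariants (\cite{neumann}, \cite{Sc2}). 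The bookkeeping you describe is indeed standard (a five-lemma argument identifies $G$ with the group presented by the lifted relators together with centrality of $z$), and your key step is sound: since $c_j$ has order exactly $p_j$ in $F$ and $z$ generates the kernel, the kernel of $\Z^2\to\langle \tilde c_j,z\rangle$ is precisely $\langle (p_j,q_j)\rangle$, so torsion-freeness forces $\gcd(p_j,q_j)=1$, and the normalisation $0<q_j<p_j$ with the corresponding change of $d$ is harmless. What your approach buys is elementarity -- no appeal to Zimmermann's theorem or to the Seifert fibre space theorem -- at the cost of presentation bookkeeping; the paper's route is much shorter given the cited machinery.

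Two caveats. First, the lemma as stated does not assume that the kernel $\Z$ is central: conjugation gives a homomorphism $F\to\mathrm{Aut}(\Z)\cong C_2$ that could be non-trivial, and in that case the presentation you write down (with ``$z$ is central'') is not a presentation of $G$, so your construction as written covers only the central case. That is the only case in which the paper ever applies the lemma (centrality of the kernel is established before each application), but to get the literal statement in full generality you would need the more general Seifert-type presentations in which some generators invert the fibre. Second, in the ``only if'' direction you tacitly assume that the Seifert fibration of $M$ has base orbifold $\H^2/F$, which is not part of the hypothesis; the quick fix is that $G$ surjects onto the non-elementary group $F$, so $\pi_1 M$ is infinite and not virtually cyclic, hence $M$ cannot be covered by $S^2\times\R$ and must be aspherical, giving torsion-freeness for any Seifert fibred $M$ with $\pi_1M\cong G$.
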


\begin{proof} If $G$ is the fundamental group of a Seifert fibred space $M$ with hyperbolic base, then $M$ is an aspherical manifold and so $G$ is torsion-free.

For the converse, let $p: G\rightarrow F$ denote the quotient homomorphism. We pass to a finite index torsion-free normal subgroup $F_0=\pi_1(\Sigma)$ of $F$  where $\Sigma$ is a closed, orientable, surface of genus at least $2$.  Let $K = p^{-1}(F_0)$ which fits into an induced short exact sequence
$$1 \rightarrow \Z  \rightarrow K \rightarrow F_0 \rightarrow 1.$$
$K$ is isomorphic to the fundamental group of an orientable circle bundle over $\Sigma$, which is Haken. The key point now is that the extension
$$1 \rightarrow K  \rightarrow G \rightarrow G/K \rightarrow 1$$
is {\em effective} in the language of \cite{GMW}, and as such one can apply a theorem of Zimmermann \cite[Satz 0.2]{Zim} to conclude that since $G$ is torsion-free, it is the fundamental group of a closed aspherical $3$-manifold, which is necessarily Seifert fibred since it contains an infinite cyclic normal subgroup.\end{proof}

\subsection{A result of Wilkes}
\label{ss:wilkes}

We now recall a result of Wilkes \cite{Wil} which identifies  precisely the class of Seifert fibred spaces
whose fundamental groups are profinitely rigid within the class of fundamental groups of   compact $3$-manifolds.

\begin{theorem}
\label{main_wilkes}
Let $M$ be a closed aspherical Seifert fibred space, and let $N$ be a compact $3$-manifold with $\wh{\pi_1(M)}\cong \wh{\pi_1(N)}$. Then, either $N\cong M$ or else $N$ is  a Seifert fibred space and both $M$ and $N$ are among 
the surface bundles over the circle with periodic monodromy that arise  in the construction of \cite{Hem}. In particular, when $\wh{\pi_1(M)}\cong \wh{\pi_1(N)}$ and $M$ and $N$ are not homeomorphic, both $M$ and $N$ have $\H^2\times\R$ geometry and $e(M)=e(N)=0$.
\end{theorem}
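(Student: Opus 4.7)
The overall strategy is to upgrade the profinite isomorphism $\wh{\pi_1(M)}\cong\wh{\pi_1(N)}$ step by step into a geometric correspondence: first, that $N$ is itself a closed aspherical Seifert fibred space; second, that its base orbifold coincides with that of $M$; and third, that its Seifert invariants match those of $M$ unless the pair falls into the specific ambiguity for surface bundles over the circle with periodic monodromy constructed in \cite{Hem}. The ``In particular'' clause is then immediate from the observation, recorded in Subsection \ref{ss:SFS}, that any SFS which fibres over the circle has $e(M)=0$ and $\H^2\times\R$ geometry.

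The first and most substantial step is to show $N$ is itself a closed aspherical SFS. Since $M$ is closed aspherical with hyperbolic base orbifold, $Z(\pi_1(M))\cong\Z$ is generated by a regular fibre. By Theorem \ref{good1} and Corollary \ref{c:zhat}, $\wh{\pi_1(M)}$ sits in the central extension $1\to\wh{\Z}\to\wh{\pi_1(M)}\to\wh{F}\to 1$, where $F$ is the Fuchsian base group, and Theorem \ref{prof-centre-fuchs} identifies $\wh{\Z}$ as exactly the centre of $\wh{\pi_1(M)}$. One then transports this large central pro-cyclic subgroup to $\wh{\pi_1(N)}$ and uses profinite detection of the geometric decomposition of compact 3-manifolds (goodness together with the standard profinite treatment of JSJ splittings) to exclude every other geometry: hyperbolic, Sol and Nil 3-manifold groups have trivial or bounded centre profinitely, and a nontrivial JSJ decomposition forces $\wh{\pi_1(N)}$ to split as a profinite graph of groups with edge groups $\wh{\Z^2}$, incompatible with having a large central pro-cyclic factor. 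Theorem \ref{good2} ensures $\pi_1(N)$ is torsion-free, and Lemma \ref{GMW_Lemma} then promotes the resulting $\Z$-by-Fuchsian structure on $\pi_1(N)$ to an SFS structure on $N$.

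With both $M$ and $N$ aspherical SFS with hyperbolic base, quotienting by the centres (uniquely determined by Theorem \ref{prof-centre-fuchs} and Proposition \ref{p:sameZ}) gives $\wh{F_M}\cong\wh{F_N}$, and profinite rigidity within the class of cocompact Fuchsian groups identifies the two base orbifolds. Matching bases forces the same multiset $\{p_j\}$ of cone-point orders on the two sides, so the remaining data to compare is $(e;\{q_j\})$. These are pinned down by inspecting the first homology of the congruence quotients $\pi_1M\big/\langle (\text{fibre})^n\rangle$ and their analogues for $N$; a finite case analysis as in \cite{Wil} shows the remaining invariants are forced to agree except precisely when both $M$ and $N$ lie in Hempel's family of periodic-monodromy surface bundles, where genuine profinitely equivalent non-homeomorphic pairs arise and produce the stated exception.

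The main obstacle is the first step: ruling out non-Seifert 3-manifolds from purely profinite data. This requires profinite goodness of 3-manifold groups and careful profinite treatment of the JSJ decomposition, and it is the substantive input from low-dimensional topology. Once the reduction to the SFS category is achieved, the identification of the base is a direct application of the machinery collected in Section \ref{prelims}, and the classification of Seifert invariants reduces to an arithmetic computation whose exceptional locus is exactly the Hempel family.
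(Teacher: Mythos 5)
The paper offers no proof of Theorem \ref{main_wilkes}: it is quoted directly from Wilkes \cite{Wil}, with the exceptional family coming from Hempel's construction \cite{Hem}, so the only justification the paper intends is the citation. Your sketch does capture the broad architecture of the argument in the literature (reduce to the Seifert category using goodness, Corollary \ref{c:zhat}, Theorem \ref{prof-centre-fuchs} and Lemma \ref{GMW_Lemma}; identify the base orbifold via profinite rigidity of Fuchsian groups as in \cite{BCR}; then compare Seifert data), but as a proof it has genuine gaps at both ends.

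First, the decisive step is missing: once $M$ and $N$ are known to be Seifert fibred over the same base, you must show that the profinite completion determines the remaining Seifert data (in particular the euler number) except precisely for the $\H^2\times\R$, $e=0$ surface bundles with periodic monodromy of \cite{Hem}. You dispose of this with ``a finite case analysis as in \cite{Wil}'', which is circular -- that analysis \emph{is} the theorem being proved, and nothing in your sketch indicates how $\wh{\pi_1 M}$ sees $e(M)$ or the $q_j$, nor why the exceptional locus is exactly Hempel's family. Second, the reduction to the Seifert category is only gestured at, and parts of it are wrong as stated: a closed aspherical SFS need not have hyperbolic base (Euclidean and Nil bases occur, e.g.\ the $3$-torus and Heisenberg manifold), so $Z(\pi_1 M)\cong\Z$ and the appeal to Theorem \ref{prof-centre-fuchs} do not cover the theorem as stated; your claim that Nil $3$-manifold groups have ``trivial or bounded centre profinitely'' is false (the Heisenberg group has infinite centre whose closure is central in the completion); and excluding hyperbolic, Sol, and non-geometric $N$ (as well as manifolds with boundary) from the profinite data is a substantial theorem -- it rests on Wilton--Zalesskii's detection of geometries and on the efficiency of the JSJ decomposition in the profinite completion, not on the soft observation that a profinite graph of groups with $\wh{\Z^2}$ edge groups cannot have a large central pro-cyclic subgroup, which you assert without argument. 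So the proposal is best read as a plausible roadmap through \cite{Wil} and its prerequisites rather than a proof.
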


We shall use this in combination with the following lemma.

\begin{lemma}\label{l:dense-will-do}
Let $\La$ and $\G$ be the fundamental groups of Seifert fibred spaces over a hyperbolic base orbifold
that has fundamental group $\Delta$. Then, every homomorphism $j:\La\rightarrow \wh{\G}$
with dense image is injective and induces an isomorphism $\hat{j}: \wh{\La}\to\wh{\G}$
\end{lemma}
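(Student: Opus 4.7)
The plan is to extend $j$ to a continuous surjection $\hat{j}\colon\wh{\La}\onto\wh{\G}$ and then use the central-extension structure of $\La$ and $\G$ to reduce the injectivity of $\hat{j}$ to the Hopfian property of $\wh{\D}$ and $\wh{\Z}$. Once $\hat{j}$ is established as an isomorphism, the injectivity of $j$ is automatic, since $\La$ is residually finite (being a $3$-manifold group) and $j$ factors through $\La\hookrightarrow\wh{\La}$.

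First I would record the relevant structure. As the base orbifold is hyperbolic, $\D$ is a cocompact Fuchsian group with trivial centre, and both $\La$ and $\G$ fit into central extensions
\[
1\to Z_\La\to\La\to\D\to 1,\qquad 1\to Z_\G\to\G\to\D\to 1,
\]
with $Z_\La\cong Z_\G\cong\Z$ the regular fibre subgroups. By Theorem~\ref{good1}, $\D$ is good, so Lemma~\ref{full_profinite} together with Corollary~\ref{exact} yields corresponding short exact sequences of profinite completions with kernel $\wh{\Z}$. Combining this with Theorem~\ref{prof-centre-fuchs}, which gives $Z(\wh{\D})=1$, one identifies $Z(\wh{\La})=\wh{Z_\La}$ and $Z(\wh{\G})=\wh{Z_\G}$.

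Next, since $\wh{\G}$ is profinite, $j$ extends uniquely to a continuous $\hat{j}\colon\wh{\La}\to\wh{\G}$; the image $\hat{j}(\wh{\La})$ is closed (a compact subset of a Hausdorff space) and contains the dense set $j(\La)$, so $\hat{j}$ is surjective. A surjective homomorphism carries centres into centres, giving $\hat{j}(\wh{Z_\La})\subseteq\wh{Z_\G}$ and thus a commutative ladder of short exact sequences whose induced map on quotients is a surjection $\bar{j}\colon\wh{\D}\to\wh{\D}$. Since $\wh{\D}$ is topologically finitely generated, it is Hopfian, so $\bar{j}$ is an isomorphism. Any $x\in\ker\hat{j}$ has trivial image in $\wh{\D}$ (and hence lies in $\wh{Z_\La}$), so $\ker\hat{j}$ equals the kernel of the restricted map $\alpha\colon\wh{Z_\La}\to\wh{Z_\G}$; a brief diagram chase (every $z\in\wh{Z_\G}$ is $\hat{j}(x)$ for some $x\in\wh{\La}$, and the image of $x$ in $\wh{\D}$ lies in $\ker\bar{j}=1$, forcing $x\in\wh{Z_\La}$) shows $\alpha$ is surjective. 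Because $\wh{\Z}$ is Hopfian — any continuous surjection $\wh{\Z}\to\wh{\Z}$ is multiplication by a topological unit, hence injective — $\alpha$ is an isomorphism, and therefore so is $\hat{j}$. The one genuinely substantive input, and the main point where the Seifert-fibred hypothesis enters, is the identification of $Z(\wh{\La})$ and $Z(\wh{\G})$ with the closures of their fibre subgroups; after that step, the remainder is a routine diagram chase.
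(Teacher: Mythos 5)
Your proposal is correct and follows essentially the same route as the paper: both arguments use the central extensions $1\to\Z\to\La\to\D\to 1$ and $1\to\Z\to\G\to\D\to 1$, Corollary \ref{c:zhat} to get the corresponding profinite exact sequences, the fact that $Z(\wh{\D})=1$ to identify centres and induce a surjection $\wh{\D}\to\wh{\D}$, and then Hopficity of $\wh{\D}$ and of $\wh{\Z}$ to conclude that $\hat{j}$ is an isomorphism. The only cosmetic difference is that you extend $j$ to $\hat{j}$ at the outset and chase the completed ladder, whereas the paper first induces the map $\D\to\wh{\D}$ from $j$ itself and then completes; the content is the same.
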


\begin{proof} We have short exact sequences $1\to \Z\to \La\to \Delta\to 1$
and $1\to \Z\to \G\to \Delta\to 1$. 
As  $j(\La)$ is dense in $\wh{\G}$, its centre   must map to the centre of $\wh{\G}$,
so $j$ induces a map $\pi:\Delta \to \wh{\Delta}$ with dense image such that the following diagram 
commutes.
$$ 
\begin{matrix}
&1\longrightarrow &\Z&\longrightarrow&
\La &\longrightarrow& \Delta&  \longrightarrow& 1
\cr
&&\ {\bigg\downarrow} &&{\bigg\downarrow} {j}
&& {\bigg\downarrow} {\pi}&&
\cr
&1\longrightarrow &\wh{\Z}&\longrightarrow&
\wh{\G} &\longrightarrow& \wh{\Delta} &  \longrightarrow& 1
\end{matrix}
$$ 
Since the image of $\pi$ is dense, it extends to an epimorphism $\wh{\pi}:\wh{\Delta}\to\wh{\Delta}$,
and since $\wh{\Delta}$ is Hopfian, this must be an isomorphism.  Thus, with an appeal 
to Corollary \ref{c:zhat},  we obtain a commutative
diagram 
$$ 
\begin{matrix}
&1\longrightarrow &\wh{\Z}&\longrightarrow&
\wh{\La} &\longrightarrow& \wh{\Delta}&  \longrightarrow& 1
\cr
&&\ {\bigg\downarrow} &&{\bigg\downarrow} {\hat{j}}
&& {\bigg\downarrow} {\wh{\pi}}&&
\cr
&1\longrightarrow &\wh{\Z}&\longrightarrow&
\wh{\G} &\longrightarrow& \wh{\Delta} &  \longrightarrow& 1
\end{matrix}
$$ 
with $\hat{j}$ a surjection and $\wh{\pi}$ an isomorphism. It follows that the restriction of $ {\hat{j}}$ to $\wh{\Z}$ is
an epimorphism. By the Hopf property for $\wh{\Z}$, this
restriction must be an isomorphism, and therefore $\wh{j}$ is an isomorphism. 
\end{proof}

\begin{corollary}\label{c:dense-enough}
Under the hypotheses of Lemma \ref{l:dense-will-do}, if the Euler number of either  Seifert fibred space
is non-zero then $\La\cong\G$.
\end{corollary}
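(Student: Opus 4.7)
The plan is to combine Lemma \ref{l:dense-will-do} with the theorem of Wilkes (Theorem \ref{main_wilkes}) in a direct way. From the hypotheses of Lemma \ref{l:dense-will-do}, we are given a homomorphism $j\colon \La\to\wh{\G}$ with dense image, and the lemma produces an induced isomorphism $\hat{j}\colon\wh{\La}\xrightarrow{\cong}\wh{\G}$. So the task reduces to promoting this profinite isomorphism to an actual isomorphism $\La\cong\G$ under the assumption that the euler number of at least one of the underlying Seifert fibred spaces is nonzero.

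First I would observe that, since $\La$ and $\G$ fit into central extensions $1\to\Z\to\La\to\Delta\to 1$ and $1\to\Z\to\G\to\Delta\to 1$ with $\Delta$ the fundamental group of a hyperbolic $2$-orbifold, the manifolds $M_\La$ and $M_\G$ with $\pi_1M_\La=\La$ and $\pi_1M_\G=\G$ are closed aspherical Seifert fibred spaces (indeed, both $M_\La$ and $M_\G$ are geometric with $\H^2\times\R$ or $\widetilde{\PSL}_2$ geometry, as recalled in Section \ref{ss:SFS}). Thus $M_\La$ and $M_\G$ lie in the class of $3$-manifolds to which Theorem \ref{main_wilkes} applies, and $\wh{\pi_1M_\La}\cong\wh{\pi_1M_\G}$ by the preceding step.

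Now I would invoke Wilkes' dichotomy: either $M_\La\cong M_\G$, or both $M_\La$ and $M_\G$ arise from Hempel's construction as surface bundles over the circle with periodic monodromy, in which case both admit $\H^2\times\R$ geometry and satisfy $e(M_\La)=e(M_\G)=0$. The hypothesis that the euler number of either Seifert fibred space is nonzero rules out the second alternative. Hence $M_\La\cong M_\G$, and taking fundamental groups yields $\La\cong\G$, as required.

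There is no real obstacle here beyond verifying that the extensions considered are indeed the fundamental groups of closed aspherical Seifert fibred spaces; this is immediate from the central extension structure together with Lemma \ref{GMW_Lemma} (after noting that goodness of $\Delta$ and Theorem \ref{good-tors-free} ensure the relevant extensions are torsion-free whenever we need them to be), so the argument reduces entirely to assembling the two cited results.
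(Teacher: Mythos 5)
Your argument is correct and is exactly the paper's intended one: the corollary is stated without proof precisely because Lemma \ref{l:dense-will-do} yields $\wh{\La}\cong\wh{\G}$ and Theorem \ref{main_wilkes} then forces the two Seifert fibred spaces to be homeomorphic unless both euler numbers vanish, which the hypothesis excludes. (Your closing remarks about Lemma \ref{GMW_Lemma} are unnecessary, since the hypotheses of Lemma \ref{l:dense-will-do} already posit that $\La$ and $\G$ are fundamental groups of Seifert fibred spaces over a hyperbolic base, but this does no harm.)
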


\subsection{Some calculations}
\label{s:clacs}

We will  need  
information about the abelianizations of the fundamental groups of Seifert fibred spaces with base orbifolds $S^2(p,q,r)$ drawn from list (\ref{list-top}), 
and  control over  the Euler numbers of 
Seifert fibred spaces  with base from lists  (\ref{list-top}) and (\ref{list-bottom}). For convenience, we recall the standard presentation for the triangle group $\Delta(p,q,r)$:
$$\Delta(p,q,r) = \langle a,b,c | a^p=b^q=c^r=1, abc=1 \rangle.$$

Here and throughout this article, we write $C_n$ to denote the {\em cyclic group of order $n$.}
 
\begin{lemma}
\label{small_ab}
Let $M$ be a Seifert fibred space with base orbifold $S^2(p,q,r)$ where
$\D(p,q,r)$ is drawn from (\ref{list-top}),  
$$\Delta(3,3,4), \Delta(3,3,5), \Delta(3,3,6), \Delta(2,5,5), \Delta(4,4,4).$$
Then, $e(M)\neq 0$ and $H_1(M,\Z)$ is finite. In more detail, if 
$$
\pi_1(M) = \< c_1, c_2,c_3, z\mid z~\hbox{is central},~c_1^{p}z^{e_1}=c_2^{q}z^{e_2}=c_3^{r}z^{e_3}=1, c_1c_2c_3 = z^d\>
$$
then, in the various cases, $H_1(M,\Z)$ is:
\begin{enumerate}
\item $\Delta(3,3,4)$: $C_{3k_1}$ where $k_1= 4 e_1 + 4 e_2 + 3 e_3 + 12 d$
\item $\Delta(3,3,5)$: $C_{3k_2}$ where $k_2= 5 e_1 + 5 e_2 + 3 e_3 + 15 d$
\item $\Delta(3,3,6)$: $C_3\times C_{3k_3}$  where $k_3= 2 e_1 + 2 e_2 + e_3 + 6 d$
\item $\Delta(2,5,5)$: $C_{5k_4}$  where $k_4= 5 e_1 + 2 e_2 + 2 e_3 + 10 d$ 
\item $\Delta(4,4,4)$: $C_4\times C_{4k_5}$  where $k_5= e_1 + e_2 + e_3 + 4 d$. 
\end{enumerate}
\end{lemma}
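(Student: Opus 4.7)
\bigskip

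\noindent\textbf{Proof plan for Lemma \ref{small_ab}.} The plan is to abelianize the given presentation and compute the Smith normal form of the resulting integer matrix, case by case. Writing additively, abelianizing kills $[x,y]$-type commutators (there are none here, since $g=0$) and turns the relations into the $4\times 4$ integer presentation matrix
\[
A \;=\; \begin{pmatrix} p & 0 & 0 & e_1 \\ 0 & q & 0 & e_2 \\ 0 & 0 & r & e_3 \\ 1 & 1 & 1 & -d \end{pmatrix}
\]
on generators $(c_1,c_2,c_3,z)$, so that $H_1(M,\Z)$ is the cokernel of $A$.

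First I would use the last row to eliminate $c_1$ via elementary row and column operations, reducing $A$ to a block that is the identity on a single generator together with the $3\times 3$ matrix (on $(c_2,c_3,z)$)
\[
B \;=\; \begin{pmatrix} -p & -p & pd+e_1 \\ q & 0 & e_2 \\ 0 & r & e_3 \end{pmatrix},
\qquad |\det B| \;=\; |dpqr + e_1 qr + e_2 pr + e_3 pq| \;=\; pqr\,|e(M)|.
\]
Thus $|H_1(M,\Z)|$ is finite iff $e(M)\neq 0$, and its value equals $pqr\cdot|e(M)|$; the claimed formulas for the $k_i$ are precisely the quantities that reduce $pqr\,|e(M)|$ to integer form in each of the five cases.

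Next I would compute the Smith normal form of $B$ in each case. In each instance one uses the coprimality condition $\gcd(e_i,p_i)=1$, together with integer row and column operations, to reduce $B$ to an upper-triangular form whose diagonal I can read off. Specifically, for $\D(3,3,4)$, $\D(3,3,5)$ and $\D(2,5,5)$, the relevant exponent $e_j$ coprime to $p_j$ forces the first two invariant factors to equal $1$, producing a cyclic cokernel with last invariant factor $3k_1$, $3k_2$ or $5k_4$. For $\D(3,3,6)$ and $\D(4,4,4)$, where $\gcd(p,q,r)>1$, one computes that the gcd of all $2\times 2$ minors equals $3$ and $4$ respectively (again using $\gcd(e_i,p_i)=1$), which upgrades the second invariant factor, yielding the noncyclic groups $C_3\times C_{3k_3}$ and $C_4\times C_{4k_5}$. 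These five Smith normal form computations are the bulk of the work but are entirely routine.

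Finally I must check that $k_i\neq 0$, so that $H_1(M,\Z)$ is indeed finite and $e(M)\neq 0$. This is a short modular-arithmetic verification using the standing hypothesis $0<e_j<p_j$ with $\gcd(e_j,p_j)=1$. For example, $k_1=4e_1+4e_2+3e_3+12d$ reduces modulo $4$ to $3e_3$ with $e_3\in\{1,3\}$, which is never $0$ mod $4$; similarly $k_2\equiv 3e_3\not\equiv 0\pmod 5$; $k_3\pmod 6$ is never $0$ when $e_3\in\{1,5\}$ and $e_1,e_2\in\{1,2\}$; $k_4$ and $k_5$ are forced to be odd. The main obstacle, such as it is, is organising the case analysis cleanly rather than any single difficult step; goodness, profinite considerations, and the deeper machinery of the paper play no role, since this is a purely combinatorial group-theoretic calculation from a given presentation.
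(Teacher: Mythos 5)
Your plan is correct: the presentation matrix, the identity $|\det| = pqr\,|e(M)| = 3k_1,\,3k_2,\,9k_3,\,5k_4,\,16k_5$ in the five cases, the Smith normal form claims (second invariant factor $1$ in the three cyclic cases, $3$ for $\D(3,3,6)$ and $4$ for $\D(4,4,4)$, using $\gcd(e_j,p_j)=1$), and the mod-$4$, mod-$5$ and parity checks showing $k_i\neq 0$ all check out against the normalisation $0<e_j<p_j$. The route differs from the paper's in two respects. For the group structure, the paper does not compute Smith normal forms: it uses the fact that $M$ has Heegaard genus $2$ (so $\pi_1M$ has rank $2$ and $H_1$ has at most two invariant factors), the surjection of $H_1(M,\Z)$ onto the abelianisation of the base triangle group ($C_3$, $C_3$, $C_3\times C_3$, $C_5$, $C_4\times C_4$), and the exact sequence $1\to Z\to H_1(M,\Z)\to C\to 1$ with $Z$ the (cyclic) image of the centre and $|Z|$ coprime to $|C|$; the determinant then fixes the order. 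For $e(M)\neq 0$, the paper argues topologically: $e(M)=0$ would force $M$ to be a surface bundle over the circle with periodic monodromy (Hempel/Scott), which is impossible since $H_1(M,\Z)$ is finite; you instead read $e(M)\neq 0$ directly off the identity $pqr\,e(M)=\pm\det$, which is cleaner and entirely arithmetic. Your version is more self-contained and elementary (no appeal to Heegaard genus or the surface-bundle criterion), at the cost of five routine but explicit normal-form computations; the paper's version trades those computations for standard 3-manifold facts. Either way the determinant/coprimality core is the same, so there is no gap to flag.
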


\begin{proof}   
As $M$ is a Seifert fibred space over $S^2(p,q,r)$, it  has Heegaard genus $2$ (see \cite{BCZ}), and hence 
$\pi_1(M)$ has rank $2$. It follows that if $H_1(M,\Z)$ is not cyclic, then $H_1(M,\Z)\cong C_m\times C_n$.
The abelianizations of the five triangle  groups that we are considering are, in order,   $C_3$, $C_3$, $C_3\times C_3$, $C_5$ and $C_4\times C_4$, and the abelianization of $\pi_1M$ surjects the abelianization of its base.
Abelianizing the fundamental group of $\pi_1M$  in the five cases gives the relation matrices shown below:
$$\left(
\begin{array}{cccc}
 e_1 & 3 & 0 & 0 \\
 e_2 & 0 & 3 & 0 \\
 e_3 & 0 & 0 & 4 \\
 -d & 1 & 1 & 1 \\
\end{array}
\right),
\left(
\begin{array}{cccc}
e_1 & 3 & 0 & 0 \\
e_2 & 0 & 3 & 0 \\
e_3 & 0 & 0 & 5 \\
 -d & 1 & 1 & 1 \\
\end{array}
\right),
\left(
\begin{array}{cccc}
e_1 & 3 & 0 & 0 \\
e_2 & 0 & 3 & 0 \\
e_3 & 0 & 0 & 6 \\
 -d & 1 & 1 & 1 \\
\end{array}
\right),
\left(
\begin{array}{cccc}
e_1 & 2 & 0 & 0 \\
e_2 & 0 & 5 & 0 \\
e_3 & 0 & 0 & 5 \\
 -d & 1 & 1 & 1 \\
\end{array}
\right),
\left(
\begin{array}{cccc}
e_1 & 4 & 0 & 0 \\
e_2 & 0 & 4 & 0 \\
e_3 & 0 & 0 & 4 \\
 -d & 1 & 1 & 1 \\
\end{array}
\right).
$$
These have determinants $3 (4 e_1 + 4 e_2 + 3 e_3 + 12 d)$, 
$3 (5 e_1 + 5 e_2 + 3 e_3 + 15 d)$, 
$9 (2 e_1 + 2 e_2 + e_3 + 6 d)$, 
$5 (5 e_1 + 2 e_2 + 2 e_3 + 10 d)$ and 
$16 (e_1 + e_2 + e_3 + 4 d)$ respectively.

For $H_1(M,\Z)$ to be infinite these determinants must vanish.  Setting these determinants to zero, and rearranging, we see that in the case of the first four equations it follows that $4|e_3$, $5|e_3$ $2|e_3$ and $2|e_1$.  However,
by definition of the Seifert invariants (the coprimeness of $p$, $q$, $r$ with the $e_i$), none of these 
conditions is satisfied. For the final case, by definition of the Seifert invariants, each $e_i=1$ or $3$, and so $e_1 + e_2 + e_3$ is always odd, and hence the determinant cannot vanish in this case either.
It follows that these determinants are the orders of the abelianizations, as claimed.

As noted, for $M$ a Seifert fibred space as above, $\pi_1(M)$ has rank $2$, and so the structure of the abelianizations in cases (3) and (5) of Lemma \ref{small_ab} follows immediately. We also note that using the coprimeness condition
on the Seifert invariants in these cases that $k_3$ is coprime to $3$ and $k_5$ is coprime to $2$.

For the remaining cases of Lemma \ref{small_ab}, we note that if $Z$ denotes the
image of the centre in $H_1(M,\Z)$ then we have an exact sequence (where $C\cong C_3$ or $C\cong C_5$ is the abelianization of the triangle group base) 
$$1 \rightarrow Z \rightarrow H_1(M,\Z) \rightarrow C \rightarrow 1.$$
Once again, using the coprimeness condition
on the Seifert invariants we deduce that $(4 e_1 + 4 e_2 + 3 e_3 + 12 d)$ and $(5 e_1 + 5 e_2 + 3 e_3 + 15 d)$ are coprime to $3$ and $(5 e_1 + 5 e_2 + 3 e_3 + 15 d)$ is coprime to $5$.
Hence the order of $Z$  is coprime
to $|C|$, which is $3$ or $5$, and it follows that the abelianizations in these three cases are as claimed. 

That $e(M)\neq 0$ now follows easily since it is known (see \cite[Theorem 4.1]{Hem} or \cite[Theorem 5.4]{Sc2}) that if $M$ is a closed 
orientable Seifert fibred space with orientable base then $M$ is also a surface bundle over the circle with periodic monodromy if and only if $e(M)=0$. In our case, since $H_1(M,\Z)$ is finite, it cannot be a surface bundle over the circle and therefore $e(M)\neq 0$. \end{proof}

\begin{lemma}
\label{l:euler_no_bottomlist}
Let $M$ be a Seifert fibred space with base orbifold $S^2(p,q,r)$ where
$\D(p,q,r)$ is  from (\ref{list-bottom}), i.e. is one of
$$\D(2,3,8), \D(2,3,10), \D(2,3,12), \D(2,4,5), \D(2,4,8).$$
Then $e(M)\neq 0$.
\end{lemma}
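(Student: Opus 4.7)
The plan is to follow the strategy of Lemma~\ref{small_ab} verbatim. First, I would invoke the consequence of \cite[Theorem~4.1]{Hem} (or \cite[Theorem~5.4]{Sc2}) recalled there: for a closed orientable Seifert fibred space with orientable base, $e(M)=0$ if and only if $M$ is a surface bundle over $S^1$ with periodic monodromy, in which case $H_1(M,\Z)$ is infinite. Hence it suffices to show that $H_1(M,\Z)$ is finite for every Seifert fibred space with base orbifold drawn from list~(\ref{list-bottom}).

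Writing the standard presentation of $\pi_1 M$ with Seifert invariants $(p,e_1),(q,e_2),(r,e_3)$ and euler datum $d$, a cofactor expansion identical to the one carried out in Lemma~\ref{small_ab} computes the determinant of the abelianization matrix as
\[
D(d,e_1,e_2,e_3) \;=\; d\cdot pqr \,+\, e_1\,qr \,+\, e_2\,pr \,+\, e_3\,pq,
\]
and $H_1(M,\Z)$ is finite precisely when $D\neq 0$. So the task reduces to verifying that $D$ does not vanish for any admissible tuple in each of the five cases, where admissibility means $\gcd(p,e_1)=\gcd(q,e_2)=\gcd(r,e_3)=1$ and $0<e_i<p_i$; note that for every triple in list~(\ref{list-bottom}) this already forces $e_1=1$ and $e_3$ odd.

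Each of the five cases then becomes a one-line divisibility check. For $\Delta(2,3,8)$, $\Delta(2,3,12)$ and $\Delta(2,4,8)$ I would work modulo a power of $2$: three of the four summands of $D$ carry a high $2$-adic valuation while $e_3\cdot pq$ has a strictly smaller $2$-adic valuation (because $e_3$ is odd), so the sum cannot vanish. For $\Delta(2,3,10)$ and $\Delta(2,4,5)$ I would instead reduce modulo $5$: all terms of $D$ other than $e_3\cdot pq$ vanish mod $5$, while $e_3$ coprime to $r$ guarantees $e_3\cdot pq\not\equiv 0\pmod 5$. The mildly delicate case is $\Delta(2,4,8)$, where every summand of $D$ is individually divisible by $8$ and one must first factor out $8$ before invoking the parity of $e_3$; but this is the only subtlety, and there is no substantive obstacle to completing the proof.
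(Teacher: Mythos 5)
Your proof is correct, but it takes a different route from the paper. The paper disposes of list (\ref{list-bottom}) in two lines: each such $\D(p,q,r)$ contains the corresponding triangle group from list (\ref{list-top}) with index $2$, so $M$ has a Seifert fibred double cover $M'$ over a list-(\ref{list-top}) base; since $e(M')=2e(M)$ and Lemma \ref{small_ab} gives $e(M')\neq 0$, the result follows. You instead redo the determinant computation of Lemma \ref{small_ab} directly for the five new bases, and your case-by-case divisibility checks (mod $4$ for $(2,3,8)$ and $(2,3,12)$, mod $5$ for $(2,3,10)$ and $(2,4,5)$, and factoring out $8$ for $(2,4,8)$) are all accurate; your formula $D=dpqr+e_1qr+e_2pr+e_3pq$ agrees with the paper's determinants in the list-(\ref{list-top}) cases. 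Two small remarks: your parenthetical claim that admissibility forces $e_3$ odd fails for $\D(2,4,5)$ (where $e_3$ need only be coprime to $5$), but this is harmless since you never use parity of $e_3$ in that case; and the detour through the surface-bundle characterisation of $e(M)=0$ is unnecessary, because with the paper's convention $e(M)=-(d+e_1/p+e_2/q+e_3/r)=-D/pqr$, so $D\neq 0$ is literally equivalent to $e(M)\neq 0$. What your approach buys is a self-contained argument that also shows $H_1(M,\Z)$ is finite for these bases; what the paper's approach buys is brevity and reuse of the computation already done for list (\ref{list-top}).
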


\begin{proof} Each triangle group in the statement of Lemma \ref{l:euler_no_bottomlist} has  a subgroup of index
$2$, namely the corresponding triangle group from list (\ref{list-top}). Thus $M$ has a double cover $M'$ which is a Seifert fibred space with base $S^2(p',q',r')$ from (\ref{list-top}). For Euler numbers we have $e(M')=2 e(M)$
 (see \cite[Theorem 3.6]{Sc2} for example),  and 
 Lemma \ref{small_ab} tells us that $e(M')\neq 0$. \end{proof}

\section{Profinite rigidity of certain Seifert fibred spaces}
\label{SFS_section_rigid}

For the convenience of the reader,
we begin by recalling from \cite{BMRS2}  
that the triangle groups on which we are focussing are profinitely rigid.
\begin{theorem}\label{main_triangle}
Each of the   triangle groups in (\ref{list-top}) and (\ref{list-bottom}) is arithmetic and profinitely rigid.
\end{theorem}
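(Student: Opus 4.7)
The plan is to follow the two-step template of \cite{BMRS1, BMRS2}: first establish Galois rigidity in the sense of Definition \ref{def:galois-rigid}, and then combine it with the specific arithmetic shape of each triangle group to upgrade an abstract profinite isomorphism to a concrete isomorphism of groups.

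First I would gather the arithmetic data. Each of the ten triangle groups appearing in lists (\ref{list-top}) and (\ref{list-bottom}) lies on Takeuchi's list of arithmetic Fuchsian triangle groups, so arithmeticity is immediate and the invariant trace field $K$, the quaternion algebra $B/K$, the ramification set of $B$, and the type number of $B$ can each be written down explicitly. In every case $K$ has degree at most $2$ over $\Q$, $B$ has type number $1$, and $B$ is ramified at exactly one real place together with (at most) one finite place $\omega$. This places the groups squarely in the arithmetic regime required by Theorem \ref{galois_rigid_reps}(ii).

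The central step is to prove Galois rigidity of each $\D(p,q,r)$ in list (\ref{list-top}), i.e.\ that $|\mathrm{X}_{\mathrm{zar}}(\D(p,q,r),\C)|=n_{K}$. A Zariski-dense representation of $\D(p,q,r)$ to $\PSL(2,\C)$ is determined up to conjugation by the traces of the images of the three standard generators and the trace of a product; the first three traces are forced by the generator orders $p,q,r$, so the Zariski-dense locus of the character variety is cut out by a single explicit polynomial relation. I would verify by direct computation that its solutions are permuted transitively by $\mathrm{Gal}(\overline{\Q}/\Q)$ acting on $K$, giving exactly $n_{K}$ conjugacy classes. For the groups in list (\ref{list-bottom}), each contains a unique index-$2$ Fuchsian subgroup from list (\ref{list-top}); since Zariski-dense representations remain Zariski-dense upon restriction, one can bootstrap the count.

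With Galois rigidity and the arithmetic input in hand, profinite rigidity is deduced as follows. Given a finitely generated, residually finite $\Sigma$ with $\wh{\Sigma}\cong\wh{\D}$, Theorem \ref{galois_rigid_reps}(ii) produces a Zariski-dense homomorphism $f'\colon \Sigma\to \P\rho(\mathcal{O}^1)\subset \PSL(2,\C)$. Because $B$ has type number $1$ and the prescribed ramification, $f'(\Sigma)$ is forced into (a conjugate of) $\D$ itself. Using the goodness of Fuchsian groups (Theorem \ref{good-groups}), the LERF property of triangle groups, and the densities of $\Sigma$ and $\D$ inside their common profinite completion, one matches finite-index subgroups across $f'$ and concludes that $f'$ realises the given profinite isomorphism and is an isomorphism of groups. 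The main obstacle in the programme is Galois rigidity itself: once the representation-theoretic count $|\mathrm{X}_{\mathrm{zar}}(\D,\C)|=n_{K}$ is nailed down, the remaining steps are essentially arithmetic bookkeeping with the machinery already assembled in \cite{BMRS1, BMRS2}.
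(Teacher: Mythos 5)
Two preliminary remarks. First, in this paper Theorem \ref{main_triangle} is not proved at all: it is recalled from \cite{BMRS2}, so the only yardstick is the argument of that paper (summarised in the introduction here). For the five groups of list (\ref{list-top}) your outline does follow that template: arithmeticity from Takeuchi's classification, Galois rigidity by a character count, then Theorem \ref{galois_rigid_reps} together with Hopficity and the results of \cite{BMRS2} on epimorphisms of $\wh{\D}$ to force $\Sigma\cong\D$. One caution even there: the traces of the generators are \emph{not} forced by the orders $p,q,r$ --- an elliptic of order $p$ may have trace $\pm 2\cos(k\pi/p)$ for any $k$ coprime to $p$ --- and this multiplicity is exactly where the count can fail; it is why $\D(3,5,5)$ and $\D(5,5,5)$, originally asserted to be Galois rigid in \cite[Proposition 3.3]{BMRS2}, are in fact not (see the discussion preceding Proposition \ref{reduce_galois_rigid}). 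So the phrase ``verify by direct computation'' carries the real weight, though for list (\ref{list-top}) the computation does go through.

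The genuine gap is your treatment of list (\ref{list-bottom}). The arithmetic bookkeeping is wrong for these groups: their trace fields in the sense of Section \ref{galois} (traces of the $\SL(2,\C)$-lift) have degree $4$ over $\Q$ (e.g. $2\cos(\pi/8)$ for $\D(2,3,8)$, or $\Q(\sqrt2,\sqrt5)$ for $\D(2,4,5)$); only the \emph{invariant} trace field is quadratic. Consequently the hypotheses of Theorem \ref{galois_rigid_reps} --- a real quadratic $K$, $\mathrm{Ram}(B)=\{v_2,\omega\}$, type number one --- do not apply to them as stated, and your proposal to ``bootstrap the count'' of Zariski-dense characters from the index-$2$ subgroup is unsubstantiated: restriction of characters to a finite-index subgroup gives no control of the number of Zariski-dense characters of the overgroup, and Galois rigidity of the list (\ref{list-bottom}) groups is neither proved nor used in \cite{BMRS2}. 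The actual argument runs in the opposite direction: one first establishes profinite rigidity of the index-$2$ subgroup from list (\ref{list-top}), then shows that any $\Sigma$ with $\wh{\Sigma}\cong\wh{\D}$ contains an index-$2$ subgroup isomorphic to that subgroup, and finally pins down the possible index-$2$ extensions using \cite[Lemma 4.3]{BMRS2} --- the same going-up step that this paper mirrors for Seifert fibred groups in Proposition \ref{up_reduce}. Without that step (or an honest proof of Galois rigidity plus the requisite arithmetic input over the degree-$4$ trace field), your argument does not prove Theorem \ref{main_triangle} for the groups in list (\ref{list-bottom}).
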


\subsection{Reducing to the relative case}
\label{reduce_to_rel_rigid}

Let $\Delta = \Delta(p,q,r)$ be one of the triangle groups from Theorem \ref{main_triangle}, let $M$ be a Seifert fibred space with base $S^2(p,q,r)$, and $\Gamma = \pi_1M$. As a first step towards proving that $\Gamma$ is profinitely rigid, we reduce  to a ``relative situation" by proving the following result.

\begin{theorem} \label{reduce_seif}
For $\Gamma$ as above and $\Lambda$ a finitely generated, residually finite group with $\widehat{\Gamma} \cong \widehat{\Lambda}$,  there is a Seifert fibred space $N$ with base $S^2(p,q,r)$ such that $\Lambda \cong \pi_1(N)$.
\end{theorem}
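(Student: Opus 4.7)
The plan is to realise $\Lambda$ as a central extension $1 \to \Z \to \Lambda \to \Delta \to 1$ and then invoke Lemma \ref{GMW_Lemma} to identify $\Lambda$ as the fundamental group of a Seifert fibred space $N$ with base orbifold $\H^2/\Delta = S^2(p,q,r)$. To locate the central subgroup, note that since $\Delta$ is good (Theorem \ref{good-groups}) and $Z(\G) = \Z$ is finitely generated, Lemma \ref{full_profinite} together with Corollary \ref{c:zhat} yields the short exact sequence $1 \to \wh\Z \to \wh\G \to \wh\Delta \to 1$. Combined with $Z(\wh\Delta) = 1$ from Theorem \ref{prof-centre-fuchs}, this gives $Z(\wh\G) = \wh\Z$, and transporting across $\wh\Lambda \cong \wh\G$ shows $Z(\wh\Lambda) = \wh\Z$ with quotient $\wh\Delta$. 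By Lemma \ref{centres}, $Z := Z(\Lambda) = \Lambda \cap \wh\Z$ is a torsion-free abelian subgroup of $\wh\Z$.

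The critical task is to identify $\Lambda/Z$ with $\Delta$. The image $N$ of $\Lambda$ under $\wh\Lambda \twoheadrightarrow \wh\Delta$ equals $\Lambda/Z$ and is a finitely generated dense subgroup of $\wh\Delta$. Since $Z$ is closed in the profinite topology on $\Lambda$ (being the intersection of $\Lambda$ with a closed subgroup of $\wh\Lambda$), $N$ is residually finite, and the dense inclusion $N \hookrightarrow \wh\Delta$ induces a continuous surjection $\wh N \twoheadrightarrow \wh\Delta$. If this surjection is bijective then the profinite rigidity of $\Delta$ (Theorem \ref{main_triangle}) forces $N \cong \Delta$. Bijectivity is equivalent to $Z$ being dense in $\wh\Z$, and establishing this density is the main technical obstacle: the approach is to pull back, via $\wh\Lambda \cong \wh\G$, the finite quotients of $\G$ in which the fibre class $z$ has prescribed order $n$ (produced by Lemma \ref{full_profinite} applied to $\G$), obtaining finite quotients of $\Lambda$ in which the image of $\wh\Z = Z(\wh\Lambda)$ is cyclic of order $n$. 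By analysing which elements of $\Lambda$ can descend to these central cyclic quotients, one then argues that the image of $Z$ itself exhausts $\wh\Z/n\wh\Z$ for every $n$, so $\overline Z = \wh\Z$ in $\wh\Lambda$.

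The hard part is the analysis above, because a priori an element of $\Lambda$ close in $\wh\Lambda$ to a central element of $\wh\Lambda$ need not itself be central in $\Lambda$; one must use the specific structure of the chosen finite quotients to force the central cyclic quotient to be attained by $Z$ rather than by an approximating coset. Once $\Lambda/Z \cong \Delta$ is in hand, Lemma \ref{l:Zfg} gives that $Z$ is finitely generated, whereupon Proposition \ref{p:sameZ} (applied with $B = \Delta$) yields $Z \cong \Z$. Torsion-freeness of $\Lambda$ follows from goodness: since $\G$ is torsion-free and good (Theorems \ref{good-groups} and \ref{good-tors-free}), $\wh\G \cong \wh\Lambda$ is torsion-free and hence so is $\Lambda$. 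With the central extension $1 \to \Z \to \Lambda \to \Delta \to 1$ in hand and $\Lambda$ torsion-free, Lemma \ref{GMW_Lemma} identifies $\Lambda = \pi_1(N)$ for a Seifert fibred space $N$, whose base orbifold is $\H^2/\Delta = S^2(p,q,r)$.
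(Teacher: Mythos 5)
Your overall scaffolding (identify $Z(\wh\Lambda)=\wh\Z$ via Corollary \ref{c:zhat} and Theorem \ref{prof-centre-fuchs}, get $Z:=Z(\Lambda)=\Lambda\cap\wh\Z$ from Lemma \ref{centres}, finish with Lemma \ref{l:Zfg}, Proposition \ref{p:sameZ}, goodness and Lemma \ref{GMW_Lemma}) matches the end of the paper's argument, but the heart of the theorem is missing. Everything hinges on your claim that $Z$ is dense in $\wh\Z$ (equivalently, that $\Lambda\to\wh\Delta$ has image a group $N=\Lambda/Z$ with $\wh N\to\wh\Delta$ an isomorphism, so that profinite rigidity of $\Delta$ applies). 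You explicitly label this ``the main technical obstacle'' and then only gesture at it: pulling back finite quotients of $\G$ in which the fibre class has order $n$ gives finite quotients of $\Lambda$ whose \emph{profinite} centre maps onto $C_n$, but, as you yourself note, an element of $\Lambda$ hitting that central cyclic image need not be central in $\Lambda$, and nothing in your sketch forces $Z$ (rather than a non-central approximating element) to realise these quotients. A priori $\Lambda$ could even be centre-free while $\wh\Lambda$ has centre $\wh\Z$ --- the paper's own Example with $\SL(3,\Z)$ shows that the centre of a profinite completion can vastly exceed the closure of the centre --- so no soft argument of this kind can close the gap. (A small additional quibble: bijectivity of $\wh N\to\wh\Delta$ is implied by, but not obviously equivalent to, density of $Z$; only the implication you need is clear.)

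The paper closes exactly this gap with arithmetic input that your proposal never uses: Galois rigidity. Lemma \ref{galois_rigid_seif} upgrades Galois rigidity from $\Delta$ to $\G$, and Theorem \ref{galois_rigid_reps} then converts the isomorphism $\wh\Lambda\cong\wh\G$ into an honest homomorphism $\rho\colon\Lambda\to\mathrm{P}\rho(\mathcal O^1)$ with non-elementary image $L\le\Delta$ (with an extra twist for $\D(4,4,4)$ and an index-$2$ step for list (\ref{list-bottom})). Centre-freeness of $\wh L$ lets $\wh\rho$ descend to an epimorphism $\wh\Delta\twoheadrightarrow\wh L$, whence $L=\Delta$ by \cite[Corollary 3.8]{BMRS2} and, by the Hopf property, $\ker\rho$ is central; this is what produces the central extension $1\to Z\to\Lambda\to\Delta\to1$ that your plan presupposes. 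Without a substitute for this representation-theoretic step, your argument does not go through.
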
 

The main idea here is that the Galois rigidity of $\Delta$ can be upgraded to Galois rigidity for $\Gamma$.   Arguments similar to those in \cite{BMRS2} can then be used to show that $\Lambda$ must itself  
surject onto $\Delta$, giving $\Lambda$ as a $\Z$-central extension of $\Delta$. 

\begin{lemma}
 \label{galois_rigid_seif}
If $\Delta$ is Galois rigid, then $\G$ is Galois rigid with the same associated trace-field and quaternion algebra as $\Delta$.
\end{lemma}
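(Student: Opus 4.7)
The plan is to exploit the short exact sequence $1 \to Z \to \Gamma \to \Delta \to 1$, where $Z = Z(\Gamma)$ is the infinite cyclic subgroup generated by the regular fibre class $z$, together with the fact (Lemma \ref{l:thru-proj}(2)) that any non-elementary representation of $\Gamma$ into $\PSL(2,\mathbb{C})$ must kill every central element. Since Zariski-dense representations are automatically non-elementary, this will force every Zariski-dense $\rho:\Gamma\to\PSL(2,\mathbb{C})$ to factor through $\Delta$, and thereby reduce the rigidity question for $\Gamma$ to the one already settled for $\Delta$.

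More concretely, let $\pi:\Gamma\twoheadrightarrow\Delta$ denote the quotient and let $\tau:\Delta\hookrightarrow\PSL(2,\mathbb{R})\subset\PSL(2,\mathbb{C})$ be the discrete faithful (hence Zariski-dense) representation associated to the hyperbolic triangle group. First I would observe that $\tau\circ\pi$ is a Zariski-dense representation of $\Gamma$ whose image equals $\tau(\Delta)$, so it has trace field $K_\Delta$ and integral traces; in particular $K_\Gamma$ is defined and equals $K_\Delta$, and the quaternion algebra generated by the image coincides with the one associated to $\Delta$. Next, given an arbitrary Zariski-dense representation $\rho:\Gamma\to\PSL(2,\mathbb{C})$, Lemma \ref{l:thru-proj}(2) applied to $H=\Gamma$ and $Z=Z(\Gamma)$ gives $\rho(Z)=1$, so $\rho$ descends uniquely to a representation $\bar\rho:\Delta\to\PSL(2,\mathbb{C})$, which is again Zariski-dense since it has the same image as $\rho$. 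Conversely, every Zariski-dense representation of $\Delta$ pulls back along $\pi$ to a Zariski-dense representation of $\Gamma$. As characters are preserved by both operations, this correspondence descends to a bijection
\[
\mathrm{X}_{\mathrm{zar}}(\Gamma,\mathbb{C})\;\longleftrightarrow\;\mathrm{X}_{\mathrm{zar}}(\Delta,\mathbb{C}).
\]

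Finally I would invoke the Galois rigidity of $\Delta$ to conclude $|\mathrm{X}_{\mathrm{zar}}(\Delta,\mathbb{C})|=n_{K_\Delta}$, so by the bijection above
$$|\mathrm{X}_{\mathrm{zar}}(\Gamma,\mathbb{C})|=n_{K_\Delta}=n_{K_\Gamma},$$
which is exactly Galois rigidity of $\Gamma$ with associated field $K_\Gamma=K_\Delta$. Since every Zariski-dense representation of $\Gamma$ factors through $\pi$ with the same image as the corresponding representation of $\Delta$, the quaternion algebra attached to $\Gamma$ (the $K$-span of the pullback in $M(2,\mathbb{C})$ of the matrix realisation of $\Gamma$) agrees with that attached to $\Delta$, completing the proof.

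There is no real obstacle here beyond verifying carefully that the factorisation through $\Delta$ preserves Zariski-density and that one may freely identify the character of $\rho$ with the character of $\bar\rho$; the content of the lemma is entirely contained in the observation that central elements lie in the kernel of every non-elementary $\PSL(2,\mathbb{C})$-representation, which is supplied by Lemma \ref{l:thru-proj}(2).
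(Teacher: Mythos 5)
Your proposal is correct and follows essentially the same route as the paper: both use the fact that a Zariski-dense representation is non-elementary, so by Lemma \ref{l:thru-proj}(2) it kills the (central) fibre class and factors through $\Delta$, reducing Galois rigidity of $\G$ to that of $\Delta$ via pullback of representations. Your write-up just makes the resulting bijection of character sets and the identification of trace-field and quaternion algebra more explicit than the paper does.
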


\begin{proof} We have a surjection $p:\G\to\Delta$. Each Zariski-dense representation $\Delta\to \PSL(2,\C)$ pulls back to a Zariski-dense representation
$\Gamma\to \PSL(2,\C)$.  Since $\Delta$ is assumed to be Galois rigid, it suffices to show that every Zariski-dense representation $f: \Gamma \to \PSL(2,\C)$ factors through $p$.
But  
this follows from Lemma \ref{l:thru-proj}(2).\end{proof}  

Following the strategy from \cite{BMRS2}, it will be convenient to first prove Theorem \ref{reduce_seif} for the a limited list of the bases $S^2(p,q,r)$. The remaining triangle groups in Theorem \ref{main_triangle} will be handled in Proposition \ref{up_reduce}. 
However, we first comment upon and correct some statements from \cite{BMRS2}. 

The reader may have observed that the  triangle groups in (\ref{list-top}) are a proper subset of those listed in \cite[Proposition 3.3]{BMRS2}  where Galois rigidity is asserted. This is because it was wrong to assert
that the proof applied to  $\Delta(3,5,5)$ and $\Delta(5,5,5)$; indeed these groups are {\em not Galois rigid}. 
In hindsight, this is clear for $\Delta(5,5,5)$ since it is an index-2 subgroup of $ \Delta(2,5,10)$ 
and $ \Delta(2,5,10)$ surjects the triangle group
$\Delta(2,5,5)$, which  has no subgroup of index-2. The additional characters that show  $\Delta(3,5,5)$ is not Galois
rigid are less obvious; they are described in the erratum to \cite{BMRS2}. 
The proof of  \cite[Proposition 3.3]{BMRS2} is valid for the groups in list  (\ref{list-top}).
 A very pleasing and clean account of the set of characters of all $\PSL(2,\C)$ representations of triangle groups is given in \cite[Proposition D]{AB} and  the Galois rigidity of the groups in (\ref{list-top}), proved in \cite{BMRS2}, can  be verified using this account.

\begin{proposition}
 \label{reduce_galois_rigid}
When $\Delta = \Delta(p,q,r)$ is a triangle group from list (\ref{list-top}), the conclusion of Theorem \ref{reduce_seif} holds.
\end{proposition}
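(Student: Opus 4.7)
The plan is to show that $\L$ fits into a central extension
$1 \to \Z \to \L \to \Delta \to 1$ with $\L$ torsion-free, so that
Lemma \ref{GMW_Lemma} identifies $\L$ with $\pi_1(N)$ for a closed
Seifert fibred space $N$ whose base orbifold is $\H^2/\Delta = S^2(p,q,r)$.

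First I would record the profinite centre. Since $\G$ is good (Theorem \ref{good-groups})
and $\wh{\Delta}$ is centre-free (Theorem \ref{prof-centre-fuchs}), Corollary \ref{c:zhat}
yields an exact sequence $1 \to \wh{Z(\G)} \to \wh{\G} \to \wh{\Delta} \to 1$
with $Z(\wh{\G}) = \wh{Z(\G)} \cong \wh{\Z}$. Transporting through $\wh{\L} \cong \wh{\G}$
and invoking Lemma \ref{centres}, we get $Z(\wh{\L}) \cong \wh{\Z}$ and
$Z(\L) = Z(\wh{\L}) \cap \L$. Next, Lemma \ref{galois_rigid_seif} shows $\G$ is Galois
rigid with the same trace field $K$ and quaternion algebra $B$ as $\Delta$, and for the
triangle groups in list (\ref{list-top}) the arithmetic data established in \cite{BMRS2}
verifies the type number $1$ and ramification hypotheses of Theorem \ref{galois_rigid_reps}(ii).
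Hence there is a Zariski-dense representation
$f' \colon \L \to \P\rho(\mathcal{O}^1) \subset \PSL(2,\C)$ with integral traces, and
Lemma \ref{l:thru-proj}(2) gives $Z(\L) \subset \ker f'$.

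The main obstacle is to identify $f'(\L)$ with $\Delta$ up to conjugation. For this I would
adapt the arithmetic/congruence arguments from \cite{BMRS2} used to prove profinite rigidity
of $\Delta$: the image $f'(\L)$ is a Zariski-dense, integral-traces subgroup of the
arithmetic lattice $\P\rho(\mathcal{O}^1)$, and the finite quotients of $\L$ inherited from
$\wh{\L} \cong \wh{\G}$ project, via the centre-quotient $\wh{\G} \to \wh{\Delta}$, onto the
congruence quotients of $\Delta \subset \P\rho(\mathcal{O}^1)$; the type number $1$
hypothesis together with the matching of ramification then force $f'(\L) = \Delta$.
Granting this, the continuous extension $\wh{f'} \colon \wh{\L} \to \wh{\Delta}$ is
surjective, and since $\wh{\Delta}$ is centre-free, $\ker \wh{f'} \supset Z(\wh{\L}) = \wh{\Z}$.
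The induced surjection $\wh{\Delta} = \wh{\L}/\wh{\Z} \to \wh{\L}/\ker \wh{f'} = \wh{\Delta}$
is an isomorphism by Hopfianness, so $\ker \wh{f'} = \wh{\Z}$, whence
$\ker f' = \L \cap \wh{\Z} = Z(\L)$.

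We thus obtain a central exact sequence $1 \to Z(\L) \to \L \to \Delta \to 1$. Since
$\Delta$ is finitely presented, Lemma \ref{l:Zfg} shows $Z(\L)$ is finitely generated,
and then Remark \ref{remark_closure} gives $\wh{Z(\L)} \cong \overline{Z(\L)} = \wh{\Z}$,
forcing the finitely generated abelian group $Z(\L)$ to be isomorphic to $\Z$.
Torsion-freeness of $\L$ follows because $\G$ is torsion-free and good, so $\wh{\G}$ is
torsion-free by Theorem \ref{good-tors-free}, and hence so is $\L \hookrightarrow \wh{\L}$.
Lemma \ref{GMW_Lemma} then identifies $\L$ with $\pi_1(N)$ for a closed Seifert fibred
space $N$, and $\L/Z(\L) \cong \Delta$ forces the base orbifold to be $S^2(p,q,r)$.
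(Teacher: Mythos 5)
The overall architecture of your proposal (produce a representation onto $\Delta$ with central kernel, identify the kernel with $\Z$ via the profinite centre and the completed exact sequence, deduce torsion-freeness from goodness, then invoke Lemma \ref{GMW_Lemma}) matches the paper. But there is a genuine gap at the step you yourself flag as the main obstacle: showing $f'(\L)=\Delta$. Theorem \ref{galois_rigid_reps}(ii) only gives a Zariski-dense image $L:=f'(\L)$ inside $\P\rho(\mathcal{O}^1)$; the type number $1$ hypothesis and the matching of ramification are exactly what produce this containment, and they do not by themselves force $L$ to be all of $\Delta$ --- a priori $L$ could be any finitely generated, non-elementary (even infinite-index) subgroup of the lattice, and ``finite quotients of $\L$ projecting onto congruence quotients of $\Delta$'' does not pin $L$ down: $f'(\L)$ is an abstract image, not a subgroup whose closure in $\wh{\Delta}$ you control. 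The paper's mechanism is different and essential: since $L$ is Fuchsian, $\wh{L}$ is centre-free (Theorem \ref{prof-centre-fuchs}), so the induced surjection $\wh{\rho}\colon\wh{\L}\twoheadrightarrow\wh{L}$ descends through the central quotient $\wh{\L}\to\wh{\Delta}$ to an epimorphism $h\colon\wh{\Delta}\twoheadrightarrow\wh{L}$, and then the specific result \cite[Corollary 3.8]{BMRS2} --- that for these particular triangle groups there is no epimorphism from $\wh{\Delta}$ onto $\wh{L}$ for a proper non-elementary subgroup $L<\Delta$ --- forces $L=\Delta$. Without this (or an equivalent) input, your argument does not close.

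A second, smaller omission: for $\Delta(4,4,4)$ the unit group is $\P\rho(\mathcal{O}^1)=\Delta(3,3,4)$, so even granting the above one only knows a priori that $L<\Delta(3,3,4)$; the paper needs the extra observation that otherwise $L$, and hence $\wh{\Delta}$ via $h$, would have a $C_3$ quotient, which $\Delta(4,4,4)$ does not admit. Your proposal treats all five groups uniformly and misses this case. The remainder of your argument (centrality of $\ker f'$ via Hopfianness of $\wh{\Delta}$, $Z(\L)\cong\Z$ via Corollary \ref{c:zhat}/Lemma \ref{l:Zfg}, torsion-freeness from $\L\hookrightarrow\wh{\L}\cong\wh{\G}$ and Theorem \ref{good-tors-free}, and the appeal to Lemma \ref{GMW_Lemma}) is correct and essentially identical to the paper's.
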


\begin{proof} The preceding discussion tells us that each of these triangle groups is Galois rigid.
In addition, from \cite[\S 3.2]{BMRS2}, the invariant trace-fields and quaternion algebras of these triangle groups all satisfy the hypothesis described in the preamble to Theorem \ref{galois_rigid_reps}. 
Thus, Lemma \ref{galois_rigid_seif} shows that $\Gamma$ is Galois rigid with the same invariant trace-field and quaternion algebra as $\Delta$.  

We now imitate the proof of \cite[Theorem 1.1]{BMRS2}, for which we first assume that $\Delta$ is one of the first four triangle groups in (\ref{list-top}). 
From \cite[Theorem 3.2]{BMRS2}, each of these triangle groups is exactly the image in $\PSL(2,\C)$ of the group of norm 1 elements in the unique (up to conjugation) maximal order of their quaternion algebra. In this context, Theorem \ref{galois_rigid_reps} provides a  finitely generated Zariski-dense  
 subgroup $L < \Delta$ and a surjection $\rho: \Lambda \twoheadrightarrow  L$ inducing a continuous surjection $\widehat{\rho}: \widehat{\Lambda} \twoheadrightarrow  \widehat{L}$.

Because $\G$ is a central extension $1 \to \Z \to \G \to \Delta \to 1$, Corollary \ref{exact} shows that $\wh{\Lambda} \cong \wh{\G}$ is also a central extension $1 \to \wh{\Z} \to \wh{\Lambda} \to \wh{\Delta} \to 1$, and so the kernel of the surjection $f:\wh{\Lambda} \to \wh{\Delta}$ is central. Since $L$ is Fuchsian, $\widehat{L}$ is centre-free by Theorem \ref{prof-centre-fuchs}, so that $\wh{\rho}$ descends along $f$ to give a surjection $h: \widehat{\Delta} \twoheadrightarrow \widehat{L}$ with $\widehat{\rho} = h \circ f$.  
Appealing to \cite[Theorem 5.1]{BMRS2} and  \cite[Theorem B]{BRprasad}, we deduce that $L = \Delta$, and by the Hopfian property for finitely generated profinite groups, $h: \widehat{\Delta} \twoheadrightarrow \widehat{L} = \widehat{\Delta}$ must also be injective. Since $h$ is an isomorphism and $\widehat{\rho} = h \circ f$, we can see that $\widehat{\rho}$ has the same kernel as $f$, which we know is central in $\wh{\Lambda}$. Thus the kernel of $\rho$ itself is central in $\Lambda$.

So far we have shown that $\rho$ gives a surjection of $\Lambda$ onto $\Delta$ with central kernel. It now follows from Proposition \ref{p:sameZ} that the center of $\Lambda$ must be isomorphic to that of $\G$, namely $\Z$. Since $\Gamma$ is torsion free, Theorems \ref{good-groups} and \ref{good-tors-free} imply that $\Lambda$ is also torsion-free. Putting this together with Lemma \ref{GMW_Lemma} implies that $\Lambda$ is isomorphic to the fundamental group of a Seifert fibred space, whose base has orbifold group isomorphic to $\Delta$. This completes the proof for the first four triangle groups in (\ref{list-top}).

We next consider the case of $\Delta=\Delta(4,4,4)$.
Now, $\Delta\subset \Delta(3,3,4)$ is a normal subgroup of index $3$, and in this
case the methods of \cite{BMRS2} only provide a surjection $\rho: \Lambda \to L$ and {\em a priori} we only know $L < \Delta(3,3,4)$. As above, we can construct $\widehat{\rho}: \widehat{\Lambda} \twoheadrightarrow  \widehat{L}$ and $h: \widehat{\Delta} \twoheadrightarrow \widehat{L}$ with $\widehat{\rho} = h \circ f$.
However, as in \cite{BMRS2},  if $L$ were not contained in $\Delta$, then $L$ would have a $\mathbb{Z}/3\mathbb{Z}$ quotient which would pull back to $\Delta$ along $h$, and $\Delta$ has no such quotient. Thus we see that $L < \Delta$, and the rest of the proof now proceeds as in the previous cases: we conclude that $L = \Delta$, that $\Lambda$ is a central extension of $\Delta$ by $\Z$, and that $\Lambda$ is isomorphic to the fundamental group of a Seifert fibred space with the appropriate base. \end{proof}

The triangle groups in (\ref{list-bottom}) can now be handled as in \cite{BMRS2} by understanding the index-$2$ extensions of the cases already proven in Proposition \ref{reduce_galois_rigid}.

\begin{proposition}
 \label{up_reduce}
When $\Delta = \Delta(p,q,r)$ is a triangle group in (\ref{list-bottom}), the conclusion of Theorem \ref{reduce_seif} holds.
\end{proposition}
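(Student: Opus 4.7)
The approach mirrors the index-2 ascent used in \cite{BMRS2}. The plan is to pass to the subgroup of $\Lambda$ sitting over the triangle group $\Delta'<\Delta$ from list (\ref{list-top}), apply the case already settled in Proposition \ref{reduce_galois_rigid}, and then re-ascend by invoking profinite rigidity of $\Delta$ (Theorem \ref{main_triangle}).

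Concretely, let $\Delta'<\Delta$ be the index-2 subgroup from list (\ref{list-top}) (for example $\Delta(3,3,4)<\Delta(2,3,8)$) and let $\Gamma'<\Gamma$ be its preimage under $\Gamma\to\Delta$, so that $\Gamma'=\pi_1 M'$ is the fundamental group of a Seifert fibred space with base $S^2(p',q',r')$ drawn from list (\ref{list-top}). Goodness of $\Gamma$ (Theorem \ref{good1}) together with Remark \ref{remark_closure} gives $\overline{\Gamma'}\cong\widehat{\Gamma'}$ inside $\widehat{\Gamma}$; via $\widehat{\Lambda}\cong\widehat{\Gamma}$, this closed index-2 subgroup corresponds to $\overline{\Lambda'}\cong\widehat{\Lambda'}$ for a unique index-2 subgroup $\Lambda'<\Lambda$. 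Proposition \ref{reduce_galois_rigid} then yields $\Lambda'\cong\pi_1 N'$ for some Seifert fibred space $N'$ with base $S^2(p',q',r')$; in particular $1\to Z\to\Lambda'\to\Delta'\to 1$ is exact with $Z\cong\Z$ central.

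The heart of the proof is to upgrade this to a central extension $1\to Z\to\Lambda\to\Delta\to 1$. Since $Z(\widehat{\Gamma})=\widehat{\Z}$ (Corollary \ref{c:zhat} with Theorem \ref{prof-centre-fuchs}), the same holds for $Z(\widehat{\Lambda})$, so the action of $\Lambda/\Lambda'\cong C_2$ on the characteristic subgroup $Z\subset\widehat{\Z}$ is trivial and $Z\subseteq Z(\Lambda)$. The closure of $Z$ in $\widehat{\Lambda}$ equals its closure in $\overline{\Lambda'}\cong\widehat{\Lambda'}$, which is $\widehat{Z}$ (Remark \ref{remark_closure}); hence $\widehat{\Lambda}/\widehat{Z}\cong\widehat{\Delta}$, giving a map $\Lambda\to\widehat{\Delta}$ with kernel $\Lambda\cap\widehat{Z}$. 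The crux is to show that this kernel is exactly $Z$: any surplus element $t\in(\Lambda\cap\widehat{Z})\setminus\Lambda'$ would be central in $\Lambda$, would commute with $\Lambda'$ (hence act trivially on $\Delta'=\Lambda'/Z$), and by continuity act trivially on $\widehat{\Delta'}$. Because $Z(\widehat{\Delta'})=1$ (Theorem \ref{prof-centre-fuchs}), the outer action of $C_2$ on $\widehat{\Delta'}$ being trivial would force $\widehat{\Delta}\cong\widehat{\Delta'}\times C_2$, and profinite rigidity of $\Delta$ (Theorem \ref{main_triangle}) would then force $\Delta\cong\Delta'\times C_2$---ruled out in each pair on the two lists by a direct computation of $\Delta^{\ab}$.

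Consequently $\Lambda/Z$ embeds densely in $\widehat{\Delta}$; pulling back finite-index subgroups of $\Lambda/Z$ to closed finite-index subgroups of $\widehat{\Lambda}/\widehat{Z}$ shows that $\widehat{\Lambda/Z}\cong\widehat{\Lambda}/\widehat{Z}\cong\widehat{\Delta}$, and profinite rigidity of $\Delta$ gives $\Lambda/Z\cong\Delta$. Since $\Gamma$ is torsion-free and good (Theorems \ref{good1} and \ref{good2}), $\widehat{\Lambda}$ and therefore $\Lambda$ are torsion-free, so Lemma \ref{GMW_Lemma} produces the desired Seifert fibred space $N$ with base $S^2(p,q,r)$ and $\pi_1 N=\Lambda$. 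The main obstacle will be excluding the surplus central element $t$; this is precisely where profinite rigidity of $\Delta$ and the comparison of $\Delta^{\ab}$ with $(\Delta')^{\ab}\times C_2$ do the essential work.
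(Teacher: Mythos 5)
Your overall route is the same as the paper's: pass to the index-$2$ subgroup $\Lambda'<\Lambda$ lying over $\Delta'$, apply Proposition \ref{reduce_galois_rigid} to recognise $\Lambda'$ as a central $\Z$-extension of $\Delta'$, and then identify $\Lambda/Z$ with $\Delta$ before finishing with goodness, torsion-freeness and Lemma \ref{GMW_Lemma}. The only real difference in the endgame is that the paper identifies $\Lambda/Z$ with $\Delta$ by applying \cite[Lemma 4.3]{BMRS2} to the index-$2$ extension $G=\Lambda/Z$ of $\Delta'$ with $\wh{G}\cong\wh{\Delta}$, whereas you invoke the full profinite rigidity of $\Delta$ (Theorem \ref{main_triangle}); both are legitimate.

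However, the step you yourself flag as the heart of the proof is argued incorrectly. Having supposed there is a surplus element $t\in(\Lambda\cap\wh{Z})\setminus\Lambda'$, you deduce that conjugation by $t$ is trivial on $\wh{\Delta'}$ and conclude that ``the outer action of $C_2$ on $\wh{\Delta'}$ is trivial'', hence $\wh{\Delta}\cong\wh{\Delta'}\times C_2$. This is a non sequitur: $t$ lies in $\wh{Z}$, which is precisely the kernel of $\wh{\Lambda}\to\wh{\Delta}$, so the image of $t$ in $\wh{\Delta}$ is trivial; in particular $t$ does not represent the non-trivial coset of $\wh{\Delta'}$ in $\wh{\Delta}$, and the triviality of conjugation by $t$ gives no information about the outer action of $\wh{\Delta}/\wh{\Delta'}\cong C_2$ on $\wh{\Delta'}$, so the splitting $\wh{\Delta}\cong\wh{\Delta'}\times C_2$ does not follow from your premises. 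Fortunately, the claim you need is true for a one-line reason already contained in your setup: $\wh{Z}$ is the kernel of $\wh{\Lambda}\to\wh{\Delta}$ while $\overline{\Lambda'}$ is the preimage of $\wh{\Delta'}$, so $\wh{Z}\subseteq\overline{\Lambda'}$ and hence $\Lambda\cap\wh{Z}\subseteq\Lambda\cap\overline{\Lambda'}=\Lambda'$; therefore $\Lambda\cap\wh{Z}=\Lambda'\cap Z(\wh{\Lambda'})=Z(\Lambda')=Z$, using Corollary \ref{c:zhat} with Theorem \ref{prof-centre-fuchs} to see $\wh{Z}=Z(\wh{\Lambda'})$ and Lemma \ref{centres} for the last two equalities. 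In other words, no such $t$ can exist at all, and no outer-action argument is needed. With this repair, the remainder of your argument ($\wh{\Lambda/Z}\cong\wh{\Lambda}/\wh{Z}\cong\wh{\Delta}$, residual finiteness of $\Lambda/Z$ from its embedding in $\wh{\Delta}$, profinite rigidity of $\Delta$, torsion-freeness via Theorems \ref{good1} and \ref{good2}, and Lemma \ref{GMW_Lemma}) goes through and recovers the paper's conclusion.
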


\begin{proof}
Let $\Delta$ denote any one of the triangle groups (\ref{list-bottom}), and let $\Delta'$ be the unique triangle group appearing in (\ref{list-top}) which is an index-$2$ subgroup of $\Delta$.  
Let $\Gamma' < \Gamma$ be the index-$2$ subgroup whose image in $\Delta$ is $\Delta'$. From the isomorphism $\wh{\Lambda} \cong \wh{\Gamma}$ we obtain a corresponding index-$2$ subgroup $\Lambda' < \Lambda$ so that $\wh{\Lambda'} \cong \wh{\Gamma'}$. In particular, $\wh{\Lambda'}$ contains the central $\wh{\Z}$ which is the kernel of the surjection $\wh{\Lambda} \twoheadrightarrow \wh{\Delta}$ and the image of $\wh{\Lambda'}$ under this surjection is $\wh{\Delta'}$.

Now $\Gamma'$ is the fundamental group of a Seifert fibred space whose base is the triangle orbifold corresponding to $\Delta'$, and we may apply Theorem \ref{reduce_galois_rigid} to see that $\Lambda'$ is also isomorphic to the fundamental group of such a Seifert fibred space. In particular, $\Lambda'$ is a central $\Z$-extension of $\Delta'$. Because $\wh{\Delta}$ is centre-free, by Theorem \ref{prof-centre-fuchs}, $\Lambda'$ contains the centre of $\Lambda$. Hence $\Lambda$ is a central $\Z$-extension of some group, $G = \Lambda/ \Z$, which is an index-$2$ extension of $\Delta'$, and   $\wh{G} \cong \wh{\Delta}$. By \cite[Lemma 4.3]{BMRS2}, this implies $G \cong \Delta$, so   $\Lambda$ is also a central $\Z$ extension of $\Delta$.  Just as in the end of the proof of Proposition \ref{reduce_galois_rigid}, we can now apply Theorem \ref{good-groups} and Theorem \ref{good-tors-free} to show that $\Lambda$ torsion-free and Lemma \ref{GMW_Lemma} to show that $\Lambda$ is isomorphic to the fundamental group of a Seifert fibred space as claimed. This finishes the proof of Theorem \ref{reduce_seif}.
\end{proof}

\subsection{Completing the proof of profinite rigidity}
\label{ss_rigid}

In this subsection we put together the results from the previous sections to prove Theorem \ref{SFS_rigid}, which we restate for the reader's conveneince.

\begin{theorem}
\label{SFS_prorigid}
Let $M$ be a Seifert fibred space with base orbifold $S^2(p,q,r)$ associated to a triangle group in (\ref{list-top}) or (\ref{list-bottom}). Then $\G=\pi_1M$ is profinitely rigid.
\end{theorem}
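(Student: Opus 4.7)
The plan is to combine the groundwork laid earlier in the paper: Theorem \ref{reduce_seif} reduces the problem to a comparison between two Seifert fibred spaces sharing a base orbifold, and Wilkes' Theorem \ref{main_wilkes} together with the euler number calculations of Lemmas \ref{small_ab} and \ref{l:euler_no_bottomlist} then forces an isomorphism. So the argument essentially amounts to stringing these results together correctly.

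First I would let $\Lambda$ be a finitely generated, residually finite group with $\wh{\Lambda}\cong\wh{\Gamma}$, where $\Gamma=\pi_1 M$. By Theorem \ref{reduce_seif}, there exists a Seifert fibred space $N$ with base orbifold $S^2(p,q,r)$ and an isomorphism $\Lambda\cong\pi_1 N$. Thus $\wh{\pi_1 N}\cong\wh{\pi_1 M}$, and both $M$ and $N$ are closed aspherical Seifert fibred spaces with hyperbolic base.

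Next, I would apply Theorem \ref{main_wilkes} of Wilkes to the pair $(M,N)$. The conclusion gives two alternatives: either $N\cong M$ (and hence $\Lambda\cong\Gamma$, finishing the proof), or $M$ and $N$ are both $\mathbb{H}^2\times\mathbb{R}$ surface bundles over the circle with periodic monodromy, in which case $e(M)=e(N)=0$. To rule out the second alternative, I would invoke Lemma \ref{small_ab} when $\Delta(p,q,r)$ lies in list (\ref{list-top}) and Lemma \ref{l:euler_no_bottomlist} when it lies in list (\ref{list-bottom}); in both cases every Seifert fibred space with the given base orbifold has nonzero euler number. Since $e(M)\neq 0$, Wilkes' second alternative is impossible, so $N\cong M$ and $\Lambda\cong\pi_1 M=\Gamma$.

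The main conceptual obstacle to this argument has already been surmounted in the preceding sections: it is the passage from an arbitrary $\Lambda$ with $\wh{\Lambda}\cong\wh{\Gamma}$ to a Seifert fibred space presentation of $\Lambda$, which required the Galois rigidity upgrade from $\Delta$ to $\Gamma$ (Lemma \ref{galois_rigid_seif}), the centre analysis (Proposition \ref{p:sameZ}, Theorem \ref{prof-centre-fuchs}), and the Zimmermann-style recognition result (Lemma \ref{GMW_Lemma}). Given Theorem \ref{reduce_seif}, what remains is really a bookkeeping step. As an alternative finish, once one knows $\Lambda\cong\pi_1 N$ with the same base, one could instead appeal directly to Corollary \ref{c:dense-enough} applied to the identity homomorphism $\pi_1 N\to\wh{\pi_1 M}$ coming from the profinite isomorphism, since $e(M)\neq 0$; this yields $\pi_1 N\cong\pi_1 M$ without explicit appeal to Wilkes, provided one verifies that the induced homomorphism has dense image, which follows from the fact that $\pi_1 N$ is dense in its own profinite completion.
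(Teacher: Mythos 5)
Your proposal is correct and follows essentially the same route as the paper: Theorem \ref{reduce_seif} to convert $\Lambda$ into the fundamental group of a Seifert fibred space with the same base, the euler number computations of Lemmas \ref{small_ab} and \ref{l:euler_no_bottomlist} to ensure $e(M)\neq 0$, and Theorem \ref{main_wilkes} to rule out the exceptional Hempel case and conclude $\Lambda\cong\Gamma$. The only caveat is that your suggested alternative finish via Corollary \ref{c:dense-enough} does not genuinely bypass Wilkes, since that corollary itself rests on Theorem \ref{main_wilkes}.
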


\begin{proof} Suppose that $\Lambda$ is a finitely generated, residually finite group with $\wh{\Lambda}\cong \wh{\G}$. We know from Theorem \ref{reduce_seif} that $\Lambda$ is isomorphic to the fundamental group of a Seifert fibred space with the  same base $S^2(p,q,r)$ as $M$. From Lemmas \ref{small_ab} and \ref{l:euler_no_bottomlist} we
know $e(M)\neq 0$, so we can apply Theorem \ref{main_wilkes} to deduce that $\Lambda \cong\G$.
\end{proof}

\section{Reducing to Grothendieck rigidity: proof of Theorem \ref{t3}}\label{s:t3}

We remind the reader of the statement of Theorem \ref{t3} from the introduction.  

\begin{theorem} \label{second-t3}
Let $\Delta(p,q,r)$ be a triangle group from (\ref{list-top}) or (\ref{list-bottom}), let $M$ be a 
Seifert fibred space with  base orbifold $S^2(p,q,r)$ and let $\G=\pi_1M$.
Then, for  every finitely generated, residually
 finite group $\L$ with $\wh{\L}\cong\wh{\G\times\G}$, there is
 an embedding $\L\hookrightarrow\G\times\G$ that induces the isomorphism.
 \end{theorem}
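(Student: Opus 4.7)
The plan is to show that the composition $j : \Lambda \hookrightarrow \wh{\Lambda} \cong \wh{\G \times \G}$ lands inside $\G \times \G$; injectivity of $j$ then follows from residual finiteness of $\Lambda$, and the induced map on profinite completions is, by construction, the given isomorphism. I argue in two stages: first reducing modulo the centres, and then lifting.

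For the reduction, applying Corollary \ref{c:zhat} to the central extension $1 \to \mathbb{Z}^2 \to \G \times \G \to \Delta \times \Delta \to 1$ gives the short exact sequence $1 \to \wh{\mathbb{Z}^2} \to \wh{\G \times \G} \to \wh{\Delta \times \Delta} \to 1$, and since $\wh{\Delta}$ is centre-free (Theorem \ref{prof-centre-fuchs}) this identifies $\wh{\mathbb{Z}^2}$ with $Z(\wh{\G \times \G})$. Write $\bar{j} : \Lambda \to \wh{\Delta \times \Delta}$ for the composition of $j$ with the quotient; the stage-one goal is $\bar{j}(\Lambda) \subseteq \Delta \times \Delta$. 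Because $\G$ inherits Galois rigidity from $\Delta$ (Lemma \ref{galois_rigid_seif}) with the same trace-field and quaternion algebra, Theorem \ref{products_prof_thm} applies to the isomorphism $\wh{\Lambda} \cong \wh{\G \times \G}$ and yields exactly two Zariski-dense representations $\psi_1, \psi_2 : \Lambda \to \Delta \subseteq \PSL(2, \mathbb{C})$, with images in $\P\rho(\mathcal{O}^1) = \Delta$. The construction of $\psi_1, \psi_2$ via the techniques of \cite{BMRS1, Spit} identifies their continuous extensions $\wh{\psi_j} : \wh{\Lambda} \to \wh{\Delta}$, up to Galois conjugation and reordering, with the natural factor projections $\wh{\G \times \G} \xrightarrow{\wh{\pi_i}} \wh{\G} \xrightarrow{\wh{f}} \wh{\Delta}$, where $f : \G \twoheadrightarrow \Delta$ is the quotient by the centre. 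Restricting to $\Lambda$, each coordinate of $\bar{j}$ therefore has image $\psi_i(\Lambda) \subseteq \Delta$, so $\bar{j}(\Lambda) \subseteq \Delta \times \Delta$ and consequently $j(\Lambda) \subseteq (\G \times \G) \cdot \wh{\mathbb{Z}^2}$ inside $\wh{\G \times \G}$.

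For the lifting, $\G \times \G$ is normal in $(\G \times \G) \cdot \wh{\mathbb{Z}^2}$ because $\wh{\mathbb{Z}^2}$ is central, with quotient $\wh{\mathbb{Z}^2}/\mathbb{Z}^2$, so the assignment $\lambda \mapsto j(\lambda)(\G \times \G)$ defines a homomorphism $\delta : \Lambda \to \wh{\mathbb{Z}^2}/\mathbb{Z}^2$ whose kernel is $j^{-1}(\G \times \G)$. I claim $\delta \equiv 0$. Being a homomorphism to an abelian group, $\delta$ factors through $\Lambda^{\ab}$. The isomorphism $\wh{\Lambda} \cong \wh{\G \times \G}$ induces $\wh{\Lambda^{\ab}} \cong \wh{(\G \times \G)^{\ab}} = (\G^{\ab})^2$, which is finite by Lemma \ref{small_ab} for the list (\ref{list-top}) and, for the list (\ref{list-bottom}), by Lemma \ref{l:euler_no_bottomlist} together with the fact that a Seifert fibred space with non-zero Euler number and spherical base has finite first homology. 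Hence $\Lambda^{\ab}$ is itself finite, and in particular torsion. On the other hand, $\wh{\mathbb{Z}^2}/\mathbb{Z}^2$ is torsion-free, since any $z \in \wh{\mathbb{Z}}$ satisfying $nz \in \mathbb{Z}$ lies in $\wh{\mathbb{Z}} \cap \mathbb{Q} = \mathbb{Z}$. A homomorphism from a torsion group to a torsion-free group is zero, giving $\delta = 0$ and therefore $j(\Lambda) \subseteq \G \times \G$.

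The main obstacle is the first stage: matching the two abstract Zariski-dense representations of $\Lambda$ produced by Theorem \ref{products_prof_thm} with the factor-projection maps coming from the chosen profinite isomorphism. This is where the arithmetic hypotheses on $\Delta$ (Galois rigidity, type number one, the precise set of ramified places) are used essentially; the lifting stage, by contrast, is a short argument using only that $\G^{\ab}$ is finite and that $\wh{\mathbb{Z}}/\mathbb{Z}$ is torsion-free.
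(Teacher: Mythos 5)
There is a genuine gap at the heart of your first stage. From Theorem \ref{products_prof_thm} (and its proof via \cite{BMRS1,Spit}) you may conclude that each $\wh{\psi}_j$ factors through exactly one coordinate projection $\wh{\L}\cong\wh{\G\times\G}\to\wh{\G}_i$, and, by Lemma \ref{l:justZ}(2), that the induced continuous epimorphism $\wh{\G}_i\to\wh{\D}$ has kernel equal to the centre. That identifies $\wh{\psi}_j$ with the natural map $\wh{\G}_i\to\wh{\D}$ only \emph{up to composing with an automorphism of $\wh{\D}$}, and such an automorphism has no reason to carry the discrete subgroup $\D<\wh{\D}$ to itself. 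So the fact that $\psi_j(\L)\subseteq\D$ does not give $\bar{j}(\L)\subseteq\D\times\D$ for the canonical quotient $\bar{j}$: the canonical image is only contained in $\alpha_1^{-1}(\D)\times\alpha_2^{-1}(\D)$ for unknown automorphisms $\alpha_i$ of $\wh{\D}$, and your lifting argument (which needs the genuine lattice $\G\times\G$, or at least a discrete SFS group, sitting over the target) no longer applies. This is precisely the point where the paper has to work harder: it does not attempt to show that the canonical copy of $\L$ lies inside the canonical $\G\times\G$. Instead it shows that each projection $\L_i<\wh{\G}_i$ admits an epimorphism onto $\D$ with central, infinite cyclic kernel (5-term sequence, Lemma \ref{inf_cyclic}), is torsion-free by goodness, hence is itself a Seifert fibred group over $S^2(p,q,r)$ (Lemma \ref{GMW_Lemma}); only then, using Wilkes' rigidity through Lemma \ref{l:dense-will-do} and Corollary \ref{c:dense-enough} (where $e(M)\neq 0$ enters), does it get $\L_1\cong\L_2\cong\G$, and the final embedding is obtained after adjusting by an automorphism of $\wh{\G\times\G}$ via the Hopf property --- a weaker conclusion than the containment you are aiming for, but all that the theorem asserts. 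Your proposal, by asserting the identification of $\wh{\psi}_j$ with the factor projections, assumes the hard part.

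Two further points. First, Theorem \ref{products_prof_thm} gives $\psi_j(\Sigma)<\P\rho(\mathcal{O}^1)$ only after a conjugation in $\PSL(2,\C)$, and $\P\rho(\mathcal{O}^1)=\D$ fails for $\D(4,4,4)$ (where $\P\rho(\mathcal{O}^1)=\D(3,3,4)$); the paper needs the extra $C_3$-quotient argument of Proposition \ref{p:sdp} there. Second, the groups in list (\ref{list-bottom}) are not covered by a direct appeal to Theorem \ref{products_prof_thm} at all, since the Galois rigidity and arithmetic hypotheses are established only for list (\ref{list-top}); the paper treats (\ref{list-bottom}) separately in Proposition \ref{p:deal_with_bottom}, by passing to the index-2 subgroup coming from list (\ref{list-top}) and then analysing the finite-index overgroups $\L_i\supset\G'$. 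Your stage two (finiteness of $H_1(\L,\Z)$, torsion-freeness of $\wh{\Z}/\Z$) is fine as far as it goes, but it cannot be reached without repairing stage one, and the natural repair leads back to the paper's argument.
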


We begin by establishing some notation that will enable us to describe the outline of the proof.
Given a  finitely generated, residually finite group $\L$ with $\wh{\L}\cong \wh{\G\times \G}$,
we fix an isomorphism and 
use the inclusion  $\Lambda\hookrightarrow \wh{\L}\cong \wh{\G\times \G}$ to identify $\L$ with its image in
$\wh{\G\times \G}$.  
To lighten the notation, we write $\wh{\G}_1=\wh{\G}\times 1$ and $\wh{\G}_2=1\times \wh{\G}$.
Let $\L_i$ ($i=1,2$) denote the projections of $\Lambda$ to $\wh{\G}_1$ and $\wh{\G}_2$ respectively.
Our ultimate goal, for suitable $\G$, is to prove that $\L_1\cong\L_2\cong\G$ and that
$\L\hookrightarrow\L_1\times\L_2$ is a Grothendieck pair.  

The main arguments in this section build on Theorem \ref{products_prof_thm} to establish the following proposition. This covers the $\D(p,q,r)$ drawn from (\ref{list-top}); we deal with (\ref{list-bottom}) at the end of this section.

\begin{proposition}\label{p:towardsGP} Let $\G$ be the fundamental group of a \SFS whose base orbifold
is $S^2(p,q,r)$ with $\D(p,q,r)$ from (\ref{list-top}) and let 
$\L$ be as above. Then, 
\begin{enumerate}
\item $\L_i$ is dense in $\wh{\G}_i$, for $i=1, 2$;
\item $\L_i$ is the fundamental group of a Seifert fibred space with base $S^2(p,q,r)$.
\end{enumerate}
\end{proposition}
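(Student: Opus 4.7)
The plan is to apply Theorem \ref{products_prof_thm} to extract two Zariski-dense representations of $\L$, show that each factors through a coordinate projection $\pi_j : \L \to \L_j$, and then identify $\L_j$ via the methods of Section \ref{reduce_to_rel_rigid} together with Lemma \ref{GMW_Lemma}. Part (1) is immediate: identifying $\L$ with its image in $\wh{\L} \cong \wh{\G}_1 \times \wh{\G}_2$, the subgroup $\L$ is dense, and the continuous coordinate projections carry dense subgroups to dense subgroups, so each $\L_i = \pi_i(\L)$ is dense in $\wh{\G}_i$.

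For part (2), Theorem \ref{products_prof_thm} applied to the fixed profinite isomorphism $\wh{\L} \cong \wh{\G \times \G}$ supplies two Zariski-dense representations $\psi_j : \L \to \P\rho(\mathcal{O}^1)$ with integral traces, trace field $K$, and quaternion algebra $B$. The central step is to show that, after relabeling, each $\psi_j$ factors as $\L \xrightarrow{\pi_j} \L_j \to \P\rho(\mathcal{O}^1)$. Because $\psi_j(\L)$ lies in the arithmetic group $\P\rho(\mathcal{O}^1)$ with integral traces, $\psi_j$ extends to a continuous homomorphism $\wh\psi_j : \wh{\L} \to \wh{\P\rho(\mathcal{O}^1)}$. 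Transporting along $\wh{\L} \cong \wh{\G}_1 \times \wh{\G}_2$, the subgroups $\wh\psi_j(\wh{\G}_1)$ and $\wh\psi_j(\wh{\G}_2)$ commute in a profinite group whose natural quotient to $\wh{\Delta}$ is centre-free by Theorem \ref{prof-centre-fuchs}; combined with the non-elementary nature of the image $\psi_j(\L)$, a profinite analogue of the centraliser argument in Lemma \ref{l:thru-proj}(1) forces exactly one of $\wh\psi_j(\wh{\G}_i)$ to be trivial. Distinctness of the characters of $\psi_1$ and $\psi_2$ ensures they pair off with the two distinct projections, and so after relabeling $\psi_j$ descends to a Zariski-dense representation $\tilde\psi_j : \L_j \to \P\rho(\mathcal{O}^1)$.

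Emulating the proof of Proposition \ref{reduce_galois_rigid}, profinite rigidity of $\Delta$ and Hopficity of $\wh{\Delta}$ promote $\tilde\psi_j(\L_j)$ to $\Delta$ and show that $K_j := \ker\tilde\psi_j$ is central in $\L_j$. To pin down $K_j \cong \Z$, I would compare the central extensions $1 \to K_j \to \L_j \to \Delta \to 1$ and $1 \to \Z \to \G \to \Delta \to 1$, using the density-induced surjection $\wh{\L_j} \twoheadrightarrow \wh{\G}$, Lemma \ref{full_profinite}, Corollary \ref{c:zhat}, and a comparison of centres across the profinite isomorphism $\wh{\L} \cong \wh{\G \times \G}$ (which, via Proposition \ref{p:sameZ} applied to the centre-free base $\Delta \times \Delta$, identifies $Z(\L)$ with $\Z^2$); this should upgrade the surjection $\wh{\L_j} \to \wh{\G}$ to an isomorphism and yield $K_j \cong \Z$. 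Since $\G$ is good (Theorem \ref{good1}) and torsion-free, $\wh{\G}$ is torsion-free (Theorem \ref{good2}), so the dense subgroup $\L_j \subset \wh{\G}$ is also torsion-free. Lemma \ref{GMW_Lemma} then identifies $\L_j$ as the fundamental group of a Seifert fibred space with base $S^2(p,q,r)$, proving (2).

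The main technical obstacle is the factoring step: extending $\psi_j$ continuously to $\wh{\L}$, ruling out the scenario in which both $\wh\psi_j(\wh{\G}_i)$ are non-trivial, and ensuring the proper pairing between the $\psi_j$ and the projections $\pi_j$. A secondary delicate point is the identification $K_j \cong \Z$, since a priori a finitely generated central subgroup of $\L_j$ embedded in $\wh{\Z}$ could have higher rank, and the five-lemma comparison of the two central extensions must be arranged carefully to force the kernel comparison map to be an isomorphism rather than a mere surjection.
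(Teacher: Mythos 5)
Your overall skeleton matches the paper's (apply Theorem \ref{products_prof_thm}, factor the two Zariski-dense representations through the coordinate projections, show each $\L_j$ is a torsion-free central extension of $\D$ by $\Z$, and finish with Lemma \ref{GMW_Lemma}), but the two steps you flag as ``delicate'' are exactly where your argument has genuine gaps. First, the pairing step. Asserting that ``distinctness of the characters of $\psi_1$ and $\psi_2$ ensures they pair off with the two distinct projections'' is not a proof: nothing at this stage bounds the set of Zariski-dense characters of $\L_1$, so a priori both $\psi_1$ and $\psi_2$ could factor through the \emph{same} projection $\L\to\L_1$ while remaining non-conjugate. The paper rules this out by a chain of arguments you do not supply: (a) each $\psi_i$ surjects onto $\D$, via Proposition \ref{p:neat}, which rests on the profinite statement that $\wh{\D}$ does not surject the completion of a proper non-elementary subgroup (\cite[Cor.~3.7--3.8]{BMRS2}) together with Lemma \ref{l:justZ}; (b) the image of $(\psi_1,\psi_2)$ in $\D\times\D$ is a \emph{full} subdirect product, proved using Mostow-type rigidity of the triangle group (uniqueness of its discrete faithful representation up to conjugacy in $\PSL(2,\C)$) plus Hopficity; and (c) Lemma \ref{l:justZ}(2): if both $\wh\psi_i$ factored through $\wh{\G}_1$, their kernels would both equal $Z(\wh{\G}_1)$, so $\im(\wh\psi_1,\wh\psi_2)$ would meet neither factor of $\wh\D\times\wh\D$, contradicting fullness. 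Also note that the ``profinite analogue of the centraliser argument'' you invoke does not transfer verbatim (there are no limit sets in $\wh\D$); the paper's Lemma \ref{l:justZ}(1) is proved instead via the conjugacy classes of finite subgroups (\cite{BCR}) and the fact that finitely generated closed normal subgroups of completions of limit groups have finite index. Finally, for $\D(4,4,4)$ one needs the extra $C_3$-quotient argument, since $\P\rho(\mathcal{O}^1)=\D(3,3,4)$ there.

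Second, the identification $K_j\cong\Z$. Your proposed use of Proposition \ref{p:sameZ} to identify $Z(\L)$ with $\Z^2$ is circular: that proposition requires \emph{both} groups to be known central extensions of a good centre-free-completion base with finitely generated kernels, and at this point $\L$ is only known to be a finitely generated residually finite group with $\wh\L\cong\wh{\G\times\G}$, not a central extension of anything resembling $\D\times\D$. Likewise, upgrading the density-induced surjection $\wh{\L_j}\twoheadrightarrow\wh{\G}$ to an isomorphism cannot be done by Hopficity here, because one does not yet know $\wh{\L_j}$ and $\wh{\G}$ are abstractly isomorphic; the paper only proves such a statement (Lemma \ref{l:dense-will-do}) \emph{after} $\L_j$ is known to be a Seifert fibred group, and the proposition itself deliberately stops short of claiming $\L_j\cong\G$. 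The paper's route is much more direct: having the central extension $1\to Z\to\L_j\to\D\to 1$, it applies the five-term exact sequence $H_2(\D,\Z)\to Z\to H_1(\L_j,\Z)\to H_1(\D,\Z)\to 0$, where $H_2(\D,\Z)\cong\Z$, $Z$ is torsion-free (as $\wh\G$ is torsion-free by goodness), and $H_1(\L_j,\Z)$ is finite because it is a quotient of $H_1(\L,\Z)\cong H_1(\G\times\G,\Z)$, which is finite by Lemma \ref{small_ab}; this forces $Z\cong\Z$. You should replace your comparison-of-extensions scheme with this homological argument, and supply the full subdirect product argument for the pairing step.
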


Before proving this proposition, we explain why it implies Theorem \ref{t3} for those $\D(p,q,r)$ from (\ref{list-top}). \\[\baselineskip]
{\noindent{\bf Proof of Theorem \ref{t3} (=\ref{second-t3}) for those $\D(p,q,r)$ from (\ref{list-top}).} With the notation established above,
Proposition \ref{p:towardsGP} allows us to apply Corollary \ref{c:dense-enough} and  conclude that
 $\La_1\cong\La_2\cong \G$. 
 By construction, the inclusion $\L\hookrightarrow\wh{\G\times\G}$,
  which realises the given isomorphism $\wh{\L}\cong\wh{\G\times\G}$, factors
  through $\L\hookrightarrow\L_1\times\L_2$. And since  
$\L_1\cong\L_2\cong\G$, the
  Hopf property for $\wh{\G\times\G}$ implies that there is an automorphism
  of $\wh{\G\times\G}$  taking $\L_1\times\L_2$ to $\G\times\G$.
  \qed

\subsection{Proof of Proposition \ref{p:towardsGP}}\label{s:towardsGP}
 
 We continue with the notation established above: $\G$ will be the fundamental group of a \SFS over a 
base with fundamental group $\D$, 
\begin{equation}\label{sfs-seq}
 1\to \Z \to \G \to \Delta \to 1,
\end{equation}
 and $\L_i \subset \wh{\G}_i$ is the image of $\L<\wh{\G}\times \wh{\G}$ under coordinate projection.  Item (1)   of Proposition \ref{p:towardsGP} is obvious,
 given these definitions, and our task is to prove Proposition \ref{p:towardsGP}(2).
 
 We now define $N_i= \L\cap\wh{\G}_i$, 
 so  $\L_1 = \L/N_2$ and  $\L_2 = \L/N_1$, and we define $L_i<\wh{\Delta}$ to be the image of 
 $\L_i$ under $\wh{\G}_i\to\wh{\Delta}$, where $\G\times\G\to \D\times\D$ is the product
 of two copies of the map from (\ref{sfs-seq}).

 \begin{lemma}\label{l:split}
 Every representation  $\rho : \L\to {\rm{PSL}}(2,\C)$ with Zariski-dense image factors  
 through one of the projections $\L\to \L_1$ or  $\L\to \L_2$.
 \end{lemma}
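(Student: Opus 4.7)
The plan is to adapt the argument of Lemma~\ref{l:thru-proj}(1), applied not to a direct product of groups but to the pair of commuting normal subgroups $N_1, N_2 \trianglelefteq \L$. Two structural facts will drive the proof: first, $N_i = \L \cap \wh{\G}_i$ is normal in $\L$ because $\wh{\G}_i$ is normal in $\wh{\G}\times\wh{\G}$; second, $[N_1, N_2] = 1$ because $\wh{\G}_1$ and $\wh{\G}_2$ commute in $\wh{\G}\times\wh{\G}$. Since $\L\to\L_1$ has kernel $N_2$ and $\L\to\L_2$ has kernel $N_1$, factoring $\rho$ through one of these projections is equivalent to showing that one of $\rho(N_1)$, $\rho(N_2)$ is trivial.

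My plan is to suppose that $\rho(N_1)$ contains a non-identity element $\gamma$ and deduce that $\rho(N_2) = 1$. Because $N_2$ commutes elementwise with $N_1$, the subgroup $\rho(N_2)$ lies in the centraliser $C_{\PSL(2,\C)}(\gamma)$, which is abelian. Normality of $N_2$ in $\L$ then promotes $\rho(N_2)$ to a normal subgroup of $\rho(\L)$. As in the proof of Lemma~\ref{l:thru-proj}(1), a non-elementary subgroup of $\PSL(2,\C)$ admits no non-trivial normal abelian subgroup, since the limit set of such a subgroup consists of at most two points and is preserved by the normaliser, forcing the ambient group to be elementary — contradicting the hypothesis on $\rho$. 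Hence $\rho(N_2) = 1$ and $\rho$ factors through $\L\to\L_1$.

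The symmetric argument shows that if $\rho(N_2) \ne 1$ then $\rho(N_1) = 1$ and $\rho$ factors through $\L\to\L_2$; and if both $\rho(N_1)$ and $\rho(N_2)$ are trivial then $\rho$ factors through either projection. I do not anticipate any genuine obstacle: the argument is essentially a transcription of Lemma~\ref{l:thru-proj}(1), with the commuting normal subgroups $N_1, N_2$ playing the role of the two factors of a direct product, so that $\L$ itself need not be (and generally will not be) isomorphic to a product.
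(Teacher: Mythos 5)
Your proof is correct and takes essentially the same route as the paper, which simply applies Lemma~\ref{l:thru-proj}(1) with $H_i=N_i=\L\cap\wh{\G}_i$. Your rederivation using normality of $N_1,N_2$ in $\L$ — so that $\rho(N_2)$ becomes an abelian normal subgroup of the non-elementary group $\rho(\L)$ itself — is if anything slightly more careful than the paper's direct citation, since it does not require the image of $N_1N_2$ to be non-elementary.
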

 
 \begin{proof} Apply Lemma \ref{l:thru-proj}(1) with $H_i=N_i = \L\cap \wh{\G_i}$ to see that one of the $N_i$ must map   trivially, hence $\rho$ factors through $\L\to\L_{j\ne i}$.
 \end{proof}

 We also need a profinite analogue of Lemma \ref{l:thru-proj}.

\begin{lemma}\label{l:justZ} Let  $\Delta$ be  a non-elementary Fuchsian group and let $G_1$ and $G_2$ be 
finitely generated profinite groups. 
\begin{enumerate}
\item For every continuous epimorphism $\phi:G_1\times G_2\twoheadrightarrow\wh{\Delta}$, either $\phi(G_1)=1$
or $\phi(G_2)=1$.
\item If
$\wh{\Delta}\cong G_1/Z$ with $Z<G_1$ central, then $Z$ is the centre of $G_1$ and
$\ker\psi = Z$ for every epimorphism  $\psi: G_1\to \wh{\Delta}$.
\end{enumerate}
\end{lemma}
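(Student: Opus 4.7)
For \textbf{part (2)}, the argument is short. Since $\wh{\Delta}$ has trivial centre by Theorem~\ref{prof-centre-fuchs}, the image of $Z(G_1)$ under the quotient $G_1 \twoheadrightarrow G_1/Z \cong \wh{\Delta}$ lies in $Z(\wh{\Delta}) = 1$, so $Z(G_1) \subseteq Z$; combined with the hypothesis $Z \subseteq Z(G_1)$ we obtain $Z = Z(G_1)$. For any epimorphism $\psi \colon G_1 \twoheadrightarrow \wh{\Delta}$, the same reasoning forces $\psi(Z) \subseteq Z(\wh{\Delta}) = 1$, so $\psi$ descends to a continuous epimorphism $G_1/Z \cong \wh{\Delta} \twoheadrightarrow \wh{\Delta}$; since finitely generated profinite groups are Hopfian, this is an isomorphism, giving $\ker\psi = Z$.

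For \textbf{part (1)}, I would set $A := \phi(G_1)$ and $B := \phi(G_2)$. These are closed (each $G_i$ being compact) normal subgroups of $\wh{\Delta}$ that commute elementwise in $\wh{\Delta}$ (since $G_1,G_2$ commute in $G_1 \times G_2$) and satisfy $AB = \wh{\Delta}$. Their intersection $A \cap B$ therefore lies in $Z(AB) = Z(\wh{\Delta}) = 1$ by Theorem~\ref{prof-centre-fuchs}, so $\wh{\Delta} \cong A \times B$ as topological groups. The task reduces to showing that this decomposition is trivial, i.e., that $\wh{\Delta}$ is directly indecomposable.

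My plan is then to restrict to the dense subgroup $\Delta$: the subgroups $K_A := \Delta \cap A$ and $K_B := \Delta \cap B$ are normal in $\Delta$, commute elementwise, and have trivial intersection. A standard fact about non-elementary Fuchsian groups is that every non-trivial normal subgroup $N$ of $\Delta$ is itself non-elementary (a finite normal subgroup would force a global fixed point in $\H^2$; a virtually cyclic normal subgroup would force a $\Delta$-invariant pair of fixed points on $\partial\H^2$; both contradict non-elementarity), and so $N$ contains two loxodromic elements with disjoint fixed-point pairs, forcing $C_\Delta(N) = 1$. Applied to $K_A$ and $K_B$ this forces one, say $K_A$, to be trivial, so the projection $\pi_B$ embeds $\Delta$ into $B$.

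The \textbf{main obstacle} is to upgrade $K_A = 1$ to $A = 1$. My approach here is cohomological: the goodness of $\Delta$ (Theorem~\ref{good1}) yields $H^*(\Delta, \F_\ell) \cong H^*(\wh{\Delta}, \F_\ell)$ for every prime $\ell$, and the K\"unneth formula for continuous cohomology of profinite groups applied to $\wh{\Delta} \cong A \times B$ identifies this with $H^*(A, \F_\ell) \otimes H^*(B, \F_\ell)$. The low-dimensional structure of $H^*(\Delta, \F_\ell)$, together with the virtual Poincar\'e-duality constraint $\dim H^2(\Delta, \F_\ell) \leq 1$ at primes $\ell$ coprime to the torsion in $\Delta$, severely restricts the possible non-trivial graded tensor factorizations; a careful analysis across all $\ell$ should force one of $A, B$ to have trivial $H^1(\cdot, \F_\ell)$ for every prime $\ell$, and a finitely generated profinite group with no non-trivial finite abelian quotient is trivial. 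This gives $A = 1$ or $B = 1$ and completes the proof.
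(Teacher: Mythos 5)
Part (2) of your proposal is correct and is essentially the paper's argument: triviality of $Z(\wh{\Delta})$ (Theorem \ref{prof-centre-fuchs}) gives $Z=Z(G_1)$ and $\psi(Z)=1$, and then Hopficity of $\wh{\Delta}$ finishes it.

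Part (1), however, has a genuine gap exactly at the point you flag. Your reduction to showing that $\wh{\Delta}\cong A\times B$ is impossible with both factors non-trivial is fine (and the observation that $\Delta\cap A$ or $\Delta\cap B$ must vanish is correct but, as you note, carries no force: closed normal subgroups of $\wh{\Delta}$ can meet the dense subgroup $\Delta$ trivially -- that is precisely the Grothendieck-pair phenomenon this paper is about). The proposed cohomological rescue does not close the gap. First, the K\"unneth constraint with $\F_\ell$-coefficients only yields, for each prime $\ell$ separately, that one of $H^1(A,\F_\ell)$, $H^1(B,\F_\ell)$ vanishes; it cannot by itself exclude a ``prime-disjoint'' factorization in which $A$ carries the cohomology at some primes and $B$ at others, and for the non-cocompact Fuchsian groups allowed by the lemma (free groups and free products of cyclic groups) the degree-two constraint you lean on disappears entirely, since $H^2(\Delta,\F_\ell)=0$. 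Second, the final step is based on a false principle: a finitely generated profinite group with no non-trivial finite abelian quotient need not be trivial (e.g.\ $\prod_{n\ge 5}\mathrm{Alt}(n)$ is topologically $2$-generated and perfect, as is the profinite completion of any finitely generated perfect group with many finite quotients). So even if the ``analysis across all $\ell$'' could be carried out, it would not yield $A=1$ or $B=1$. There are also unaddressed finiteness hypotheses needed to invoke a K\"unneth formula in continuous cohomology for factors that are merely finitely generated.

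The paper's proof closes this gap by using the finite generation of $\phi(G_1)$ and $\phi(G_2)$, which your sketch never exploits: since $\Delta$ has no non-trivial finite normal subgroup, neither does $\wh{\Delta}$ (via the bijection of conjugacy classes of finite subgroups from \cite{BCR}), so if both images were non-trivial they would be infinite; after passing to a torsion-free finite-index subgroup of $\Delta$ (a free or surface group) and taking preimages, one invokes the theorem of \cite{Gut} that a non-trivial finitely generated closed normal subgroup of the profinite completion of a limit group has finite index. Two commuting closed subgroups of finite index would force an abelian finite-index subgroup of $\wh{\Delta}$, a contradiction. If you want to repair your write-up, this is the missing ingredient; some substitute for it (i.e.\ some use of finite generation of the factors) appears unavoidable.
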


\begin{proof} First we prove (1). The embedding  $\Delta\to \wh{\Delta}$ induces a bijection between the conjugacy classes of finite
subgroups \cite[Theorem 5.1]{BCR}.
 In particular, since $\Delta$ does not contain a finite normal subgroup, neither does $\wh{\Delta}$,
 so if the assertion of (1) were false, then both  $\phi(G_1)$
and $\phi(G_2)$ would be infinite. Passing to a subgroup of finite index in $\Delta$ and replacing 
$G_1$ and $G_2$ by the pre-images of this subgroup reduces us to the case where $\Delta$ is torsion-free,
i.e. is a free or surface group. But a non-trivial, finitely generated, closed normal subgroup of the profinite
completion of such a group (indeed of any limit group \cite[Theorem B]{Gut}) is of finite index. But  $\phi(G_1)$
and $\phi(G_2)$ commute, so cannot both be of finite index, otherwise, $\phi(G_1)\cap \phi(G_2)$ is an abelian subgroup of finite index in $\wh{\Delta}$, which is a contradiction. 

For (2), from Theorem \ref{prof-centre-fuchs} we know that $\widehat{\D}$ is centreless, hence $Z=Z(G_1)$ and
$Z\le \ker\psi$ for all epimorphisms $\psi:G_1\twoheadrightarrow\widehat{\D}$. In particular, $\psi$ factors
through $G_1/Z$ and the induced map $\overline{\psi}: G_1/Z\twoheadrightarrow\widehat{\D}$ gives rise to an
epimorphism 
$$\widehat{\D}\overset{\cong}\to G_1/Z\overset{\overline{\psi}}\twoheadrightarrow\widehat{\D}.$$
As $\widehat{\Delta}$ is Hopfian, we deduce that $\overline{\psi}$ is injective, hence $\ker\psi = Z$.
\end{proof}

 \begin{proposition}\label{p:neat}
 Let $\D$ be a hyperbolic triangle group and let $\G$ be a finitely generated central extension of $\Delta$.
 If $\L$ is a finitely generated group with $\wh{\L}\cong\wh{\G\times\G}$, then there does not exist a
 homomorphism $h :  \L\to \D$ with image a Zariski-dense proper subgroup.
 \end{proposition}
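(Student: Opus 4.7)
The plan is to argue by contradiction, transporting $h$ to the profinite level and combining Lemma \ref{l:justZ} with centrality (Theorem \ref{prof-centre-fuchs}) and LERF to reach a forbidden configuration.

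Assume $h : \L \to \D$ has non-elementary image $L' \subsetneq \D$. Since $\L$ is finitely generated, so is $L'$; as a finitely generated non-elementary subgroup of a Fuchsian group, $L'$ is itself a non-elementary Fuchsian group. By Scott's theorem $\D$ is LERF, so $L'$ is separable in $\D$ and its closure in $\wh{\D}$ is canonically identified with $\wh{L'}$. The universal property of profinite completion then extends $h$ to a continuous surjection $\wh{h} : \wh{\L} \twoheadrightarrow \wh{L'}$.

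Transporting $\wh{h}$ along the given isomorphism $\wh{\L} \cong \wh{\G}_1 \times \wh{\G}_2$, I would apply Lemma \ref{l:justZ}(1) with target $\wh{L'}$ (the completion of the non-elementary Fuchsian group $L'$). This forces $\wh{h}$ to annihilate exactly one of the factors, say $\wh{\G}_2$, so that $\wh{h}$ descends to a continuous surjection $\bar{h} : \wh{\G} \twoheadrightarrow \wh{L'}$. By Theorem \ref{prof-centre-fuchs}, $\wh{L'}$ is centreless; the centre $Z$ of $\G$ is already central in $\wh{\G}$ (by density of $\G$ in $\wh{\G}$), so $\bar{h}(Z) \subset Z(\wh{L'}) = 1$. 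Goodness of $\D$ (Theorem \ref{good-groups}) together with Corollary \ref{c:zhat} identifies $\wh{Z} = \overline{Z}$ with $\ker(\wh{\G} \to \wh{\D})$, and hence $\bar{h}$ descends further to a continuous surjection $\widetilde{h} : \wh{\D} \twoheadrightarrow \wh{L'}$.

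The main obstacle is then to derive a contradiction from the pair $(\widetilde{h}, \iota)$, where $\iota : \wh{L'} \hookrightarrow \wh{\D}$ is the LERF inclusion and $L' \subsetneq \D$. My intended route is to compare $\widetilde{h}$ with the identity on $\wh{\D}$ (the descent of the natural projection $\pi : \wh{\G} \to \wh{\D}$), using Lemma \ref{l:split}: the same argument that let us descend $\wh h$ through $p_1$ can be run with the natural $\pi$, and the resulting two endomorphisms of $\wh{\D}$ agree on the dense image of $\L_1 \subset \wh{\G}$, which forces $\iota \circ \widetilde h = \mathrm{id}_{\wh\D}$ and hence $\wh{L'} = \wh{\D}$; a further appeal to LERF ($L' = \D \cap \wh{L'}$) then gives $L' = \D$, contradicting properness. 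The delicate heart of the argument is to make this compatibility rigorous: one must check that the data extracted from $h$ via Lemma \ref{l:split} forces the descended map $\widetilde h$ to coincide with the identity on a dense set rather than merely being some alternative quotient, which is where the rigidity of hyperbolic triangle groups (implicitly, a statement in the spirit of \cite[Corollary 3.8]{BMRS2} but available for all hyperbolic triangle groups) enters.
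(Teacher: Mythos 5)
Your reduction to a continuous epimorphism $\widetilde h:\wh\D\onto\wh{L'}$ is exactly the paper's argument: extend $h$ to a continuous surjection $\wh h:\wh\L\cong\wh\G\times\wh\G\onto\wh{L'}$, kill one factor via Lemma \ref{l:justZ}(1), and use Theorem \ref{prof-centre-fuchs} together with Corollary \ref{c:zhat} to descend through $\wh\G$ and then through $\wh\G$ modulo its centre, i.e.\ $\wh\D$. (The LERF/separability input and the appeal to Lemma \ref{l:split} are superfluous here, since you work directly at the profinite level; this part is otherwise sound.)

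The final step, however, contains a genuine gap. There is no reason why $\iota\circ\widetilde h$ should agree with the identity of $\wh\D$ on a dense subgroup: $h$ is an arbitrary abstract homomorphism $\L\to\D$, with no compatibility with the chosen identification $\L\hookrightarrow\wh{\G\times\G}$, so for $\lambda\in\L$ the value $\widetilde h(\bar\lambda_1)=h(\lambda)$ need bear no relation to the image $\bar\lambda_1$ of $\lambda$ in $\wh\D$; nothing forces $\widetilde h$ to be anything other than ``some alternative quotient,'' and rigidity of the discrete faithful representation of $\D$ (which the paper uses elsewhere, e.g.\ in Proposition \ref{p:sdp}) says nothing about an abstract continuous epimorphism $\wh\D\onto\wh{L'}$. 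The ingredient actually needed to finish --- and the one the paper invokes --- is that there is \emph{no} continuous epimorphism from $\wh\D$ onto $\wh S$ when $S<\D$ is a proper, finitely generated, non-elementary subgroup; this is \cite[Corollary 3.7]{BMRS2}, a substantive result about triangle groups proved there by comparing invariants of $\D$ with those of its proper subgroups, and it is not recoverable from your density/compatibility argument. Replacing your last paragraph by an appeal to that result turns your proposal into the paper's proof.
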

 
\begin{proof} Note that since the image of $h$ is contained in the discrete group $\D$, Zariski-dense and non-elementary are the same. Thus, if there were such a  homomorphism $h$, with image $S$ say, then Lemma \ref{l:justZ}(1) would tell us
that $\wh{h}$ factored through one of the coordinate projections in $\wh{\G\times\G}$.
And since, by Theorem \ref{prof-centre-fuchs}, the centre of $\wh{S}$ is trivial,
$\wh{h}$ would actually  factor through $\wh{\G}$ modulo
its centre, which is $\wh{\Delta}$. But from \cite[Theorem 5.1]{BMRS2} and  \cite[Theorem B]{BRprasad} we know that 
there is no epimorphism from $\wh{\Delta}$ to $\wh{S}$.
\end{proof}

We now  assume that $\D=\D(p,q,r)$ is one of the triangle groups from list (\ref{list-top}), the key feature of
these groups is again their Galois rigidity and specific arithmetic properties which ensure Theorem \ref{products_prof_thm} can be applied.

\begin{proposition} \label{p:sdp} Suppose that $\Delta=\Delta(p,q,r)$ is from list (\ref{list-top}) and 
that $\G$ is the fundamental  group of a \SFS whose base orbifold is $S^2(p,q,r)$. If
$\L$ is a finitely generated, residually finite group with $\wh{\L}\cong\wh{\G\times \G}$
and $\L_i$ is its projection to $\wh{\G}_i$, then
there exist epimorphisms $g_i:\L_i\to\Delta$ and hence a homomorphism
$$
g: \L \hookrightarrow \L_1\times\L_2 \overset{(g_1,g_2)}\to \Delta\times\Delta
$$
with image a full subdirect product.
\end{proposition}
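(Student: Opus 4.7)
The strategy is to promote the representation rigidity for $\G\times\G$ packaged in Theorem \ref{products_prof_thm} into concrete surjections onto $\Delta$. First I would verify the hypotheses of that theorem: by Lemma \ref{galois_rigid_seif}, $\G$ is Galois rigid with the same trace field and quaternion algebra as $\Delta$, and for each $\D(p,q,r)$ in list (\ref{list-top}) that algebra has type number one with a single finite ramified place (as used in the proof of Proposition \ref{reduce_galois_rigid}). Theorem \ref{products_prof_thm} then yields two Zariski-dense representations $\psi_1,\psi_2\colon\Lambda\to\P\rho(\mathcal{O}^1)\subset\PSL(2,\C)$, exhausting $\mathrm{X}_{\mathrm{zar}}(\Lambda,\C)$ up to conjugacy and Galois conjugation.

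Next I would factor each $\psi_j$ through one of the projections $\Lambda\twoheadrightarrow\Lambda_i$ and, after relabelling, arrange that $\psi_j$ passes through $\Lambda_j$. Since $\psi_j$ has non-elementary image, Lemma \ref{l:split} already gives a factorisation through some $\Lambda_i$. To see that the two representations pass through distinct projections, one must track the construction of Theorem \ref{products_prof_thm}: each $\psi_j$ is obtained by restricting to $\Lambda$ a continuous profinite representation $\wh\psi_j\colon\wh\Lambda\to\PSL(2,\cdot)$ that, under the fixed isomorphism $\wh\Lambda\cong\wh{\G\times\G}$, matches (up to Galois action) the profinite completion of one of the representations $\phi_1,\phi_2$ of Corollary \ref{products_reps}. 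Since $\wh{\phi_i}$ vanishes on $\wh\G_{3-i}$, the transported $\wh\psi_j$ vanishes on the corresponding closed subgroup of $\wh\Lambda$; intersecting with $\Lambda$ one gets $N_{3-j}$, and so $\psi_j$ descends to $\tilde\psi_j\colon\Lambda_j\to\P\rho(\mathcal{O}^1)$.

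Finally, I would identify the image of $\tilde\psi_j$ with $\Delta$ to produce $g_j$. For the first four entries of list (\ref{list-top}), $\P\rho(\mathcal{O}^1)=\Delta$, so the composition $\Lambda\twoheadrightarrow\Lambda_j\overset{\tilde\psi_j}\to\Delta$ is a non-elementary homomorphism, and Proposition \ref{p:neat} forbids its image from being a non-elementary proper subgroup, forcing $\tilde\psi_j$ to be surjective. For $\D(4,4,4)$, where $\P\rho(\mathcal{O}^1)=\D(3,3,4)\supset\D(4,4,4)$ of index $3$, I would first show that the image lies in $\D(4,4,4)$ by adapting the $\Z/3$-quotient argument from the end of the proof of Proposition \ref{reduce_galois_rigid}: an escape from $\D(4,4,4)$ would produce a non-trivial $\Z/3$ quotient of $\Lambda_j$, hence (via the profinite projection to $\wh\G$) of $\G$, to be excluded by the abelianization calculation in Lemma \ref{small_ab}(5). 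Proposition \ref{p:neat} then finishes as before. With $g_j:=\tilde\psi_j$ in hand, the composite $g=(g_1,g_2)\circ\iota$, where $\iota\colon\Lambda\hookrightarrow\Lambda_1\times\Lambda_2$ is the tautological inclusion, projects surjectively onto each factor of $\Delta\times\Delta$, giving a full subdirect product.

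The main obstacle I anticipate is the middle step: Theorem \ref{products_prof_thm} and Corollary \ref{products_reps} only identify the representations $\psi_j$ with the $\phi_i$ up to conjugacy and Galois action, whereas the required conclusion depends on the incidence data ``$\wh\psi_j$ vanishes on $\wh\G_{3-j}$''. To push this through I would unwind the argument of \cite[Theorem 4.8]{BMRS1} and \cite[Theorem 6.1]{Spit} in the present set-up, arranging that the profinite completions of $\phi_1,\phi_2$ are chosen compatibly with the fixed isomorphism $\wh\Lambda\cong\wh{\G\times\G}$ so that the kernels transport correctly; were the two $\psi_j$ to factor through the same $\Lambda_i$, the other $\Lambda_i$ would carry no Zariski-dense representation, and the matching of profinite completions supplied by Theorem \ref{products_prof_thm} would fail on that side.
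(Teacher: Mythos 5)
Your outline gets the first step right (Theorem \ref{products_prof_thm} plus Proposition \ref{p:neat} yields two non-conjugate epimorphisms onto $\D$), but there are two genuine gaps. The more structural one is at the end: surjecting onto each factor of $\D\times\D$ only makes the image a subdirect product, whereas ``full'' means the image meets $\D\times 1$ and $1\times\D$ non-trivially (a diagonal copy of $\D$ surjects both factors but is not full), and you never prove this. In the paper this is where the real work lies: if the image of $(\psi_1,\psi_2)$ met $\D\times 1$ trivially, then $\psi_2(\lambda)\mapsto\psi_1(\lambda)$ would define an epimorphism $\D\to\D$, hence an automorphism by the Hopf property, hence one induced by conjugation in $\PSL(2,\C)$ by the uniqueness of the discrete faithful representation of $\D$ -- contradicting the fact that $\psi_1$ and $\psi_2$ are non-conjugate. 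Moreover, this fullness is exactly what lets one settle your self-declared ``main obstacle'': rather than reopening the proofs of \cite[Theorem 4.8]{BMRS1} and \cite[Theorem 6.1]{Spit} to track which coordinate each $\wh\psi_j$ kills (information that the statement of Theorem \ref{products_prof_thm} does not supply, and which your sketch does not actually establish), one argues indirectly: by Lemma \ref{l:justZ}(2) every continuous epimorphism $\wh\G_1\to\wh\D$ has kernel exactly $Z(\wh\G_1)$, so if both $\wh\psi_1$ and $\wh\psi_2$ factored through the same coordinate their joint image in $\wh\D\times\wh\D$ would miss both factors, contradicting fullness. So the step you propose to prove by unwinding the arithmetic construction is deduced in the paper from fullness, and fullness is the step your proposal omits.

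The second gap is your treatment of $\D(4,4,4)$. You propose to exclude an escape from $\D(4,4,4)\lhd\D(3,3,4)$ by producing a $C_3$ quotient of $\G$ and appealing to Lemma \ref{small_ab}(5); but that lemma gives $H_1(M,\Z)\cong C_4\times C_{4k_5}$ with $k_5$ odd and \emph{possibly divisible by $3$} (for instance $e_1=e_2=e_3=1$, $d=0$ gives $C_4\times C_{12}$), so $\G$ -- and hence $\wh\G\times\wh\G$ -- may well surject $C_3$ and no contradiction arises. The correct argument must first kill the centre: the hypothetical quotient factors as $\wh\L\onto\wh{I}\onto C_3$ where $I$ is the non-elementary Fuchsian image, and $\wh{I}$ is centre-free by Theorem \ref{prof-centre-fuchs}, so the map factors through $\wh\L$ modulo its centre, which is $\wh\D\times\wh\D$ with $\D=\D(4,4,4)$; since $\D(4,4,4)$ has abelianization $C_4\times C_4$, this group has no $C_3$ quotient. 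The contradiction lives at the level of the triangle group, not the Seifert fibred group, and as written your argument would fail for all those $M$ over $S^2(4,4,4)$ whose first homology has $3$-torsion.
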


\begin{proof}
Arguing as in the first paragraph of the proof of Proposition \ref{reduce_galois_rigid}, we see that  $\G$ satisfies the hypotheses of Theorem \ref{products_prof_thm}. Thus, from Theorem \ref{products_prof_thm}(i) we obtain representations 
$\psi_i: \L \to  \P\rho(\mathcal{O}^1) < {\rm{PSL}}(2,\C)$ for $i=1,2$, with $\psi_1$ and $\psi_2$
 distinct up to conjugation in 
${\rm{PSL}}(2,\C)$. Crucially, $\Delta = \P\rho(\mathcal{O}^1)$ for all of the triangle groups $\D$ in list (\ref{list-top}) except $\D(4,4,4)$, so $\psi_i$ has image in $\Delta$. An additional argument is required in the case $\D(4,4,4)$,  which 
is a normal subgroup of index $3$ in the corresponding
unit group $ \P\rho(\mathcal{O}^1) = \D(3,3,4)$. In this case, if  
$I:=\im \psi_i$ were not contained in $\D(4,4,4)$, then we would have a surjection  $\L\to I\to  C_3$ and hence 
$\wh{\L}\to \wh{I}\to C_3$. But this is not possible, because $\wh{I}$ has  trivial centre, by Theorem 
\ref{prof-centre-fuchs},
and $\wh{\L} = \wh{\G}\times\wh{\G}$ modulo its centre is
$\wh{\D}\times\wh{\D}$, where $\D=\D(4,4,4)$, and $\wh{\D}\times\wh{\D}$
does not map onto $C_3$ since  $\D(4,4,4)$ does not. 

We have argued that in all cases $\psi_i$ has image in $\Delta$. 
The images of these representations $\psi_i$ are Zariski-dense  
 and by Proposition \ref{p:neat} each  must be a surjection. Thus we obtain a homomorphism $\psi: \L \overset{(\psi_1,\psi_2)}\to \Delta\times\Delta$ whose image is a  subdirect product. We claim that in fact this is a {\em full} subdirect product, i.e. that
the image of $\psi=(\psi_1, \psi_2)$
intersects both factors of $\Delta\times\D$. To see why this is true,
consider what would happen if $\im \psi$
were to intersect $\D\times 1$ trivially. In this case,
the restriction of the projection $\D\times\D\to 1\times\D$ to $\im \psi$  
would be an isomorphism with inverse $\psi_2(\lambda)\mapsto (\psi_1(\lambda), \psi_2(\lambda))$. The composition
of this last map with the  projection $\D\times\D\to \D\times 1$ would give a surjection
$\psi_2(\lambda)\mapsto \psi_1(\lambda)$ from $\D$ to $\D$. As $\D$ is Hopfian, this surjection must be an automorphism.
However, $\D$ is rigid, in the sense that the faithful discrete representation of $\D$ in $\PSL(2,\C)$ is unique up to conjugacy in $\PSL(2,\C)$, so that
this automorphism of $\D$ must be the restriction of a conjugation in $\PSL(2,\C)$.
But there is no such conjugation, because  $\psi_1$ and $\psi_2$ are distinct up to conjugation in 
$\PSL(2,\C)$,  hence there can be no such automorphism.

Lemma \ref{l:split} tells us that each $\psi_i$ must factor through at least one of the maps $\L\to\L_{j(i)}$,
while Lemma \ref{l:justZ}(1) tells us that each  $\wh{\psi}_i$
must factor through exactly one of the maps $\wh{\L}\to\wh{\G}_j$.
We shall complete the proof by arguing that we can choose $\{j(1), j(2)\}=\{1,2\}$. It is clear 
that if $\wh{\psi}_i$ factors through $\wh{\L}\to\wh{\G}_j$ then $\psi_i$ factors
through $\L\to\L_{j}$, so we will be done if we can
derive a contradiction from the assumption that $\wh{\psi}_1$ and $\wh{\psi}_2$ both
factor through $\wh{\L}\to\wh{\G}_1$. 
Suppose, then, that  for $i=1,2$ we have an epimorphism
$\wh{\Psi}_i: \wh{\G}_1\to\wh{\D}$ so that $\wh{\psi}_i=\wh{\Psi}_i\circ p$ where $p:\wh{\L} \to\wh{\G}_1$ 
is the coordinate projection. 
Lemma \ref{l:justZ}(2) tells us that $\ker\wh{\Psi}_1=\ker\wh{\Psi}_2=Z(\wh{\G}_1)$.
Therefore,  
$\im(\wh{\psi}_1, \wh{\psi}_2) = \im(\wh{\Psi}_1, \wh{\Psi}_2) < \wh{\D}\times\wh{\D}$ does not intersect either factor,
contradicting the fact that the image of $(\psi_1, \psi_2)$ in $\Delta\times\D$ intersects both factors.

The map $g_{j(i)}$ in the statement of the proposition is the restriction to $\L_i$ of the
map $\wh{\G}_{j(i)}\to\D$ defined above.
\end{proof}

We have constructed, for $i=1,2$, an epimorphism $\L_i\to \D$ that 
extends to a continuous epimorphism $\wh{\G}_i\to \wh{\D}$. Lemma \ref{l:justZ}(2) tells us that the 
kernel of this second map is central, so we get a short exact sequence
\begin{equation}\label{sequ}
1\to Z \to \L_i \to \D\to 1
\end{equation} 
with $Z$ central in $\L_i$.

\begin{lemma}\label{inf_cyclic}
$Z$ is infinite cyclic.
\end{lemma}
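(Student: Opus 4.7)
The plan is to exploit the product structure $\wh{\L}=\wh{\G}_1\times\wh{\G}_2$ together with the finiteness of $\G_{\mathrm{ab}}$ (Lemma~\ref{small_ab}) to pin down $Z$.

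First I would observe that $Z$ is a finitely generated, torsion-free abelian group, so $Z\cong\Z^r$ for some $r\ge 0$. Torsion-freeness is immediate because $Z=\L_i\cap\wh{\Z}$ sits inside $\wh{\Z}\cong\hat{\Z}$, while finite generation follows from Lemma~\ref{l:Zfg} applied to (\ref{sequ}), since $\L_i$ is finitely generated (being a quotient of $\L$) and $\D$ is finitely presented.

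Next I would pin down $\wh{\L_i}$ as an explicit direct product. The quotient $\L\twoheadrightarrow\L_i=\L/N_j$ induces a continuous surjection $\wh{\L}\twoheadrightarrow\wh{\L_i}$ whose kernel is the closure $\overline{N_j}$ of $N_j=\L\cap\wh{\G}_j$ in $\wh{\L}$. Because this kernel is closed, normal in $\wh{\L}=\wh{\G}_i\times\wh{\G}_j$, and contained in the factor $\wh{\G}_j$, it must be of the form $\{1\}\times M'$ for some closed normal $M'\trianglelefteq\wh{\G}$. Hence
\[
\wh{\L_i}\;\cong\;\wh{\G}\times\bigl(\wh{\G}/M'\bigr),
\]
with the first factor identified with $\wh{\G}_i$.

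The main step is to show that $\wh{\G}/M'$ is trivial. By Corollary~\ref{c:zhat} and Lemma~\ref{l:justZ}(2), $\wh{Z}=\ker(\wh{\L_i}\twoheadrightarrow\wh{\D})$ coincides with the centre $Z(\wh{\L_i})=\wh{\Z}\times Z(\wh{\G}/M')$. On the other hand, since $g_i\colon\L_i\to\D$ is by construction the restriction of $\wh{\G}_i\to\wh{\D}$ (Proposition~\ref{p:sdp}), the induced map $\wh{\L_i}\to\wh{\D}$ factors through the first factor of $\wh{\G}\times(\wh{\G}/M')$ and therefore has kernel $\wh{\Z}\times(\wh{\G}/M')$. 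Comparing the two expressions forces $Z(\wh{\G}/M')=\wh{\G}/M'$; that is, $\wh{\G}/M'$ is abelian. As an abelian quotient of $\wh{\G}$ it is then a quotient of $\wh{\G}_{\mathrm{ab}}=\G_{\mathrm{ab}}$, which is finite by Lemma~\ref{small_ab}.

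Finally, $\wh{Z}=\wh{\Z}\times(\wh{\G}/M')$ is isomorphic to $\hat{\Z}^r$ and is therefore torsion-free, so the finite group $\wh{\G}/M'$ must vanish. Hence $\hat{\Z}^r=\hat{\Z}$, which forces $r=1$ and $Z\cong\Z$. The crux of the argument---and the main obstacle I anticipate---is recognising that the product decomposition of $\wh{\L}$ descends to a product decomposition of $\wh{\L_i}$ in which the second factor is necessarily abelian; once this is in place, the finiteness of $\G_{\mathrm{ab}}$ closes the argument.
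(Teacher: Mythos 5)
Your proof is correct, but it takes a genuinely different route from the paper's. The paper disposes of the lemma in three lines of homology: it applies the five-term exact sequence of $1\to Z\to\L_i\to\D\to 1$, namely $H_2(\D,\Z)\to Z\to H_1(\L_i,\Z)\to H_1(\D,\Z)\to 0$, and uses that $Z$ is torsion-free, that $H_2(\D,\Z)\cong\Z$, and that $H_1(\L_i,\Z)$ is finite (being a quotient of $H_1(\L,\Z)\cong H_1(\G\times\G,\Z)$, which is finite by Lemma \ref{small_ab}) to conclude $Z\cong\Z$. You instead stay entirely inside the profinite framework: you write $\wh{\L_i}\cong\wh{\L}/\overline{N_j}\cong\wh{\G}\times(\wh{\G}/M')$, compute the kernel of the induced map $\wh{\L_i}\twoheadrightarrow\wh{\D}$ in two ways (as $\wh{Z}$ via Corollary \ref{c:zhat}/Lemma \ref{l:justZ}(2), and as $\wh{\Z}\times(\wh{\G}/M')$ via the factorisation through the first projection), deduce that $\wh{\G}/M'$ is a finite abelian direct factor of a torsion-free group and hence trivial, and conclude $\wh{Z}\cong\wh{\Z}$, so $r=1$. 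Both arguments ultimately rest on Lemma \ref{small_ab}, but the paper's is shorter and needs only the Schur multiplier of the triangle group, while yours leans on more of the machinery already assembled (Proposition \ref{p:sdp}, Lemma \ref{l:justZ}, goodness) and buys two by-products the five-term argument does not make explicit: the non-triviality of $Z$ comes for free (the kernel visibly contains the factor $\wh{\Z}$), and you learn that $M'=\wh{\G}$, i.e.\ that $N_j=\L\cap\wh{\G}_j$ is dense in $\wh{\G}_j$, which is pertinent to the Grothendieck-pair structure later on. Two small points you should make explicit when writing this up, though neither affects correctness: the identification of the induced map $\wh{\L_i}\to\wh{\D}$ with $\wh{\Psi}$ composed with the first-factor projection holds because the two continuous maps agree on the dense subgroup $\L_i$; and Corollary \ref{c:zhat} applies to the pair $(\L_i,Z)$ because $Z$ is finitely generated by Lemma \ref{l:Zfg} and $\D$ is good (indeed $Z$ is exactly the centre of $\L_i$, since by density $Z(\L_i)\subset Z(\wh{\G}_i)=\wh{\Z}$).
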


\begin{proof}
Associated to the short exact sequence we have the following 5-term exact sequence in homology, noting that  $Z=H_0(\D,Z)$ since $Z$ is central.
$$
H_2(\D,\Z)\to Z\to H_1(\L_i,\Z)\to H_1(\D,\Z)\to 0.
$$
$Z$ is torsion-free,  $H_2(\D,\Z)\cong\Z$ and $H_1(\L_i,\Z)$ is finite, since it is  a quotient of $H_1(\L,\Z)\cong H_1(\G\times \G,\Z)$. Therefore $Z\cong\Z$.
\end{proof}

\noindent{\bf{End of the proof of Proposition \ref{p:towardsGP}}:}
With the short exact sequence (\ref{sequ}) and Lemma \ref{inf_cyclic} in hand, all that remains to be observed is that 
$\L_i$ is torsion-free, which follows from $\wh{\G}_i$ being torsion-free; this being a consequence of the goodness of $\Gamma$ (Theorems \ref{good1} and \ref{good2}). Hence 
we can appeal to Lemma \ref{GMW_Lemma} and the proof is complete. \qed\\[\baselineskip]
To complete the proof of Theorem \ref{second-t3} we deal with those Seifert fibred spaces with base orbifolds associated to triangle groups from (\ref{list-bottom}).

\begin{proposition}
\label{p:deal_with_bottom}
 Let $\Delta=\Delta(p,q,r)$ be a triangle group from (\ref{list-bottom}), let $M$ be a 
Seifert fibred space with  base orbifold $S^2(p,q,r)$ and let $\G=\pi_1M$.
Then, for  every finitely generated, residually finite group $\L$ with $\wh{\L}\cong\wh{\G\times\G}$, there is an embedding $\L\hookrightarrow\G\times\G$ that induces the isomorphism.
\end{proposition}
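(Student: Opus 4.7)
The plan is to reduce to the already-established case of Theorem \ref{second-t3} for triangle groups from list (\ref{list-top}), in close analogy with the reduction of Proposition \ref{up_reduce} to Proposition \ref{reduce_galois_rigid}. Let $\D' < \D$ be the unique index-$2$ triangle subgroup lying in (\ref{list-top}), and let $\G' < \G$ be its preimage. Then $\G'$ is the fundamental group of a Seifert fibred space with base $S^2(p',q',r')$, and $\G' \times \G' \subset \G \times \G$ has index $4$ with quotient $(\D/\D')^2 \cong C_2 \times C_2$. Using the given isomorphism $\phi : \wh{\L} \cong \wh{\G \times \G}$, define $\L^* := \phi^{-1}(\wh{\G' \times \G'}) \cap \L$; this has index $4$ in $\L$, and by goodness (Theorem \ref{good1}) together with Remark \ref{remark_closure}, $\phi$ restricts to an isomorphism $\wh{\L^*} \cong \wh{\G' \times \G'}$.

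Applying the already-proved case of Theorem \ref{second-t3} to $\L^*$ and $\G' \times \G'$ furnishes an embedding $\iota : \L^* \hookrightarrow \G' \times \G'$ inducing this restricted isomorphism. Since the centre of $\wh{\D \times \D}$ is trivial (Theorem \ref{prof-centre-fuchs}), we have $Z(\L) = Z(\L^*)$, and $\iota$ restricts to an isomorphism $Z(\L) \cong Z(\G \times \G) = \Z^2$.

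The remaining task is to extend $\iota$ to an embedding $\eta : \L \hookrightarrow \G \times \G$ inducing $\phi$ on profinite completions. Viewing $\L$ and $\G \times \G$ as extensions of $C_2 \times C_2$ by $\L^* \cong \G' \times \G'$ (via $\iota$), this reduces to showing these two extensions of $C_2 \times C_2$ are isomorphic over $\iota$. Choosing coset representatives $\lambda_\sigma \in \L$ and $g_\sigma \in \G \times \G$ for each $\sigma \in C_2 \times C_2$, one must check that the outer action of $C_2 \times C_2$ on $\L^*$ and on $\G' \times \G'$ agree via $\iota$, and that the associated classifying cohomology classes agree in $H^2(C_2 \times C_2; Z(\L^*)) \cong H^2(C_2 \times C_2; \Z^2)$. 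Both compatibilities hold after profinite completion because $\phi$ is a morphism of extensions, and the descent to the abstract level follows from the injectivity of the comparison map $H^2(C_2 \times C_2; \Z^2) \hookrightarrow H^2(C_2 \times C_2; \wh{\Z}^2)$ (the source is a finite $2$-group and the map is injective on $2$-torsion), together with the corresponding injectivity for outer automorphism groups on the relevant finite-order subgroups. With the extensions identified, $\eta$ is defined on cosets by $\eta(\lambda_\sigma \cdot x) = g_\sigma \cdot \iota(x)$, with the $g_\sigma$ adjusted within $\G' \times \G'$ so that the cocycles match; any residual discrepancy between $\wh{\eta}$ and $\phi$ vanishes because $\Hom(C_2 \times C_2, \wh{\Z}^2) = 0$.

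The principal obstacle is this cohomological descent step, which ensures that the abstract extensions are isomorphic given that their profinite completions are. Beyond this, the construction of $\eta$ is a matter of bookkeeping over cosets of $\L^*$ in $\L$, carefully chosen so as to produce an embedding inducing the original profinite isomorphism.
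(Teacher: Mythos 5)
Your opening move coincides with the paper's: pass to the index-$2$ subgroup $\D'<\D$ from list (\ref{list-top}), hence to $\G'\times\G'<\G\times\G$ of index $4$ and to the corresponding index-$4$ subgroup of $\L$, and apply the already-established case of Theorem \ref{second-t3} to get a Grothendieck pair $\iota:\L^*\hookrightarrow\G'\times\G'$. After that, however, there is a genuine gap. You write ``Viewing $\L$ and $\G\times\G$ as extensions of $C_2\times C_2$ by $\L^*\cong\G'\times\G'$ (via $\iota$)'' and then try to show the two extensions are isomorphic. But the inductive step only furnishes an \emph{embedding} inducing a profinite isomorphism, not an isomorphism: in general $\L^*$ is a proper subgroup of $\G'\times\G'$ (indeed the existence of such proper Grothendieck pairs, in abundance, is the content of Theorems \ref{t:not-GR} and \ref{t:333}), and correspondingly $\L$ need not be isomorphic to $\G\times\G$ --- the proposition only asserts an embedding. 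So the extension-comparison machinery, which classifies extensions of $C_2\times C_2$ by a \emph{fixed} kernel via an outer action and a class in $H^2$ with coefficients in the centre of that kernel, is set up to prove a statement ($\L\cong\G\times\G$) that is false in general; it does not produce an embedding when $\iota$ is not surjective, and the proposal does not explain how to push the extension of $\L^*$ forward along a non-surjective $\iota$ into $\G\times\G$. Even in the special situation where $\iota$ happens to be an isomorphism, the descent steps are unjustified: you need injectivity of $\Out(\G'\times\G')\to\Out(\wh{\G'\times\G'})$ (at least on the relevant finite subgroups), and the claim that $\iota$ restricts to an isomorphism $Z(\L^*)\to Z(\G'\times\G')=\Z^2$ is asserted rather than proved (an embedding inducing a profinite isomorphism sends the centre into $\Z^2$, but surjectivity onto $\Z^2$ requires an argument).

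The paper takes a different and more robust route at exactly this point, which you should compare with: regard $\L$ as a dense subgroup of $\wh{\G\times\G}$ and study its projections $\L_1,\L_2$ to the two factors $\wh{\G}$. By the list-(\ref{list-top}) case (Proposition \ref{p:towardsGP} and the proof of Theorem \ref{t3}), the projections of $\L^*$ are exactly $\G'$, so each $\L_i$ is dense in $\wh{\G}$ and contains $\G'$ with finite index; torsion-freeness of $\wh{\G}$ (Theorems \ref{good1} and \ref{good-tors-free}) and Lemma \ref{GMW_Lemma} show each $\L_i$ is the fundamental group of a Seifert fibred space with base $S^2(p,q,r)$, and then Lemma \ref{l:dense-will-do} together with Corollary \ref{c:dense-enough} (using $e(M)\neq 0$ from Lemma \ref{l:euler_no_bottomlist}) gives $\L_1\cong\L_2\cong\G$; the Hopf property of $\wh{\G\times\G}$ then yields the embedding $\L\hookrightarrow\L_1\times\L_2\cong\G\times\G$ inducing the isomorphism. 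This sidesteps all extension-theoretic descent from profinite to discrete data, which is where your argument breaks down.
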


\begin{proof} For a group $\Delta$ from (\ref{list-bottom}) we can find a group $\Delta'$ from (\ref{list-top}) with $[\Delta : \Delta']=2$, and hence a Seifert fibred space $M'$ with fundamental group $\G'$ and $[\G : \G']=2$. 
Note that $\G\times\G$ contains $\G'\times\G'$ as a normal subgroup of index-$4$,  
and hence with $\L$ as in
the statement of Proposition \ref{p:deal_with_bottom}, we also have an index-$4$ normal subgroup $\L' < \L$ with $\wh{\L}'\cong \wh{\G'\times \G'}$. 

Now from what we have already proved about the groups from (\ref{list-top}), it follows that $\L'\hookrightarrow \G'\times \G'$ is a Grothendieck pair. Let $p_1: \wh{\G\times \G}\rightarrow \wh{\G}\times 1$ and 
$p_2: \wh{\G\times \G}\rightarrow 1\times \wh{\G}$, and set $p_i(\L)=\L_i$ for $i=1,2$ (where we view $\L \subset \wh{\G\times \G}$). By Proposition \ref{p:towardsGP} and the proof of Theorem \ref{t3}, we have that $p_i(\L')=\G'$ for $i=1,2$.
Note that $\L_1$ and $\L_2$ are dense in each factor, and so properly contain the group $\G'$ of finite index. By Theorems \ref{good1} and \ref{good-tors-free}, $\wh{\G}$ is torsion-free and hence we may deduce from Lemma \ref{GMW_Lemma}
that $\L_1$ and $\L_2$ are the fundamental groups of Seifert fibred spaces with base $S^2(p,q,r)$.  It now follows from Lemma \ref{l:dense-will-do} and Corollary \ref{c:dense-enough} that $\L_1\cong\L_2\cong\G$, and we complete the argument as we did  in the proof of Theorem \ref{t3} (in the case of the triangle groups from (\ref{list-top})).\end{proof}

\section{No finitely presented Grothendieck pairs}\label{products}

In this section we prove a strengthened version of Theorem \ref{t:not-fp}:
there are no Grothendieck pairs $\L\hookrightarrow \G\times\dots\times \G$ with $\G$ the fundamental group
of a Seifert fibred space and
$\L$ a proper, finitely presented subgroup. We shall see that this follows from the corresponding
result for Fuchsian groups that comes from \cite{BW} and \cite{BHMS2}.

We will make use of the following lemma, whose proof is straightforward.
 
 \begin{lemma}\label{l:usual} Let $A$ and $B$ be finitely generated groups.
If $f:A\to B$ induces an isomorphism $\wh{A}\to\wh{B}$, and $B_0<B$ has finite index, then $f|_{A_0}$ induces an isomorphism $\wh{A}_0\to\wh{B}_0$, where $A_0=f^{-1}(B_0)$.
\end{lemma}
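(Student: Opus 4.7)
My plan is to exploit the standard correspondence between finite-index subgroups of a finitely generated group and open subgroups of its profinite completion. The key fact I shall use is that if $H\le G$ has finite index, then the natural map $\wh{H}\to\wh{G}$ is injective with image the closure $\-{H}$ of $H$ in $\wh{G}$, which is an open subgroup of index $[G:H]$; this holds because every finite-index subgroup of $H$ contains its normal core in $H$, which has finite index in $G$, so the profinite topology of $G$ restricts to the full profinite topology of $H$.

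First I would verify that $[A:A_0]=[B:B_0]$. Since $\wh{f}$ is surjective, $f(A)$ is dense in $\wh{B}$, so $f(A)\cdot B_0=B$ (by intersecting with $B$ inside $\wh{B}$, using that the closure of $B_0$ meets $B$ in $B_0$ when $B_0$ has finite index). Combined with the defining equation $A_0=f^{-1}(B_0)$, this shows that the assignment $aA_0\mapsto f(a)B_0$ gives a bijection between the coset spaces $A/A_0$ and $B/B_0$.

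The lemma then follows by index counting. Restricting $\wh{f}$ to the open subgroup $\wh{A_0}=\-{A_0}\le\wh{A}$ yields a continuous injection into $\wh{B}$ whose image contains $f(A_0)\subseteq B_0$ and, being closed, is therefore contained in $\-{B_0}=\wh{B_0}$. Since $\wh{f}$ is an isomorphism and $[\wh{A}:\wh{A_0}]=[A:A_0]=[B:B_0]=[\wh{B}:\wh{B_0}]$, the image of $\wh{A_0}$ must equal $\wh{B_0}$ on index grounds, and the restriction $\wh{f}|_{\wh{A_0}}:\wh{A_0}\to\wh{B_0}$ is the required isomorphism. The only step needing a moment's care is the initial translation of density into the identity $f(A)\cdot B_0=B$; everything else is bookkeeping with the open-subgroup correspondence.
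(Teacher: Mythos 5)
Your proof is correct, and it is exactly the ``straightforward'' argument the paper omits: identify $\wh{A_0}$ and $\wh{B_0}$ with the open subgroups $\overline{A_0}\le\wh{A}$ and $\overline{B_0}\le\wh{B}$ via the finite-index/open-subgroup correspondence, check $[A:A_0]=[B:B_0]$ using density of the image of $f(A)$, and conclude by index counting that $\wh{f}$ carries $\overline{A_0}$ onto $\overline{B_0}$. The only point to phrase with care is that $A$ and $B$ are not assumed residually finite, so statements such as ``the closure of $B_0$ meets $B$ in $B_0$'' should be read as saying the preimage of $\overline{B_0}$ under $B\to\wh{B}$ equals $B_0$; this is harmless, since the kernel of $B\to\wh{B}$ lies in every finite-index subgroup of $B$, and likewise the containment $\wh{f}(\overline{A_0})\subseteq\overline{B_0}$ is cleanest phrased as ``image of closure equals closure of image'' under the homeomorphism $\wh{f}$.
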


\begin{proposition}
\label{p:fills}
Let $\D$ be a non-elementary Fuchsian group, let $D$
be the direct product of finitely many copies of $\D$,
and let $\L<D$ be a subgroup such that the
inclusion $\L\hookrightarrow D$ induces an isomorphism of 
profinite completions $\wh{\L}\cong\wh{D}$. If $\L$
is finitely presented, then $\L= D$.
\end{proposition}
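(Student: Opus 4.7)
The plan is to derive the statement from the closedness theorem of Bridson--Wilton \cite{BW}: every finitely presented subgroup of a direct product of finitely many non-elementary Fuchsian groups is closed in the profinite topology on the ambient product. Granting this, the proposition will fall out from a short density argument, with the serious work outsourced to \cite{BW}.

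First, I would reformulate the hypothesis: saying that the inclusion $\L\hookrightarrow D$ induces an isomorphism $\wh{\L}\cong\wh{D}$ is equivalent to saying that $\L$ is dense in $\wh{D}$, or equivalently that $\L$ surjects onto every finite quotient of $D$. Next, suppose for contradiction that $\L$ is a proper subgroup and choose $g\in D\smallsetminus\L$. Applying the \cite{BW} closedness theorem to the finitely presented subgroup $\L<D$ produces a normal subgroup $N\triangleleft D$ of finite index with $\L\subseteq N$ and $g\notin N$; in particular $D/N\ne 1$. The composition
$$
\L\hookrightarrow D \twoheadrightarrow D/N
$$
is the trivial map, since $\L\subseteq N$. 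On the other hand, the density reformulation of the hypothesis forces this same composition to be surjective, so $D/N=1$, a contradiction. Hence $\L=D$.

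The main obstacle, hidden behind the invocation of \cite{BW}, is of course the closedness theorem itself; it rests in an essential way on the structural classification of finitely presented subgroups of direct products of limit groups due to Bridson--Howie--Miller--Short (cf. \cite{BHMS2}) and on subgroup separability results for residually free groups. Everything surrounding that input in the argument above is formal density bookkeeping, so the proposition is essentially a restatement of the \cite{BW} closedness theorem in profinite language.
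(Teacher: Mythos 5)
Your outline is the same as the paper's: reduce everything to the Bridson--Wilton closedness theorem, then observe that the profinite isomorphism makes $\L$ dense in the profinite topology on $D$, so closed plus dense forces $\L=D$. However, there is a genuine gap in how you invoke \cite{BW}. The theorem there (together with \cite{BHMS2}) asserts that finitely presented subgroups are closed in the profinite topology for \emph{residually free} groups, equivalently for direct products of limit groups such as free and surface groups; limit groups are torsion-free, and a non-elementary Fuchsian group with torsion is not residually free, so the statement ``every finitely presented subgroup of a direct product of non-elementary Fuchsian groups is closed'' is not what \cite{BW} provides. This is exactly why the paper's proof first passes to a torsion-free finite-index subgroup $S<\D$ (free or surface group), sets $D_0=S\times\dots\times S$ and $\L_0=\L\cap D_0$, checks that $\L_0$ is finitely presented and that the inclusion $\L_0\hookrightarrow D_0$ still induces an isomorphism of profinite completions (Lemma \ref{l:usual}), applies the closedness theorem there to get $\L_0=D_0$, and then deduces $\L=D$ from finite index plus the original profinite isomorphism. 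To repair your argument you must either insert this reduction, or prove the closedness statement you want from the torsion-free case (e.g.\ note that $\L_0$ is closed in $D_0$, that the profinite topology of $D$ induces that of the finite-index subgroup $D_0$, and that $\L$ is a finite union of translates of $\L_0$, hence closed in $D$); either way, the missing step is the passage through the torsion-free finite-index subgroup.

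A smaller point: your formulation of closedness is off. Being closed in the profinite topology gives, for $g\in D\smallsetminus\L$, a finite-index normal subgroup $N\ns D$ with $g\notin \L N$; it does not give $\L\subseteq N$ (a closed subgroup need not lie in any proper finite-index subgroup). The contradiction still goes through with the correct version: the image of $\L$ in $D/N$ misses $gN$, so it is a proper subgroup, contradicting the surjectivity of $\L\to D/N$ that follows from $\wh{\L}\cong\wh{D}$. So this slip is easily fixed, but the invocation of \cite{BW} for torsion Fuchsian products is the substantive gap.
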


\begin{proof}  
This essentially follows \cite{BW}. The details are given below.
Let $S<\D$ be a torsion-free subgroup of finite index, let $D_0=S\times\dots\times S$
and let $\Lambda_0=\Lambda\cap D_0$. Note that $S$ is either a surface group or a 
finitely generated free group.
$\Lambda_0$ is finitely presented and 
the restriction of the inclusion $\iota:\L\to D$
induces an isomorphism $\hat\iota_0:\wh{\L}_0\cong\wh{D}_0$ (by Lemma \ref{l:usual}).
But in a direct product of finitely many free and surface groups (more generally, a product
of limit groups) all finitely presented subgroups
are closed in the profinite topology -- see \cite{BW} and \cite{BHMS2} -- so
$\hat{\iota}_0$ being an isomorphism implies $\L_0=D_0$. Thus $\L$ has finite
index in $D$, and since $\hat{\iota}$ is an isomorphism, we conclude that $\L=D$.
\end{proof}
 
Theorem \ref{t:not-fp}  from the  introduction is a special case of the following result.

\begin{theorem}\label{p:fills2} 
Let $M$ be a Seifert fibred space with hyperbolic base orbifold, let $\G=\pi_1M$,
let $D$ be the direct product of finitely many copies of $\G$, 
and let $\L<D$ be a subgroup such that the inclusion  induces an isomorphism of 
profinite completions. If $\L$
is finitely presented, then $\L= D$.
\end{theorem}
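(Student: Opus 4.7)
The strategy is to quotient by the centre $Z=Z(D)\cong\Z^n$ to reduce to the Fuchsian setting covered by Proposition~\ref{p:fills}, and then lift back. Writing $D=\G^n$ and recalling from Section~\ref{ss:SFS} that $\G$ fits in a central extension $1\to\Z\to\G\to\Delta\to 1$ with $\Delta$ the cocompact Fuchsian fundamental group of the base orbifold, one obtains a central extension $1\to Z\to D\to\Delta^n\to 1$; let $\bar{\L}$ denote the image of $\L$ in $\Delta^n$. Since $\L\cap Z\le\Z^n$ is finitely generated and central in $\L$, the quotient $\bar{\L}\cong\L/(\L\cap Z)$ is finitely presented. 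The hypothesis that $\wh{\L}\cong\wh{D}$ is induced by inclusion makes $\L$ dense in $\wh{D}$, and pushing through $\wh{D}\twoheadrightarrow\wh{\Delta^n}$ makes $\bar{\L}$ dense in $\wh{\Delta^n}$.

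Next I would reduce to a torsion-free product by choosing a torsion-free finite-index subgroup $S\le\Delta$ (a non-elementary surface group), setting $T=S^n\le\Delta^n$, and $\bar{\L}_0=\bar{\L}\cap T$. Density of $\bar{\L}$ gives $\bar{\L}T=\Delta^n$, so $\bar{\L}_0$ has finite index in $\bar{\L}$ and is therefore finitely presented; a direct verification shows $\bar{\L}_0$ is also dense in $\wh{T}$ (for any finite-index $K\le T$, density of $\bar{\L}$ in $\wh{\Delta^n}$ gives $\bar{\L}K=\Delta^n$, and in a factorisation $t=\lambda k$ with $t\in T$, $k\in K$, the element $\lambda=tk^{-1}$ lies in $T\cap\bar{\L}=\bar{\L}_0$). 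Because $T$ is a product of surface groups, the result of \cite{BW,BHMS2} invoked in Proposition~\ref{p:fills} tells us $\bar{\L}_0$ is closed in the profinite topology of $T$, so $\wh{\bar{\L}_0}\hookrightarrow\wh{T}$ is injective. Closedness and density together give the isomorphism $\wh{\bar{\L}_0}\cong\wh{T}$ needed to apply Proposition~\ref{p:fills}, yielding $\bar{\L}_0=T$. Hence $\bar{\L}\supseteq T$, and combined with $\bar{\L}T=\Delta^n$ this forces $\bar{\L}=\Delta^n$.

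To lift back, $\bar{\L}=\Delta^n$ means $\L Z=D$. Because $Z$ is central, $\L$ is normal in $D$ and $D/\L\cong Z/(\L\cap Z)$ is a finitely generated abelian group. Density of $\L$ in $\wh{D}$ forces $\wh{D/\L}=1$, so $D/\L$ is a finitely generated abelian group with no non-trivial finite quotients and is therefore trivial; thus $\L=D$. The main obstacle is the step that upgrades density of $\bar{\L}_0$ in $\wh{T}$ to injectivity of $\wh{\bar{\L}_0}\to\wh{T}$: because the inclusion $\bar{\L}\hookrightarrow\Delta^n$ is not known a priori to induce an isomorphism on profinite completions, Lemma~\ref{l:usual} is not directly available, and it is precisely the torsion-free reduction to products of surface groups that brings the \cite{BW,BHMS2} closedness theorem into play.
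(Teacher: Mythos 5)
Your proof is correct, and its first half follows the same skeleton as the paper's: kill the centre, project $\L$ to $B=\Delta^n$, note the image is finitely presented because the kernel is central and finitely generated, pass to a torsion-free finite-index product $T=S^n$, and combine density with the closedness of finitely presented subgroups of products of limit groups from \cite{BW,BHMS2} to conclude the projection is onto. Indeed you spell out a point the paper leaves implicit, namely that Lemma \ref{l:usual} is unavailable for $\bar{\L}<\Delta^n$ and one must verify density of $\bar{\L}_0=\bar{\L}\cap T$ in $\wh{T}$ by hand; your coset computation does this correctly. (One small caveat: your parenthetical claim that closedness of $\bar{\L}_0$ in $T$ makes $\wh{\bar{\L}_0}\to\wh{T}$ injective is not justified -- separability does not imply that $T$ induces the full profinite topology on $\bar{\L}_0$ -- but it is also unnecessary: closed plus dense gives $\bar{\L}_0=T$ directly, exactly as in the proof of Proposition \ref{p:fills}, so nothing is lost.) Where you genuinely diverge from the paper is the final step. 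The paper shows $\mathcal{Z}\subset\L$ by a goodness argument: $D$ and the putative smaller extension are good, and comparing cohomological dimensions of $\wh{\L}$ and $\wh{D}$ rules out $\L$ meeting a central factor in infinite index. You instead observe that $\bar{\L}=\Delta^n$ gives $\L Z=D$, hence $\L\trianglelefteq D$ with $D/\L\cong Z/(\L\cap Z)$ finitely generated abelian, and density of $\L$ kills every finite quotient of $D/\L$, forcing $D/\L=1$. This is more elementary, avoids goodness and profinite cohomological dimension altogether, and as a bonus needs no separate treatment of the case where $\G$ is virtually a product (the paper's opening paragraph), since your argument runs uniformly whether or not the central extension virtually splits; when the base orbifold is not closed one simply takes $S$ free rather than a surface group, which the \cite{BW,BHMS2} input covers equally well.
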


\begin{proof}   
If $\G$ is virtually a product (a surface or free group times $\Z$), since the product of surface groups, free groups and copies of $\Z$ is a limit group, as noted above, it follows that  every 
finitely presented subgroup of $D$ is closed in the profinite topology. Thus we may assume
that $M$ is not finitely covered by a product bundle, and therefore the base orbifold is closed.

We first project $\L$ to the product of the base groups $B=\Delta\times\dots\times \Delta$, and note that the image group is still finitely presented since the kernel is finitely generated
(being in the centre). 
Thus, as in the proof of Proposition \ref{p:fills}, 
we deduce this mapping is onto (since the image of $\Lambda$ will be dense in $\wh{B}$).

We now have an epimorphism $\wh{\Lambda}\rightarrow \wh{B}$. We claim that this implies that $\L$ contains the centre $\mathcal{Z}=\Z^r<D$ (where $r$ is the number of factors). 
To see this, first note that if $\Lambda \cap \mathcal{Z}$ has finite index in 
$\mathcal{Z}$, then $\Lambda \cap \mathcal{Z}=\mathcal{Z}$, because otherwise $[D:\Lambda]<\infty$, and this contradicts $\wh{\Lambda}= \wh{D}$.
With this observation, the only concern is that $\L$ 
meets one of the direct factors of $\mathcal{Z}$ trivially.  We claim that this cannot happen.  The reason is this: since $M$ is a Seifert fibred space, $\G$ is good by Theorem \ref{good-groups}, 
hence $D$ is also good by \cite[Proposition 3.4]{GJZ}, and so in particular the fact that
$D$ has cohomological dimension $3r$ is witnessed by the continuous cohomology of $\wh{\D}$ with finite field coefficients.  If $\L$ meets one of the factors of $\mathcal{Z}$ trivially, then $\L$ will be an extension of $B$ by $\Z^s$,
with $s<r$, and in particular will be good (using Theorem \ref{good-groups}, by another application of \cite[Proposition 3.4]{GJZ}).  
But, in this case $\L$ has cohomological dimension less than $3r$,   and therefore so does $\wh{\L}$, a contradiction. \end{proof} 
 
\def\L{\Lambda}
\def\D{\Delta}

\section{Constructing Grothendieck Pairs in Products of Central Extensions of Hyperbolic Groups}  
\label{s:homol-argument}

Our purpose in this section is to establish the following 
 criterion for showing that the (strong) profinite genus of certain direct products is infinite. In the next section
 we shall explain why this criterion applies to the fundamental groups of many Seifert fibred
 spaces whose base orbifold is a hyperbolic triangle group. 
 
\begin{theorem}\label{t2:lots-of-P}
Let $\Delta$ be a non-elementary hyperbolic group and
let $\G$ be a group with $H_2(\G,\Z)=0$ that maps onto $\Delta$.
Let $G$ be a finitely generated group that maps onto a subgroup of finite index in $[\G, \G]$. Then, 
\begin{enumerate}
\item there exists an infinite sequence of distinct finitely generated subgroups $P_n<G\times G$
such that each inclusion $u_n:P_n\hookrightarrow G\times G$  induces an isomorphism of profinite completions. 
\item If $G$ is a central extension of a hyperbolic group and centralizers of elements in that hyperbolic group are virtually cyclic, 
then $P_n$ is not abstractly isomorphic to $P_m$ when $n\neq m$.
\end{enumerate}
\end{theorem}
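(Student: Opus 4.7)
The plan is to follow the Platonov--Tavgen template developed in \cite{PT, BG, BL}: I will produce a sequence of surjections $\phi_n : G \twoheadrightarrow Q_n$ onto pairwise non-isomorphic, infinite, finitely presented groups $Q_n$ satisfying $\wh{Q_n} = 1$ and $H_2(Q_n,\Z)=0$, and set
\[
P_n := \{(g_1,g_2) \in G \times G \,:\, \phi_n(g_1) = \phi_n(g_2)\}.
\]
Granted such $\phi_n$, the standard Platonov--Tavgen argument shows that each inclusion $u_n : P_n \hookrightarrow G \times G$ induces an isomorphism of profinite completions: the conditions $\wh{Q_n}=1$ and $H_2(Q_n,\Z)=0$, combined with the five-term exact sequence associated to $1\to \ker\phi_n\to G\to Q_n\to 1$, force $P_n$ to map onto every finite quotient of $G\times G$. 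Finite generation of $P_n$ will follow from finite presentability of $Q_n$, via the ``$0$-shadow'' of Grunewald's $1$-$2$-$3$ theorem.

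To construct the $Q_n$, I would exploit the hypotheses that $\G$ surjects onto the non-elementary hyperbolic group $\Delta$ and that $H_2(\G,\Z)=0$, following \cite{BL}. Since $\Delta$ is SQ-universal, it admits infinitely many pairwise non-isomorphic infinite quotients; by iteratively killing finite-index normal subgroups of such quotients using finitely many additional relations (or invoking small-cancellation refinements over hyperbolic groups), I would produce infinitely many pairwise non-isomorphic finitely presented quotients $Q$ of $\G$ with $\wh{Q}=1$. The vanishing of $H_2(Q,\Z)$ is to be obtained from the Hopf formula combined with $H_2(\G,\Z)=0$, arranging the kill-relators so as to preserve this condition. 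Each $Q$ so obtained is necessarily perfect, because a finitely generated group with trivial profinite completion has trivial abelianisation; hence $[\G,\G]\twoheadrightarrow [Q,Q]=Q$, and since $Q$ has no proper finite-index subgroups, every finite-index subgroup of $[\G,\G]$ surjects onto $Q$. Composing with the assumption that $G$ maps onto a finite-index subgroup of $[\G,\G]$ yields the required $\phi_n : G \twoheadrightarrow Q_n$ and establishes part (1).

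For part (2), I would recover $Q_n$ intrinsically from $P_n$. Writing $G$ as a central extension $1\to Z\to G\to H\to 1$ with $H$ hyperbolic and all centralisers of non-trivial elements of $H$ virtually cyclic, I would first show that $Q_n$ is centreless: a perfect group with trivial profinite completion has trivial centre, since a non-trivial central element would produce a non-trivial finite cyclic quotient. Consequently $\phi_n$ is trivial on $Z$ and $Z(P_n)=Z\times Z$; the non-trivial inclusion uses that both projections $P_n\to G$ are surjective, so any central element of $P_n$ projects into $Z(G)=Z$ in each coordinate. Hence $\bar P_n := P_n/Z(P_n)$ is identified with the fibre product $H\times_{Q_n}H \subseteq H\times H$. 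The centraliser hypothesis on $H$ sharply restricts the virtually abelian subgroups of $H\times H$ to have rank at most two and to split into ``coordinate'' virtually cyclic factors; I plan to use this to recognise the two coordinate kernels $K_n\times 1$ and $1\times K_n$ (with $K_n = \ker(H \twoheadrightarrow Q_n)$) intrinsically inside $\bar P_n$. Once they are identified, $\bar P_n/(K_n\times K_n)\cong Q_n$ becomes an isomorphism invariant of $P_n$, so any $P_n\cong P_m$ would force $Q_n\cong Q_m$, contradicting our choice.

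The two main obstacles are: (i) constructing infinitely many pairwise non-isomorphic $Q_n$ that satisfy finite presentability, $\wh{Q_n}=1$ and $H_2(Q_n,\Z)=0$ simultaneously, which requires a delicate choice of relators so that killing finite quotients does not destroy the homological condition; and (ii) in part (2), giving a genuinely intrinsic characterisation of the coordinate kernels of $\bar P_n$ without tacitly using the embedding $\bar P_n\hookrightarrow H\times H$. The hardest step is (ii): the virtually-cyclic-centraliser hypothesis on $H$ appears to be the precise input needed to rule out ``diagonal'' normal subgroups masquerading as coordinate kernels, but turning this into a clean intrinsic recognition will require careful bookkeeping with centralisers and normal closures in $\bar P_n$.
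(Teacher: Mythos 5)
Your overall template (Platonov--Tavgen fibre products over quotients $Q_n$ with $\wh{Q}_n=1$ and $H_2(Q_n,\Z)=0$, then recovering $Q_n$ from $P_n$ via centralisers) is the same as the paper's, and the profinite-isomorphism and finite-generation steps are fine. But the construction of the $Q_n$ -- the heart of part (1) -- has a genuine gap. You propose to obtain $H_2(Q_n,\Z)=0$ ``from the Hopf formula combined with $H_2(\G,\Z)=0$, arranging the kill-relators so as to preserve this condition''. There is no mechanism for this: for a quotient $Q=\G/N$ with $H_2(\G,\Z)=0$, the five-term sequence gives $H_2(Q,\Z)\cong\ker\bigl(N/[\G,N]\to H_1(\G,\Z)\bigr)$, and the small-cancellation/common-quotient machinery that kills finite quotients gives no control whatsoever over $N/[\G,N]$; generically, adding relators creates $H_2$. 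This is exactly the difficulty the paper isolates, and its resolution is different: build quotients $Q_i$ of $\D$ with $\wh{Q}_i=1$ and trivial centre with \emph{no} homological control (via Osin's common-quotient theorem for relatively hyperbolic groups, with free products of dihedral groups of order $2p_i$ inserted so that the maximal torsion order $p_i$ distinguishes the $Q_i$ -- your appeal to SQ-universality gives many quotients of $\D$ but no reason the further quotients with $\wh{Q}=1$ remain pairwise non-isomorphic), then pass to the universal central extension $\tilde{Q}_i$, which is finitely presented, superperfect and has no finite quotients, and finally use the hypothesis $H_2(\G,\Z)=0$ through a five-term-sequence argument (Lemma \ref{l:schur}) to lift $[\G,\G]\onto Q_i$ to $[\G,\G]\onto\tilde{Q}_i$. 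That lifting step is where the hypothesis on $\G$ is actually spent; your proposal never uses it in a way that can work.

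In part (2) there is a concrete error: ``a perfect group with trivial profinite completion has trivial centre'' is false -- the universal central extension of an infinite simple group with non-trivial Schur multiplier is superperfect, has no non-trivial finite quotients, and has non-trivial centre. So you cannot conclude $\phi_n(Z)=1$ or $Z(P_n)=Z\times Z$, and indeed in the paper's construction the target $\tilde{Q}_n$ typically has non-trivial centre. The paper sidesteps this: it only shows that $P_n$ modulo its centre is the fibre product of $\D\to \tilde{Q}_n/Z_n$ for \emph{some} central $Z_n$ (Lemma \ref{l:P-gives-Q}), and compensates by arranging the stronger property that no central quotient of $\tilde{Q}_n$ is isomorphic to a central quotient of $\tilde{Q}_m$ for $n\neq m$ (Corollary \ref{c:super-perfect}(3), automatic because each $Q_n$ is centreless by construction, via item (5) of Osin's theorem). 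Your remaining plan for (2) -- recognising the coordinate kernels inside $\bar{P}_n$ via the virtually-cyclic-centraliser hypothesis -- is essentially the paper's Lemma \ref{l:easy}, with the simplification that one need only recognise the set of elements whose centraliser is not virtually abelian (hence the product $N\times M$ of the two kernels), not each kernel separately; that part of your outline is sound, but it rests on the unproved centrelessness claim and on the unconstructed $Q_n$.
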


\noindent The subgroups $P_i$ will not be finitely presented in general, even if $G$ is finitely presented (cf.~\cite{B-jems}).

This criterion extends a well-established train of ideas, which we now explain.
Grothendieck \cite{Groth} asked if there exist Grothendieck pairs of finitely presented groups. This problem was eventually solved by
Bridson and Grunewald \cite{BG}. Their proof builds on an earlier argument of Platonov and Tavgen \cite{PT}
who constructed the first Grothendieck pair of finitely generated groups. They did this by appealing to a special case of 
the following proposition, taking $G$ to be a free group and $Q$ to be Higman's famous example
of a 4-generator, 4-relator group with $\widehat{Q}=1$ and $H_2(Q,\Z)=0$ (see for example \cite[Lemma 2.1 \& Theorem 5.1]{BG} for proofs of both statements).  

\begin{proposition}\label{p:PT}
Let $f:G\to Q$ be an epimorphism of groups, with $G$ finitely generated and $Q$ finitely presented. Consider the fibre product 
$P=\{(g,h) \mid f(g)=f(h)\} < G\times G$. Then,
\begin{enumerate}
\item $P$ is finitely generated;
\item if $\widehat{Q}=1$ and $H_2(Q,\Z)=0$, then $P\hookrightarrow G\times G$ induces an isomorphism $\widehat{P}\overset{\cong}\hookrightarrow\widehat{G\times G}$.
\end{enumerate}
\end{proposition}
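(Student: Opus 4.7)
The strategy is to prove (1) from the finite presentability of $Q$, and to split (2) into the surjectivity and the injectivity of $\wh P\to\wh{G\times G}$: the former uses only $\wh Q=1$, and the latter is where $H_2(Q,\Z)=0$ enters. The injectivity is the principal obstacle.

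For (1), write $N=\ker f$. Finite presentability of $Q$ gives finitely many elements $w_1,\dots,w_m\in N$ whose normal closure in $G$ equals $N$. Since $(g_1,g_2)\in P$ iff $g_1g_2^{-1}\in N$, one can factor $(g_1,g_2)=(g_1g_2^{-1},1)\cdot(g_2,g_2)$, showing $P=(N\times 1)\cdot\Delta_G$, where $\Delta_G=\{(g,g):g\in G\}$ is the diagonal. The subgroup $N\times 1$ is normal in $P$, and conjugation by $(g,g)\in\Delta_G$ sends $(w,1)$ to $(gwg^{-1},1)$; hence $(w_1,1),\dots,(w_m,1)$ normally generate $N\times 1$ inside $P$. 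Therefore $P$ is generated by a finite generating set for $\Delta_G\cong G$ together with the $m$ elements $(w_j,1)$.

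For surjectivity in (2), fix a finite quotient $\pi:G\times G\twoheadrightarrow F$ and define $\pi_i:G\to F$ by placing $g$ in the $i$-th coordinate. The quotient $\pi_i(G)/\pi_i(N)$ is a finite quotient of $G/N=Q$, and so is trivial because $\wh Q=1$. Thus $\pi_i(N)=\pi_i(G)$. Since both $N\times 1$ and $1\times N$ are contained in $P$,
\[
\pi(P)\supseteq \pi_1(N)\cdot\pi_2(N)=\pi_1(G)\cdot\pi_2(G)=F,
\]
so $\pi|_P$ is surjective. As this holds for every finite quotient, the induced map $\wh P\to\wh{G\times G}$ is surjective.

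For injectivity I would follow the homological template of Platonov--Tavgen, as developed in \cite{BG}. The setup is the commutative diagram of extensions
\[
\begin{array}{ccccccccc}
1&\to& N\times N &\to& P &\to& Q &\to& 1\\
& & \| & & \cap & & \cap & & \\
1&\to& N\times N &\to& G\times G &\to& Q\times Q &\to& 1,
\end{array}
\]
where the top row arises from $(g_1,g_2)\mapsto f(g_1)=f(g_2)$, identifying $P/(N\times N)$ with the diagonal $\Delta_Q\subset Q\times Q$. Given a finite-index normal subgroup $U\triangleleft P$, one must produce a finite-index subgroup $V\le G\times G$ with $V\cap P\subseteq U$. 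The five-term exact sequences in homology attached to the two rows, combined with the hypotheses $\wh Q=1$ (which kills all finite quotients of $Q^{\rm ab}$) and $H_2(Q,\Z)=0$, force the obstruction to this extension problem -- which lies in a second cohomology group of $Q$ with coefficients in a finite abelian section of $P/U$ -- to vanish. Making this cohomological comparison precise, and in particular verifying that the combined vanishing of $H_2(Q,\Z)$ and of all finite quotients of $Q^{\rm ab}$ exactly suffices to lift every finite quotient of $P$ to one of $G\times G$, is the key technical step and the main obstacle in the proof.
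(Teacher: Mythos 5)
Your part (1) and the surjectivity half of part (2) are correct and complete: the decomposition $P=(N\times 1)\Delta_G$ with the $(w_j,1)$ normally generating $N\times 1$ under diagonal conjugation, and the argument that $\wh{Q}=1$ forces $\pi(P)=F$ for every finite quotient $\pi$ of $G\times G$, are exactly the standard Platonov--Tavgen steps. (For calibration: the paper itself does not prove this proposition at all; it quotes it, attributing the argument to \cite{PT}, \cite{BG} and \cite{BL}.)

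The genuine gap is the injectivity half of (2), which is the heart of the statement and the only place where $H_2(Q,\Z)=0$ does real work. You set up the right criterion (for each finite-index normal $U\triangleleft P$, find a finite-index $V\le G\times G$ with $V\cap P\subseteq U$) and the right diagram, but then you only assert that five-term sequences ``force the obstruction to vanish'' and explicitly defer the verification; as written this is a plan, not a proof, and the five-term sequence alone will not deliver it. The actual mechanism is more concrete. Since $Q$ is finitely generated with $\wh{Q}=1$, it is perfect, and together with $H_2(Q,\Z)=0$ the universal coefficient theorem gives $H^2(Q,A)=0$ for every finite abelian $A$ with trivial action. Given a finite quotient $\phi\colon P\twoheadrightarrow\Phi$, your own surjectivity argument shows $\phi(N\times N)=\Phi$, and the kernels of $n\mapsto\phi(n,1)$ and $n\mapsto\phi(1,n)$ are finite-index subgroups of $N$ invariant under conjugation by $G$ (acting through the diagonal in $P$); each therefore yields an extension of $Q$ by a finite group. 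One then shows such an extension virtually splits: the centraliser of the finite kernel has finite index, hence maps onto $Q$ because $Q$ has no proper finite-index subgroups, and the resulting central extension splits since $H^2(Q,\cdot)=0$ on finite abelian coefficients. This produces finite-index subgroups of $G$ meeting $N$ inside the two kernels, and a further (not entirely formal) assembly of the two coordinates yields the required $V$; see \cite{BG} or \cite{PT} for the details. Either carry out this argument or cite those sources for it; without one or the other, the isomorphism $\wh{P}\cong\wh{G\times G}$ has not been established.
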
 

In the course of constructing counterexamples to the Platonov Conjecture, Bass and Lubotzky \cite{BL} described infinitely 
many Grothendieck pairs  by applying the full force
of Proposition \ref{p:PT}.
In order to do this, they needed  a technique for mapping hyperbolic groups onto finitely presented groups $Q$ with
$\wh{Q}=1$ and  $H_2(Q,\Z)=0$. This was achieved in two stages. First, a bespoke
small-cancellation argument due to Olshanskii \cite{Ol}  
shows that every non-elementary hyperbolic group $G$ maps onto a finitely presented group
with $\wh{Q}=1$, but one does not have control over $H_2(Q,\Z)$. If $G$ is free, this is not a serious problem -- one can replace 
$Q$ by its universal central extension $\widetilde{Q}$ and lift $G\to Q$ to a surjection $G\to\widetilde{Q}$ -- but for an arbitrary hyperbolic
group $G$ one cannot do this.  
Bass and Lubotzky got around this by using specific homological
properties of the groups that were their concern, with an argument that inspired Lemma \ref{l:schur} below.

Advances in the understanding of hyperbolic and related groups mean that
today one can replace Olshanskii's carefully crafted argument with a more flexible  and conceptually-easier construction. We shall use this
flexibility to prove that if  $\G$ is a non-elementary hyperbolic group with trivial 
centre, then there is an infinite sequence of non-isomorphic, finitely presented,
centerless groups $Q_i$ and  epimorphisms $G\to Q_i$ with ${Q}_i\not\cong Q_j$ if $i\neq j$ (Theorem \ref{t:lots-of-Q2} below).
The proof draws heavily on ideas from \cite{AMO} and \cite{abj}.

\subsection{Homology of groups and central extensions}
We assume that the reader is familiar with the homology of groups, as explained in \cite{brown}.  For
the theory of universal central extensions, the standard reference is \cite[pp. 43-47]{milnor}. We shall need only
the following basic facts.  The assertion in (3) is a consequence of (2): if $\widetilde{Q}$ mapped onto a finite
group $G$, then $Q=\widetilde{Q}/Z$ would map onto $G$ modulo its centre
 
 \begin{lemma}\label{l:univ} If $Q$ is a perfect group, then there is a central extension $1\to Z\to \widetilde{Q}\overset{p}\to Q\to 1$
 where
 \begin{enumerate}
 \item $\widetilde{Q}\overset{p}\to Q$ is universal: if the kernel of $E\overset{r}\onto Q$ is central in $E$, then there is a 
 unique homomorphism $f:\widetilde{Q}\to E$ such that $p=r\circ f$;
 \item $H_1(\widetilde{Q},\Z)=H_2(\widetilde{Q},\Z)=0$;
 \item if $Q$ has no non-trivial finite quotients, then neither does $\widetilde{Q}$.
 \end{enumerate}
 If $Q$ is finitely presented, then $H_2(Q,\Z)$ is finitely generated and $\widetilde{Q}$ is finitely presented. 
  \end{lemma}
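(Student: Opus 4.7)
\emph{Plan.} The statement is a standard package of facts about universal central extensions in the style of Milnor \cite{milnor}; my plan is to prove it using Hopf's formula rather than treat it as a black box, since that gives direct access to the finite presentability assertion and to the no-finite-quotients statement.

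First, I choose any free presentation $1 \to R \to F \to Q \to 1$ and define
$$
\widetilde{Q} \;=\; [F,F]\,/\,[F,R],
$$
with $p : \widetilde{Q} \to Q$ induced by $F \to Q$. Since $Q = F/R$ is perfect, $F = R\cdot [F,F]$, so $p$ is surjective. Hopf's formula gives $\ker p = (R \cap [F,F])/[F,R] = H_2(Q,\Z)$, and $[F,R]/[F,R]$ is visibly central in $[F,F]/[F,R]$, so
$$
1 \longrightarrow H_2(Q,\Z) \longrightarrow \widetilde{Q} \overset{p}\longrightarrow Q \longrightarrow 1
$$
is a central extension. For universality (item (1)), given $E \overset{r}\twoheadrightarrow Q$ with central kernel, lift the composition $F \to Q$ to a homomorphism $\tilde{F}: F \to E$ (using that $F$ is free); then $\tilde{F}([F,R]) \subseteq [E,\ker r] = 1$ since $\ker r$ is central, and $\tilde{F}([F,F]) \subseteq [E,E]$, so $\tilde{F}$ descends to $f : \widetilde{Q} \to E$. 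Uniqueness is forced by the fact that $\widetilde{Q}$ is generated by commutators from $[F,F]$ and $f$ is determined on these by $p$ composed with any set-theoretic section modulo the central $\ker r$.

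For item (2): $\widetilde{Q}$ is perfect (as the image of $[F,F]$), so $H_1(\widetilde{Q},\Z) = 0$. For $H_2$, I would invoke the standard fact (an immediate consequence of the universal property just established) that a perfect group $H$ is its own universal central extension iff $H_2(H,\Z) = 0$: given any central extension $E \twoheadrightarrow \widetilde{Q}$, composing with $p$ gives a central extension of $Q$, and the unique lift $\widetilde{Q} \to E$ splits it; this shows $\widetilde{Q}$ is superperfect, i.e.\ $H_2(\widetilde{Q},\Z) = 0$. Alternatively, the five-term exact sequence applied to $1 \to Z \to \widetilde{Q} \to Q \to 1$ combined with the description of the transgression $H_2(Q,\Z) \to Z = H_2(Q,\Z)$ as the identity yields the same conclusion.

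For item (3): suppose $Q$ has no nontrivial finite quotients and let $\pi : \widetilde{Q} \twoheadrightarrow N$ with $N$ finite. Then $\pi(Z)$ is central in $N$, and $N/\pi(Z)$ is a finite quotient of $\widetilde{Q}/Z = Q$, hence trivial; so $N = \pi(Z)$ is abelian. But $N$ is also a quotient of the perfect group $\widetilde{Q}$, hence perfect, hence trivial. Finally, if $Q$ is finitely presented, take $F$ finitely generated and $R$ the normal closure of finitely many relators; then $R/[F,R]$ is finitely generated as a $\Z[Q]$-module and hence as an abelian group since the conjugation action of $Q$ is trivial, so its subgroup $H_2(Q,\Z) = (R\cap[F,F])/[F,R]$ is finitely generated; $\widetilde{Q}$ is then finitely presented as the middle term of a central extension of a finitely presented group by a finitely generated abelian group.

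The only point that requires mild care is the proof of $H_2(\widetilde{Q},\Z) = 0$; everything else is formal. I would present it via the universal property route, since that keeps the argument self-contained and avoids having to identify the transgression map explicitly.
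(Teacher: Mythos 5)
The paper gives no internal proof of this lemma: it is quoted as a package of standard facts with a citation to Milnor's treatment of universal central extensions, and your Hopf-formula construction $\widetilde{Q}=[F,F]/[F,R]$ is exactly the argument of that reference. The construction of the extension, the existence half of universality, item (3), and the finite-presentability statement are correct as you present them (modulo the typo where ``$[F,R]/[F,R]$'' should read $(R\cap[F,F])/[F,R]$).

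Two steps need repair, and they are linked. First, the perfectness of $\widetilde{Q}$ is not justified by the parenthetical ``as the image of $[F,F]$'': a quotient of a commutator subgroup need not be perfect (e.g.\ $[F,F]/[[F,F],[F,F]]$ is a non-trivial abelian group). The correct argument uses that $Q$ is perfect: $F=R\,[F,F]$, and since $R$ becomes central modulo $[F,R]$, any commutator $[r_1c_1,r_2c_2]$ with $r_i\in R$, $c_i\in[F,F]$ is congruent to $[c_1,c_2]$ mod $[F,R]$, whence $[F,F]=[[F,F],[F,F]]\,[F,R]$ and $\widetilde{Q}$ is perfect. This matters because perfectness of $\widetilde{Q}$ is load-bearing elsewhere: the uniqueness clause in (1) needs it (two lifts $f_1,f_2$ over $p$ differ by the map $x\mapsto f_1(x)f_2(x)^{-1}$, which by centrality of $\ker r$ is a homomorphism into an abelian group and so dies on the perfect group $\widetilde{Q}$; your ``generated by commutators'' phrasing implicitly requires elements of $\widetilde{Q}$ to be products of commutators of elements of $\widetilde{Q}$ itself, not merely images of commutators from $F$), and so does the next point. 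Second, in the proof that $H_2(\widetilde{Q},\Z)=0$ you assert that composing a central extension $q:E\onto\widetilde{Q}$ with $p$ ``gives a central extension of $Q$''. This is not automatic: $\ker(p\circ q)=q^{-1}(Z)$, and the preimage of a central subgroup need not be central. It is true here, but only via perfectness of $\widetilde{Q}$: that hypothesis gives $E=[E,E]\cdot\ker q$, and for $k\in q^{-1}(Z)$ the map $e\mapsto[e,k]$ is a homomorphism into the central subgroup $\ker q$, hence trivial on $[E,E]$ and on $\ker q$, hence trivial, so $q^{-1}(Z)$ is central (this is Milnor's lemma on composites of central extensions over a perfect middle group; alternatively replace $E$ by its perfect subgroup $[E,E]$, which still surjects onto $\widetilde{Q}$). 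With that inserted, the rest of your argument goes through: universality gives $f:\widetilde{Q}\to E$ with $p\circ q\circ f=p$, uniqueness over $p$ forces $q\circ f=\mathrm{id}$, so every central extension of $\widetilde{Q}$ splits, and then $H^2(\widetilde{Q},A)\cong\Hom(H_2(\widetilde{Q},\Z),A)=0$ for every trivial module $A$ (using $H_1(\widetilde{Q},\Z)=0$) yields $H_2(\widetilde{Q},\Z)=0$ upon taking $A=H_2(\widetilde{Q},\Z)$.
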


 \begin{proof} The points other than (3) are covered in \cite[pp. 43-47]{milnor}.
 For (3), observe that if $\widetilde{Q}$ maps onto a finite
group $G$ then $Q=\widetilde{Q}/Z$ maps onto $G$ modulo its centre; since $Q$ has
no  non-trivial finite quotients, this means that $G$ is a quotient of $Z$, hence it is abelian; 
and from (2) we know that $\widetilde{Q}$ has no non-trivial abelian quotients.
 \end{proof}

\begin{lemma}\label{l:schur}
Let $Q$ be a perfect group with universal central extension $p:\widetilde{Q}\to Q$, let $G$ be a group with $H_2(G,\Z)=0$
and let $F:G\to Q$ be an epimorphism that restricts to $f:[G,G]\to Q$. Then, there exists an epimorphism $\tilde{f}:[G,G]\to\widetilde{Q}$
with $p\circ\tilde{f}=f$.
\end{lemma}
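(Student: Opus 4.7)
The plan is to pull back the universal central extension along $F$ and then exhibit a splitting of the pulled-back extension over the commutator subgroup $[G,G]$. Form the pullback
\[ \bar{E} = \{ (g,\tilde{q}) \in G\times \widetilde{Q} : F(g) = p(\tilde{q})\}, \]
so that the first-coordinate projection $\bar{p}:\bar{E}\to G$ fits into a central extension $1\to Z \to \bar{E}\overset{\bar{p}}\to G\to 1$ with kernel $Z=\ker p = H_2(Q,\Z)$ (where I identify $Z$ with $\{(1,z) : z\in Z\}\subset \bar{E}$). Writing $\pi:\bar{E}\to\widetilde{Q}$ for the second-coordinate projection, any homomorphism $s:[G,G]\to \bar{E}$ satisfying $\bar{p}\circ s = \iota$, the inclusion $[G,G]\hookrightarrow G$, will yield the required lift $\tilde{f} = \pi\circ s$, since then $p\circ\tilde{f} = F\circ \bar{p}\circ s = F\circ\iota = f$.

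The key step is to show that $\bar{p}$ restricts to an \emph{isomorphism} $[\bar{E},\bar{E}] \to [G,G]$, after which one takes $s$ to be its inverse. Surjectivity is immediate from the surjectivity of $\bar{p}$. For injectivity, the five-term exact sequence in integral homology associated to the central extension reads
\[ H_2(\bar{E},\Z) \to H_2(G,\Z) \to Z \to H_1(\bar{E},\Z) \to H_1(G,\Z) \to 0, \]
where the third map sends $z\in Z$ to the class of $z$ in $\bar{E}/[\bar{E},\bar{E}]$; its kernel is therefore exactly $Z\cap [\bar{E},\bar{E}]$. By exactness this kernel equals the image of $H_2(G,\Z)\to Z$, and that image is trivial since $H_2(G,\Z)=0$ by hypothesis. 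Hence $Z\cap[\bar{E},\bar{E}]=1$, as required.

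Finally, I check that the lift $\tilde{f}:[G,G]\to\widetilde{Q}$ is surjective. Because $Q$ is perfect and $F$ is surjective, $f([G,G])\supseteq [F(G),F(G)]=[Q,Q]=Q$, so $p(\tilde{f}([G,G])) = Q$, which gives $\tilde{f}([G,G])\cdot Z = \widetilde{Q}$. By Lemma \ref{l:univ}(2) the group $\widetilde{Q}$ satisfies $H_1(\widetilde{Q},\Z)=0$, hence is perfect; since $Z$ is central, we obtain
\[ \widetilde{Q} = [\widetilde{Q},\widetilde{Q}] = [\tilde{f}([G,G])\cdot Z,\; \tilde{f}([G,G])\cdot Z] = [\tilde{f}([G,G]),\tilde{f}([G,G])] \le \tilde{f}([G,G]). \]
So $\tilde{f}$ is onto. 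The only genuinely non-formal step is the use of the five-term sequence to locate $Z\cap[\bar{E},\bar{E}]$; everything else is immediate from the pullback construction and the perfectness of $\widetilde{Q}$.
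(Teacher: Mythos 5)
Your proof is correct and follows essentially the same route as the paper: you form the same fibre product of $F$ and $p$, use the same five-term exact sequence to see that $Z$ meets the commutator subgroup trivially, and obtain $\tilde{f}$ by inverting the resulting isomorphism onto $[G,G]$ and projecting to $\widetilde{Q}$, with surjectivity from the perfectness of $\widetilde{Q}$. The only difference is cosmetic: you spell out the surjectivity step via $\tilde{f}([G,G])\cdot Z=\widetilde{Q}$ and $[AZ,AZ]=[A,A]$, which the paper leaves implicit.
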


\begin{proof} The fibre product of the maps $F$ and $p$ is the subgroup $\breve{G}=\{(x,y)\mid F(x)=p(y)\}<G\times\tilde{Q}$.
By projecting to the first factor we see that $\breve{G}$ is a central extension
$$
0\to Z \to \breve{G} \to G\to 1,
$$
where $Z=\ker p \cong H_2(Q,\Z)$. 

Consider the standard 5-term exact sequence associated to this extension:
$$
H_2(G, \Z) \to Z \to H_1(\breve{G}, \Z) \to H_1(G, \Z) \to 0.
$$
The first term is zero by hypothesis, so $Z$ injects into $H_1(\breve{G}, \Z)$ -- in other words 
$Z \cap [\breve{G}, \breve{G}]$ is trivial. Thus $\breve{G}\onto G$ restricts to an
isomorphism  $[\breve{G}, \breve{G}]\to [G,G]$. By composing the inverse of this isomorphism
with the coordinate projection $\breve{G}\to 1\times \widetilde{Q}$ we obtain the desired 
map $\tilde{f}:[G,G]\to \tilde{Q}$, which is onto because its image contains $[\widetilde{Q}, \widetilde{Q}]$ and
$\widetilde{Q}$ is perfect.
\end{proof}
  
 \subsection{Common quotients of relatively hyperbolic groups}
The theory of relatively hyperbolic groups was outlined by Gromov \cite{gro} and developed by Farb \cite{Farb}, Bowditch \cite{bow} and others. Roughly speaking, a group is hyperbolic relative to a system of peripheral subgroups if it acts in a controlled manner 
on a Gromov-hyperbolic metric space with conjugates of the peripheral subgroups as isotropy subgroups.
The only examples that we shall need to consider are (i) hyperbolic groups, in which case the system of peripheral subgroups is trivial, and (ii) non-trivial free products $G_1\ast G_2$,
in which case the system of peripheral subgroups is $\{G_1,G_2\}$.  

Theorem 1.4 of \cite{AMO}, which is an application of Theorem 2.4 from \cite{Osin}, 
 states that every pair of properly relatively hyperbolic groups has a common quotient that is properly relatively hyperbolic, with control on the peripheral subgroups. Theorem \ref{t:osin} is a special case of this, with some adornments that
are implicit in  \cite{AMO}. We are grateful to Daniel Groves for a discussion of this result and  references.

\begin{theorem}[\cite{Osin}, \cite{AMO}]\label{t:osin} Let $G$ be a non-elementary hyperbolic group  and
let $H=H_1\ast H_2$ be a free product of non-trivial finitely presented groups that have trivial centre. 
Then there is an epimorphism $\mu: G\ast H\to Q$, such that
\begin{enumerate}
\item $Q$ is finitely presented;
\item the restriction of $\mu$ to each of $G$ and $H$ is surjective; 
\item the restriction of $\mu$ to $H_1$ and $H_2$ is injective;
\item every element of finite order in $Q$ is the image of an element of finite order in $H$ or $G$;  
\item the centre of $Q$ is trivial.
\end{enumerate}
\end{theorem}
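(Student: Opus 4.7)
The plan is to realise $G \ast H$ as a non-elementary relatively hyperbolic group and then apply the group-theoretic small-cancellation / Dehn-filling machinery of Osin \cite[Theorem 2.4]{Osin}, which is the engine driving Theorem 1.4 of \cite{AMO}. Since $G$ is hyperbolic and $H = H_1 \ast H_2$ is hyperbolic relative to the peripheral collection $\{H_1, H_2\}$, the free product $G \ast H$ is hyperbolic relative to $\{H_1, H_2\}$, with $G$ appearing as a non-peripheral (hyperbolically embedded) subgroup.

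To construct $\mu$, I would follow the common-quotient template of \cite[\S 4]{AMO}: fix finite generating sets $x_1, \dots, x_k$ for $G$ and $y_1, \dots, y_l$ for $H$, and invoke Osin's theorem to add finitely many relators of the form $x_i u_i^{-1}$ and $y_j v_j^{-1}$, where the $u_i$ are suitably generic long words in the $y_\ast$ and the $v_j$ are suitably generic long words in the $x_\ast$. The genericity (a small-cancellation condition relative to the peripheral structure) ensures that in the quotient $\mu : G \ast H \twoheadrightarrow Q$ the images of $G$ and of $H$ each generate $Q$, yielding (2); the peripheral subgroups $H_1, H_2$ embed, yielding (3); the quotient $Q$ is itself non-elementary relatively hyperbolic with peripheral structure inherited from that of $G \ast H$; and the torsion in $Q$ is controlled as in \cite[Theorem 2.4(5)]{Osin}, yielding (4). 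Property (1) is immediate because $G \ast H$ is finitely presented and only finitely many relators have been added.

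For (5), the group $Q$ is non-elementary relatively hyperbolic and therefore has finite centre, so every central element of $Q$ is torsion. By (4), each such element equals $\mu(t)$ for some non-trivial torsion $t \in G$ or $t \in H_i$ (using that torsion in $H = H_1 \ast H_2$ is conjugate into a free factor, together with the fact that central elements are invariant under conjugation in $Q$). Combining (3) with the centralities forced on $t$ by the fact that $\mu(t)$ commutes with everything in $Q$, and using the hypothesis that $G, H_1, H_2$ are all centreless, then forces $t$ to be trivial, whence $Z(Q) = \{1\}$.

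The main obstacle is the calibration of the added relators in Osin's construction to ensure that (1)--(4) hold simultaneously while preserving relative hyperbolicity of the quotient. This is precisely the technical content of \cite[Theorem 2.4]{Osin}, and the common-quotient extensions carried out in \cite[\S 4]{AMO} supply the additional surjectivities of $\mu|_G$ and $\mu|_H$; the ``adornments'' for torsion and centres are then the short consequences described above.
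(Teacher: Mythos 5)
Your proposal matches the paper's treatment: the paper likewise obtains the statement as a special case of the common-quotient theorem of \cite{AMO} (an application of \cite[Theorem 2.4]{Osin}), applied with $G\ast H$ hyperbolic relative to $\{H_1,H_2\}$, with torsion controlled by \cite[Theorem 2.4(5)]{Osin} and the trivial-centre assertion deduced exactly as you do, from the finiteness of the centre of the non-elementary relatively hyperbolic quotient together with (3), (4) and the centrelessness of $G$, $H_1$, $H_2$. So your argument is correct and essentially the same as the paper's.
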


\begin{proof} Theorem 1.4 of \cite{AMO} provides a properly relatively hyperbolic group $Q$ satisfying conditions (1)
to (3) and Theorem 2.4(5) of \cite{Osin} controls the elements of finite order in  $Q$, as required in (4).
As $Q$ is properly relatively hyperbolic, its centre $Z(Q)$ is finite. By hypothesis, $H_1$ and $H_2$
have trivial centre, so condition (3) is preserved if we replace $Q$ by $Q/Z(Q)$ and replace $\mu$ by its composition 
with $Q\to Q/Z(Q)$; conditions (1), (2) and (4) are also preserved.
If we continue in this manner, quotienting out the centre of $Q/Z(Q)$ and so on,  the
process will terminate after a finite number of steps because properly relatively hyperbolic groups have a 
maximal finite normal subgroup \cite[Lemma 3.3]{AMO}. Thus  we can arrange for $Q$ to have
 trivial centre.
\end{proof}

\subsection{Infinitely many profinitely-trivial quotients}

The main difficulty in the following proof is proving that the groups $Q_i$ are not abstractly isomorphic, even though one
expects this to be true in great generality.

\begin{theorem}\label{t:lots-of-Q2}  
If $\Delta$ is a non-elementary hyperbolic group, then there is an infinite sequence of non-isomorphic, 
finitely presented groups $Q_i$ and 
epimorphisms $\Delta\to Q_i$ such that  $\widehat{Q}_i=1$. Moreover, each $Q_i$ has trivial center.
\end{theorem}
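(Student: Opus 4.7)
\emph{Proof proposal.} My plan is to construct the $Q_i$ iteratively by repeated application of Theorem \ref{t:osin}, and then to distinguish them up to isomorphism using Hopfianness of hyperbolic groups.

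As a preliminary reduction, I would replace $\Delta$ by its quotient modulo the (finite) maximal finite normal subgroup, so that $\Delta$ has trivial centre; any resulting epimorphism of $\Delta$ onto a centreless group factors through this quotient. Next I would fix once and for all a finitely presented, non-elementary hyperbolic group $J$ with trivial centre and $\wh{J}=1$. Such a $J$ can be built by applying Theorem \ref{t:osin} with $G$ a closed hyperbolic surface group and $H$ the free product of two copies of Higman's group: the common quotient $J$ inherits trivial profinite completion from $H$ (any finite quotient of $J$ would pull back through the surjection $H\twoheadrightarrow J$, forcing it to be trivial), trivial centre by property (5), and absolute hyperbolicity via the combination theorem for relatively hyperbolic groups with hyperbolic peripheral subgroups.

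I then set $Q_{-1}:=\Delta$ and, inductively, apply Theorem \ref{t:osin} to $G=Q_{i-1}$ and $H=J\ast J$, using the flexibility inherent in the underlying construction of \cite{Osin, AMO} to impose one additional relation that kills a chosen non-identity element of $Q_{i-1}$. Properties (1)--(5) of Theorem \ref{t:osin} ensure that $Q_i$ is finitely presented, centreless, non-elementary (since property (3) embeds the non-elementary group $J$ into $Q_i$), and hyperbolic. The inductive hypothesis $\wh{Q_{i-1}}=1$ gives $\wh{Q_i}=1$, since any finite quotient of $Q_i$ pulls back through $Q_{i-1}\twoheadrightarrow Q_i$ to a finite quotient of $Q_{i-1}$, which must be trivial. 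By construction the surjection $Q_{i-1}\twoheadrightarrow Q_i$ has non-trivial kernel.

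To see the $Q_i$ are pairwise non-isomorphic, suppose $Q_i\cong Q_j$ for some $i<j$. The chain $Q_i\twoheadrightarrow Q_{i+1}\twoheadrightarrow\cdots\twoheadrightarrow Q_j$ has non-trivial kernel (each arrow does), and composing with an isomorphism $Q_j\to Q_i$ would produce a surjective endomorphism of $Q_i$ with non-trivial kernel, contradicting Sela's theorem that hyperbolic groups are Hopfian. The hard part will be extracting from the Osin--AMO machinery precisely the flexibility needed, namely simultaneously producing an (absolutely) hyperbolic common quotient and forcing the death of a designated non-identity element of $G$ at each iterative step; this is a documented feature of the small-cancellation-over-relatively-hyperbolic-groups technology of \cite{Osin, AMO}, but the bookkeeping merits care.
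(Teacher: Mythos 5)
There is a genuine gap, and it sits at the heart of your iteration. Theorem \ref{t:osin} requires the input group $G$ to be a non-elementary \emph{hyperbolic} group, but nothing in that theorem (or in the Osin--AMO machinery it summarises) makes the common quotient hyperbolic: the quotient is only hyperbolic \emph{relative} to the images of $H_1$ and $H_2$, and by item (3) it contains isomorphic copies of them. In your construction $H_1=H_2=J$, and $J$ itself contains a copy of Higman's group, which contains $BS(1,2)$ and is therefore not hyperbolic; so neither $J$ nor any of your $Q_i$ can be hyperbolic, and the claimed ``combination theorem with hyperbolic peripheral subgroups'' does not apply because the peripherals are not hyperbolic. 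This kills the induction (you may not feed $Q_{i-1}$ back into Theorem \ref{t:osin} as the group $G$) and it also kills the final step, since Sela's Hopficity theorem (and its extension to hyperbolic groups with torsion) is only available for hyperbolic groups. Nor can you repair this by choosing the seed $H$ to be a hyperbolic group with trivial profinite completion: no finitely presented hyperbolic group with no non-trivial finite quotients is known (this is tied to the open problem of residual finiteness of hyperbolic groups), so the strategy of keeping everything hyperbolic while inheriting $\wh{Q}=1$ from $H$ has no known starting point. A secondary, smaller gap is your appeal to ``flexibility'' to kill a designated element at each step; that is not part of the statement of Theorem \ref{t:osin}, though you flagged it.

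The paper avoids all of this by never asking the quotients to be hyperbolic or Hopfian. It applies Theorem \ref{t:osin} exactly twice, each time with a group that is genuinely hyperbolic in the $G$-slot: first $G_i=D_i\ast D_i$ (virtually free, centerless) with $H=B\ast B$, where $B$ is finitely presented, torsion-free, with no finite quotients, producing intermediate groups $R_i$ whose torsion has maximal order exactly $p_i$ (using item (4)); then $G=\Delta$ with $H=R_i\ast R_i$, producing $Q_i$. The groups $Q_i$ are distinguished not by a Hopf-type argument along a chain of surjections, but by an explicit invariant: the maximal order of a torsion element of $Q_i$ is $p_i$, again by item (4). If you want a working proof, replace your iteration-plus-Hopficity scheme by an invariant of this kind that Theorem \ref{t:osin} actually controls.
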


\begin{proof}  
There are only finitely many conjugacy classes of torsion elements in $\Delta$;
 let $N$ be the maximum of their orders. We fix a
sequence of odd primes
$N<p_1 < p_2<\dots$ and let $D_i$ be the dihedral group of order $2p_i$.
Define $G_i=D_i\ast D_i$.
Let $B$ be a finitely presented infinite group that is torsion-free, centerless, and has no finite quotients; many such groups are known -- see
\cite{BG}, for example. We apply Theorem \ref{t:osin} with $G=G_i$ and $H=B\ast B$; let $R_i$ be the resulting finitely presented common
quotient. As $R_i$ is a quotient of $B\ast B$, it has no non-trivial finite quotients; moreover, by 
item (3) of Theorem \ref{t:osin} it contains a copy of $B$.
As $R_i$ is a quotient of $D_i\ast D_i$ which is not virtually cyclic, it contains torsion elements of order $p_i$. And item (4) of Theorem \ref{t:osin} tells us that the maximum order among the torsion elements of $R_i$ is $p_i$.  

Applying Theorem \ref{t:osin} again with $G=\Delta$ and $H=R_i\ast R_i$, 
we obtain a finitely presented common quotient $Q_i$ that has no non-trivial finite
quotients. Moreover, $Q_i$ is not isomorphic to $Q_j$ if $i\neq j$, because the maximum of the order of a torsion element
in $Q_i$ is $p_i$.
\end{proof}

\subsection{Super-perfect quotients}

\begin{theorem}\label{c:super-perfect}  
Let $\G$ be a finitely generated group that maps onto a 
non-elementary hyperbolic group.
Suppose that $H_2(\G,\Z)=0$.
Then, there is an infinite sequence of
epimorphisms $[\G,\G]\to \tilde{Q}_i $ so that, for all $i\ge 1$,
\begin{enumerate}
\item  $H_1(\tilde Q_i,\Z)=H_2(\tilde Q_i,\Z)=0$;
\item  $\tilde{Q}_i$ is a finitely presented group with no non-trivial finite quotients; and
\item $\tilde{Q}_i/Z_i\not\cong  \tilde{Q}_j/Z_j$ for all $i\neq j$ and all central $Z_i<\tilde{Q}_i$ and $Z_j<\tilde{Q}_j$.
\end{enumerate}
\end{theorem}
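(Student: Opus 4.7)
The plan is to manufacture each $\tilde{Q}_i$ by applying Theorem \ref{t:lots-of-Q2} to the given non-elementary hyperbolic quotient of $\G$ and then passing to universal central extensions. First, I would fix a surjection $\G \twoheadrightarrow \Delta$ onto a non-elementary hyperbolic group $\Delta$ and invoke Theorem \ref{t:lots-of-Q2} to produce a sequence of pairwise non-isomorphic, finitely presented, centreless groups $Q_i$ with $\wh{Q}_i = 1$, together with epimorphisms $\Delta \twoheadrightarrow Q_i$. Since $\wh{Q}_i = 1$ forces $H_1(Q_i,\Z) = 0$, each $Q_i$ is perfect, so the composite $F_i : \G \twoheadrightarrow Q_i$ restricts to a surjection $f_i : [\G,\G] \twoheadrightarrow [Q_i,Q_i] = Q_i$.

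Next, I would form the universal central extension $p_i : \tilde{Q}_i \twoheadrightarrow Q_i$. By Lemma \ref{l:univ}, $\tilde{Q}_i$ is finitely presented with $H_1(\tilde{Q}_i,\Z) = H_2(\tilde{Q}_i,\Z) = 0$, and it inherits the property of having no non-trivial finite quotients. Since $H_2(\G,\Z) = 0$ by hypothesis, Lemma \ref{l:schur} applies and lifts $f_i$ to a surjection $\tilde{f}_i : [\G,\G] \twoheadrightarrow \tilde{Q}_i$ with $p_i \circ \tilde{f}_i = f_i$. This already delivers conditions (1) and (2).

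The main obstacle is condition (3): distinguishing the quotients $\tilde{Q}_i/Z_i$ from $\tilde{Q}_j/Z_j$ for \emph{arbitrary} central subgroups $Z_i, Z_j$. The strategy is to recover $Q_i$ canonically from $\tilde{Q}_i/Z_i$ as a further central quotient. The first observation is that, because $Q_i$ has trivial centre while $\ker p_i$ is central in $\tilde{Q}_i$, one has $Z(\tilde{Q}_i) = \ker p_i$. I will then show that for every central subgroup $Z_i \leq \tilde{Q}_i$ (which is automatically contained in $Z(\tilde{Q}_i)$), the equality $Z(\tilde{Q}_i/Z_i) = Z(\tilde{Q}_i)/Z_i$ holds: if $\bar{x}$ is central in $\tilde{Q}_i/Z_i$, then $[x,y] \in Z_i \leq \ker p_i$ for every $y \in \tilde{Q}_i$, so the image of $x$ in $Q_i$ centralises $Q_i$ and must therefore be trivial, forcing $x \in \ker p_i = Z(\tilde{Q}_i)$. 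Consequently $(\tilde{Q}_i/Z_i)/Z(\tilde{Q}_i/Z_i)$ is canonically isomorphic to $Q_i$, and any isomorphism $\tilde{Q}_i/Z_i \cong \tilde{Q}_j/Z_j$ would descend to an isomorphism $Q_i \cong Q_j$, contradicting the non-isomorphism of the $Q_i$ supplied by Theorem \ref{t:lots-of-Q2} when $i \neq j$.
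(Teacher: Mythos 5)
Your proposal is correct and follows essentially the same route as the paper: apply Theorem \ref{t:lots-of-Q2} to a hyperbolic quotient of $\G$, pass to universal central extensions, use Lemma \ref{l:univ} for (1)--(2) and Lemma \ref{l:schur} (via $H_2(\G,\Z)=0$) to lift to $[\G,\G]$. Your verification that $Q_i$ is recovered as the central quotient of $\tilde{Q}_i/Z_i$ by its centre is exactly the step the paper asserts in one line, just written out in full.
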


\begin{proof} Suppose $\G$ maps onto a non-elementary hyperbolic group $\Delta$. 
Let $Q_i$ be as in Theorem \ref{t:lots-of-Q2} and let $\tilde{Q}_i$ be the universal central extension of $Q_i$. As
$Q_i$ is centerless, it is the quotient of $\tilde{Q}_i$ by its centre,
and indeed the quotient of $\tilde{Q}_i/Z_i$ by its centre for any central $Z_i<\tilde{Q}_i$. 
Therefore $\tilde{Q}_i/Z_i\not\cong  \tilde{Q}_j/Z_j$ if $i\neq j$, as needed for (3).

We compose $\G\to \Delta$ with $\Delta\to Q_i$ to obtain $f:\G\to Q_i$. As $Q_i$ is perfect, this restricts to an epimorphism 
$[\G, \G]\to Q_i$. Lemma \ref{l:schur} assures us this last map lifts to an epimorphism to the universal central extension $[\G, \G]\to \tilde{Q}_i$.
Assertions (1) and (2) follow from Lemma \ref{l:univ}.
\end{proof}

\subsection{Ensuring that fibre products are not abstractly isomorphic}

The awkward hypothesis on centralisers in the following lemma avoids the problem that the centralisers of torsion elements
in hyperbolic groups can be
large. This lemma will be used in much the same manner as it was in \cite[Lemma 5.1]{B-jems}.

\begin{lemma}\label{l:easy} Let  $\Delta$ be a non-elementary hyperbolic group in which the centraliser of every non-trivial 
element is virtually cyclic.  For $i=1,2$, let  $p_i:\Delta\to Q_i$ be an epimorphism with infinite kernel and fibre product $P_i<\D\times\D$. 
If $P_1\cong P_2$ then  $Q_1\cong Q_2$.
\end{lemma}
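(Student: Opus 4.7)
The plan is to recover each $Q_i$ intrinsically from the abstract isomorphism type of $P_i$. Set $N_i = \ker p_i$, and inside $P_i$ consider the subgroups $A_i = N_i\times 1$ and $B_i = 1\times N_i$ (these lie in $P_i$ because $p_i(n) = p_i(1) = 1$ for $n \in N_i$). They are commuting normal subgroups of $P_i$; indeed, $A_i$ and $B_i$ are the kernels of the two coordinate projections $\pi_1,\pi_2 : P_i \twoheadrightarrow \D$, and $P_i/(A_iB_i) \cong Q_i$. It therefore suffices to characterise the subgroup $A_iB_i$ in a way that is invariant under abstract isomorphism.

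Two consequences of the centraliser hypothesis on $\D$ do almost all of the work. First, every infinite normal subgroup $N\triangleleft\D$ has trivial centraliser in $\D$: any such $N$ is non-elementary (infinite normal subgroups of non-elementary hyperbolic groups contain rank-two free subgroups), so if a non-trivial $h$ centralised $N$ then $N$ would sit inside the virtually cyclic group $C_\D(h)$, a contradiction. Second, $\D$ has no non-trivial finite normal subgroup: for $1\neq f \in F\triangleleft\D$ with $|F|<\infty$, the centraliser $C_\D(f)$ would have finite index in $\D$ but also be virtually cyclic, forcing $\D$ itself to be elementary. From these observations one reads off $C_{P_i}(A_i) = B_i$ and $C_{P_i}(B_i) = A_i$.

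The heart of the matter is to identify $\{A_i,B_i\}$ intrinsically as the set of maximal members of
$$\mathcal{F}_i = \{M\triangleleft P_i \mid M \text{ and } C_{P_i}(M) \text{ are both infinite}\}.$$
Given $M\in\mathcal{F}_i$, consider the projections $\pi_j(M) \triangleleft \D$. If both were infinite, then any $(x,y) \in C_{P_i}(M)$ would satisfy $x \in C_\D(\pi_1 M) = 1$ and $y \in C_\D(\pi_2 M) = 1$, so $C_{P_i}(M)=1$, contradicting $M\in\mathcal{F}_i$. So at least one projection, say $\pi_1(M)$, is finite; by the second consequence above it is trivial, whence $M \subset B_i$. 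A short analysis shows that the normal subgroups of $P_i$ contained in $B_i$ are exactly those of the form $1\times N'$ with $N'\triangleleft\D$ and $N'\subset N_i$, and for $N'$ infinite the centraliser in $P_i$ is $A_i$; so $A_i$ and $B_i$ are indeed the only maximal members of $\mathcal{F}_i$. Because $\mathcal{F}_i$ is defined purely in terms of the abstract group $P_i$, any isomorphism $\phi:P_1 \to P_2$ carries $\{A_1,B_1\}$ to $\{A_2,B_2\}$ and therefore $A_1B_1$ to $A_2B_2$, descending to an isomorphism $Q_1 \cong Q_2$. The delicate point to get right is this maximality argument, which is precisely where both consequences of the virtually cyclic centraliser hypothesis are needed simultaneously.
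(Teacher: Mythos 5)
Your proof is correct and takes essentially the same route as the paper: both arguments recover the subgroup $(\ker p_i\times 1)\cdot(1\times\ker p_i)$ intrinsically from centraliser conditions (using that non-trivial elements of $\Delta$ have virtually cyclic centralisers while infinite normal subgroups of a non-elementary hyperbolic group are non-elementary), and then identify $Q_i$ with the quotient $P_i/(\ker p_i\times\ker p_i)$. The only difference is cosmetic: the paper characterises the two corner subgroups element-wise, as the set of elements of $P_i$ whose centraliser is not virtually abelian, whereas you characterise the pair of subgroups as the maximal normal subgroups $M\triangleleft P_i$ with $M$ and $C_{P_i}(M)$ both infinite, which additionally requires your (correct) observation that $\Delta$ has no non-trivial finite normal subgroup.
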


\begin{proof}  We follow the ideas in the proof of \cite[Lemma 5.1]{B-jems}. Let $N_i = (\Delta\times 1)\cap P_i$ and $M_i = (1\times \Delta)\cap P_i$. We first claim that any isomorphism  
$P_1\cong P_2$ must restrict to an isomorphism $N_1\times M_1\cong N_2\times M_2$.
To prove this, we describe a property that characterises elements of $N_i \cup M_i$ in $P_i$
and is preserved by isomorphisms:
{\em $x \in N_i \cup M_i$ if and only if the centraliser of $x \in P_i$ is not virtually abelian}.  
To see that this characterisation is valid,
note that our assumption on $\Delta$ tells us that the elements of $\Delta\times\Delta$ (hence $P_i$) that do not lie in the direct factors
have virtually abelian centralisers, while the centraliser in $P_i$ of any element of $N_i$ (resp. $M_i$)
contains $M_i$ (resp. $N_i$), which is not virtually abelian since it is an infinite normal subgroup of the non-elementary
hyperbolic group $\Delta$.

Now $N_i\times M_i$ 
is the kernel of the restriction to $P_i$ of $(p_i,p_i):\Delta\times\Delta\to Q_i\times Q_i$, and the image of 
$P_i$ under this map is the diagonal copy of $Q_i$. Thus, $Q_i\cong P_i/(N_i\times M_i)$
is an invariant of the abstract isomorphism  type of $P_i$.
\end{proof}

\begin{lemma}\label{l:P-gives-Q} Let  $\Delta$ be a non-elementary hyperbolic group in which the centraliser of every non-trivial 
element is virtually cyclic.  Let $\phi:G\to\Delta$ be a finitely generated central extension of $\Delta$ and 
for $i=1,2$ let $p_i: G\to Q_i$ be an epimorphism with fibre product $P_i<G\times G$. 
If $P_1\cong P_2$ then  $Q_1/Z_1\cong Q_2/Z_2$, where $Z_i$ is central in $Q_i$. 
\end{lemma}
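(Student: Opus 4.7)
The plan is to reduce to Lemma \ref{l:easy} by passing to the quotient by centres. Any abstract isomorphism $P_1\cong P_2$ carries $Z(P_1)$ to $Z(P_2)$ and so descends to an isomorphism $P_1/Z(P_1)\cong P_2/Z(P_2)$. It will therefore suffice to show that $P_i/Z(P_i)$ is naturally isomorphic to the fibre product in $\Delta\times\Delta$ associated to the induced epimorphism $\bar p_i:\Delta\to Q_i/Z_i$ (where $C=\ker\phi$ is central in $G$ and $Z_i=p_i(C)$).

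First I would compute the centres. The hypothesis that every non-trivial centraliser in $\Delta$ is virtually cyclic forces $Z(\Delta)=1$, since otherwise $\Delta$ would coincide with the centraliser of a central element and hence be virtually cyclic, contradicting non-elementarity. Consequently $Z(G)=C$ and $Z(G\times G)=C\times C$. Since $P_i$ contains the diagonal $\{(x,x):x\in G\}$, any $(g,h)\in Z(P_i)$ must commute with every $(x,x)$, which forces $g,h\in Z(G)=C$. Combined with the obvious reverse inclusion, this gives
$$Z(P_i)=P_i\cap (C\times C)=\{(c_1,c_2)\in C\times C : p_i(c_1)=p_i(c_2)\}.$$

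Next, I would analyse the coordinate-wise projection $(\phi,\phi):G\times G\to\Delta\times\Delta$ restricted to $P_i$. Its kernel is precisely the subgroup just identified as $Z(P_i)$, and I claim that its image equals the fibre product
$$P_i'=\{(\delta_1,\delta_2)\in \Delta\times\Delta : \bar p_i(\delta_1)=\bar p_i(\delta_2)\}.$$
The containment in $P_i'$ is immediate from the definitions. For the reverse inclusion, given $(\delta_1,\delta_2)\in P_i'$, choose any lifts $g_j\in G$ with $\phi(g_j)=\delta_j$; then $p_i(g_1)p_i(g_2)^{-1}\in Z_i$, so this element equals $p_i(c)$ for some $c\in C$, and $(c^{-1}g_1,g_2)$ lies in $P_i$ and still projects to $(\delta_1,\delta_2)$ because $c\in\ker\phi$. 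Hence $P_i/Z(P_i)\cong P_i'$.

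In the setting of interest the epimorphisms $\bar p_i:\Delta\to Q_i/Z_i$ have infinite kernel, so Lemma \ref{l:easy} applies to the induced isomorphism $P_1'\cong P_2'$ and yields $Q_1/Z_1\cong Q_2/Z_2$, as required. The main structural point is the explicit identification of $Z(P_i)$, for which the crucial ingredient is the presence of the diagonal in $P_i$ together with the triviality of $Z(\Delta)$; everything else is bookkeeping.
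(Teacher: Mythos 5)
Your argument is correct and follows essentially the same route as the paper: both pass to the quotient of $P_i$ by its centre, identify this quotient with the fibre product of the induced epimorphism $\Delta\to Q_i/Z_i$, and then invoke Lemma \ref{l:easy}. The only cosmetic differences are that you compute $Z(P_i)$ directly via the diagonal and $Z(G)=C$ (and verify surjectivity onto the fibre product explicitly), whereas the paper deduces the same identification from the fact that the image $\overline{P}_i<\Delta\times\Delta$ is a centreless subdirect product; like the paper, you defer the infinite-kernel hypothesis of Lemma \ref{l:easy} to the setting in which the lemma is applied.
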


\begin{proof} We have $\Delta = G/\zeta$ where $\zeta<G$ is central. Let $Z_i=p_i(\zeta)$.
Let $\overline{P}_i$ be the image of $P_i$ in $\Delta\times\Delta$. By hypothesis, the centre of $\Delta$ is trivial,
and since $\overline{P}_i< \Delta\times\Delta$ is a subdirect product, its centre is also trivial. Thus $\overline{P}_i$
is the quotient of $P_i$ by its centre; in particular, $P_1\cong P_2$ implies $\overline{P}_1\cong \overline{P}_2$.

$\overline{P}_i$ is the fibre product of $\Delta\to Q_i/Z_i$, so  $\overline{P}_1\cong \overline{P}_2$
implies $Q_1/Z_1\cong Q_2/Z_2$, by the previous lemma.
\end{proof}

\subsection{Proof of Theorem \ref{t2:lots-of-P}} 
By composing $G\to[\G,\G]$ with the epimorphisms $ [\G,\G]\to \tilde{Q}_i $ furnished by
Corollary \ref{c:super-perfect} we obtain a 
sequence of epimorphisms
$G\to  \tilde{Q}_i $ where each $\tilde{Q}_i$ is a 
super-perfect group with no non-trivial finite quotients and where no central quotient of $\tilde{Q}_i$ is isomorphic to
a central quotient of $\tilde{Q}_j$ if $i\neq j$.  Let $P_i<G\times G$ be the fibre 
product of $G\to \tilde{Q}_i$.  Proposition \ref{p:PT} tells us that $P_i$ is finitely generated
and that $P_i\to G\times G$ induces an isomorphism 
$\widehat{P}_i\overset{\cong}\to\widehat{G\times G}$.  Lemma \ref{l:P-gives-Q} completes the proof.
\qed

\section{Constructing Grothendieck pairs: the last step for Theorem \ref{t:main}} 
\label{s:finish_off}

We would like to apply Theorem \ref{t2:lots-of-P} to the fundamental groups of  arbitrary
Seifert fibre spaces over the base orbifolds  $S^2(p,q,r)$ listed in (\ref{list-top}). More specifically, 
 in the notation of Theorem \ref{t2:lots-of-P}, we would like to take $\Delta=
\Delta(p,q,r)$ and $G=\G = \pi_1M$. 
But we cannot do this because $H_1(M,\Z)$, although finite, is not trivial.
Instead, we look for an auxiliary group $\Lambda$ with finite abelianisation 
and with $H_2(\Lambda,\Z)=0$ so that $\Lambda$ maps onto
a non-elementary hyperbolic group and $\pi_1M$ maps onto a subgroup of finite index in $[\Lambda,\Lambda]$.
In this section  we shall introduce a device that will allow us to achieve this for infinitely many Seifert fibred
spaces over each of the orbifolds in (\ref{list-top}). Our main focus will be on the Seifert fibred
spaces  over
$S^2(3,3,4),\ S^2(3,3,6)$ and $S^2(2,5,5)$. The following theorem completes the proof of Theorem \ref{t:main}.

\begin{theorem}\label{t:333}
If $\Pi$ is the fundamental group of a \SFS whose base orbifold is $S^2(3,3,4),\ S^2(3,3,6)$ or $S^2(2,5,5)$, then
there are infinitely many, pairwise non-isomorphic, finitely generated groups $P\hookrightarrow \Pi\times \Pi$
such that the inclusion induces an isomorphism $\wh{P}\cong \wh{\Pi\times \Pi}$. 
\end{theorem}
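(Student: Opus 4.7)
The plan is to apply Theorem \ref{t2:lots-of-P} with $G = \Pi$. Two hypotheses of that theorem are immediate from the structure of $\Pi$: first, $\Pi$ is a central $\Z$-extension of $\Delta = \Delta(p,q,r)$; second, for $(p,q,r) \in \{(3,3,4),(3,3,6),(2,5,5)\}$, we have $1/p+1/q+1/r<1$, so $\Delta$ is a non-elementary hyperbolic group in which every centraliser is virtually cyclic (finite cyclic for torsion elements, virtually $\Z$ for infinite-order ones). What remains is to construct a finitely generated auxiliary group $\G$ with $H_2(\G,\Z)=0$, a surjection onto a non-elementary hyperbolic group, and a homomorphism $\Pi \to \G$ whose image has finite index in $[\G,\G]$.

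The arithmetic input that singles out the three listed bases is a parity calculation based on Lemma \ref{small_ab}: using $\gcd(e_i,p_i)=1$ on the Seifert invariants, one checks that for every Seifert fibred space with base $S^2(3,3,4)$, $S^2(3,3,6)$ or $S^2(2,5,5)$, the order of $H_1(\Pi,\Z)$ is always \emph{odd}. For $S^2(3,3,5)$ this order can be even (depending on the parity of $e_3$), and for $S^2(4,4,4)$ it is always even; this is the reason those two bases are not treated here.

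My proposal is to take $\G$ to be a suitable index-$2$ extension $1 \to \Pi \to \G \to C_2 \to 1$, obtained geometrically by quotienting $M$ by an orientation-reversing involution that combines an isometry of the base orbifold with fibre inversion (such an involution exists because $p=q$ in each of the three cases, so there is always a base-orbifold symmetry exchanging two equal-order cone points). Then $\G$ surjects the reflection-triangle group $\Delta \rtimes C_2$, which remains non-elementary hyperbolic. Provided the induced $C_2$-action on $H_1(\Pi,\Z)$ acts as $-1$, the operator $1-(-1)=2$ is invertible on the odd-order group $H_1(\Pi,\Z)$, so the coinvariants $H_1(\Pi,\Z)_{C_2}$ vanish and $\Pi$ becomes a subgroup of finite index in $[\G,\G]$. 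For $H_2(\G,\Z)=0$ one runs the Lyndon--Hochschild--Serre spectral sequence for $1 \to \Pi \to \G \to C_2 \to 1$: $H_2(\Pi,\Z)=0$ because $M$ is a closed orientable $3$-manifold with finite $H_1$, so Poincar\'e duality gives $H_2(M,\Z) \cong H^1(M,\Z) = \mathrm{Hom}(H_1(M,\Z),\Z) = 0$; next, $H_p(C_2, H_1(\Pi,\Z))=0$ for all $p \geq 1$ by the oddness of $|H_1(\Pi,\Z)|$; and the only potentially surviving contribution on the $p+q=2$ antidiagonal comes from $H_2(C_2, H_0(\Pi,\Z)) = \Z/2$, which is killed by the transgression associated with the extension class once the involution is chosen compatibly with the Seifert structure.

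Once $\G$ is constructed, Theorem \ref{t2:lots-of-P} applied to $(G,\G)=(\Pi,\G)$ yields the required infinite sequence of pairwise non-isomorphic, finitely generated subgroups $P_n \hookrightarrow \Pi \times \Pi$ whose inclusions induce isomorphisms $\wh{P_n} \cong \wh{\Pi \times \Pi}$. The main obstacle is to exhibit a genuine involution of $\Pi$ whose action on $H_1(\Pi,\Z)$ is by $-1$ uniformly over all Seifert invariants, and to verify simultaneously that the resulting extension has $H_2=0$; both steps rely crucially on the oddness of $|H_1(\Pi,\Z)|$ forced by the three listed bases, which is exactly what this approach cannot supply for $S^2(3,3,5)$ or $S^2(4,4,4)$.
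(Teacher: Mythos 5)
Your overall strategy is the one the paper uses: exploit the fact (from Lemma \ref{small_ab}) that $|H_1(\Pi,\Z)|$ is odd for every Seifert fibred space over $S^2(3,3,4)$, $S^2(3,3,6)$ or $S^2(2,5,5)$ to build an index-$2$ extension $1\to\Pi\to\Lambda\to C_2\to 1$ with $\Pi$ of finite index in $[\Lambda,\Lambda]$, $H_2(\Lambda,\Z)=0$ and $\Lambda$ surjecting a non-elementary hyperbolic group, and then apply Theorem \ref{t2:lots-of-P} with $\Pi$ in the role of $G$ and $\Lambda$ in the role of $\Gamma$. However, the step you concede you cannot complete --- exhibiting an involution of $\Pi$ acting as $-1$ on $H_1(\Pi,\Z)$ uniformly over all Seifert invariants --- is exactly the content of the paper's Proposition \ref{p:coxeter}, and the involution you propose does not supply it. First, a map of $M$ combining an orientation-preserving base symmetry that swaps the two equal-order cone points with fibre inversion would be orientation-reversing on $M$, and no orientation-reversing self-homeomorphism of $M$ exists, because $e(M)\neq 0$ by Lemma \ref{small_ab} and orientation reversal negates the euler number. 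Second, if instead you take the base reflection exchanging the two equal cone points together with fibre inversion (orientation-preserving on $M$), such a lift need not exist for arbitrary Seifert invariants, since the two exceptional fibres may carry different invariants $(p,\beta_1)\neq(p,\beta_2)$ while a fibre-inverting, orientation-preserving homeomorphism preserves them; and even when it exists, its action on $H_1$ is not $-1$: for instance in $H_1(\Delta(3,3,4))\cong C_3$ one has $[b]=-[a]$ and $[c]=0$, so the cone-point exchange acts trivially on this quotient, the $C_2$-coinvariants of $H_1(\Pi,\Z)$ do not vanish, and your intended conclusion that $\Pi$ maps onto a finite-index subgroup of $[\Lambda,\Lambda]$ breaks down.

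The paper's construction avoids all of this by using a different symmetry: the reflection $\tau$ of the triangle $T(p,q,r)$ in the edge joining the vertices fixed by $a$ and $b$ (so $\Delta^-=\Delta\rtimes\langle\tau\rangle$), which fixes the cone points rather than permuting them. Its descent to $S^2(p,q,r)$ is covered, for \emph{every} choice of Seifert invariants, by an orientation-preserving involution of $M$ that inverts fibres and acts on $\pi_1M$ by $(a,b,c,z)\mapsto(a^{-1},b^{-1},bc^{-1}b^{-1},z^{-1})$; each generator is sent to a conjugate of its inverse, so the action on $H_1(\Pi,\Z)$ is $-1$, and oddness then gives $\Pi=[\Lambda,\Lambda]$, $H_1(\Lambda,\Z)\cong C_2$ and a surjection $\Lambda\twoheadrightarrow\Delta\rtimes C_2$. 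Finally, a small correction to your homological step: $H_2(C_2,H_0(\Pi,\Z))=H_2(C_2,\Z)=0$, since the integral homology of a finite cyclic group vanishes in positive even degrees, so no transgression argument is needed; $H_2(\Lambda,\Z)=0$ follows directly from $H_2(\Pi,\Z)\cong H^1(M,\Z)=0$ (Poincar\'e duality plus finiteness of $H_1(M,\Z)$) together with the oddness of $|H_1(\Pi,\Z)|$, as in the lemma preceding Proposition \ref{p:coxeter}.
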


We begin with a homological observation.

\begin{lemma} Let $G$ be a finitely generated group and suppose that $N<G$ has index $2$. If $H_2(N,\Z)=0$ and $H_1(N,\Z)$ is a finite group of odd order, then $H_2(G,\Z)=0$.
\end{lemma}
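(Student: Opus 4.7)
The plan is to apply the Lyndon--Hochschild--Serre spectral sequence to the short exact sequence
\[
1 \to N \to G \to C_2 \to 1,
\]
where $C_2 = G/N$. This gives a first-quadrant homology spectral sequence with $E_2^{p,q} = H_p(C_2, H_q(N,\Z))$ converging to $H_{p+q}(G,\Z)$, and the terms that contribute to $H_2(G,\Z)$ are $E_\infty^{0,2}, E_\infty^{1,1}, E_\infty^{2,0}$. Each is a subquotient of the corresponding $E_2$ term, so it suffices to show
\[
E_2^{0,2} = H_0(C_2, H_2(N,\Z)) = 0, \quad E_2^{1,1} = H_1(C_2, H_1(N,\Z)) = 0, \quad E_2^{2,0} = H_2(C_2,\Z) = 0.
\]

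The first of these is immediate from the hypothesis $H_2(N,\Z)=0$, and the third is the standard calculation that $H_n(C_2,\Z) = 0$ for all even $n>0$. For the middle term, I would invoke the general principle that for any finite group $\pi$ and any $\pi$-module $A$, the order $|\pi|$ annihilates $H_n(\pi,A)$ whenever $n \geq 1$ (proved by composing restriction and corestriction to the trivial subgroup). Applied with $\pi = C_2$ and $A = H_1(N,\Z)$, this tells us that $H_1(C_2, H_1(N,\Z))$ is annihilated by $2$. But by hypothesis $H_1(N,\Z)$ has odd order, so multiplication by $2$ is invertible on $H_1(N,\Z)$, hence also on every group of equivariant homology built from it. Thus $H_1(C_2, H_1(N,\Z))$ is both annihilated by $2$ and $2$-divisible, forcing it to vanish.

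Combining these three vanishings, every $E_\infty$ term contributing to $H_2(G,\Z)$ is zero, so $H_2(G,\Z)=0$, as required. There is no serious obstacle here; the only delicate point is the vanishing of $E_2^{1,1}$, which turns entirely on the parity hypothesis, and the argument above (annihilation by $|C_2|=2$ versus invertibility of $2$) is the cleanest way to record it.
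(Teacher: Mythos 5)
Your proof is correct and follows essentially the same route as the paper: the LHS spectral sequence for $1\to N\to G\to C_2\to 1$, with the same three $E_2$-terms and the same parity observation killing $H_1(C_2,H_1(N,\Z))$. Your restriction--corestriction justification for that middle term is in fact slightly more robust than the paper's quoted formula $H_1(C_2,M)=M/2M$ (which presumes trivial action), since it handles a possibly non-trivial conjugation action of $C_2$ on $H_1(N,\Z)$ without further comment.
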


\begin{proof} We consider the LHS spectral sequence in homology for $1\to N\to G\to C_2\to 1$, where $C_2$ is cyclic of order $2$.
The terms on the $E_2$-page that  might contribute to $H_2(G, \Z)$ are 
$H_0(C_2, H_2(N,\Z))$ and $H_1(C_2, H_1(N,\Z))$ and $H_2(C_2, \Z)=0$. The
first is obviously zero and the second is zero because for any coefficent module $M$
one has $H_1(C_2, M) = M/2M$, and we have assumed $|M|$ is odd.
\end{proof}

\subsection{Convenient orbifold quotients of certain \SFSs}

The hypothesis on $|H_1(M,\Z)|$ in the following proposition forces at least two of $p,q,r$ to be odd and it also forces the homology class of the regular fibre (representing the centre of $\pi_1M$) to have odd order in $H_1(M,\Z)$;
this is a serious constraint (see Proposition \ref{p:all-odd}).

\begin{proposition}\label{p:coxeter}
Let $\Pi$ be the fundamental group of a \SFS $M$ with base orbifold $S^2(p,q,r)$. If $|H_1(M,\Z)|$ is finite and odd, 
then there is a group $\Lambda>\Pi$ with the following properties:
\begin{enumerate}
\item $\Pi = [\Lambda, \Lambda]$;
\item $H_1(\Lambda, \Z)$ is cyclic of order $2$;
\item $H_2(\Lambda, \Z) = 0$;
\item $\Lambda$ maps onto $D^-(p,q,r)=\D(p,q,r)\rtimes C_2$.
\end{enumerate}
\end{proposition}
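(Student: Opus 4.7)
The plan is to construct $\Lambda$ as a semi-direct product $\Pi\rtimes C_2$, where the $C_2$ acts on $\Pi$ by an involution $\tilde\sigma$ lifting the outer involution $\sigma$ of $\Delta(p,q,r)$ that realises $\Delta(p,q,r)\rtimes C_2$ as the full triangle reflection group. In the standard Seifert presentation
\[
\Pi = \<c_1,c_2,c_3,z \mid z \text{ central},\ c_i^{p_i}z^{e_i}=1,\ c_1c_2c_3=z^d\>,
\]
$\sigma$ acts as $c_1\mapsto c_1^{-1}$, $c_2\mapsto c_2^{-1}$, $c_3\mapsto c_2c_1$ (one checks $\sigma^2=\mathrm{id}$ using $c_1c_2c_3=1$), and I would lift it to $\Pi$ by
\[
\tilde\sigma(c_1)=c_1^{-1},\quad \tilde\sigma(c_2)=c_2^{-1},\quad \tilde\sigma(c_3)=c_2c_1z^{-d},\quad \tilde\sigma(z)=z^{-1}.
\]
The twist $z^{-d}$ in $\tilde\sigma(c_3)$ is uniquely forced by preservation of $c_1c_2c_3=z^d$; preservation of $c_3^{p_3}z^{e_3}=1$ reduces to the identity $(c_2c_1)^{p_3}=z^{dp_3+e_3}$, which follows because $c_2c_1$ is a conjugate of $c_1c_2=z^dc_3^{-1}$ and $(c_1c_2)^{p_3}=z^{dp_3}c_3^{-p_3}=z^{dp_3+e_3}$ is central. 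Verifying $\tilde\sigma^2=\mathrm{id}$ on $c_3$ uses $c_1c_2c_3=z^d$ once more. Having $\tilde\sigma$, set $\Lambda:=\Pi\rtimes_{\tilde\sigma}\<t\mid t^2\>$; property (4) is immediate, because killing $z$ extends $\Pi\twoheadrightarrow\Delta$ to $\Lambda\twoheadrightarrow\Delta\rtimes C_2$.

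For (1) and (2) I would use the standard identification $\Lambda^{\mathrm{ab}}\cong (\Pi^{\mathrm{ab}})_{\tilde\sigma}\oplus C_2$ valid for split extensions by $C_2$. On $\Pi^{\mathrm{ab}}$ (written additively), $\tilde\sigma$ acts as multiplication by $-1$: this is clear on $c_1,c_2,z$, and for $c_3$ one uses the abelianised relation $c_1+c_2+c_3=dz$ to rewrite $\tilde\sigma(c_3)=c_1+c_2-dz=-c_3$. Hence $(\Pi^{\mathrm{ab}})_{\tilde\sigma}=\Pi^{\mathrm{ab}}/2\Pi^{\mathrm{ab}}$, which vanishes under the hypothesis $|H_1(M,\Z)|=|\Pi^{\mathrm{ab}}|$ odd. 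This gives $\Lambda^{\mathrm{ab}}\cong C_2$, which is (2); and since $\Lambda/\Pi\cong C_2$ is abelian and $|\Lambda^{\mathrm{ab}}|=2$, the containment $[\Lambda,\Lambda]\subseteq\Pi$ is forced to be equality, giving (1).

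For (3), I would apply the homological lemma stated immediately before the proposition to the index-two subgroup $\Pi\lhd\Lambda$: the hypothesis already provides $|H_1(\Pi,\Z)|$ finite and odd, and $H_2(\Pi,\Z)=0$ because $M$ is a closed, orientable, aspherical $3$-manifold with finite $H_1$, so Poincar\'e duality gives
\[
H_2(\Pi,\Z)=H_2(M,\Z)\cong H^1(M,\Z)\cong\Hom(H_1(M,\Z),\Z)=0.
\]

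The main obstacle is defining $\tilde\sigma$ correctly: the non-obvious twist $z^{-d}$ in $\tilde\sigma(c_3)$, together with the short central-element calculation $(c_2c_1)^{p_3}=z^{dp_3+e_3}$ needed to preserve the relation $c_3^{p_3}z^{e_3}=1$, are the only points requiring any real computation. Once $\tilde\sigma$ is in hand, properties (1)--(4) unfold uniformly from the odd-order hypothesis by the formal manipulations above.
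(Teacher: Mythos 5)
Your proposal is correct and follows essentially the same route as the paper: the paper builds the same semidirect product $\Lambda=\Pi\rtimes C_2$, with the involution obtained geometrically as the orientation-preserving, fibre-reversing lift of the reflection of $S^2(p,q,r)$ (its action $(a,b,c,z)\mapsto(a^{-1},b^{-1},bc^{-1}b^{-1},z^{-1})$ coincides with your $\tilde\sigma$, since $bc^{-1}b^{-1}=c_2c_1z^{-d}$), and then deduces (1)--(4) from the odd-order hypothesis, Poincar\'e duality, and the preceding homological lemma exactly as you do. Your only deviation is verifying the involution algebraically on the presentation rather than invoking the geometric involution of $M$, which is a harmless (and complete) substitute.
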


\begin{proof}
$\D=\D(p,q,r)$ is the orientation preserving subgroup of the group $\D^-=\D^-(p,q,r)$ generated by reflections in the sides of the hyperbolic triangle $T=T(p,q,r)$. The short exact sequence $1\to \D \to \D^-\to C_2\to 1$ can be split by lifting the generator of $C_2$ to any one of the three basic reflections; we choose to lift it to the reflection $\tau$ in the edge connecting the vertices fixed by the rotations $a$ and $b$ (in the standard notation). Thus $\D^- = \D\rtimes C_2$ where  the action of $C_2$ on $\D^-$ is $(a,b,c)\mapsto (a^{-1}, b^{-1}, bc^{-1}b^{-1})$.

The reflection $\tau$ descends to a reflection of the orbifold $S^2(p,q,r)$ that interchanges the two connected components of the complement of $\partial T$, and this is covered by
an {\em orientation preserving}
 involution $\tilde{\tau}: M\to M$ that reverses the orientation of fibres. The action of  $\tilde{\tau}$ on $\Pi= \pi_1M$, in the standard notation, is
$\tau_*:(a,b,c,z)\mapsto (a^{-1}, b^{-1}, bc^{-1}b^{-1},z^{-1})$. We define $\Lambda=\Pi\rtimes_{\tau_*}C_2$ to be the resulting semidirect product (i.e. the fundamental group of the orbifold quotient of $M$ by $\tilde{\tau}$).

Conjugation by the generator of $C_2$ sends each generator of $\Pi$ to a conjugate of its inverse, so the image of $\Pi$ in $H_1(\Lambda,\Z)$  is a 2-group, and since $|H_1(\Pi,\Z)|$
is odd, this image must be trivial. This proves (1) and (2), and (3) follows from the preceding lemma, because $H_1(M,\Z)$ is finite, so by 
Poincar\'{e} duality $H_2(M,\Z) = H^1(M,\Z)=0$, and $H_2(\Lambda,\Z)$ is a quotient of $H_2(M,\Z)$
(with equality if $\frac{1}{p} + \frac{1}{q}+\frac{1}{r} < 1$, because then $M$ is aspherical). 
\end{proof}

\subsection{Proof of Theorem \ref{t:333}}
 As was explained in Section 4, an arbitrary \SFS over $S^2(3,3,4)$ has fundamental
group
$$
\Pi=\<a,b,c,z\mid z \text{ is central },\ a^3z^{e_1}=b^{3}z^{e_2}=c^4z^{e_3}=1, abc = z^d\>
$$
where $e_1, e_2\in\{1,2\}$ and $e_3\in\{1,3\}$. We calculated in Lemma \ref{small_ab} that
the abelianisation of $\Pi$ has order $3 (4 e_1 + 4 e_2 + 3 e_3 + 12 d)$, which is odd in all 
cases.  Likewise, the calculations in Lemma \ref{small_ab} show that for every
\SFS $M$ with base $S^2(3,3,6)$ or $S^2(2,5,5)$, the order of $H_1(M,\Z)$ is odd:
for $(3,3,6)$ we have $9(2 e_1 + 2 e_2 + e_3 + 6 d)$ with $e_3$ odd, 
and for $(2,5,5)$ we have  $5(5 + 2 e_2 + 2 e_3 + 10 d)$.
Thus, in all cases, $\Pi$ satisfies the hypotheses of Proposition \ref{p:coxeter},
which furnishes us with a group $\Lambda$ so that  Theorem \ref{t2:lots-of-P} 
applies with the appropriate $\Delta(p,q,r)$ in the role of $\Delta$, $\Pi$ in the role of $G$ and $\Lambda$ in the role of $\Gamma$.
\qed

\section{Infinitely many examples over each base orbifold in list (\ref{list-top})}

When combined with Theorems \ref{t3},  \ref{t:333} and \ref{t:not-fp}, the results in this section show that for infinitely many Seifert fibred spaces 
 $M$ over each of the bases listed in (\ref{list-top}), the fundamental group $\G=\pi_1M$
satisfies Theorem \ref{t:main} parts (1), (2) and (3).

\subsection{Infinitely many examples over $S^2(3,3,5)$}

\begin{proposition}\label{p:all-odd} If $p, q$ and $r$ are odd,
then for every $d\in\Z$ the \SFS over $S^2(p,q,r)$ with fundamental group  
$$
\Pi=\<a,b,c,z\mid a^p=b^q= c^r=z,\ abc=z^{2d} \>
$$
is such that $\Pi\times\Pi$ has infinitely many non-isomorphic, finitely generated subgroups $P$
such that  $P\hookrightarrow \Pi\times\Pi$ is a Grothendieck pair.
\end{proposition}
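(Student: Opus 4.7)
The plan is to reduce the statement to an application of Theorem \ref{t2:lots-of-P}, using Proposition \ref{p:coxeter} to produce the requisite auxiliary group, exactly mirroring the strategy used in the proof of Theorem \ref{t:333}. In particular, the only work to do is to verify that the hypothesis of Proposition \ref{p:coxeter}, namely that $|H_1(M,\Z)|$ is finite and odd, holds for every Seifert fibred space $M$ with fundamental group $\Pi$ as presented in the proposition.

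For the homological calculation, I would read off the Seifert invariants directly from the presentation: in the notation of Section \ref{s:clacs}, they are $(p,-1),(q,-1),(r,-1)$ with gluing integer $2d$. The arguments of Lemma \ref{small_ab} (applied with these unnormalised invariants, which give the same abelianisation) yield
\[
|H_1(M,\Z)| \;=\; \bigl|\, qr + pr + pq - 2d\,pqr \,\bigr|.
\]
The key observation is then the parity check: when $p,q,r$ are all odd, each of $qr,\,pr,\,pq$ is odd, so their sum is odd, while $2d\,pqr$ is even, and hence $|H_1(M,\Z)|$ is odd. The same observation shows the integer inside the absolute value is nonzero, so $H_1(M,\Z)$ is finite; in particular $M$ is not a surface bundle over the circle, so its base is truly the hyperbolic orbifold $S^2(p,q,r)$. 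This is the one step requiring genuine (albeit brief) calculation, and it is the main obstacle only in the sense that it is all there is to check.

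With finiteness and oddness in hand, Proposition \ref{p:coxeter} furnishes a group $\Lambda$ with $[\Lambda,\Lambda]=\Pi$, $H_2(\Lambda,\Z)=0$, and a surjection $\Lambda\twoheadrightarrow \Delta(p,q,r)\rtimes C_2$. Since $\Delta(p,q,r)\rtimes C_2$ is non-elementary hyperbolic, Theorem \ref{t2:lots-of-P} applies with $\Delta=\Delta(p,q,r)\rtimes C_2$, $\Gamma=\Lambda$, and $G=\Pi$ (which maps onto $[\Lambda,\Lambda]=\Pi$ as a subgroup of index $1$). Part (1) of that theorem produces an infinite sequence of pairwise distinct finitely generated subgroups $P_n\hookrightarrow \Pi\times\Pi$ each inducing an isomorphism of profinite completions, i.e.\ Grothendieck pairs. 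To deduce that the $P_n$ are pairwise non-isomorphic, I invoke part (2): $\Pi$ is a central $\Z$-extension of the hyperbolic triangle group $\Delta(p,q,r)$, whose element centralisers are virtually cyclic (finite cyclic for torsion elements, infinite cyclic for hyperbolic elements, as is standard for Fuchsian groups). This completes the proof.
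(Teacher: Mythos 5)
Your proposal is correct and follows the paper's own proof essentially step for step: the paper likewise computes the relation-matrix determinant $r(q+p)+pq-2pqrd$, observes it is odd (hence nonzero), and then feeds Proposition \ref{p:coxeter} into Theorem \ref{t2:lots-of-P} with $\Pi$ in the role of $G$ and $\Lambda$ in the role of $\Gamma$. The only cosmetic difference is that you name $\Delta(p,q,r)\rtimes C_2$ rather than $\Delta(p,q,r)$ as the non-elementary hyperbolic target of $\Lambda$, which is harmless (and arguably the more precise reading of Proposition \ref{p:coxeter}(4)).
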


\begin{proof} 
As in the proof of Lemma \ref{small_ab}, the determinant of the relation matrix can be computed and equals (in absolute value) $r(q+p) + pq - 2pqrd$, which is odd.
Thus $\Pi$ satisfies the hypotheses of Proposition \ref{p:coxeter}. That proposition furnishes us with a group $\Lambda$ so that  Theorem \ref{t2:lots-of-P} 
applies with the appropriate $\Delta(p,q,r)$ in the role of $\Delta$, $\Pi$ in the role of $G$, and $\Lambda$ in the role of $\Gamma$.
 \end{proof}

 \begin{corollary}\label{c:335}
 For infinitely many \SFSs $M$ over $S^2(3,3,5)$, the fundamental group $\G=\pi_1M$
 satisfies the requirements of Theorem \ref{t:main}.
 \end{corollary}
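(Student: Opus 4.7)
The plan is to invoke Proposition \ref{p:all-odd} with $(p,q,r) = (3,3,5)$, since each of $3,3,5$ is odd. For each $d \in \mathbb{Z}$ this produces a Seifert fibred space $M_d$ over $S^2(3,3,5)$ whose fundamental group
$$\Pi_d = \langle a,b,c,z \mid a^3 = b^3 = c^5 = z,\ abc = z^{2d} \rangle$$
has the property that $\Pi_d \times \Pi_d$ admits infinitely many pairwise non-isomorphic, finitely generated subgroups $P \hookrightarrow \Pi_d \times \Pi_d$ that are Grothendieck pairs. This already delivers parts (2) and (3) of Theorem \ref{t:main} for $\Gamma = \Pi_d$, for every $d$.

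Next I would confirm that infinitely many of the groups $\Pi_d$ are pairwise non-isomorphic. The determinant computation recorded in Proposition \ref{p:all-odd} gives $|H_1(M_d,\mathbb{Z})| = |r(p+q) + pq - 2pqr d| = |39 - 90d|$, which takes infinitely many distinct values as $d$ ranges over $\mathbb{Z}$. Hence the family $\{\Pi_d\}_{d\in\mathbb{Z}}$ contains infinitely many isomorphism types, and the corresponding $M_d$ provide the infinitely many Seifert fibred spaces over $S^2(3,3,5)$ required by the corollary.

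It remains to establish part (1) of Theorem \ref{t:main} for each $\Pi_d$, and this falls out by combining two theorems proved earlier in the paper. Because $\Delta(3,3,5)$ lies in list (\ref{list-top}), Theorem \ref{t3} applies: any finitely generated, residually finite group $\Lambda$ with $\widehat{\Lambda} \cong \widehat{\Pi_d \times \Pi_d}$ embeds as a Grothendieck pair $\Lambda \hookrightarrow \Pi_d \times \Pi_d$ inducing the given profinite isomorphism. Theorem \ref{t:not-fp}, applied to the Fuchsian group $\Delta(3,3,5)$ and the Seifert fibred space $M_d$, then rules out the existence of proper finitely presented Grothendieck subgroups of $\Pi_d \times \Pi_d$. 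Thus every such $\Lambda$ that is additionally finitely presented must equal $\Pi_d \times \Pi_d$, establishing the profinite rigidity of $\Pi_d \times \Pi_d$ within the class of finitely presented, residually finite groups.

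The main obstacle in this argument is almost entirely bookkeeping: one must only check that $(3,3,5)$ satisfies the parity hypothesis of Proposition \ref{p:all-odd} and that the arithmetic constraints in the hypotheses of Theorems \ref{t3} and \ref{t:not-fp} are met, both of which are immediate. All of the substantial work — the Galois rigidity and centraliser analysis behind Theorem \ref{t3}, the closedness of finitely presented subgroups in products of Fuchsian groups behind Theorem \ref{t:not-fp}, and the fibre-product construction behind Proposition \ref{p:all-odd} — has already been carried out in Sections \ref{s:t3}, \ref{products}, \ref{s:homol-argument}, and \ref{s:finish_off}.
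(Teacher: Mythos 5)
Your proposal is correct and follows essentially the same route as the paper: the paper's proof of this corollary likewise combines Theorem \ref{t3} (every profinite isomorphism with $\wh{\G\times\G}$ arises from a Grothendieck pair), Theorem \ref{t:not-fp} (no proper finitely presented Grothendieck pairs), and Proposition \ref{p:all-odd} applied with $(p,q,r)=(3,3,5)$ to supply infinitely many finitely generated pairs. Your added check that $|H_1(M_d,\Z)|=|39-90d|$ distinguishes infinitely many of the manifolds $M_d$ is a harmless bookkeeping supplement to what the paper leaves implicit.
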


 \begin{proof}  
 Theorem \ref{t3} tells us that any finitely generated, residually finite group $P$  with the same
 profinite completion as $\G\times\G$ arises from a Grothendieck pair, and Theorem \ref{t:not-fp}
 tells us that $P$ cannot be finitely presented if it is not isomorphic to $\G\times\G$.
  Proposition \ref{p:all-odd} provides infinitely many possibilities for $P$.
 \end{proof}

 \begin{remark} A slight variation on the proof of Proposition  \ref{p:all-odd}  shows that 
if $p$ and $q$ are odd and $r$ is even, then for every $d\in\Z$ the \SFS over $S^2(p,q,r)$ with fundamental group  
$$
\Pi=\<a,b,c,z\mid a^p=b^q= c^r=z,\ abc=z^{d} \>
$$
is such that $\Pi\times\Pi$ has infinitely many non-isomorphic, finitely generated subgroups $P$
such that  $P\hookrightarrow \Pi\times\Pi$ is a Grothendieck pair. The reader will readily  devise 
other conditions on the Seifert data that ensure the same conclusion, but
some care is needed. For instance, it is important in Proposition \ref{p:all-odd} that the exponent of $z$ in the relations $a^p=b^q= c^r=z$ is $1$.  By way of contrast, with $p=q=r=5$,  the \SFS $M$ over $S^2(5,5,5)$ with Seifert invariants 
 $(5,2), (5,2), (5,1)$ and $d=1$ has $H_1(M,\Z)$ infinite and $e(M)=0$.
 In this case $\pi_1M$ is not profinitely rigid, by \cite{Hem}. \end{remark}

\subsection{Infinitely many examples over $S^2(4,4,4)$}  

We want to exhibit  infinitely many \SFS $M$ over $S^2(4,4,4)$ so that the fundamental group $\G=\pi_1M$
 satisfies the requirements of Theorem \ref{t:not-GR} and hence Theorem \ref{t:main} parts (1), (2) and (3).
In the light of Lemma \ref{l:usual} and Theorem \ref{t:333}, it suffices to prove that infinitely many such $\G$
arise as a subgroup of finite index in the fundamental group $\Pi$ of a Seifert fibred space with base $S^2(3,3,4)$.
To do this, we simply pull back the commutator subgroup $\D(4,4,4)<\D(3,3,4)$, which has index $3$.
$$ 
\begin{matrix}
&1\longrightarrow &\Z&\longrightarrow&
\G &\longrightarrow& \Delta(4,4,4)&  \longrightarrow& 1
\cr
&&{=} {\bigg\downarrow} &&{\bigg\downarrow} 
&& {\bigg\downarrow}  &&
\cr
&1\longrightarrow &{\Z}&\longrightarrow&
\Pi  &\longrightarrow& \Delta(3,3,4) &  \longrightarrow& 1
\end{matrix}
$$  
The Euler number $e(\G)=3 e(\Pi)$ varies without bound as we vary $\Pi$, so we obtain infinitely many
possibilities for $\G$. Indeed one can calculate that $\G$ can be any of the groups  
$$
\G_+(d) = \< a,b,c,z\mid a^4z,\ b^4z,\ c^4z,\, abcz^d \>.
$$
The Seifert fibred spaces  
that are not covered by this argument are those whose fundamental group lies in the family
$$
\G_-(d) = \< a,b,c,z\mid a^4z^{-1},\ b^4z,\ c^4z,\, abcz^d \>.
$$
 
\section{Closing Remarks}\label{s:last}
It seems reasonable to expect that the fundamental group $\G$ of every \SFS over each of the base orbifolds
from lists (\ref{list-top}) and (\ref{list-bottom}) satisfies the conclusion of Theorem \ref{t:main}. At this point, we are unable to prove that {\em any} \SFS over a base orbifold from (\ref{list-bottom}) satisfies the conclusion of Theorem \ref{t:main}, but with our other results in hand, we would be able to do this if we could map $\G$ onto infinitely many, finitely presented,
non-isomorphic groups $Q$ with $\wh{Q}=1$ and $H_2(Q,\Z)=0$. The nub of our remaining difficulties is
that we do not see how to arrange the condition  $H_2(Q,\Z)=0$ in sufficient generality. 
We have exploited Theorem \ref{t2:lots-of-P} to overcome this difficulty in many cases  
but
 \SFSs over bases in (\ref{list-bottom}) are not covered by the arguments that we have presented.
If one could construct suitable quotients $Q$ for one \SFS over any base orbifold, then one could
construct infinitely many such \SFSs over the same base, as we shall now explain.

\subsection{Finite extensions and circle actions}

We begin with a general tool for promoting the existence of Grothendieck
pairs in a group to the existence of pairs in certain finite extensions of the group.

\begin{lemma}\label{l:jack-up}
Let $1\to G_0\to G\to Q\to 1$ be a short exact sequence of groups with $G$ finitely generated 
and residually finite, and assume that
$Q$ is a finite group generated by the image of a finitely generated normal subgroup $S<G$. 
Suppose that  
$P_0\hookrightarrow G_0$ is a Grothendieck pair with $P_0$
finitely generated and that $S\cap G_0 \subset P_0$.
Let $P= \< P_0, S \>$. 
Then $P\hookrightarrow G$ is a Grothendieck pair.
\end{lemma}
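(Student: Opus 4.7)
The plan is to show that $P$ sits in a short exact sequence mapping compatibly to the given one for $G$, and then to invoke the five lemma after passing to profinite completions.

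First, I would establish that $P = P_0 S$ and that $P \cap G_0 = P_0$. Since $S$ is normal in $G$, for any $p \in P_0$ and $s \in S$ we have $ps = (psp^{-1})p \in SP_0$ and $sp = p(p^{-1}sp) \in P_0 S$, so $P_0 S = SP_0$ is a subgroup, and visibly it equals $\langle P_0, S\rangle = P$. If $ps \in P_0 S$ lies in $G_0$, then $s = p^{-1}(ps) \in S \cap G_0 \subseteq P_0$, so $ps \in P_0$; the reverse inclusion is immediate. Since $S$ surjects onto $Q$, so does $P$, and the kernel of $P \twoheadrightarrow Q$ is $P \cap G_0 = P_0$. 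This produces a commutative ladder with exact rows
\[
1 \to P_0 \to P \to Q \to 1, \qquad 1 \to G_0 \to G \to Q \to 1,
\]
in which the vertical arrows are the inclusions $P_0 \hookrightarrow G_0$ and $P \hookrightarrow G$.

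Next I would pass to profinite completions. Because $Q$ is finite, $P_0$ and $G_0$ are of finite index and hence open in the profinite topologies on $P$ and $G$, respectively. Moreover, $P$ is finitely generated (both $P_0$ and $S$ are) and residually finite (as a subgroup of $G$), so profinite completion preserves the exactness of both rows:
\[
1 \to \wh{P_0} \to \wh{P} \to Q \to 1, \qquad 1 \to \wh{G_0} \to \wh{G} \to Q \to 1.
\]
By hypothesis the induced map $\wh{P_0} \to \wh{G_0}$ is an isomorphism, and so the five lemma forces $\wh{P} \to \wh{G}$ to be an isomorphism. Since $P \hookrightarrow G$ is injective by construction, this confirms that $P \hookrightarrow G$ is a Grothendieck pair.

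The main technical point is the identity $P \cap G_0 = P_0$, which is where both the normality of $S$ in $G$ and the hypothesis $S \cap G_0 \subseteq P_0$ play their role; beyond this, the argument is essentially formal.
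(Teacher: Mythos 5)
Your proposal is correct and follows essentially the same route as the paper: establish $P\cap G_0=P_0$ using $S\cap G_0\subset P_0$, form the commutative ladder of extensions with finite quotient $Q$, note that finite index guarantees the completions of $P_0$ and $G_0$ carry the full profinite topology so both completed rows are exact, and conclude the isomorphism $\wh{P}\to\wh{G}$ (the paper states this directly where you cite the five lemma). Your extra verification that $P_0S=SP_0$ is a subgroup is a harmless elaboration of the same argument.
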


\begin{proof} As $S\cap G_0 \subset P_0$, we have $P\cap G_0 = P_0$ and therefore
$P_0$ has finite index in $P$. In more detail,
each $g\in P$ can be written $g=s g_0$ with $g_0\in P_0<G_0$ and $s\in S$, so if $g\in P\cap G_0$
then $s \in S\cap G_0 \subset P_0$, hence $g\in P_0$. We are assuming   
$S$ maps onto $Q$, so we have the following commutative diagram, where the vertical maps are inclusions
$$ 
\begin{matrix}
&1\longrightarrow &P_0&\longrightarrow&
P &\longrightarrow& Q&  \longrightarrow& 1
\cr
&&\ {\bigg\downarrow} &&{\bigg\downarrow} 
&& {\bigg\downarrow} {=}&&
\cr
&1\longrightarrow &G_0&\longrightarrow&
G &\longrightarrow& Q &  \longrightarrow& 1.
\end{matrix}
$$ 
As $P_0$ has finite index in the finitely generated group $P$, the subspace topology on the closure of $P_0$ in $\wh{P}$
is the full profinite topology, and similarly for $G_0<G$. Thus we have a commutative diagram of profinite groups
$$ 
\begin{matrix}
&1\longrightarrow &\wh{P}_0&\longrightarrow&
\wh{P} &\longrightarrow& Q&  \longrightarrow& 1
\cr
&&\ {\bigg\downarrow} &&{\bigg\downarrow} 
&& {\bigg\downarrow} {=}&&
\cr
&1\longrightarrow &\wh{G}_0&\longrightarrow&
\wh{G} &\longrightarrow& Q &  \longrightarrow& 1.
\end{matrix}
$$ 
The first vertical map is an isomorphism, by hypothesis, and hence the second vertical map is as well. 
\end{proof}

We apply this lemma to the map on fundamental groups induced by  
a regular covering arising from the natural circle action on a Seifert fibred space.
Recall (from \cite{neumann}, for example) 
that associated to any Seifert fibred space $M$ one has a continuous action of ${S}^1$ that preserves the circle fibres; regular fibres of the fibration are free orbits, while a singular fibre over a cone point of order $p$ will have a stabilizer that is cyclic of order $p$. If $N$ is an integer that is coprime to the orders of all of the cone points, then the cyclic group $C_N<S^1$ consisting of $N$-th roots of unity will act freely on $M$ and we can form the quotient manifold $M_N = M/C_N$. At the level of $\pi_1$, passing from $\pi_1M$ to $\pi_1M_N$ corresponds to augmenting the centre (which is the homotopy class of a regular fibre) by adjoining a central $N$-th root. Thus if
$$
\pi_1M = \< a,b,c, z\mid z\text{ is central}, a^p=z^{\beta_1},\ b^q=z^{\beta_2},\ c^r=z^{\beta_3},\ abc=z^d\>,
$$
then
$$
\pi_1M_N = \< a,b,c, \zeta\mid \zeta\text{ is central}, a^p=\zeta^{N\beta_1},\ b^q=\zeta^{N\beta_2},\ c^r=\zeta^{N\beta_3},\ abc=\zeta^{Nd}\>,
$$
where $\zeta^N=z$.

The Euler numbers $e(M_N) = N\, e(M)$ assure us that the manifolds $M_N$ are pairwise distinct if $e(M)\neq 0$.

The labelling of generators defines a short exact sequence
\begin{equation} 
1\to \pi_1 M \to \pi_1 M_N \to C_N \to 1,
\end{equation} 
where $C_N$ is the image of $\zeta\in \pi_1M$.
The key point to observe is that  this sequence and its square
 provide us with  settings in which Lemma \ref{l:jack-up} can be applied. 
 Thus we obtain:

\begin{proposition}\label{p:last} Let $M$ be a Seifert fibred space with base oribifold $\mathcal{O}$ and Euler
number $e(M)\neq 0$. If there are finitely many (respectively, infinitely many) Grothendieck
pairs $P\hookrightarrow \pi_1M\times\pi_1M$ with $P$ finitely generated,
then there are finitely many (respectively, infinitely many) Grothendieck
pairs $P_N\hookrightarrow \pi_1M_N\times\pi_1M_N$ for infinitely many \SFSs $M_N$ over $\mathcal{O}$.
\end{proposition}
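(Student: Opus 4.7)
The plan is to invoke Lemma \ref{l:jack-up} with the extensions that arise from the free action of $C_N<S^1$ on $M$, for each $N$ coprime to the orders of all the cone points of $\mathcal{O}$. Squaring the associated short exact sequence produces
$$
1 \longrightarrow G_0 \longrightarrow G \longrightarrow Q \longrightarrow 1,
$$
where $G_0=\pi_1M\times\pi_1M$, $G=\pi_1M_N\times\pi_1M_N$ and $Q=C_N\times C_N$. Because $e(M_N)=N\,e(M)\neq 0$ grows without bound, infinitely many choices of $N$ yield pairwise distinct manifolds $M_N$, and we deal with each such $N$ separately.

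For the role of the normal subgroup $S$ demanded by Lemma \ref{l:jack-up}, take $S := \langle\,(\zeta,1),(1,\zeta)\,\rangle < G$, where $\zeta\in\pi_1M_N$ is the central $N$-th root of the regular-fibre generator $z\in\pi_1M$. Being central in $G$, this $S$ is normal and finitely generated, and its image in $Q$ is a generating set; moreover $S\cap G_0=\langle(z,1),(1,z)\rangle$. Given any Grothendieck pair $P_0\hookrightarrow G_0$, set
$$
P_0^{\sharp} := \langle P_0,(z,1),(1,z)\rangle.
$$
The inclusion $P_0^{\sharp}\hookrightarrow G_0$ still induces an isomorphism of profinite completions (since $P_0\hookrightarrow G_0$ already does), and $S\cap G_0\subset P_0^{\sharp}$ is automatic, so Lemma \ref{l:jack-up} supplies a Grothendieck pair $P_N := \langle P_0^{\sharp},S\rangle \hookrightarrow G$.

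For the counting, the finite case is immediate: each $P_0$ yields at most one $P_N$ per $N$. In the infinite case, $P_0^{\sharp}$ is visible inside $P_N$ as the kernel $P_N\cap G_0$ of the canonical projection $P_N\twoheadrightarrow Q$, so $P_0^{\sharp}$ is an intrinsic finite-index normal subgroup of $P_N$ of index dividing $N^2$. Since each abstract isomorphism type of $P_N$ can support only finitely many such normal subgroups of bounded index, an infinite family of pairwise non-isomorphic $P_0^{\sharp}$'s forces an infinite family of pairwise non-isomorphic $P_N$'s, after passing to an infinite subfamily if necessary.

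The main obstacle is the last step: ensuring that distinct abstract isomorphism classes of $P_0$ persist to distinct abstract isomorphism classes of $P_N$. The cleanest case, which suffices for the applications in this paper, is when the given Grothendieck pairs arise as fibre products of maps $\pi_1M\to\tilde Q_i$ that kill $z$, namely the construction of Section \ref{s:homol-argument}. In that situation one has $(z,1),(1,z)\in P_0$ from the outset, so $P_0^{\sharp}=P_0$ and the recovery $P_0=P_N\cap G_0$ immediately converts non-isomorphism of the $P_0$'s into non-isomorphism of the $P_N$'s. In the general case one would need an argument in this spirit, identifying an intrinsic invariant of $P_N$ that sees the abstract isomorphism type of $P_0^{\sharp}$; this is where the bulk of any remaining technical work lies.
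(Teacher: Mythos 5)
Your outline coincides with the paper's: square the extension $1\to\pi_1M\to\pi_1M_N\to C_N\to 1$ coming from the circle action, take $S=\langle(\zeta,1),(1,\zeta)\rangle$, apply Lemma \ref{l:jack-up}, and use $e(M_N)=N\,e(M)$ to see that the $M_N$ are pairwise distinct. The genuine gap is in the step you add in order to treat an arbitrary pair, namely the replacement of $P_0$ by $P_0^{\sharp}=\langle P_0,(z,1),(1,z)\rangle$. The assertion that $P_0^{\sharp}\hookrightarrow G_0$ still induces an isomorphism of profinite completions ``since $P_0\hookrightarrow G_0$ already does'' is not a valid inference: for $P_0\le P_0^{\sharp}\le G_0$ the map $\wh{P_0^{\sharp}}\to\wh{G_0}$ is surjective, but injectivity is not inherited from $P_0$ -- it is the (non-automatic) statement that the topology induced on $P_0^{\sharp}$ from the profinite topology of $G_0$ is the full profinite topology of $P_0^{\sharp}$. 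In fact, in the cases where the proposition is applied ($H_1(M,\Z)$ finite), the claim can simply fail: writing $Z=\langle(z,1),(1,z)\rangle\cong\Z^2$, if $P_0\cap Z$ has infinite index in $Z$ then $P_0^{\sharp}=P_0\cdot Z$ surjects onto the infinite finitely generated abelian group $Z/(P_0\cap Z)$, hence onto $\Z$, whereas $\wh{G_0}$ has finite abelianisation; so $P_0^{\sharp}\hookrightarrow G_0$ cannot be a Grothendieck pair for such a $P_0$. Thus either one proves that every finitely generated Grothendieck pair in $G_0$ already contains (the relevant finite-index subgroup of) the centre -- a genuine statement, which the paper establishes only for \emph{finitely presented} subgroups in Theorem \ref{p:fills2}, via goodness and cohomological dimension -- or one must, as the paper does, work with pairs satisfying the hypothesis $S\cap G_0\subset P_0$ of Lemma \ref{l:jack-up}. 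Your $\sharp$-device does not remove that hypothesis.

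Two further points on the counting. For the ``infinitely many'' direction no analysis of abstract isomorphism types is needed (your last two paragraphs overcomplicate this): once $S\cap G_0\subset P_0$, the proof of Lemma \ref{l:jack-up} gives $P_N\cap G_0=P_0$, so distinct subgroups $P_0$ yield distinct subgroups $P_N$, which is all the statement asserts. For the ``finitely many'' direction your justification runs the wrong way: showing that the construction produces at most one $P_N$ per $P_0$ does not bound the number of Grothendieck pairs in $\pi_1M_N\times\pi_1M_N$; one must start from an arbitrary such pair $P$, restrict it to $G_0$ via Lemma \ref{l:usual}, and control the fibres of $P\mapsto P\cap G_0$. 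Finally, your fallback remark that the fibre-product pairs of Section \ref{s:homol-argument} contain $(z,1)$ and $(1,z)$ ``from the outset'' is itself not established: the epimorphisms used there are lifts $[\G,\G]\to\tilde{Q}_i$ of maps that kill $z$, and the lift only sends $z$ into the centre of $\tilde{Q}_i$, not necessarily to the identity, so membership of $(z,1)$ in the fibre product requires an additional argument.
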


\begin{example} Let $M$ be the Seifert fibred space over $S^2(4,4,4)$ with fundamental
group  
$$
\G_+(1) = \< a,b,c, z\mid z\text{ is central }, a^4= b^4= c^4=z=abc \>.
$$
Following the construction described above, we add $N$-th roots to the centre with $N$ a positive odd integer.  
If $N=4k+1$, then
setting $\zeta^N=z$ and writing $A=a\zeta^{-k},\ B=b\zeta^{-k},\ C=c\zeta^{-k}$ we get
$$
\G_+(k+1) = \< A,B,C, \zeta \mid  A^4= B^4= C^4=\zeta,\ ABC=\zeta^{k+1}\>.
$$
For $N=4k-1$, setting $\zeta^N=z^{-1}$ and  writing $A=a\zeta^{k},\ B=b\zeta^{k},\ C=c\zeta^{k}$ we get
$$
\G_+( -k+1 )= \< A,B,C, \zeta\mid A^4= B^4= C^4=\zeta,\ ABC=\zeta^{-k+1} \>.
$$ 
$\G_+(1) $ is the commutator subgroup of a \SFS over $S^2(3,3,4)$, so we obtain
infinitely many Grothendieck pairs in $\G_+(1)\times\G_+(1)$ by applying Theorem \ref{t2:lots-of-P}.
Proposition \ref{p:last} tells us that the same is true of all of the groups $\G_+(d)$, 
and this gives a second proof  of Theorem \ref{t:main} for these groups.
\end{example}

\subsection{A more direct argument for \SFSs over $S^2(4,4,4)$ and $S^2(3,3,4)$}

In our proof of Theorem \ref{t:main}, we appealed to the work of  Wilkes on
the relative profinite rigidity of Seifert fibred spaces \cite{Wil}: we invoked it to
prove Lemma \ref{l:dense-will-do}. This appeal to \cite{Wil}
can be avoided in the case of \SFSs over $S^2(4,4,4)$ and $S^2(3,3,4)$,
as we shall now explain.  
The fundamental group of each \SFS over $S^2(4,4,4)$ is isomorphic to one
of the following groups
$$
\G_+(d) = \< a,b,c,z\mid a^4z,\ b^4z,\ c^4z,\, abcz^d \>,
$$
$$
\G_-(d) = \< a,b,c,z\mid a^4z^{-1},\ b^4z,\ c^4z,\, abcz^d \>.
$$
For a prime $p$, 
the {\em exponent-$p$ lower central series}
 of a group $\G$ is defined recursively by $\G_0=\G$ and $\G_{i+1}=\G_i^{(p)}\,
 [\G_i, \G]$,
where the superscript denotes the group generated by $p$-th powers. Let $N_i^\G = \G/\G_i$.
For us, $p=2$; for emphasis, we may write $\G_i(2)$ and $N_i^\G(2)$  instead of $\G_i$ and $N_i^\G$.

\begin{theorem}\label{t:tell-444}  A \SFS
with base orbifold $S^2(4,4,4)$ and fundamental group $\G$  is uniquely determined by $H_1(\G,\Z)$ and
$N_5^\G(2)$, and in all cases  $|N_5^\G(2)|=2^{17}$.
\end{theorem}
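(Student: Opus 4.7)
Every \SFS over $S^2(4,4,4)$ has fundamental group isomorphic to one of $\G_+(d)$ or $\G_-(d)$ for some $d \in \Z$, as recorded at the end of the previous section. The plan is to first use $H_1(\G,\Z)$ to reduce the theorem to a sequence of explicit pairs, and then to distinguish the members of each remaining pair by a structural analysis of $N_5^\G(2)$.

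First I would compute $H_1(\G,\Z)$ from the presentations by the determinant calculation used in the proof of Lemma \ref{small_ab}. This yields $H_1(\G_+(d),\Z) \cong C_4 \times C_4 \times C_{n_+(d)}$ and $H_1(\G_-(d),\Z) \cong C_4 \times C_4 \times C_{n_-(d)}$, where $n_+(d)$ and $n_-(d)$ are explicit odd-valued affine functions of $d$ (the last summand splits off because $n_\pm(d)$ are odd). Both maps $d \mapsto n_\pm(d)$ are injective on $\Z$, so $H_1$ already separates the members of each family from one another. An elementary check shows that the only cross-family coincidences $n_+(d) = n_-(d')$ occur along a single linear relation between $d$ and $d'$. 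So the only task that remains is to distinguish $\G_+(d)$ from $\G_-(d')$ in each of these pairs.

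For this I would analyse $N_5^\G(2)$ directly. Since $H_1(\G,\F_2) \cong (\F_2)^2$, the quotient $N_5^\G(2)$ is a two-generator $2$-group of exponent-$2$ class at most $5$, obtainable from the corresponding free class-$5$ exponent-$2$ quotient on two generators by imposing the images of the four Seifert relations. Building $N_5^\G(2)$ layer by layer along the exponent-$2$ lower central series and tracking the $\F_2$-dimensions $c_i := \dim_{\F_2}(\G_i(2)/\G_{i+1}(2))$ for $0 \leq i \leq 4$, I would verify $\sum_{i=0}^4 c_i = 17$ uniformly. This uniformity is plausible because all four defining relations involve only fourth powers of generators and a single word of the form $abc z^{e}$: each relation sits in a prescribed layer of the filtration, and its contribution to the $c_i$ depends only on the residue class of the integer parameters modulo appropriate powers of $2$, not on the full value of $d$ or on the sign of $e_1$.

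The main obstacle, and the heart of the argument, is showing $N_5^{\G_+(d)}(2) \not\cong N_5^{\G_-(d')}(2)$ in each remaining pair. I would look for a structural invariant of the class-$5$ quotient that detects the sign of $e_1$: a characteristic subgroup such as the image of $Z(\G)$ in $N_5^\G(2)$ or the $2$-centre, or a numerical invariant such as the number of elements of a given order, or the $\F_2$-rank of a specific layer. The opposing first Seifert relations $a^4 = z^{\mp 1}$ in $\G_\pm(d)$ produce opposite restricted-Lie-algebra congruences between the layers containing $a^4$ and $z^{\pm 1}$, and the plan is to propagate this discrepancy up the filtration and verify that it survives at depth $5$. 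In practice both the uniform order $2^{17}$ and the pairwise non-isomorphism are most efficiently confirmed by running the $p$-quotient algorithm in a computer algebra system such as Magma or GAP, which produces explicit power-commutator presentations of each $N_5^{\G_\pm(d)}(2)$ that can be compared directly; the structural argument sketched above would then convert the machine verification into a conceptual proof.
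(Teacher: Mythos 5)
Your outline follows the same broad strategy as the paper (use $H_1(\G,\Z)$ to reduce to the cross-family pairs, then separate those by the class-$5$ exponent-$2$ quotient, computed with a $p$-quotient algorithm), but there is a genuine gap: the uniformity in $d$ is asserted rather than proved. As written, your verification --- ``running the $p$-quotient algorithm \dots{} for each $N_5^{\G_\pm(d)}(2)$'' --- is an infinite computation, and the proposed structural substitute (propagating the sign discrepancy in the first Seifert relation, or finding a characteristic subgroup, element-order count, or layer rank that detects it at depth $5$) is a wish list, not an argument. Moreover, your plausibility claim that each relation's ``contribution to the $c_i$ depends only on the residue class of the integer parameters modulo appropriate powers of $2$'' is precisely the point that needs proof, and even if established it would only control the layer sizes (hence $|N_5^\G(2)|$), not the isomorphism type of $N_5^\G(2)$, which is what the non-isomorphism half of the theorem requires.

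The paper closes this gap with a short but essential reduction. Since $z$ has odd order in $H_1(\G,\Z)$, its image in $N_1^\G(2)\cong C_2\times C_2$ is trivial; since $a^2\in\G_1(2)$ we have $z^{\pm1}=a^4\in\G_2(2)$, so the image of $z$ in $\G/\G_2(2)$ is also trivial, and thereafter its order can at most double at each stage, so the image of $z$ in $N_5^\G(2)$ has order dividing $8$. Hence $N_5^\G(2)$ depends only on $\G/\<z^8\>$, and the quotients $\G_\pm(d)/\<z^8\>$ fall into finitely many isomorphism classes (each is isomorphic to $\G_\pm(0)/\<z^8\>$ or $\G_\pm(1)/\<z^8\>$). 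This reduces the entire theorem to a finite Magma pQuotient check on four groups: that each has $|N_5(2)|=2^{17}$, and that no $N_5$ of a $\G_+$ representative is isomorphic to an $N_5$ of a $\G_-$ representative. To repair your proposal, either supply this reduction (the order-of-$z$ argument and the classification of the quotients modulo $\<z^8\>$) before invoking the machine computation, or genuinely carry out, for all $d$ at once, the restricted-Lie-algebra argument you only sketch.
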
 

\noindent{\em Sketch of proof.}  
The abelianisation of $\G_+(d)$ is $C_4 \times C_4 \times C_{|4d-3|}$ 
while the abelianisation of $\G_-(d) $ is $C_4 \times C_4 \times C_{|4d-1|}$, so abelianisation
alone reduces our task to distinguishing between $\G_+(d)$ and $\G_-(1-d)$. It is easy to verify
that for all integers $d$, the quotient  $\G_+(d)/\<z^8\>$ is isomorphic to either $\G_+(0)/\<z^8\>$ or $\G_+(1)/\<z^8\>$,
while $\G_-(d)/\<z^8\>$ is isomorphic to $\G_-(0)/\<z^8\>$ or $\G_-(1)/\<z^8\>$. So if we can argue that 
for every $\G=\G_{\pm}(d)$, the
order of the image  of $z$  in $N_5^\G(2)=\G /\G_5(2)$ divides $8$,
 then it will be enough to verify (i) that $|N_5^\G(2)|=2^{17}$
for $\G_{\pm}(0)$ and $\G_{\pm}(1)$, and (ii) that $N_5^{G_+}(2)$ is not isomorphic to $N_5^{G_-}(2)$
if $G_+\in \{\G_+(0),\, \G_+(1)\}$ and $G_-\in \{\G_-(0),\, \G_-(1)\}$; this verification can be made
using the   {\rm{pQuotient}}
package in  {\rm{Magma}} \cite{Mag}.
To see that the order of the image $z$ does
divide $8$, note that since $z$ has odd order in $H_1(\G,\Z)$, its image in  $N_1^\G =C_2\times C_2$ is trivial,
and since $z^{\pm 1} = a^4 \in \G_1^2<\G_2$, the image of $z$ in $\G/\G_2$ is also trivial.
It follows that the image of $z$ in $N_3^\G$ has order at most $2$, 
while  in $N_4^\G$ its order divides $4$, and in $N_5^\G$ its order divides $8$. 
\qed

This provides the desired substitute for Lemma \ref{l:dense-will-do}.

\begin{corollary} \label{c:bingo}
If $M\neq M'$ are \SFSs with base orbifold $S^2(4,4,4)$ and $H_1(M,\Z)\cong H_1(M',\Z)$, then $\pi_1M'$
does not embed densely in $\wh{\pi_1M}$.  
\end{corollary}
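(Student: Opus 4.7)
The plan is to exploit Theorem \ref{t:tell-444}: since $H_1(\pi_1M,\Z)\cong H_1(\pi_1M',\Z)$ is already given, it suffices to show $N_5^{\G}(2)\cong N_5^{\G'}(2)$, where $\G=\pi_1M$ and $\G'=\pi_1M'$. Once both invariants agree, Theorem \ref{t:tell-444} forces $\G\cong\G'$, and since the \SFS $M$ is determined by $\G$, we conclude $M\cong M'$, contradicting the hypothesis $M\ne M'$.

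So suppose, for contradiction, that $j:\G'\hookrightarrow \wh{\G}$ is a dense embedding. The finite quotient $\G\twoheadrightarrow N_5^\G(2)=\G/\G_5(2)$ extends continuously to $\wh{\G}\twoheadrightarrow N_5^\G(2)$ (its target being finite), and precomposing with $j$ yields a map $\G'\to N_5^\G(2)$ whose image is dense and hence (the target being finite) equal to all of $N_5^\G(2)$. Set $H=N_5^\G(2)$. By functoriality of the exponent-$2$ lower central series applied to the quotient $\G\twoheadrightarrow H$, one has $H_i(2)=\G_i(2)/\G_5(2)$ for $i\le 5$, so in particular $H_5(2)=1$. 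Therefore any homomorphism $\G'\to H$ sends $\G'_5(2)$ into $H_5(2)=1$, and our surjection descends to a surjection
\[
N_5^{\G'}(2)\twoheadrightarrow N_5^{\G}(2).
\]

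By Theorem \ref{t:tell-444}, both sides are finite groups of order $2^{17}$, so this surjection is an isomorphism. Combined with $H_1(\G,\Z)\cong H_1(\G',\Z)$, the uniqueness statement in Theorem \ref{t:tell-444} gives $\G\cong\G'$ and hence $M\cong M'$, a contradiction. There is no substantive obstacle here: all the work lies in Theorem \ref{t:tell-444} itself (in particular the \textsc{Magma} computation that pins down the order and the isomorphism type of $N_5(2)$). The corollary amounts to the standard observation that a dense subgroup of a profinite completion realises every finite quotient, combined with the fact that $N_5^\G(2)$ is an invariant defined by a universal property that is visible inside $\wh{\G}$.
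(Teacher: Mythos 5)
Your proposal is correct and follows essentially the same route as the paper: a dense embedding $\G'\hookrightarrow\wh{\G}$ forces a surjection $\G'\twoheadrightarrow N_5^\G(2)$, which factors through $N_5^{\G'}(2)$, and equality of the orders $2^{17}$ makes it an isomorphism, so Theorem \ref{t:tell-444} together with the $H_1$ hypothesis forces $M=M'$. Your write-up merely makes explicit the functoriality of the exponent-$2$ lower central series and the continuous extension to $\wh{\G}$, which the paper leaves implicit.
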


\begin{proof} Let $\G=\pi_1M$ and $\G'=\pi_1M'$.
If $\G'$ were to embed densely in $\G$,   it would map onto $N^\G_5(2)$. But such a map
$\G'\to N^\G_5(2)$ has to factor through $N^{\G'}_5(2)$.
Since $|N^\G_5(2))|= |N^{\G'}_5(2)|$, these groups must be equal, which forces $\G=\G'$ and $M=M'$.
\end{proof}

One can deduce the same result for \SFSs with base orbifold $S^2(3,3,4)$ by exploiting the 
fact that $\D(4,4,4)$ has index $3$ in $\D(3,3,4)$. We omit the details.

  
\end{document}